\newtheorem{theorem}{Theorem}[section]
\newtheorem{lemma}[theorem]{Lemma}
\newtheorem{definition}[theorem]{Definition}
\newtheorem{corollary}[theorem]{Corollary}
\newtheorem{proposition}[theorem]{Proposition}
\newtheorem{remark}{Remark}
\theoremstyle{remark}
\newenvironment{example}
  {\pushQED{\qed}\examplex}
  {\popQED\endexamplex}
\DeclareMathOperator{\sign}{sign}
\newcommand*{\myproofname}{Proof}
\newenvironment{subproof}[1][\myproofname]{\begin{proof}[#1]}{\end{proof}}
\renewcommand\Re{\operatorname{Re}}
\newcommand\Ort{\operatorname{O}}
\newcommand\End{\operatorname{End}} 
\newcommand\Gl{\operatorname{Gl}} 
\newcommand\OdR{\mbox{O}(\mathbb{R}^d)} 
\newcommand\Sym{\operatorname{Sym}}
\newcommand\Exp{\operatorname{Exp}}
\newcommand\diag{\operatorname{diag}}
\newcommand\supp{\operatorname{Supp}}
\newcommand\Spec{\operatorname{Spec}}
\renewcommand\det{\operatorname{det}}
\newcommand{\slantedslash}{\mathbin{\rotatebox[origin=c]{23}{$-$}}}
\newcommand{\docircint}[2]{%
  \ifx#1\displaystyle
    \displayrint
  \else
    \normalrint{#1}%
  \fi
}
\newcommand{\displayrint}{\displaystyle \slantedslash \mkern-18mu}
\newcommand{\normalrint}[1]{%
  \smallerc{#1}\ifx#1\textstyle\mkern-9mu\else\mkern-8.2mu\fi
}
\newcommand{\smallerc}[1]{%
  \vcenter{\hbox{$\ifx#1\textstyle\scriptstyle\else\scriptscriptstyle\fi \slantedslash $}}%
}
\newcommand\preceqdot{\mathrel{\ooalign{$\preceq$\cr
  \hidewidth\raise0.225ex\hbox{$\cdot\mkern0.5mu$}\cr}}}
\author{Evan Randles}
\title{Local Limit Theorems for Complex Functions on $\mathbb{Z}^d$}
\date{\today}
\begin{document}
\maketitle
\abstract{The local (central) limit theorem precisely describes the behavior of iterated convolution powers of a probability distribution on the $d$-dimensional integer lattice, $\mathbb{Z}^d$. Under certain mild assumptions on the distribution, the theorem says that the convolution powers are well-approximated by a single scaled Gaussian density which we call an \textit{attractor}. When such distributions are allowed to take on complex values, their convolution powers exhibit new and disparate behaviors not seen in the probabilistic setting. Following works of I. J. Schoenberg, T. N. E. Greville, P. Diaconis, and L. Saloff-Coste, the author and L. Saloff-Coste provided a complete description of local limit theorems for the class of finitely supported complex-valued functions on $\mathbb{Z}$ and the list of possible attractors includes the biharmonic heat kernel, the Airy function, and the heat kernel evaluated at purely imaginary time. For convolution powers of complex-valued functions on $\mathbb{Z}^d$, much less is known. In a previous work by the author and L. Saloff-Coste, local limit theorems were established for complex-valued functions whose Fourier transform is maximized in absolute value at so-called points of positive homogeneous type and, in that case, the resultant attractors are generalized heat kernels corresponding to a class of higher order partial differential operators. By considering the possibility that the Fourier transform can be maximized in absolute value at points of imaginary homogeneous type, a notion motivated by V. Thom\'{e}e, this article extends previous work of the author and L. Saloff-Coste to broaden the class of complex-valued functions for which it is possible to obtain local limit theorems. These local limit theorems contain attractors given by certain oscillatory integrals and their convergence is established using a generalized polar-coordinate integration formula, due to H. Bui and the author, and the Van der Corput lemma. The article also extends recent results on sup-norm type estimates of H. Bui and the author.}\\

\noindent{\small\bf Keywords:} Convolution Powers, Local Limit Theorems\\

\noindent{\small\bf Mathematics Subject Classification:} Primary 42A85; Secondary 60F99  \& 42B20

\section{Introduction}

Denote by $\ell^1(\mathbb{Z}^d)$ the set of functions $\phi:\mathbb{Z}^d\to\mathbb{C}$ for which
\begin{equation*}
\|\phi\|_1:=\sum_{x\in\mathbb{Z}^d}\abs{\phi(x)}<\infty.
\end{equation*}
For a fixed $\phi\in\ell^1(\mathbb{Z}^d)$, we define the convolution powers $\phi^{(n)}\in \ell^1(\mathbb{Z}^d)$ of $\phi$ iteratively by putting $\phi^{(1)}=\phi$ and, for each integer $n\geq 2$, 
\begin{equation*}
\phi^{(n)}(x)=\sum_{y\in\mathbb{Z}^d}\phi^{(n-1)}(x-y)\phi(y)
\end{equation*}
for $x\in\mathbb{Z}^d$. Motivated by its central importance in random walk theory \cite{spitzer_principles_1964,lawler_random_2010} and its applications to data smoothing algorithms \cite{Sch53,Gre66} and numerical solutions in partial differential equations \cite{coulombel2022generalized,Tho65,Tho69}, we are interested in the asymptotic behavior of $\phi^{(n)}(x)$ as $n\to\infty$. Following the articles \cite{DSC14}, \cite{RSC15}, and \cite{RSC17}, our goal in this article is to broaden the class of functions $\phi\in\ell^1(\mathbb{Z}^d)$ for which it is possible to obtain ``simple" pointwise descriptions of $\phi^{(n)}(x)$, for sufficiently large $n$, in the form of local limit theorems. To better understand this goal, let us first discuss some background whose origins are rooted in probability.\\

\noindent In the case that $\phi$ is a probability distribution on $\mathbb{Z}^d$, i.e., $\phi\geq 0$ and $\|\phi\|_1=\sum_{x\in\mathbb{Z}^d}\phi(x)=1$, there is a natural Markov process on $\mathbb{Z}^d$ whose $n$th-step transition kernels are given by $k_n(x,y)=\phi^{(n)}(y-x)$; we call this process the \textbf{random walk on $\mathbb{Z}^d$ driven by $\phi$}. In particular, $\phi^{(n)}(x)=k_n(0,x)$ represents the probability that the ``random walker" starting at the origin will be at position $x$ after $n$ steps. The study of random walks on $\mathbb{Z}^d$ has a long and storied history and we encourage the reader to take a look at the wonderful book of F. Spitzer \cite{spitzer_principles_1964} for an account (see also \cite{lawler_random_2010}). In the case that the random walk is aperiodic, irreducible, and of finite range, the classical local (central) limit theorem states that
\begin{equation}\label{eq:LCLTProb1}
\phi^{(n)}(x)=n^{-d/2}G_\phi\left(n^{-1/2}(x-n\alpha_\phi)\right)+o(n^{-d/2})
\end{equation}
uniformly for $x\in\mathbb{Z}^d$ where $\alpha_\phi\in\mathbb{R}^d$ denotes the mean of $\phi$ and $G_\phi$ is the generalized Gaussian given by
\begin{equation*}
G_\phi(x)=\frac{1}{(2\pi)^{d/2}\sqrt{\det(C_\phi)}}\exp\left(-\frac{x\cdot C_\phi^{-1}x}{2}\right)=\frac{1}{(2\pi)^d}\int_{\mathbb{R}^d}e^{-P_\phi(\xi)}e^{-ix\cdot\xi}\,d\xi
\end{equation*}
where $\cdot$ denotes the dot product, $C_\phi$ is the symmetric and positive definite covariance matrix of $\phi$, and $P_\phi(\xi)=\xi\cdot(C_\phi\xi)$ is its associated positive definite homogeneous second-order polynomial \cite{spitzer_principles_1964,lawler_random_2010,RSC17}. Under the additional assumption that $\phi$ is symmetric, $\alpha_\phi=0$ and so \eqref{eq:LCLTProb1} yields the two-sided estimate
\begin{equation*}
C_1n^{-d/2}\leq \phi^{(n)}(0)=k_{n}(x,x)\leq C_2 n^{-d/2}
\end{equation*}
for all $n\in\mathbb{N}_+:=\{1,2,3,\dots\}$ and $x\in\mathbb{Z}^d$; here, $C_1$ and $C_2$ are positive constants. This so-called on-diagonal estimate describes the return probabilities of the random walk and was used by G. P\'{o}lya to establish the dichotomy of recurrence/transience of simple random walk \cite{Polya21}. The hypotheses that a probability distribution $\phi$ is symmetric, aperiodic, irreducible, and of finite range can be weakened significantly and still a local limit theorem will hold. For example, if one assumes only that a probability distribution $\phi$ has finite second moments and is genuinely $d$-dimensional\footnote{In the language of F. Spitzer, $\phi$ is said to be \textbf{genuinely $d$-dimensional} if it is not supported in any affine hyperplane of $\mathbb{R}^d$ \cite{spitzer_principles_1964}.}, then
\begin{equation}\label{eq:LCLTProb2}
\phi^{(n)}(x)=n^{-d/2}\Theta(n,x)G_\phi\left(n^{-1/2}(x-n\alpha_\phi)\right)+o(n^{-d/2})
\end{equation}
uniformly for $x\in\mathbb{Z}^d$ where $\Theta(n,x)$ is a ``support function" which characterizes the periodicity of the random walk. For a proof of the local limit theorem \eqref{eq:LCLTProb2}, we refer the reader to Subsection 7.6 of \cite{RSC17}. Also, there is a rich theory for improving the error $o(n^{-d/2})$ in \eqref{eq:LCLTProb2} which is nicely presented in \cite{lawler_random_2010}.\\

\noindent In taking our discussion beyond the realm of probability, it is helpful to introduce some basic objects that play a role in the theory. For a given $\phi\in\ell^1(\mathbb{Z}^d)$, we define its Fourier transform  $\widehat{\phi}$ by the trigonometric series
\begin{equation*}
\widehat{\phi}(\xi)=\sum_{x\in\mathbb{Z}^d}\phi(x)e^{ix\cdot\xi}
\end{equation*}
which is everywhere uniformly convergent and has $|\widehat{\phi}(\xi)|\leq \|\phi\|_1$ for all $\xi\in\mathbb{R}^d$. As in \cite{BR21} and \cite{RSC17}, we shall focus on the subspace $\mathcal{S}_d$ of $\ell^1(\mathbb{Z}^d)$ consisting of those $\phi\in\ell^1(\mathbb{Z}^d)$ for which
\begin{equation*}
\|x^\beta\phi\|_1=\sum_{x\in\mathbb{Z}^d}\abs{x^\beta \phi(x)}<\infty
\end{equation*}
for each $\beta=(\beta_1,\beta_2,\dots,\beta_d)\in\mathbb{N}^d$ where $x^\beta:=(x_1)^{\beta_1}(x_2)^{\beta_2}\cdots(x_d)^{\beta_d}$ for $x=(x_1,x_2,\dots,x_d)\in\mathbb{Z}^d$. We observe that $\mathcal{S}_d$ contains all finitely supported functions in $\phi\in\ell^1(\mathbb{Z}^d)$. It is easy to see that, for each $\phi\in\mathcal{S}_d$, $\widehat{\phi}\in C^\infty(\mathbb{R}^d)$ and, in fact, $\widehat{\phi}$ is analytic whenever $\phi$ is finitely supported. As discussed in \cite{BR21,RSC17,RSC15,DSC14,coulombel2022generalized,Tho65}, the asymptotic behavior of $\phi^{(n)}$ is characterized by the local behavior of $\widehat{\phi}$ near points in $\mathbb{T}^d:=(-\pi,\pi]^d$ at which $|\widehat{\phi}|$ is maximized. The fact is seen evident through the Fourier transform identity
\begin{equation}\label{eq:FTConvolution}
\phi^{(n)}(x)=\frac{1}{(2\pi)^d}\int_{\mathbb{T}^d}\widehat{\phi}(\xi)^ne^{-ix\cdot\xi}\,d\xi
\end{equation}
which holds for all $n\in\mathbb{N}_+$ and $x\in\mathbb{Z}^d$. For simplicity, we shall assume that $\phi\in\mathcal{S}_d$ is normalized so that $\sup_{\xi}|\widehat{\phi}(\xi)|=1$ and with this we define
\begin{equation}\label{eq:DefOfOmega}
\Omega(\phi)=\left\{\xi\in\mathbb{T}^d:\abs{\widehat{\phi}(\xi)}=1\right\}.
\end{equation}
\begin{remark}
In the case that $\phi\geq 0$ is a probability distribution, the normalization $\sup|\widehat{\phi}|=1$ is automatic. In this case, recognizing $\mathbb{T}^d$ as the $d$-dimensional torus group, $\Omega(\phi)$ is a subgroup of $\mathbb{T}^d$ and the support function $\Theta(n,x)$ appearing in \eqref{eq:LCLTProb2} can be expressed in terms of the elements $\xi\in \Omega(\phi)$  \cite{RSC17}. The additional hypotheses that the random walk driven by $\phi$ is aperiodic and irreducible are equivalent to the hypothesis that $\Omega(\phi)=\{0\}$ and, in this case, $\Theta(n,x)\equiv 1$ making \eqref{eq:LCLTProb1} a special case of \eqref{eq:LCLTProb2}.
\end{remark}
\noindent For each $\xi_0\in\Omega(\phi)$, consider $\Gamma_{\xi_0}:\mathcal{U}\to\mathbb{C}$ defined by
\begin{equation}\label{eq:GammeDef}
\Gamma_{\xi_0}(\xi)=\log\left(\frac{\widehat{\phi}(\xi+\xi_0)}{\widehat{\phi}(\xi_0)}\right)
\end{equation}
where $\log$ is the principal branch of the logarithm and $\mathcal{U}\subseteq\mathbb{R}^d$ is an open convex neighborhood of $0$ which is small enough to ensure that $\log$ is continuous on $\{\widehat{\phi}(\xi+\xi_0)/\widehat{\phi}(\xi_0)\}$ as $\xi$ varies in $\mathcal{U}$. Our supposition that $\phi\in\mathcal{S}_d$ guarantees that $\Gamma_{\xi_0}\in C^\infty(\mathcal{U})$ and so it makes sense to consider its Taylor expansion at $0$. It is the nature of these Taylor expansions (and hence the nature of $\Gamma_{\xi_0}$) that determines the asymptotic nature of $\phi^{(n)}$.\\

\noindent The vast majority of existing theory on the asymptotic behavior of convolution powers pertains to the $1$-dimensional setting, i.e., $d=1$. In fact, determining the asymptotic behavior of $\phi^{(n)}$ where $\phi$ is finitely supported on $\mathbb{Z}$ and takes on only real values is known as de Forest's problem and dates back to its initial investigation by Erastus L. de Forest in the nineteenth century driven by de Forest's interest in data smoothing \cite{Stigler78,DSC14}. This study was continued by I. J. Schoenberg and T. N. E. Greville \cite{Gre66,Sch53}, both of whom proved local limit theorems under hypotheses (stronger than) described below. Tied to advancements in scientific computing, the general problem of understanding the asymptotic behavior of the convolution powers of a complex function on $\mathbb{Z}$ was reinvigorated in the second half of the twentieth century by its appearance in numerical solution algorithms for partial differential equations; we encourage the reader to look at the excellent survey of V. Thom\'{e}e \cite{Tho69} for an account of this story (see also Subsection 5.2 of \cite{DSC14}). From the perspective of numerical PDEs, one is often concerned with the so-called max-norm stability\footnote{We say that $L_\phi$ \textbf{stable in the maximum norm} if, for some $C>0$, $\|L_\phi^n f\|_{L^\infty(\mathbb{R})}\leq C\|f\|_{L^\infty(\mathbb{R})}$ for all $n\in\mathbb{N}_+$ and $f\in L^\infty(\mathbb{R})$. Upon noting that $L_\phi^n=L_{\phi^{(n)}}$, it is easily seen that max-norm stability is equivalent to the property that $\sup_n\|\phi^{(n)}\|_1<\infty$. Recognizing $\ell^1(\mathbb{Z}^d)$ as a Banach algebra equipped with the convolution product, this property is referred to as power boundedness \cite{KLU14,BSch70}.} of operators $L_\phi:L^\infty(\mathbb{R})\to L^\infty(\mathbb{R})$ defined by $L_\phi f(x)=\sum_y \phi(y)f(x+hy)$ for $f\in L^\infty(\mathbb{R})$ where $\phi\in \ell^1(\mathbb{Z})$ and $h$ is a fixed positive parameter. In 1965, V. Thom\'{e}e characterized max-norm stability for operators $L_\phi$ associated to finitely supported functions $\phi\in \ell^1(\mathbb{Z})$ \cite[Theorem 1]{Tho65}. Though this characterization is not directly relevant to our present goals, in the course of his proof, Thom\'{e}e introduced the following definition which was found essential to the theory in \cite{RSC15}.
\begin{definition}\label{def:OneDTypes}
Let $\phi:\mathbb{Z}\to\mathbb{C}$ be such that $\sup_{\xi}|\widehat{\phi}|=1$ and let $\xi_0\in\Omega(\phi)$. 
\begin{enumerate}
\item We say that $\xi_0$ is of Type 1 (or Type $\gamma$) of order $m$ for $\widehat{\phi}$ if there is an even natural number $m=m_{\xi_0}\geq 2$, a real number $\alpha_{\xi_0}$, and a complex number $\beta_{\xi_0}$ with $\Re(\beta_{\xi_0})>0$ for which
\begin{equation*}
\Gamma_{\xi_0}(\xi)=i\alpha_{\xi_0}\xi-\beta_{\xi_0}\xi^{m_{\xi_0}}+o\left(\xi^{m_{\xi_0}}\right)
\end{equation*}
as $\xi\to 0$.
\item We say that $\xi_0$ is of Type 2 (or Type $\beta$)  of order $m$ for $\widehat{\phi}$ if there is a natural number $m=m_{\xi_0}\in\{2,3,\dots,\}$, a real number $\alpha_{\xi_0}$, a real-valued polynomial $q_{\xi_0}(\xi)$ with $\beta_{\xi_0}=iq_{\xi_0}(0)\neq 0$, an even number $k_{\xi_0}>m_{\xi_0}$, and a positive number $\gamma_{\xi_0}$ for which
\begin{equation*}
\Gamma_{\xi_0}(\xi)=i\alpha_{\xi_0}\xi-iq_{\xi_0}(\xi)\xi^{m_{\xi_0}}-\gamma_{\xi_{0}}\xi^{k_{\xi_0}}+o\left(\xi^{k_{\xi_0}}\right)
\end{equation*}
as $\xi\to 0$.
\end{enumerate}
\end{definition}
\noindent The distinction made by the definition above essentially concerns the nature of the lowest order (non-linear) monomial appearing in the Taylor expansion for $\Gamma_{\xi_0}$. If the monomial has a coefficient $\beta_{\xi_0}$ with strictly positive real part, $\xi_0$ is of Type 1 for $\widehat{\phi}$ and, if the coefficient $\beta_{\xi_0}=ip_{\xi_0}(0)$ is non-zero and purely imaginary, $\xi_0$ is of Type 2 for $\widehat{\phi}$. As observed by Thom\'{e}e (see Section 3 of \cite{Tho65}) , for a finitely supported $\phi:\mathbb{Z}\to\mathbb{C}$ which is supported on more than one point in $\mathbb{Z}$ and has $\sup_{\xi}|\widehat{\phi}|=1$, $\Omega(\phi)$ is necessarily a finite set and every element $\xi\in \Omega(\phi)$ is a point of Type 1 or Type 2 for $\widehat{\phi}$. As a consequence and in view of \eqref{eq:FTConvolution}, understanding the asymptotic behavior of $\phi^{(n)}(x)$ reduces to the problem of analyzing the nature of the contributions from points of Type 1 and Type 2 to $\widehat{\phi}(\xi)^n$ which we explain as follows.\\

\noindent For illustrative purposes, let's assume that $\Omega(\phi)$ consists only of a single element, i.e., $\Omega(\phi)=\{\xi_0\}$. If $\xi_0$ is a point of Type 1 for $\widehat{\phi}$ with associated even integer $m=m_{\xi_0}$, real number $\alpha=\alpha_{\xi_0}$ and complex number $\beta=\beta_{\xi_0}$ with $\Re(\beta)>0$, Theorem 2.3 of \cite{DSC14} guarantees that
\begin{equation}\label{eq:LLTType1OneAttractor}
\phi^{(n)}(x)=\widehat{\phi}(\xi_0)^n e^{-ix\xi_0}H_{m,\beta}^n(x-n\alpha)+o(n^{-1/m})
\end{equation}
uniformly for $x\in\mathbb{Z}$ where $H_{m,\beta}^{(\cdot)}(\cdot)$ is defined by
\begin{equation}\label{eq:OneDPositiveHomAttractor}
H_{m,\beta}^t(x)=\frac{1}{2\pi}\int_{\mathbb{R}}e^{-t\beta\xi^m}e^{-ix\xi}\,d\xi
\end{equation}
for $t>0$ and $x\in\mathbb{R}$. Because $\Re(\beta)>0$, the integral defining $H_{m,\beta}$ converges absolutely for all $x\in\mathbb{R}$. In fact, $H_{m,\beta}^t(\cdot)$ is a Schwartz function (for each $t>0$) and is a fundamental solution to the heat-type equation
\begin{equation*}
\frac{\partial u}{\partial t}+i^m\beta\frac{\partial^m u}{\partial x^m}=0.
\end{equation*}
The function $H_{m,\beta}$ also enjoys the property
\begin{equation}\label{eq:OneDScale}
H_{m,\beta}^t(x)=t^{-1/m}H_{m,\beta}^1(t^{-1/m}x)
\end{equation} for all $t>0$ and $x\in\mathbb{R}$ and so \eqref{eq:LLTType1OneAttractor} can be equivalently stated as
\begin{equation*}
\phi^{(n)}(x)=n^{-1/m}\widehat{\phi}(\xi_0)^ne^{-ix\xi_0}H_{m,\beta}^1(n^{-1/m}(x-n\alpha))+o(n^{-1/m})
\end{equation*}
uniformly for $x\in\mathbb{Z}$. We remark that the error in \eqref{eq:LLTType1OneAttractor} was recently improved by L. Coeuret (see Theorem 1 and Corollary 1 of \cite{Co22}). Coeuret's results show, in particular, that the uniform error term $o(n^{-1/m})$ can be replaced by $O(n^{-2/m})$; we discuss this further in Section \ref{sec:Discussion}.

\begin{example}\label{ex:OneDEx1}
Let $\phi:\mathbb{Z}\to\mathbb{R}$ be defined by
\begin{equation*}
\phi(x)=\begin{cases}
1/2 & x=0\\
1/3 & x=\pm 1\\
-1/12 & x=\pm 2\\
0 & \mbox{otherwise}
\end{cases}
\end{equation*}
for $x\in\mathbb{Z}$. We observe easily that
\begin{equation*}
\widehat{\phi}(\xi)=\frac{1}{2}+\frac{2}{3}\cos(\xi)-\frac{1}{6}\cos(2\xi)
\end{equation*}
for $\xi\in\mathbb{R}$. With this, it is easy to verify that $\sup_{\xi}|\widehat{\phi}|=\widehat{\phi}(0)=1$, $\Omega(\phi)=\{0\}$, and
\begin{equation*}
\Gamma_0(\xi)=\log\left(\frac{\widehat{\phi}(\xi)}{\widehat{\phi}(0)}\right)=-\frac{1}{12}\xi^4+O(\xi^6)
\end{equation*}
as $\xi\to 0$. Thus, $\xi_0=0$ is a point of Type 1 for $\widehat{\phi}$ of order $m=4$, $\alpha=0$ and $\beta=1/12$. Hence, our local limit theorem says that
\begin{equation*}
\phi^{(n)}(x)=H_{4,1/12}^n(x)+o(n^{-1/4})=n^{-1/4}H_{4,1/12}^1(n^{-1/4}x)+o(n^{-1/4})
\end{equation*}
uniformly for $x\in\mathbb{Z}$; here
\begin{equation*}
H_{4,1/12}^1(x)=\frac{1}{2\pi}\int_{\mathbb{R}}e^{-\xi^4/12}e^{-ix\xi}\,d\xi.
\end{equation*}
Illustrating this local limit theorem, Figure \ref{fig:OneDEx1} depicts $\phi^{(n)}$ alongside $H^n_{4,1/12}$ for $n=300,600$.
\begin{figure}[h!]
\centering\includegraphics[width=5in]{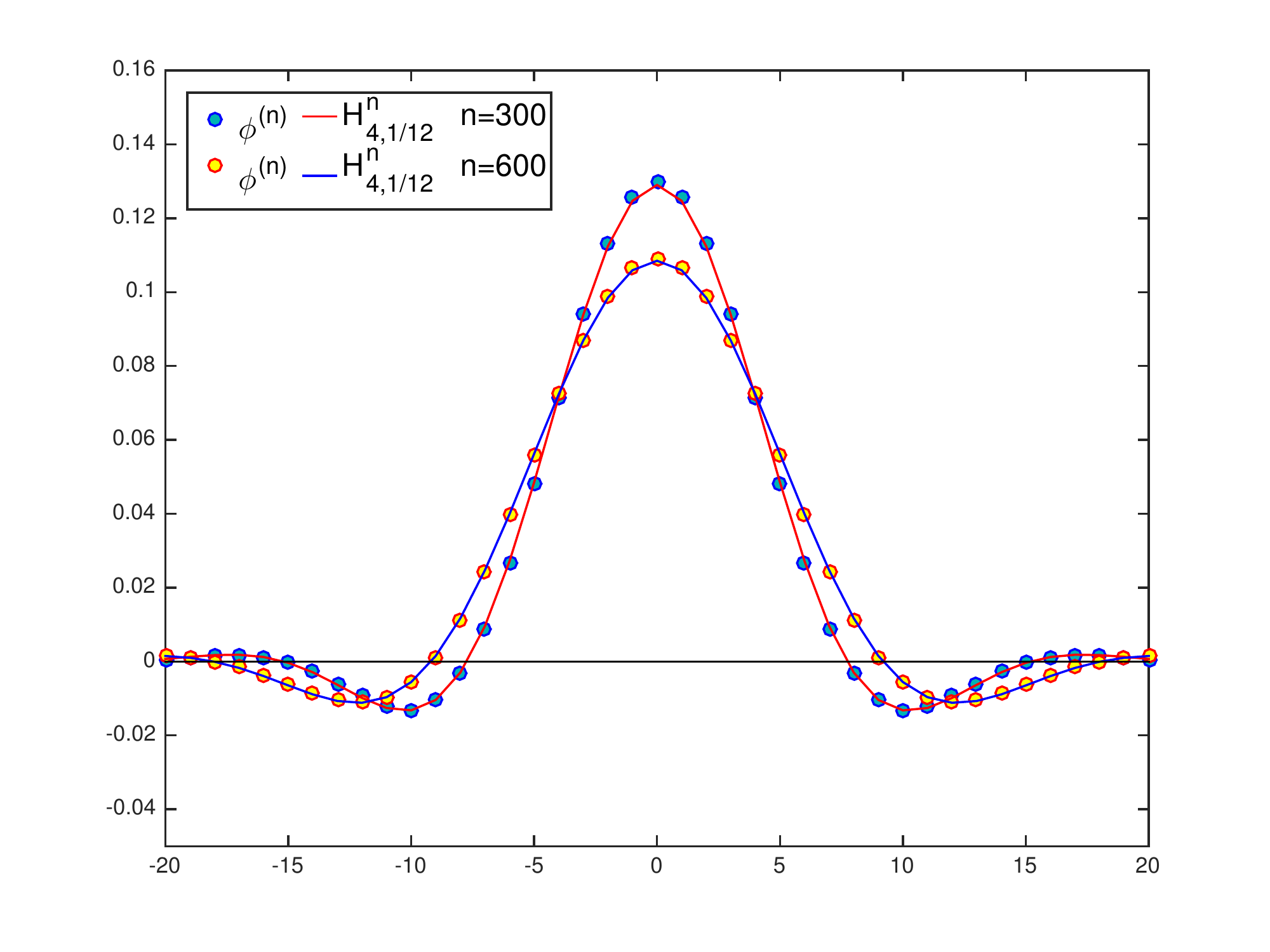}
 \caption{$\phi^{(n)}$ and $H_{4,1/12}^n$ for $n=300,600$}
\label{fig:OneDEx1}
\end{figure}
\end{example}

\noindent In the case that $\Omega(\phi)=\{\xi_0\}$ and $\xi_0$ is a point of Type 2 for $\widehat{\phi}$ with integer $m=m_{\xi_0}$, real number $\alpha=\alpha_{\xi_0}$ and purely imaginary $\beta=\beta_{\xi_0}=iq_{\xi_0}(0)$, Theorem 1.3 of \cite{RSC15} guarantees that, for any compact set $K$,
\begin{equation}\label{eq:LLTType2OneAttractor}
\phi^{(n)}(x)=\widehat{\phi}(\xi_0)^n e^{-ix\xi_0}H_{m,\beta}^n(x-n\alpha)+o(n^{-1/m})
\end{equation}
uniformly for $x\in \left(n\alpha+n^{1/m}K\right)\cap \mathbb{Z}$ where $H_{m,\beta}^{(\cdot)}(\cdot)$ is given by
\begin{equation}\label{eq:OneDImagHomAttractor}
H_{m,\beta}^t(x)=\frac{1}{2\pi}\int_{\mathbb{R}}e^{-t\beta\xi^m}e^{-ix\xi}\,d\xi=\frac{1}{2\pi}\int_{\mathbb{R}}e^{-itq\xi^m}e^{-ix\xi}\,d\xi
\end{equation}
for $t>0$ and $x\in\mathbb{R}$ where $\beta=iq$ for the real non-zero $q=q_{\xi_0}(0)$. In contrast to \eqref{eq:OneDPositiveHomAttractor}, the integrand $e^{-itq\xi^m}e^{-ix\xi}$ is not absolutely integrable and so we interpret the integral in \eqref{eq:OneDImagHomAttractor} as a sum of the improper Riemann integrals
\begin{equation*}
\int_{-\infty}^0 e^{-itq\xi^m}e^{-ix\xi}\,d\xi\hspace{1cm}\mbox{and}\hspace{1cm}\int_0^\infty e^{-itq\xi^m}e^{-ix\xi}\,d\xi
\end{equation*}
which converge on account of the highly oscillatory nature of the integrand as $\xi\to\pm\infty$. Despite the differing interpretations of the integrals in \eqref{eq:OneDPositiveHomAttractor} and \eqref{eq:OneDImagHomAttractor}, the function $H_{m,\beta}=H_{m, iq}$ is smooth and also satisfies the property \eqref{eq:OneDScale}. We remark that, when $m=2$, $H^t_{2,\beta}=H^t_{2,iq}$ is the heat kernel evaluated at imaginary time $\tau=itq$, i.e.,
\begin{equation*}
H^t_{2,iq}(x)=\frac{1}{\sqrt{4\pi i qt}}\exp\left(-\frac{x^2}{4iqt}\right)
\end{equation*}
for $t>0$ and $x\in\mathbb{R}^d$ and, when $m=3$, $H^t_{3,\beta}=H^t_{3,iq}$ is a scaled Airy function. 

\begin{example}\label{ex:OneDEx2}
Let $\phi:\mathbb{Z}\to\mathbb{C}$ be given by
\begin{equation*}
\phi(x)=\begin{cases}
\frac{1}{4}(3-i) & x=0\\
\frac{1}{24}(4+3i) & x=\pm 1\\
\frac{1}{24} & x=\pm 2\\
0 & \mbox{otherwise}
\end{cases}
\end{equation*}
for $x\in\mathbb{Z}$. In this case, it is easily verified that $\sup_\xi|\widehat{\phi}|=1$, $\Omega(\phi)=\{0\}$, and 
\begin{equation*}
\Gamma_0(\xi)=-\frac{i}{8}\xi^2-\left(\frac{13}{384}-\frac{i}{96}\right)\xi^4+O(\xi^5)
\end{equation*}
as $\xi\to 0$. From this, we see easily that $\xi_0=0$ is a point of Type 2 for $\widehat{\phi}$ of order $2$, $\alpha=0$, and $\beta=i/8$. In this case, our local limit theorem says that, for any compact set $K\subseteq\mathbb{R}$,
\begin{eqnarray*}
\phi^{(n)}(x)&=&H_{2,i/8}^n(x)+o(n^{-1/2})\\
&=&n^{-1/2}H_{2,i/8}(n^{-1/2}x)+o(n^{-1/2})\\
&=&n^{-1/2}\frac{1}{\sqrt{4\pi i/8}}\exp(-x^2/(4n i/8))+o(n^{-1/2})\\
\end{eqnarray*}
for $x\in \left(n^{1/2}K\right)\cap\mathbb{Z}$. To illustrate this local limit theorem, Figure \ref{fig:OneDEx2} shows real parts of $\phi^{(n)}$ and the attractor $H_{2,i/8}^n$ for $n=1,000$.

\begin{figure}[h!]
\centering\includegraphics[width=5in]{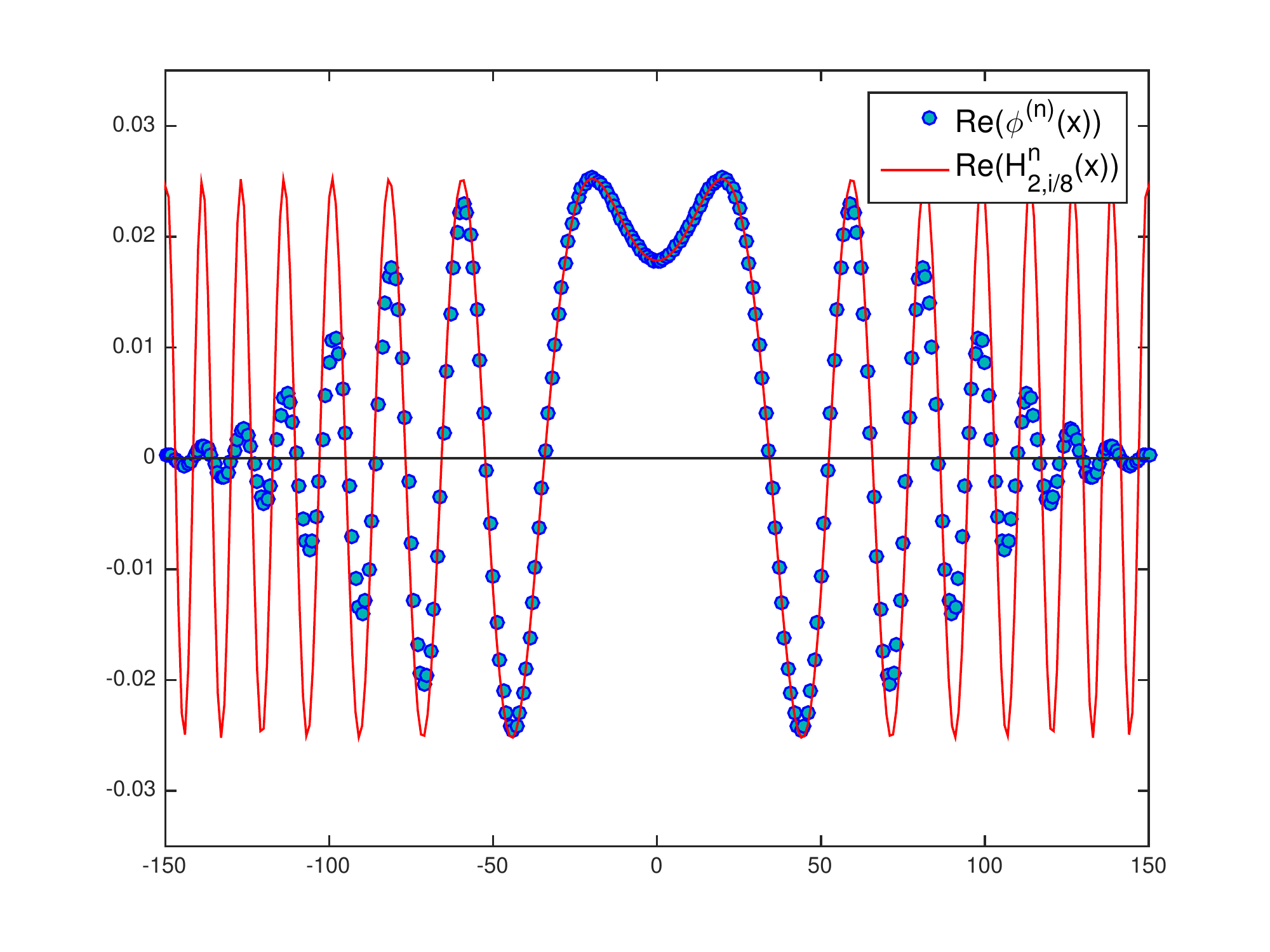}
 \caption{$\Re(\phi^{(n)})$ and $\Re(H_{2,i/8}^n)$ for $n=1,000$}
\label{fig:OneDEx2}
\end{figure}
\end{example}

\begin{remark}\label{rmk:Support} In comparing the local limit theorems \eqref{eq:LLTType1OneAttractor} and \eqref{eq:LLTType2OneAttractor}, observe that \eqref{eq:LLTType2OneAttractor} is guaranteed to hold uniformly for $x\in \left(n\alpha+ n^{1/m}K\right)\cap\mathbb{Z}$ for a compact set $K\subseteq \mathbb{R}$ whereas \eqref{eq:LLTType1OneAttractor} is valid uniformly for $x\in\mathbb{Z}$. As illustrated in Figure \ref{fig:OneDEx2}, this limitation of \eqref{eq:LLTType2OneAttractor} is necessary because $\phi^{(n)}$ is finitely supported whereas the attractor $H_{2,iq}^n(y)$ has constant modulus $1/\sqrt{4 \pi i  q n}$. An analogous limitation will also play a role in our $d$-dimensional theory. As we discuss in Section \ref{sec:Discussion}, there are a number of open questions related to these observations, one of which concerns the behavior of $\phi^{(n)}$ outside $n\alpha+n^{1/m}K$ but within its support. We remark that, when $m>2$, the local limit theorem \eqref{eq:LLTType2OneAttractor} does hold uniformly for $x\in\mathbb{Z}$ (see Theorem 1.5 of \cite{RSC15}).
\end{remark}

\noindent Beyond the case that $\Omega(\phi)=\{\xi_0\}$, i.e., in cases where $|\widehat{\phi}(\xi)|$ is maximized at more than one point, local limit theorems for $\phi^{(n)}$ involve sums of the so-called attractors $H_{m,\beta}$ of the forms \eqref{eq:OneDPositiveHomAttractor} and \eqref{eq:OneDImagHomAttractor} according to whether the points $\xi\in\Omega(\phi)$ are of Type 1 or Type 2 for $\widehat{\phi}$. For example, if $\Omega(\phi)=\{\xi_1,\xi_2,\dots,\xi_l\}$ and, for each $k=1,2,\dots, l$, $\xi_k$ is a point of Type 1 for $\widehat{\phi}$ with $m_k=m_{\xi_k}$, $\alpha_k=\alpha_{\xi_k}$, and $\beta_k=\beta_{\xi_k}$, Theorem 2.3 of \cite{DSC14} (see also Theorem 1.2 of \cite{RSC15}, Theorem 1 of \cite{Co22}, and Theorem 1.5 of \cite{RSC17}) guarantees that
\begin{equation}\label{eq:LLTType1OneAttractorMultiple}
\phi^{(n)}(x)=\sum_{j=1}^A \widehat{\phi}(\xi_{k_j})^ne^{-ix\xi_{k_j}}H_{m,\beta_{k_j}}^n(x-n\alpha_{k_j})+o(n^{-1/m})
\end{equation}
uniformly for $x\in\mathbb{Z}$ where $m=\max_{k=1,2,\dots,l}m_{k}$ and the points $\xi_{k_1},\xi_{k_2},\dots,\xi_{k_A}\in \Omega(\phi)$ are those for which $m_{k_j}=m$ for all $j=1,2,\dots,A$. In fact, according to Theorem 1.2 of \cite{RSC17}, \eqref{eq:LLTType1OneAttractorMultiple} also holds in the case that the points $\xi_1,\xi_2,\dots,\xi_l\in\Omega(\phi)$ are of Type 1 or Type 2 (which is always true when $\phi$ is finitely supported) as long as $m>2$. Thanks to Thom\'{e}e's key observations, proving a local limit theorem robust enough to handle the entire class of finitely supported functions $\phi:\mathbb{Z}\to\mathbb{C}$ required the consideration that $\Omega(\phi)$ can consist of more than one point \emph{and} that its element(s) can be of both Type 1 and Type 2 for $\widehat{\phi}$. The results of I. J. Schoenberg \cite{Sch53}, T. N. E. Greville \cite{Gre66}, and P. Diaconis and L. Saloff-Coste \cite{DSC14} treated this theory for (proper) subsets of these possibilities (see also the work of K. Hochberg \cite{Hoch80} who proved a generalized central limit theorem for the convolution powers of a signed Borel measure satisfying analogous hypotheses). Though we will not state it here due to its complexity, Theorem 1.3 of \cite{RSC15} (a local limit theorem) provides a complete description of the asymptotic behaviors of $\phi^{(n)}$ when $\phi$ is any (normalized) finitely supported complex-valued function on $\mathbb{Z}$. When applied to real-valued and finitely supported functions on $\mathbb{Z}$, it represents a full solution to de Forest's problem.\\

\noindent We now move beyond one dimension and consider a general $\phi\in S_d\subseteq\ell^1(\mathbb{Z}^d)$ for which $\sup_\xi|\widehat{\phi}(\xi)|=1$. In light of the natural complexity of $\mathbb{R}^d$, a categorization of the possible behaviors of $\Gamma_{\xi_0}$ along the lines of Definition \ref{def:OneDTypes}, even for finitely supported functions, appears to be a very difficult task (if it is possible at all). Despite this drawback, we still seek a reasonable generalization of Definition \ref{def:OneDTypes} which will capture much of the behaviors commonly seen for $\phi\in\mathcal{S}_d$. To produce such a generalization, we must first consider what will replace the monomials $\xi^m$ appearing as the lowest order (non-linear) terms in the expansion for $\Gamma_{\xi_0}$ in Definition \ref{def:OneDTypes}.\\

\noindent First, let $E$ be a linear endomorphism of $\mathbb{R}^d$ (written $E\in\End(\mathbb{R}^d)$) and define
\begin{equation*}
T_t=t^E=\exp(\log(t)E)=\sum_{k=0}^\infty \frac{(\log(t))^k}{k!}E^k
\end{equation*}
for each $t>0$. The map $t\mapsto T_t=t^E$ is a Lie group homomorphism from the positive real numbers (under multiplication) to the general linear group, $\Gl(\mathbb{R}^d)$, and its image is a one-parameter subgroup of $\Gl(\mathbb{R}^d)$ which we denote by $\{T_t\}$ or $\{t^E\}$. This one parameter group $\{t^E\}$ is said to be \textbf{generated by $E$} and $E$ is said to be \textbf{the generator of $\{t^E\}$}. In fact, all such one-parameter subgroups $\{T_t\}$ of $\Gl(\mathbb{R}^d)$ are of this form. In Subsection \ref{subsec:OneParameterGroups} of the Appendix, we have collected some useful facts about these one-parameter groups. For a function $P:\mathbb{R}^d\to\mathbb{C}$, we say that \textbf{$P$ is homogeneous with respect to $\{t^E\}$} if
\begin{equation}\label{eq:Homogeneous}
P(t^E\xi)=tP(\xi)
\end{equation}
for all $t>0$ and $\xi\in\mathbb{R}^d$. By an abuse of language, we also say that \textbf{$P$ is homogeneous with respect to $E$}. The set of all $E\in\End(\mathbb{R}^d)$ for which \eqref{eq:Homogeneous} is satisfied is said to be the \textbf{exponent set of $P$} and will be denoted by $\Exp(P)$. When $P$ is real valued, we say that $P$ is \textbf{positive definite} provided that $P\geq 0$ and $P(\xi)=0$ only when $\xi=0$. Also, the set $S_P=\{\eta\in\mathbb{R}^d:P(\eta)=1\}$ is called the \textbf{unital level set of $P$}. 
\begin{definition}
Let $P:\mathbb{R}^d\to\mathbb{R}$ be continuous and positive-definite. If $\Exp(P)$ is non-empty and $S_P$ is compact, we say that $P$ is a positive homogeneous function.
\end{definition}

\begin{example}\label{ex:PowersofNorm}
Given any $\nu>0$, consider the function $\mathbb{R}^d\ni\xi\mapsto |\xi|^\nu$ where $|\cdot|$ is the Euclidean norm on $\mathbb{R}^d$. It is not hard to see that $|\xi|^\nu$ is continuous, positive definite and $S_{|\cdot|^\nu}$ is precisely the unit sphere $\mathbb{S}\subseteq\mathbb{R}^d$, which is compact. Consider $E_{1/\nu}:=(1/\nu)I$ where $I\in \Gl(\mathbb{R}^d)$ is the identity transformation and observe that
\begin{equation*}
|t^{E_{1/\nu}}\xi|^\nu=|t^{1/\nu}\xi|^\nu=t|\xi|^\nu
\end{equation*}
for all $t>0$ and $\xi\in\mathbb{R}^d$. Thus $E_{1/\nu}\in\Exp(|\cdot|^\nu)$ and so $\xi\mapsto |\xi|^\nu$ is a positive homogeneous function. We remark that
\begin{equation*}
\Exp(|\cdot|^\nu)=E_{1/\nu}+\mathfrak{o}(d)
\end{equation*}
where $\mathfrak{o}(d)$ is the Lie algebra of the orthogonal group $\Ort(\mathbb{R}^d)$ and is characterized by the set of skew symmetric matrices. Correspondingly, $\End(|\cdot|^\nu)$ is not a singleton when $d>1$. In the context of one dimension, our argument above ensures that, for each even natural number $m\in\mathbb{N}_+$, the monomial $\xi^{m}$ is positive homogeneous. Also, for each $n\in\mathbb{N}_+$, $\mathbb{R}\ni \xi\mapsto |\xi^n|$ is positive homogeneous. 
\end{example}

\noindent We refer the reader to the recent article \cite{BR21} which develops a theory of positive homogeneous functions and presents many examples and pictures. Before we introduce a central class of positive homogeneous functions that will be of interest for us, it is helpful to state the following characterization of positive homogeneous functions which appears as Proposition 1.1. of \cite{BR21} and is proven in Section 2 therein.

\begin{proposition}\label{prop:CharofPosHom}
Let $P:\mathbb{R}^d\to\mathbb{R}$ be continuous, positive definite, and have non-empty exponent set $\Exp(P)$. Then the following are equivalent.
\begin{enumerate}
\item $P$ is positive homogeneous.
\item There exists $M>0$ for which $P(\xi)>1$ for all $|\xi|\geq M$.
\item For each $E\in\Exp(P)$, $\{t^E\}$ is \textbf{contracting} in the sense that
\begin{equation*}
\lim_{t\to 0}\|t^E\|=0
\end{equation*}
where $\|\cdot\|$ is the operator norm on $\End(\mathbb{R}^d)$.
\item There exists $E\in\Exp(P)$ for which $\{t^E\}$ is contracting.
\item\label{item:CharofPosHomLimToInf} We have $\lim_{\xi\to\infty}P(\xi)=\infty$.
\end{enumerate}
\end{proposition}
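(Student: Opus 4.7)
The central tool is the \emph{polar-coordinate decomposition} induced by homogeneity: for any $E\in\Exp(P)$, positive-definiteness of $P$ lets us write each $\xi\in\mathbb{R}^d\setminus\{0\}$ uniquely as $\xi=P(\xi)^E\eta$ with $\eta=P(\xi)^{-E}\xi\in S_P$, giving a continuous bijection $\Phi:(0,\infty)\times S_P\to\mathbb{R}^d\setminus\{0\}$, $(s,\eta)\mapsto s^E\eta$, whose inverse $\xi\mapsto(P(\xi),P(\xi)^{-E}\xi)$ is continuous as well. The scaling identity $\{P\leq c\}=c^E\{P\leq 1\}$ then yields $(2)\Leftrightarrow(5)$ immediately, and $(3)\Rightarrow(4)$ is trivial since $\Exp(P)\neq\emptyset$; I would prove the remaining equivalences through the cycle $(1)\Rightarrow(5)\Rightarrow(3)\Rightarrow(4)\Rightarrow(1)$.

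For $(5)\Rightarrow(3)$, fix $E\in\Exp(P)$. The sublevel sets $\{P\leq\epsilon\}$ are closed by continuity, bounded by $(5)$, and have diameter tending to $0$ as $\epsilon\to 0$: otherwise an accumulation point inside the compact set $\{P\leq 1\}$ would furnish a nonzero zero of $P$, contradicting positive-definiteness. Since $P(t^Ev)=tP(v)\leq tM_1$ for $|v|\leq 1$ with $M_1:=\sup_{|v|\leq 1}P(v)$, we obtain $\|t^E\|\leq\mathrm{diam}\{P\leq tM_1\}\to 0$, which is contractivity.

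For $(4)\Rightarrow(1)$, suppose $\{t^E\}$ is contracting yet $S_P$ is unbounded, and pick $\eta_n\in S_P$ with $|\eta_n|\to\infty$. The continuous map $t\mapsto|t^E\eta_n|$ equals $|\eta_n|>1$ at $t=1$ and tends to $0$ as $t\to 0^+$ by contractivity, so the intermediate value theorem produces $t_n\in(0,1)$ with $v_n:=t_n^E\eta_n$ satisfying $|v_n|=1$ and $P(v_n)=t_n$. Passing to a subsequence with $v_n\to v_*$, $|v_*|=1$, and $t_n\to t_*\in[0,1]$: if $t_*>0$ then $\eta_n=t_n^{-E}v_n$ converges, contradicting $|\eta_n|\to\infty$; if $t_*=0$ then $P(v_*)=0$ with $v_*\neq 0$, contradicting positive-definiteness.

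The subtlest step is $(1)\Rightarrow(5)$; for this I would first show that any $E\in\Exp(P)$ (under the proposition's standing hypotheses, independent of $(1)$) has spectrum in the open right half-plane. Suppose $\lambda$ is an eigenvalue of $E$ with $\Re\lambda\leq 0$. If $\lambda=0$, a real eigenvector $v\neq 0$ gives $P(v)=P(t^Ev)=tP(v)$ for all $t>0$, forcing $P(v)=0$, against positive-definiteness. If $\Re\lambda<0$, the orbit $\{t^Ew\}$ on the corresponding real invariant subspace satisfies $|t^Ew|\to 0$ as $t\to\infty$, so $P$ is bounded on this orbit by continuity at $0$, contradicting $P(t^Ew)=tP(w)\to\infty$. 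If $\Re\lambda=0$ with $\lambda=ib\neq 0$, then $t^E$ acts on the associated real $2$-dimensional invariant subspace as a one-parameter group with bounded orbits, so $P$ is again bounded on the closure of $\{t^Ew\}$ while $P(t^Ew)\to\infty$. Thus every $\{t^E\}$ is contracting, and with $S_P$ compact and $\xi_n=a_n^E\eta_n$ in polar coordinates, $|\xi_n|\to\infty$ with $a_n=P(\xi_n)$ bounded would yield (after passing to a subsequence) $a_n\to a_*\in[0,\infty)$ and $\eta_n\to\eta_*\in S_P$, giving either $\xi_n\to a_*^E\eta_*$ (if $a_*>0$) or $\xi_n\to 0$ (if $a_*=0$, by contractivity) --- both contradicting $|\xi_n|\to\infty$, so $P(\xi_n)\to\infty$. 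The main obstacle is this spectral step, particularly the complex-eigenvalue case, which requires identifying the real $2$-dimensional invariant subspace of the conjugate pair $\{\lambda,\bar\lambda\}$ and exploiting the bounded-orbit structure to extract the contradiction with continuity of $P$.
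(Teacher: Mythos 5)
Your proof is correct, but there is no in-paper argument to measure it against: the paper imports Proposition \ref{prop:CharofPosHom} from Proposition 1.1 of \cite{BR21} and explicitly defers the proof to Section 2 of that reference. Taken on its own terms, your route is sound and self-contained: the polar decomposition $\xi=P(\xi)^{E}\bigl(P(\xi)^{-E}\xi\bigr)$ gives $(2)\Leftrightarrow(5)$ via $\{P\leq c\}=c^{E}\{P\leq 1\}$; the shrinking-diameter argument for $(5)\Rightarrow(3)$ works because the sublevel sets are nested compacta intersecting in $\{0\}$, and $\|t^{E}\|\leq\operatorname{diam}\{P\leq tM_1\}$ is the right comparison; the intermediate-value construction of unit vectors $v_n=t_n^{E}\eta_n$ with $P(v_n)=t_n$ cleanly handles $(4)\Rightarrow(1)$; and the spectral trichotomy for $(1)\Rightarrow(5)$ is correct in all three cases, including the purely imaginary one, where the real span of the real and imaginary parts of an eigenvector is indeed two-dimensional, $E$-invariant, and carries bounded orbits on which the unbounded values $tP(w)$ cannot live. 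The only ingredient you invoke without proof is the classical fact that spectrum in the open right half-plane forces $\{t^{E}\}$ to be contracting; this is the converse of the Lyapunov statement the paper itself quotes from \cite{engel_one-parameter_2000}, and it also follows from the Jordan-form bound used in the proof of Lemma \ref{lem:LargeTimeContractingGroup}, so citing it is acceptable.

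One observation worth making explicit if you write this up: your spectral lemma uses only the standing hypotheses (continuity, positive definiteness, $\Exp(P)\neq\emptyset$), never statement $(1)$, so combined with your own implications $(3)\Rightarrow(4)\Rightarrow(1)\Rightarrow(5)\Rightarrow(2)$ it shows that all five conditions hold automatically for every $P$ satisfying the hypotheses of the proposition. That is a strengthening of the stated equivalence rather than an error, but it changes the character of the result, so you should either flag it as such or double-check that this is the intended reading before presenting the argument as a proof of a genuine dichotomy.
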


\noindent Armed with this proposition, we now introduce the class of semi-elliptic polynomials.

\begin{example}\label{ex:SemiElliptic}
Let $\mathbf{m}=(m_1,m_2,\dots,m_d)\in\mathbb{N}_+^d$ be a $d$-tuple of positive integers and, for a multi-index $\alpha=(\alpha_1,\alpha_2,\dots,\alpha_d)\in\mathbb{N}^d$, define
\begin{equation*}
|\alpha:\mathbf{m}|=\sum_{k=1}^d\frac{\alpha_k}{m_k}.
\end{equation*}
In this notation, we consider a multivariate polynomial $P:\mathbb{R}^d\to\mathbb{C}$ of the form
\begin{equation}\label{eq:SemiEllipticPolynomial}
P(\xi)=\sum_{|\alpha:\mathbf{m}|=1}a_\alpha \xi^\alpha
\end{equation}
where $\{a_\alpha\}\subseteq\mathbb{C}$ and $\xi^\alpha=\xi_1^{\alpha_1}\xi_2^{\alpha_2}\cdots\xi_d^{\alpha_d}$ for each multi-index $\alpha=(\alpha_1,\alpha_2,\dots,\alpha_d)\in\mathbb{N}^d$ and $\xi=(\xi_1,\xi_2,\dots,\xi_d)\in\mathbb{R}^d$. In the language of L. H\"{o}mander \cite{hormander_analysis_1983}, the polynomial \eqref{eq:SemiEllipticPolynomial} is said to be semi-elliptic if it vanishes only at the origin. Observe that, for $E=E_{1/\mathbf{m}}\in\End(\mathbb{R}^d)$ with standard matrix representation $\diag(m_1^{-1},m_2^{-1},\dots,m_d^{-1})$,
\begin{equation*}
P(t^{E}\xi)=\sum_{|\alpha:\mathbf{m}|=1}a_\alpha (t^{1/m_1}\xi_1)^{\alpha_1}(t^{1/m_2}\xi_2)^{\alpha_2}\cdots(t^{1/m_d}\xi_d)^{\alpha_d}=\sum_{|\alpha:\mathbf{m}|=1}a_{\alpha}t^{|\alpha:\mathbf{m}|}\xi^\alpha=tP(\xi)
\end{equation*}
for all $t>0$ and $\xi=(\xi_1,\xi_2,\dots,\xi_d)\in\mathbb{R}^d$. Thus, $E\in \Exp(P)$. It is clear that $\{t^{E}\}$ is contracting and so, by virtue of Proposition \ref{prop:CharofPosHom}, we obtain the following result:
\begin{quote}
If a semi-elliptic polynomial of the form \eqref{eq:SemiEllipticPolynomial} is positive definite, it is positive homogeneous.
\end{quote}
For two concrete examples, consider the polynomials $P_1$ and $P_2$ on $\mathbb{R}^2$ given by
\begin{equation*}
P_1(\xi_1,\xi_2)=\xi_1^2+\xi_1\xi_2^2+\xi_2^4\hspace{1cm}\mbox{and}\hspace{1cm}P_2(\xi_1,\xi_2)=\xi_1^6+\xi_2^4.
\end{equation*}
It is easy to see that both polynomials are of the form \eqref{eq:SemiEllipticPolynomial} with $\mathbf{m}=(2,4)$ for $P_1$ and $\mathbf{m}=(6,4)$ for $P_2$. A routine verification shows that both are, in fact, positive definite and therefore positive homogeneous. As illustrated in these examples, a real-valued polynomial of the form \eqref{eq:SemiEllipticPolynomial} can only be positive definite (and hence positive homogeneous) provided that $\mathbf{m}$ is a $d$-tuple of positive even integers, i.e., $\mathbf{m}=2\mathbf{n}$, for $\mathbf{n}\in\mathbb{N}_+^d$.
\end{example}

\begin{remark}\label{rmk:ZdPosHomComp}
In \cite{RSC17}, a positive homogeneous polynomial is a complex-valued polynomial $P$ for which $R(\xi)=\Re(P(\xi))$ is positive definite and $\Exp(P)$ contains an endomorphism whose spectrum is real. By virtue of Proposition 2.2 of \cite{RSC17}, every such positive homogeneous polynomial $P$ is semi-elliptic in some coordinate system and from this it follows that $R$ is a positive homogeneous function in the sense of the present article. 
\end{remark}

\noindent Let $P$ be a positive homogeneous function and consider the set $\Sym(P)$ consisting of those $O\in\End(\mathbb{R}^d)$ for which
\begin{equation*}
P(O\xi)=P(\xi)
\end{equation*}
for all $\xi\in\mathbb{R}^d$. In the case that $P$ is the Euclidean norm $\mathbb{R}^d\ni \xi\mapsto |\xi|$, it is easy to see that $\Sym(|\cdot|)$ coincides with the orthogonal group, $\OdR$. As the following proposition shows, this is archetypal of the situation in general. We note that this proposition appears in Section 2.1 of \cite{BR21}; we give a proof here for completeness.

\begin{proposition}\label{prop:SymPCompact}
For a positive homogeneous function $P:\mathbb{R}^d\to\mathbb{R}$, $\Sym(P)$ is a compact subgroup of the general linear group, $\Gl(\mathbb{R}^d)$.
\end{proposition}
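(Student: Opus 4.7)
The plan is to verify three things in turn: that each $O\in\Sym(P)$ is in fact invertible, that $\Sym(P)$ is closed under composition and inversion, and finally that $\Sym(P)$ is closed and bounded as a subset of the finite-dimensional space $\End(\mathbb{R}^d)$.

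First I would handle invertibility. If $O\in\Sym(P)$ and $O\xi=0$, then $P(\xi)=P(O\xi)=P(0)=0$, which forces $\xi=0$ by the positive definiteness of $P$. Hence $\Ker(O)=\{0\}$ and $O\in\Gl(\mathbb{R}^d)$. The group axioms are then immediate: $I\in\Sym(P)$ trivially, $P(O_1O_2\xi)=P(O_2\xi)=P(\xi)$ shows closure under composition, and applying $P(O\,\cdot\,)=P(\cdot)$ to the vector $O^{-1}\xi$ shows $O^{-1}\in\Sym(P)$.

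For closedness, if $\{O_n\}\subseteq\Sym(P)$ converges to some $O\in\End(\mathbb{R}^d)$, then for every fixed $\xi\in\mathbb{R}^d$ the continuity of $P$ gives
\begin{equation*}
P(O\xi)=\lim_{n\to\infty}P(O_n\xi)=P(\xi),
\end{equation*}
so $O\in\Sym(P)$ (invertibility of $O$ is automatic by the first paragraph, without needing to take limits of $O_n^{-1}$).

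The main step is boundedness, and this is where I would use Proposition \ref{prop:CharofPosHom}. By item (5) of that proposition, $P(\xi)\to\infty$ as $\xi\to\infty$, so every sublevel set of $P$ is bounded. Set $C:=\sup_{|\xi|\leq 1}P(\xi)$, which is finite by continuity of $P$, and let $K=\{\eta\in\mathbb{R}^d:P(\eta)\leq C\}$, which is bounded by the preceding observation. For any $O\in\Sym(P)$ and any $\xi$ with $|\xi|\leq 1$, we have $P(O\xi)=P(\xi)\leq C$, hence $O\xi\in K$. Consequently
\begin{equation*}
\|O\|=\sup_{|\xi|\leq 1}|O\xi|\leq \sup_{\eta\in K}|\eta|<\infty,
\end{equation*}
and this bound is uniform in $O$. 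Combined with closedness, this yields compactness of $\Sym(P)$ in $\End(\mathbb{R}^d)$. The only nontrivial input is the characterization of positive homogeneity as coercivity, which is exactly what Proposition \ref{prop:CharofPosHom} provides; once that is in hand, the argument is essentially formal.
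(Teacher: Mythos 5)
Your proof is correct and takes essentially the same route as the paper: verify that $\Sym(P)$ is a subgroup of $\Gl(\mathbb{R}^d)$ using positive definiteness, that it is closed using continuity of $P$, and that it is bounded using the coercivity of $P$ (item (5) of Proposition \ref{prop:CharofPosHom}). The only difference is cosmetic — the paper argues boundedness by contradiction with sequences on the unit sphere, while you give a direct uniform bound on $\|O\|$ via a sublevel set of $P$ — but both rest on exactly the same input.
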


\begin{proof}
In view of the fact that $P$ is positive definite, it is straightforward to see that $\Sym(P)$ is a subgroup of $\Gl(\mathbb{R}^d)$. We shall prove that $\Sym(P)$ is compact by showing it is both closed and bounded. To see that $\Sym(P)$ is closed, let $\{O_n\}\subseteq\Sym(P)$ be a sequence which converges to $O\in\Gl(\mathbb{R}^d)$. By virtue of the continuity of $P$, for each $\xi\in\mathbb{R}^d$, we have
\begin{equation*}
P(O\xi)=\lim_{n\to\infty}P(O_n\xi)=\lim_{n\to\infty}P(\xi)=P(\xi)
\end{equation*}
which proves that $O\in\Sym(P)$ and therefore $\Sym(P)$ is closed. To see that $\Sym(P)$ is bounded, we assume, to reach a contradiction, that there exists a sequence $\{O_n\}\subseteq \Sym(P)$ and a sequence $\{\xi_n\}$ of elements on the unit sphere $\mathbb{S}$ of $\mathbb{R}^d$ for which $\lim_{n\to\infty}|O_n\xi_n|=\infty$. By virtue of Item \ref{item:CharofPosHomLimToInf} of Proposition \ref{prop:CharofPosHom}, we have
\begin{equation*}
\lim_{n\to\infty}P(\xi_n)=\lim_{n\to\infty}P(O_n\xi_n)=\infty
\end{equation*}
but this is impossible for we know that $P$ is continuous on $\mathbb{R}^d$ and therefore bounded on $\mathbb{S}$.
\end{proof}

\noindent In view of the preceding proposition, we shall refer to $\Sym(P)$ as the \textbf{symmetric group of $P$.} The fact that $\Sym(P)$ is compact allows us to establish the following important invariant of $\Exp(P)$.
\begin{proposition}
Let $P$ be a positive homogeneous function and let $E_1,E_2\in\Exp(P)$. Then
\begin{equation*}
\tr E_1=\tr E_2>0.
\end{equation*}
\end{proposition}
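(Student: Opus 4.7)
The plan is to exploit the symmetry group $\Sym(P)$ (just shown to be compact) to extract an invariant from the one-parameter groups $\{t^E\}$ with $E \in \Exp(P)$. The key observation is that if $E_1, E_2 \in \Exp(P)$, then for every $\xi \in \mathbb{R}^d$ and $t > 0$ we have $P(t^{E_1}\xi) = tP(\xi) = P(t^{E_2}\xi)$. Substituting $\xi = t^{-E_2}\eta$ and using that $P(t^{-E_2}\eta) = t^{-1}P(\eta)$ (which follows from the homogeneity relation applied at $s = t^{-1}$), a short computation gives
\[
P(t^{E_1}t^{-E_2}\eta) = P(\eta)
\]
for all $\eta \in \mathbb{R}^d$. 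Thus $t^{E_1}t^{-E_2} \in \Sym(P)$ for every $t > 0$.

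Next, I would take determinants. Since $\det(t^E) = \exp(\log(t)\tr E) = t^{\tr E}$ for any $E \in \End(\mathbb{R}^d)$, we have
\[
\det(t^{E_1}t^{-E_2}) = t^{\tr E_1 - \tr E_2}.
\]
By Proposition \ref{prop:SymPCompact}, $\Sym(P)$ is compact in $\Gl(\mathbb{R}^d)$, so $\det$ restricted to $\Sym(P)$ is bounded (and bounded away from zero). Consequently the one-parameter family $t \mapsto t^{\tr E_1 - \tr E_2}$ is bounded on $(0,\infty)$, forcing $\tr E_1 = \tr E_2$.

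Finally, to obtain strict positivity of this common trace, I would invoke Item 3 of Proposition \ref{prop:CharofPosHom}: since $P$ is positive homogeneous, $\{t^E\}$ is contracting, meaning $\|t^E\| \to 0$ as $t \to 0^+$. Combining with the elementary bound $|\det(t^E)| \le \|t^E\|^d$, we conclude $t^{\tr E} = \det(t^E) \to 0$ as $t \to 0^+$, which is possible only when $\tr E > 0$. Applied to $E_1$ (or $E_2$), this yields $\tr E_1 = \tr E_2 > 0$.

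I do not anticipate any serious obstacle here: the noncommutativity of $E_1$ and $E_2$ is sidestepped by passing to determinants, and the rest is a direct appeal to the compactness of $\Sym(P)$ and the contracting property already packaged in Proposition \ref{prop:CharofPosHom}. The only point requiring a moment of care is verifying the identity $P(t^{-E}\eta) = t^{-1}P(\eta)$, which follows immediately from the definition of homogeneity with respect to $\{t^E\}$ once one reads it at the parameter value $t^{-1} > 0$.
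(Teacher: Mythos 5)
Your proposal is correct and follows essentially the same route as the paper: show $t^{E_1}t^{-E_2}\in\Sym(P)$, use compactness of $\Sym(P)$ to control $\det(t^{E_1}t^{-E_2})=t^{\tr E_1-\tr E_2}$ (the paper notes $|\det|=1$ on the compact group, you use boundedness of $\det$, which suffices), and deduce $\tr E>0$ from the contracting property via $\det(t^E)=t^{\tr E}\to 0$ as $t\to 0^+$. No gaps.
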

\begin{proof}
Suppose that, for $E\in\End(\mathbb{R}^d)$, $\{t^E\}$ is contracting. Then, by virtue of the continuity of the determinant map and the fact that $\det(t^E)=t^{\tr E}$, we find that
\begin{equation*}
\lim_{t\to 0}t^{\tr E}=\lim_{t\to 0}\det(t^E)=0
\end{equation*}
and so $\tr E>0$. Thus, in view of Proposition \ref{prop:CharofPosHom}, every element of $\Exp(P)$ has positive trace. It remains to show that $\tr E_1=\tr E_2$ for all $E_1,E_2\in\Exp(P)$. To this end, let $E_1$ and $E_2$ be two such elements and, for $t>0$, set $O_t=t^{E_1}t^{-E_2}=t^{E_1}(1/t)^{E_2}$. We observe that
\begin{equation*}
P(O_t \xi)=P(t^{E_1}t^{-E_2}\xi)=tP((1/t)^{E_2}\xi)=t(1/t)P(\xi)=P(\xi)
\end{equation*}
for all $\xi\in\mathbb{R}^d$. Consequently, $O_t\in\Sym(P)$ and, because $\Sym(P)$ is a compact group by virtue of Proposition \ref{prop:SymPCompact}, $\det(O_t)=\pm 1$ for each $t>0$.   Consequently,
\begin{equation*}
1=\abs{\det(O_t)}=\abs{\det(t^{E_1})\det(t^{-E_2})}=\abs{t^{\tr E_1}t^{-\tr E_2}}=t^{\tr E_1-\tr E_2}
\end{equation*}
for all $t>0$ and therefore $\tr E_1=\tr E_2$.
\end{proof}
\noindent By virtue of the above proposition, given a positive homogeneous function $P$, we define \textbf{the homogeneous order of $P$} to be the unique positive number $\mu_P$ for which
\begin{equation*}
\mu_P=\tr E
\end{equation*}
for all $E\in\Exp(P)$. Henceforth, when we speak of a positive homogeneous function, we take along with it its exponent set $\Exp(P)$, its symmetric group $\Sym(P)$, its unital level set $S_P$, and its homogeneous order $\mu_P$. \\

\begin{example}
In Example \ref{ex:PowersofNorm}, we considered the positive homogeneous function $\xi\mapsto |\xi|^\nu$ where $\nu>0$. In this case, $S_{|\cdot|^\nu}$ coincides with the unit sphere $\mathbb{S}\subseteq\mathbb{R}^d$, as we mentioned previously, $\Sym(|\cdot|^\nu)$ is precisely the orthogonal group $\OdR$ and, because $E=I/\nu\in \Exp(|\cdot|^\nu)$, we have $\mu_{|\cdot|^\nu}=d/\nu$. In particular, when $d=1$ and $\nu=m$ for some even natural number $m$, we find that $\xi\mapsto \xi^m$ is positive homogeneous with $\mu_{\xi^m}=1/m$. This one-dimensional case is noteworthy because, in addition to the fact that $\xi^m$ appears as the dominant term in the Type 1 in Definition \ref{def:OneDTypes}, the error $o(n^{-1/m})$ in \eqref{eq:LLTType1OneAttractor} (and the so-called on-diagonal scaling $t^{-1/m}$ of $H_{m,\beta}^t$ in \eqref{eq:OneDScale}) can be expressed in terms of the homogeneous order $\mu_{\xi^m}=1/m$. A similar observation can be made for points of Type 2 where $m$ is not necessarily even. As we will see, these observations are emblematic of the situation in $\mathbb{Z}^d$ for $d>1$ (see also Theorem 1.4. of \cite{RSC17}). 

In reference to Example \ref{ex:SemiElliptic}, for the positive homogeneous semi-elliptic polynomials $P$ of the form \eqref{eq:SemiEllipticPolynomial} (with $\mathbf{m}=(m_1,m_2,\dots,m_d)\in\mathbb{N}_+^d$), the unital level set $S_P$ and symmetric group $\Sym(P)$ will vary from example to example. It is straightforward to see that the homogeneous order of such a semi-elliptic polynomial $P$ is given by
\begin{equation*}
\mu_P=|\mathbf{1}:\mathbf{m}|=\frac{1}{m_1}+\frac{1}{m_2}+\cdots+\frac{1}{m_d}.
\end{equation*}
\end{example}

\noindent We are almost ready to present our generalization of Definition \ref{def:OneDTypes} in the context of $\mathbb{Z}^d$. It remains to develop an appropriate notion of $o(\xi^m)$ where $\xi^m$ is replaced by a positive homogeneous function. To this end, we present the following definition taken from \cite{BR21}.

\begin{definition}
Let $\widetilde{P}$ be a complex-valued function which is defined and continuous on an open neighborhood $\mathcal{U}$ of $0$ in $\mathbb{R}^d$.  Also, let $E\in\End(\mathbb{R}^d)$ be such that $\{t^E\}$ is contracting. 
\begin{enumerate}
\item We say that $\widetilde{P}$ is \textbf{subhomogeneous with respect to $E$} if, for each $\epsilon>0$ and compact set $K\subseteq\mathbb{R}^d$, there exists $\tau>0$ for which 
\begin{equation*}
\abs{\widetilde{P}(t^E\xi)}\leq \epsilon t
\end{equation*}
for all $0<t<\tau$ and $\xi\in K$.
\item Given $l\in\mathbb{N}_+$, we say that $\widetilde{P}$ is \textbf{strongly subhomogeneous with respect to $E$ of order $l$} if,  $\widetilde{P}\in C^l(\mathcal{U})$ and, for each $\epsilon>0$ and compact set $K\subseteq\mathbb{R}^d$, there exists $\tau>0$ such that
\begin{equation*}
\abs{t^k\partial_t^k\widetilde{P}(t^E\xi)}\leq \epsilon t
\end{equation*}
for all $k\in\{0,1,2,\dots,l\}$, $0<t<\tau$, and $\xi\in K$.
\end{enumerate}
When $E$ is understood (and fixed), we will say that $\widetilde{P}$ is subhomogeneous if it is subhomogeneous with respect to $E$. Also, we will say that $\widetilde{P}$ is $l$-strongly subhomogeneous if it is strongly subhomogeneous with respect to $E$ of order $l$.
\end{definition}

\noindent From the definition, it is apparent that all strongly subhomogenous functions are subhomogenous and it is reasonable to interpret subhomogeneous as strongly subhomogeneous of order $0$. So that the inequalities in the definition make sense, we remark that the supposition that $\{t^E\}$ is contracting guarantees that, for each compact set $K$, there is $\tau_0>0$ for which $t^E\xi\in\mathcal{U}$ for all $0<t<\tau_0$ and $\xi\in K$ (see Proposition A.6 of \cite{BR21}). In Section \ref{sec:Examples}, we introduce a large class smooth functions which, for a given $E$, are strongly subhomogeneous of all orders. The following proposition connects the concept of subhomogeneity with that of a function being $o(P(\xi))$ as $\xi\to 0$ for some positive homogeneous function $P$. Its proof appears in Subsection \ref{subsec:HomAndSubHom} of the Appendix.

\begin{proposition}\label{prop:Subhomequivtolittleoh}
Let $P$ be a positive homogeneous function and $\widetilde{P}$ be a complex-valued function which is continuous on a neighborhood $0$ of $\mathbb{R}^d$. The following are equivalent:
\begin{enumerate}
\item\label{item:Subhomequivtolittleoh1} $\widetilde{P}(\xi)=o(P(\xi))$ as $\xi\to 0$.
\item\label{item:Subhomequivtolittleoh2} For every $E\in \Exp(P)$, $\widetilde{P}$ is subhomogeneous with respect to $E$.
\item\label{item:Subhomequivtolittleoh3} There exists $E\in \Exp(P)$ for which $\widetilde{P}$ is subhomogeneous with respect to $E$. 
\end{enumerate}
\end{proposition}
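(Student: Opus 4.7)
The plan is to prove the equivalences by the cycle \eqref{item:Subhomequivtolittleoh1}$\Rightarrow$\eqref{item:Subhomequivtolittleoh2}$\Rightarrow$\eqref{item:Subhomequivtolittleoh3}$\Rightarrow$\eqref{item:Subhomequivtolittleoh1}. The implication \eqref{item:Subhomequivtolittleoh2}$\Rightarrow$\eqref{item:Subhomequivtolittleoh3} is immediate once one recalls that $\Exp(P)$ is non-empty because $P$ is positive homogeneous. The two substantive implications rest on two features of $P$: the intertwining identity $P(t^E\xi)=tP(\xi)$ enjoyed by any $E\in\Exp(P)$, and the compactness of the unital level set $S_P$ (plus, for \eqref{item:Subhomequivtolittleoh1}$\Rightarrow$\eqref{item:Subhomequivtolittleoh2}, the contraction of $\{t^E\}$ supplied by Proposition \ref{prop:CharofPosHom}).

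For \eqref{item:Subhomequivtolittleoh1}$\Rightarrow$\eqref{item:Subhomequivtolittleoh2}, fix $E\in\Exp(P)$, a compact $K\subseteq\mathbb{R}^d$ and $\epsilon>0$. Continuity of $P$ on $K$ provides $M:=\sup_{\xi\in K}P(\xi)<\infty$ (if $M=0$ the claim is trivial), and the hypothesis furnishes $\delta>0$ such that $|\widetilde{P}(\eta)|\leq (\epsilon/M)\,P(\eta)$ whenever $0<|\eta|<\delta$. Because $\{t^E\}$ is contracting and $K$ is bounded, one can choose $\tau>0$ small enough that $|t^E\xi|<\delta$ for every $0<t<\tau$ and $\xi\in K$. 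For $\xi\in K\setminus\{0\}$ one then estimates
\begin{equation*}
|\widetilde{P}(t^E\xi)|\leq \frac{\epsilon}{M}\,P(t^E\xi)=\frac{\epsilon}{M}\,tP(\xi)\leq \epsilon t,
\end{equation*}
while $\xi=0$ is handled by noting that continuity of $\widetilde{P}$ together with the little-oh condition forces $\widetilde{P}(0)=0$.

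For \eqref{item:Subhomequivtolittleoh3}$\Rightarrow$\eqref{item:Subhomequivtolittleoh1}, the key device is the polar-type factorization that $P$ induces through $E$: for any $\eta\neq 0$ set $t=P(\eta)>0$ and $\xi=t^{-E}\eta$; since $P(\xi)=t^{-1}P(\eta)=1$, we have $\xi\in S_P$ and $\eta=t^E\xi$. Given $\epsilon>0$, apply the subhomogeneity hypothesis with $K=S_P$, which is compact, to obtain $\tau>0$ with $|\widetilde{P}(t^E\xi)|\leq \epsilon t$ for all $0<t<\tau$ and $\xi\in S_P$. Translating back, $|\widetilde{P}(\eta)|\leq \epsilon\,P(\eta)$ whenever $0<P(\eta)<\tau$. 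Finally, continuity of $P$ and $P(0)=0$ produce $\delta>0$ such that $P(\eta)<\tau$ for $|\eta|<\delta$, and positive definiteness of $P$ ensures $P(\eta)>0$ for these $\eta\neq 0$; this gives the desired little-oh statement.

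The only genuinely delicate step is \eqref{item:Subhomequivtolittleoh3}$\Rightarrow$\eqref{item:Subhomequivtolittleoh1}, where one must reduce an arbitrary neighborhood of $0$ to the compact set $S_P$ via the $\{t^E\}$-action; this is where the compactness of $S_P$, guaranteed by the definition of positive homogeneous function, is indispensable, and where the identity $\eta=P(\eta)^E\cdot (P(\eta)^{-E}\eta)$ does the essential work.
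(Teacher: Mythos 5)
Your proof is correct and follows essentially the same route as the paper: the same cycle of implications, the same use of $P(t^E\xi)=tP(\xi)$ together with the contracting property for \eqref{item:Subhomequivtolittleoh1}$\Rightarrow$\eqref{item:Subhomequivtolittleoh2}, and the same polar-type factorization $\eta=t^E\xi$ with $t=P(\eta)$, $\xi\in S_P$ for \eqref{item:Subhomequivtolittleoh3}$\Rightarrow$\eqref{item:Subhomequivtolittleoh1}. The only differences are cosmetic (e.g., $\epsilon/M$ with the $M=0$ caveat versus the paper's $\epsilon/(M+1)$).
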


\noindent We are now ready to introduce the $d$-dimensional generalization of Definition \ref{def:OneDTypes} considered in this article. We recall that, because $\widehat{\phi}$ is smooth for $\phi\in S_d$, $\Gamma_{\xi_0}\in C^{\infty}(\mathcal{U})$ and so we can use Taylor's theorem to approximate $\Gamma_{\xi_0}$ near $0$. Presicsely, we write
\begin{equation}\label{eq:GammaExpansion}
    \Gamma_{\xi_0}(\xi)=i\alpha_{\xi_0}\cdot\xi -i\left(Q_{\xi_0}(\xi)+\widetilde{Q}_{\xi_0}(\xi)\right)-\left(R_{\xi_0}(\xi)+\widetilde{R}_{\xi_0}(\xi)\right)
\end{equation}
where $\alpha_{\xi_0}\in\mathbb{R}^d$, $Q_{\xi_0}$ and $R_{\xi_0}$ are real-valued polynomials which vanish at $0$ and contain no linear terms, and $\widetilde{Q}_{\xi_0}$ and $\widetilde{R}_{\xi_0}$ are real-valued smooth functions on $\mathcal{U}$ which vanish at $0$. The fact that this expansion contains no real linear part is seen necessary because $|\widehat{\phi}|$ has a local maximum at $\xi_0$.

\begin{definition}\label{def:Types}
Let $\phi\in\mathcal{S}_d$ with $\sup_{\xi}|\widehat{\phi}(\xi)|=1$ and, given $\xi_0\in\Omega(\phi)$, consider the expansion \eqref{eq:GammaExpansion} above.
\begin{enumerate}
    \item We say that $\xi_0$ is of \textbf{positive homogeneous type} for $\widehat{\phi}$ if $R_{\xi_0}$ is positive homogeneous and, there exists $E\in \Exp(R_{\xi_0})$ for which $Q_{\xi_0}$ is homogeneous with respect to $E$ and both $\widetilde{R}_{\xi_0}$ and $\widetilde{Q}_{\xi_0}$ are subhomogeneous with respect to $E$. In this case, we will write  $\mu_{\xi_0}=\mu_{R_{\xi_0}}$.
\item We say that $\xi_0$ is of \textbf{imaginary homogeneous type} for $\widehat{\phi}$ if $|Q_{\xi_0}|$ and $R_{\xi_0}$ are both positive homogeneous and, there exists $E\in\Exp(|Q_{\xi_0}|)$ and $k>1$ for which $R_{\xi_0}$ is homogeneous with respect to $E/k$, $\widetilde{Q}_{\xi_0}$ is strongly subhomogeneous with respect to $E$ of order $2$, and $\widetilde{R}_{\xi_0}$ is strongly subhomogeneous with respect to $E/k$ of order $1$. In this case, we write $\mu_{\xi_0}=\mu_{|Q_{\xi_0}|}$.
\end{enumerate}
In either case, $\mu_{\xi_0}$ is said to be the homogeneous order associated to $\xi_0$ and $\alpha_{\xi_0}\in\mathbb{R}^d$ is said to be the drift associated to $\xi_0$.
\end{definition}

\noindent On account of the simplicity of positive homogeneous functions in one dimension, it is straightforward to verify that the notions of positive homogeneous type and imaginary homogeneous type coincide with  Thom\'{e}e's notions of Type 1 and Type 2 presented in Definition \ref{def:OneDTypes}, respectively, when $d=1$. In Definition 1.3 of \cite{RSC17}, a point $\xi_0\in\Omega(\phi)$ is said to be of positive homogeneous type for $\widehat{\phi}$ provided that
\begin{equation}\label{eq:GammaInTermsofP}
    \Gamma_{\xi_0}(\xi)=i\alpha_{\xi_0}\cdot\xi-P_{\xi_0}(\xi)-\widetilde{P}_{\xi_0}(\xi)
\end{equation}
for $\xi\in\mathcal{U}$ where $P_{\xi_0}$ is a positive homogeneous polynomial (in the sense of Remark \ref{rmk:ZdPosHomComp}) and $\widetilde{P}_{\xi_0}(\xi)=o(R_{\xi_0}(\xi))$ as $\xi\to 0$ where $R_{\xi_0}=\Re P_{\xi_0}$. To see this in the context of Definition \ref{def:Types}, let us write $P_{\xi_0}(\xi)=R_{\xi_0}(\xi)+iQ_{\xi_0}(\xi)$ and $\widetilde{P}_{\xi_0}(\xi)=\widetilde{R}_{\xi_0}(\xi)+i\widetilde{Q}_{\xi_0}(\xi)$ and, in this case, \eqref{eq:GammaExpansion} coincides with \eqref{eq:GammaInTermsofP}. If $\xi_0$ is of positive homogeneous type for $\widehat{\phi}$ in the sense of the definition above, it follows that $P_{\xi_0}$ is a complex-valued polynomial which is homogeneous with respect to $E$ (and so $\Exp(P_{\xi_0})$ contains $E\in\End(\mathbb{R}^d)$ for which $\{r^E\}$ is contracting) and $R_{\xi_0}=\Re P_{\xi_0}$ is positive definite. In view of Remark \ref{rmk:ZdPosHomComp}, this is consistent with (and perhaps generalizes) the assumption that $P_{\xi_0}$ is a positive homogeneous polynomial in \cite{RSC17}. Further, the assumption that $\widetilde{Q}_{\xi_0}$ and $\widetilde{R}_{\xi_0}$ are subhomogeneous with respect to $E$ guarantees that $\widetilde{P}_{\xi_0}(\xi)=o(R_{\xi_0}(\xi))$ as $\xi\to 0$ by virtue of Proposition \ref{prop:Subhomequivtolittleoh}. With these two observations, we see that our definition of a point $\xi_0$ being of positive homogeneous type, which is stated in terms of subhomogeneity, is consistent with that of \cite{RSC17}.\\

\begin{remark}\label{rem:AbsPosHomExponentSets}
In the case that $\xi_0$ is of imaginary homogeneous type, the assumption that $\abs{Q_{\xi_0}}$ is positive homogeneous guarantees that, for each $E\in\Exp(\abs{Q_{\xi_0}})$, $Q_{\xi_0}$ is homogeneous with respect to $E$. In fact, by virtue of Lemma \ref{lem:AbsPosHomExponentSets}, $\Exp(\abs{Q_{\xi_0}})=\Exp(Q_{\xi_0})$. We shall use this fact many times throughout this article. In particular, when $\xi_0\in\Omega(\phi)$ is of imaginary homogeneous type for $\widehat{\phi}$, we may always choose $E\in \Exp(Q_{\xi_0})$ which has $\tr E=\mu_{\abs{Q_{\xi_0}}}=\mu_{\xi_0}$.
\end{remark}

\noindent Our hypotheses for local limit theorems in this article will be stated under the assumption that, given $\phi\in\mathcal{S}_d$ with $\sup_{\xi}|\widehat{\phi}|=1$, every point $\xi_0\in\Omega(\phi)$ is of positive homogeneous or imaginary homogeneous type for $\widehat{\phi}$. In either case, because $R_{\xi_0}$ is positive definite and $\widetilde{R}_{\xi_0}(\xi)=o(R_{\xi_0}(\xi))$ as $\xi\to 0$ (by virtue of Proposition \ref{prop:Subhomequivtolittleoh}), $\xi_0$ is necessarily an isolated point of $\mathbb{T}^d$ and so it follows that $\Omega(\phi)$ is a finite set. Under these hypotheses, we define \textbf{the homogeneous order of $\phi$} to be the positive number
\begin{equation}\label{eq:HomOrderPhi}
\mu_{\phi}=\min_{\xi\in \Omega(\phi)}\mu_{\xi}>0.
\end{equation}
Of course, in the case that $\Omega(\phi)=\{\xi_0\}$, $\mu_{\phi}=\mu_{\xi_0}$.\\

\noindent As we did in one dimension, to state our local limit theorems, it is necessary to introduce the attractors appearing therein. In the case that $\xi_0\in\Omega(\phi)$ is of positive homogeneous type for $\widehat{\phi}$ with associated $R_{\xi_0}$ and $Q_{\xi_0}$, we consider the polynomial
\begin{equation*}
P_{\xi_0}(\xi):=R_{\xi_0}(\xi)+iQ_{\xi_0}(\xi)
\end{equation*}
and define $H_{P_{\xi_0}}^{(\cdot)}(\cdot):(0,\infty)\times\mathbb{R}^d\to\mathbb{C}$ by
\begin{equation}\label{eq:PosHomAttractor}
H_{P_{\xi_0}}^t(x)=\frac{1}{(2\pi)^d}\int_{\mathbb{R}^d}e^{-tP_{\xi_0}(\xi)}e^{-ix\cdot\xi}\,d\xi
\end{equation}
for $t>0$ and $x\in\mathbb{R}^d$. Like its one-dimensional analogue \eqref{eq:OneDPositiveHomAttractor}, the integral above converges absolutely for all $x\in\mathbb{R}^d$. In fact, as we will show in Section \ref{sec:Attractor}, $x\mapsto H_{P_{\xi_0}}^t(x)$ is a Schwartz function for each $t>0$ and, for each $E\in\Exp(P_{\xi_0})=\Exp(R_{\xi_0})\cap\Exp(Q_{\xi_0})$,
\begin{equation}\label{eq:PosHomAttractorScale}
H_{P_{\xi_0}}^t(x)=t^{-\mu_{\xi_0}}H_{P_{\xi_0}}^1(t^{-E^*}x)
\end{equation}
for $t>0$ and $x\in\mathbb{R}^d$ where $E^*$ is the adjoint of $E$; this scaling property generalizes \eqref{eq:OneDScale}. The article \cite{RSC17} studies extensively the properties of the attractors $H_{P_{\xi_0}}$ and their role as fundamental solutions to higher-order heat-type equations, i.e., generalized heat kernels. In particular, \cite{RSC17} establishes Gaussian-type estimates for $H_{P_{\xi_0}}$ and its derivatives; these estimates are given in terms of the Legendre-Fenchel transform of $R_{\xi_0}$. For more on this perspective, we encourage the reader to see the articles \cite{RSC17a} and \cite{RSC20}, both of which focus on related variable-coefficient heat-type equations and their associated heat kernel estimates. Let us assume now that $\xi_0\in\Omega(\phi)$ is of imaginary homogeneous type for $\widehat{\phi}$ with associated $Q_{\xi_0}$. In this case, taking our cues from \eqref{eq:OneDImagHomAttractor} and \eqref{eq:PosHomAttractor}, we expect that, at least formally, the associated attractor will be given by
\begin{equation}\label{eq:ImagHomAttractorFormal}
H_{iQ_{\xi_0}}^t(x)=\frac{1}{(2\pi)^d}\int_{\mathbb{R}^d}e^{-tiQ_{\xi_0}(\xi)}e^{-ix\cdot\xi}\,d\xi
\end{equation}
for $t>0$ and $x\in\mathbb{R}^d$. Unlike its positive homogeneous counterpart, the convergence of the above integral is a delicate matter. Since $iQ_{\xi_0}(\xi)$ is purely imaginary, this integral is oscillatory in nature and does not converge in the sense of Lebesgue (for any values of $t>0$ and $x\in\mathbb{R}^d$). Also, because there is no canonical notion of improper Riemann integral in $\mathbb{R}^d$ for $d>1$, extending \eqref{eq:OneDImagHomAttractor} and proving an associated generalization of \eqref{eq:LLTType2OneAttractor} are not straightforward tasks. In some sense, the major hurdle faced in this article is establishing the convergence of oscillatory integrals of the form \eqref{eq:ImagHomAttractorFormal} in an appropriate sense. We will interpret the integral in \eqref{eq:ImagHomAttractorFormal} as a \textit{renormalized integral} in the sense of C. B\"{a}r \cite{Baer12} (see Section \ref{sec:Attractor}) and, using the generalized polar-coordinate integration formula of \cite{BR21}, we prove that the integral converges for all $t>0$ and $x\in\mathbb{R}^d$ provided that $\mu_{\xi_0}<1$; this is Theorem \ref{thm:Attractor}. The theorem also shows that $(t,x)\mapsto H_{iQ_{\xi_0}}^t(x)$ is continuous and, for each $E\in \Exp(Q_{\xi_0})=\Exp(\abs{Q}_{\xi_0})$, 
\begin{equation*}
H^t_{iQ_{\xi_0}}(x)=t^{-\mu_{\xi_0}}H_{iQ_{\xi_0}}^1(t^{-E^*}x)
\end{equation*}
for $t>0$ and $x\in\mathbb{R}^d$ where $E^*$ is the adjoint of $E$. Though we suspect the attractors $H_{iQ_{\xi_0}}^t(x)$ are smooth, this remains an open question. With the attractors $H_{P_{\xi_0}}$ and $H_{iQ_{\xi_0}}$ in hand, we are ready to state our first local limit theorem; this result is new and extends the local limit theorems \eqref{eq:LLTType1OneAttractor} and \eqref{eq:LLTType2OneAttractor}.

\begin{theorem}\label{thm:LLTIntro}
Let $\phi\in\mathcal{S}_d$ be such that $\sup|\widehat{\phi}|=1$. Suppose that $\Omega(\phi)=\{\xi_0\}$ and that $\xi_0$ is of positive homogeneous type or imaginary homogeneous type for $\widehat{\phi}$ with drift $\alpha_{\xi_0}$ and homogeneous order $\mu_{\xi_0}$. In either case, note that $\mu_{\phi}=\mu_{\xi_0}$. When $\xi_0$ is of imaginary homogeneous type for $\widehat{\phi}$, we assume additionally that $\mu_{\phi}=\mu_{\xi_0}<1$.
\begin{enumerate}
\item In the case that $\xi_0$ is of positive homogeneous type for $\widehat{\phi}$, let $H_{P_{\xi_0}}$ be as given in the previous paragraph. Then,
\begin{equation*}
\phi^{(n)}(x)=\widehat{\phi}(\xi_0)^n e^{-ix\cdot\xi_0}H_{P_{\xi_0}}^n(x-n\alpha_{\xi_0})+o(n^{-\mu_\phi})
\end{equation*}
uniformly for $x\in\mathbb{Z}^d$.
\item In the case that $\xi_0$ is of imaginary homogeneous type for $\widehat{\phi}$, let $H_{iQ_{\xi_0}}$ be as discussed above (and defined precisely in Section \ref{sec:Attractor}). Then, for each compact set $K\subseteq\mathbb{R}^d$ and $E\in\Exp(Q_{\xi_0})=\Exp(\abs{Q_{\xi_0}})$, 
\begin{equation*}
\phi^{(n)}(x)=\widehat{\phi}(\xi_0)^ne^{-ix\cdot\xi_0}H_{iQ_{\xi_0}}^n(x-n\alpha_{\xi_0})+o(n^{-\mu_\phi})
\end{equation*}
uniformly for $x\in \left(n\alpha_{\xi_0}+n^{E^*}(K)\right)\bigcap\mathbb{Z}^d$ where $E^*$ is the adjoint of $E$.
\end{enumerate}
\end{theorem}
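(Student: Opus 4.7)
The plan is to start from the Fourier inversion identity \eqref{eq:FTConvolution} and localize near $\xi_0$. Since $\Omega(\phi) = \{\xi_0\}$ and $|\widehat{\phi}|$ is continuous on the compact torus $\mathbb{T}^d$, we can pick the neighborhood $\mathcal{U}$ from \eqref{eq:GammeDef} together with a $\delta > 0$ so that $|\widehat{\phi}(\xi)| \leq 1 - \delta$ on $\mathbb{T}^d \setminus (\xi_0 + \mathcal{U})$. The contribution of that outer region to $\phi^{(n)}(x)$ is therefore bounded by $(1-\delta)^n \Vol(\mathbb{T}^d)/(2\pi)^d$, which is exponentially small and absorbed into the $o(n^{-\mu_\phi})$ error. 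On the local piece, shifting by $\xi_0$, using $\widehat{\phi}(\xi + \xi_0) = \widehat{\phi}(\xi_0) e^{\Gamma_{\xi_0}(\xi)}$, and absorbing the drift via \eqref{eq:GammaExpansion} gives
\[
\widehat{\phi}(\xi_0)^n e^{-ix\cdot\xi_0} \cdot \frac{1}{(2\pi)^d} \int_{\mathcal{U}} e^{-n[i(Q_{\xi_0}(\xi) + \widetilde{Q}_{\xi_0}(\xi)) + R_{\xi_0}(\xi) + \widetilde{R}_{\xi_0}(\xi)]}\, e^{-i(x - n\alpha_{\xi_0})\cdot\xi}\, d\xi.
\]

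Next, fix $E \in \Exp(R_{\xi_0})$ in the positive homogeneous case, or $E \in \Exp(Q_{\xi_0}) = \Exp(|Q_{\xi_0}|)$ in the imaginary homogeneous case, with $\operatorname{tr} E = \mu_{\xi_0}$ (by Remark \ref{rem:AbsPosHomExponentSets} in the latter case). Substituting $\xi = n^{-E}\eta$ introduces a Jacobian $n^{-\mu_{\xi_0}}$, and homogeneity converts $nR_{\xi_0}(n^{-E}\eta)=R_{\xi_0}(\eta)$ and $nQ_{\xi_0}(n^{-E}\eta)=Q_{\xi_0}(\eta)$ in the positive case (in the imaginary case the same identity for $Q_{\xi_0}$ holds, while $R_{\xi_0}$ is only $E/k$-homogeneous with $k>1$, so $nR_{\xi_0}(n^{-E}\eta) \to 0$ uniformly on compact sets). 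Writing $y_n := n^{-E^*}(x - n\alpha_{\xi_0})$ and using the duality $(x - n\alpha_{\xi_0})\cdot n^{-E}\eta = y_n\cdot\eta$, the local integral takes the form $\widehat{\phi}(\xi_0)^n e^{-ix\cdot\xi_0}\cdot n^{-\mu_{\xi_0}}\mathcal{I}_n(y_n)$, while the scaling relation \eqref{eq:PosHomAttractorScale} and its imaginary analogue give $H^n(x - n\alpha_{\xi_0}) = n^{-\mu_{\xi_0}} H^1(y_n)$. The proof thus reduces to establishing the uniform convergence $\mathcal{I}_n(y_n) \to H^1(y_n)$ in the appropriate range of $x$.

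In the positive homogeneous case, $|e^{-P_{\xi_0}(\eta)}| = e^{-R_{\xi_0}(\eta)}$ is a globally integrable majorant (Proposition \ref{prop:CharofPosHom} ensures $R_{\xi_0}(\eta) \to \infty$ as $|\eta|\to\infty$). The subhomogeneity of the remainders yields $n\widetilde{R}_{\xi_0}(n^{-E}\eta) \to 0$ and $n\widetilde{Q}_{\xi_0}(n^{-E}\eta) \to 0$ uniformly on compact $\eta$-sets, and an absorption argument gives $\Re\bigl(n\widetilde{R}_{\xi_0}(n^{-E}\eta)\bigr) \geq -\tfrac{1}{2}R_{\xi_0}(\eta)$ throughout $n^E\mathcal{U}$ for $n$ large. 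Dominated convergence, combined with the trivial bound $|e^{-iy_n\cdot\eta}| = 1$, then gives the uniform limit $\mathcal{I}_n(y_n) \to H^1_{P_{\xi_0}}(y_n)$ for all $y_n \in \mathbb{R}^d$, hence for all $x \in \mathbb{Z}^d$.

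The imaginary homogeneous case is the main obstacle. Here the leading exponent $-iQ_{\xi_0}(\eta)$ is purely imaginary and the real damping disappears in the limit, so dominated convergence is unavailable, and the attractor $H^1_{iQ_{\xi_0}}$ is only defined as the renormalized oscillatory integral of Theorem \ref{thm:Attractor} (which forces $\mu_{\xi_0} < 1$). The plan is to apply the generalized polar-coordinate integration formula of Bui and the author to both $\mathcal{I}_n(y_n)$ and $H^1_{iQ_{\xi_0}}(y_n)$, rewriting each as an iterated integral over $(0,\infty) \times S_{|Q_{\xi_0}|}$. Along each angular fiber, $Q_{\xi_0}$ becomes linear in the radial variable $r$ (indeed $\pm r$ by definition of $S_{|Q_{\xi_0}|}$), turning the inner integral into a one-dimensional oscillatory integral whose phase and amplitude have controlled derivatives: the strong subhomogeneity of order $2$ for $\widetilde{Q}_{\xi_0}$ and order $1$ for $\widetilde{R}_{\xi_0}$ is precisely what supplies uniform bounds on the first and second $r$-derivatives. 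The Van der Corput lemma then produces the needed decay of the radial integrals, uniformly in the angular parameter and in $y_n \in K$. The restriction to compact $K$ is essential because the oscillation frequency scales with $|y_n|$, and the comparison of the perturbed phase with $Q_{\xi_0}$ cannot be made uniform otherwise (compare Remark \ref{rmk:Support}). Integrating over the compact angular factor $S_{|Q_{\xi_0}|}$ and combining with a separate Van der Corput tail estimate in $r$ (where $\mu_{\xi_0} < 1$ makes the large-radius contribution negligible) yields $\mathcal{I}_n(y_n) \to H^1_{iQ_{\xi_0}}(y_n)$ uniformly for $y_n \in K$, completing the proof.
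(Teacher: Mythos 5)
Your plan is correct and follows essentially the same route as the paper: localize at $\xi_0$ with an exponentially small exterior term, rescale by $n^{-E}$ and use the attractor's scaling law, then finish with dominated convergence against the majorant $e^{-R_{\xi_0}/2}$ in the positive homogeneous case and, in the imaginary homogeneous case, with the generalized polar-coordinate formula plus Van der Corput, the phase and amplitude derivatives being controlled exactly by the strong subhomogeneity hypotheses and the tail by $\mu_{\xi_0}<1$ (the paper packages these steps as Lemmas \ref{lem:LocalLimitPosHom}, \ref{lem:PhaseEstimates}--\ref{lem:LocalLimitImagHom} and Theorem \ref{thm:Attractor}, then deduces the singleton case from Theorem \ref{thm:LLTMain}). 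The only detail you gloss over is that the strong subhomogeneity in Definition \ref{def:Types} is tied to one particular $E$, so to get the stated uniformity region for an arbitrary $E\in\Exp(Q_{\xi_0})$ you need the short enlargement argument using compactness of $\Sym(\abs{Q_{\xi_0}})$ (replacing $K$ by $K'$), as in the paper's proofs of Lemma \ref{lem:LocalLimitImagHom} and Theorem \ref{thm:LLTIntro}.
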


\begin{example}\label{ex:TwoDEx1}
Consider the function $\phi : \mathbb{Z}^2 \to \mathbb{C}$ defined by 
\begin{equation*}
    \phi(x,y) = 
    \frac{1}{768}\times
    \begin{cases}
    602 - 112i &(x,y) = (0,0)\\
    56 + 32i   &(x,y) = (0,\pm 1)\mbox{ or }(-1,0)\\
    72 + 32i   &(x,y) = (1,0)\\
    -28 - 8i   &(x,y) = (0,\pm 2)\\
    -16        &(x,y) = (\pm 2,0)\\
    56         &(x,y) = (0,\pm 3)\\
    -1         &(x,y) = (0,\pm 4)\\
    4          &(x,y) = (-1,\pm 1)\\
    -4         &(x,y) = (1,\pm 1)\\
    0          &\text{otherwise}.
    \end{cases}
\end{equation*}
This example was considered in \cite{BR21} (specifically, Example 6 of Subsection 3.1) wherein it was shown that $\sup_{\xi}|\widehat{\phi}(\xi)|=1$, $\Omega(\phi)=\{\xi_0\}$, $\widehat{\phi}(\xi_0)=1$, and $\xi_0=(0,0)$ is of imaginary homogeneous type of $\widehat{\phi}$ with drift $\alpha=(0,0)$, homogeneous order $\mu_\phi=\mu_{\xi_0}=3/4<1$, and associated polynomial
\begin{equation*}
    Q_{\xi_0}(\xi)=Q(\eta,\gamma)=\frac{1}{96}\left(4\eta^2-\eta\gamma^2+\gamma^4\right)
\end{equation*}
defined for $\xi=(\eta,\gamma)\in\mathbb{R}^2$. For completeness, these details are verified in Section \ref{sec:Examples}. Thus, Theorem \ref{thm:LLTIntro} guarantees that, for any compact set $K\subseteq\mathbb{R}^d$, 
\begin{equation}\label{eq:TwoDEx1_1}
    \phi^{(n)}(x)=H_{iQ}^n(x)+o(n^{-3/4})
\end{equation}
uniformly for $x\in n^{E}(K)=\{(n^{1/2}y_1,n^{1/4}y_2)\in\mathbb{R}^d:(y_1,y_2)\in K\}$ where $E=E^*\in \Exp(Q)$ has the standard matrix representation $\diag(1/2,1/4)$. This local limit theorem is illustrated in Figure \ref{fig:TwoDEx1} which depicts $\Re(\phi^{(n)}(x))$ and approximations of $\Re(H_{iQ}^n(x))$ for $n=2000$ and $n=4000$. We remark that the small irregularities present in Figures \ref{fig:TwoDEx1_H_2000} and \ref{fig:TwoDEx1_H_4000} are due to error in the numerical integration of the oscillatory integrals defining the attractor.

\begin{figure}[!htb]
\begin{center}
\resizebox{\textwidth}{!}{	
	    \begin{subfigure}{0.5\textwidth}
		\includegraphics[width=\textwidth]{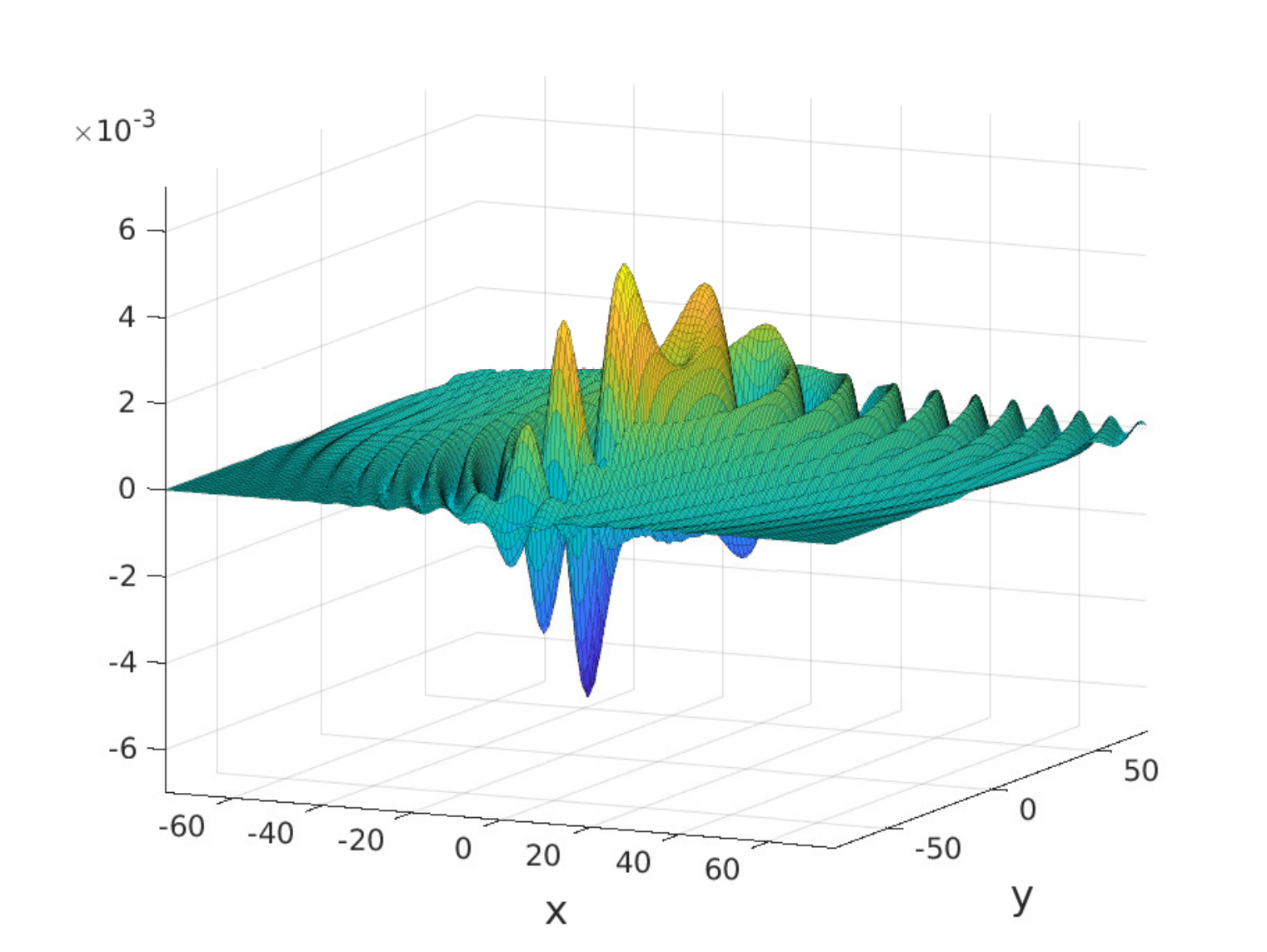}
		\caption{$\Re(\phi^{(n)})$ for $n=2000$}
		\label{fig:TwoDEx1_Conv_2000}
	    \end{subfigure}
	    \begin{subfigure}{0.5\textwidth}
		\includegraphics[width=\textwidth]{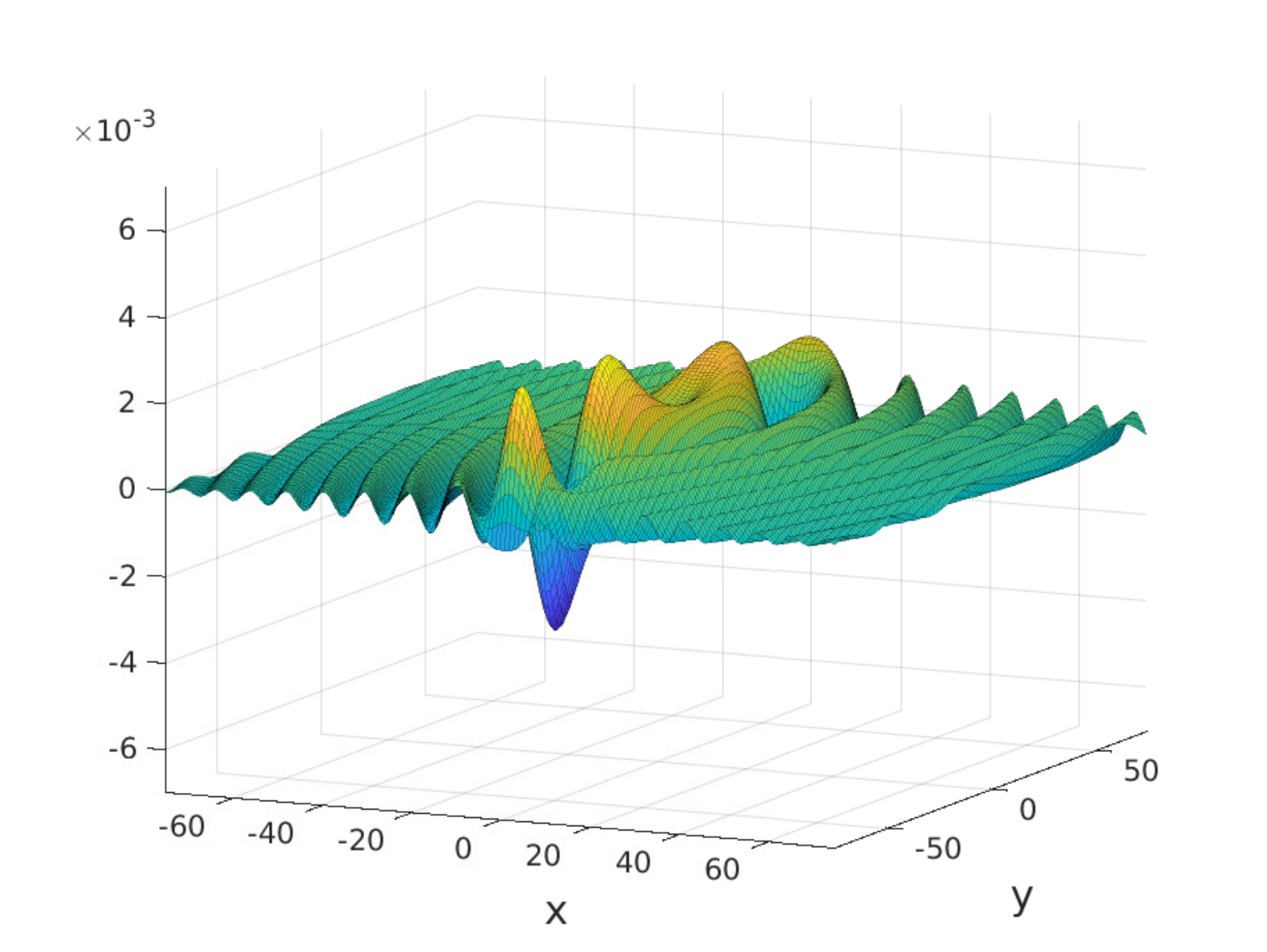}
		\caption{$\Re(\phi^{(n)})$ for $n=4000$}
		\label{fig:TwoDEx1_Conv_4000}
	    \end{subfigure}}
\resizebox{\textwidth}{!}{	
	    \begin{subfigure}{0.5\textwidth}
		\includegraphics[width=\textwidth]{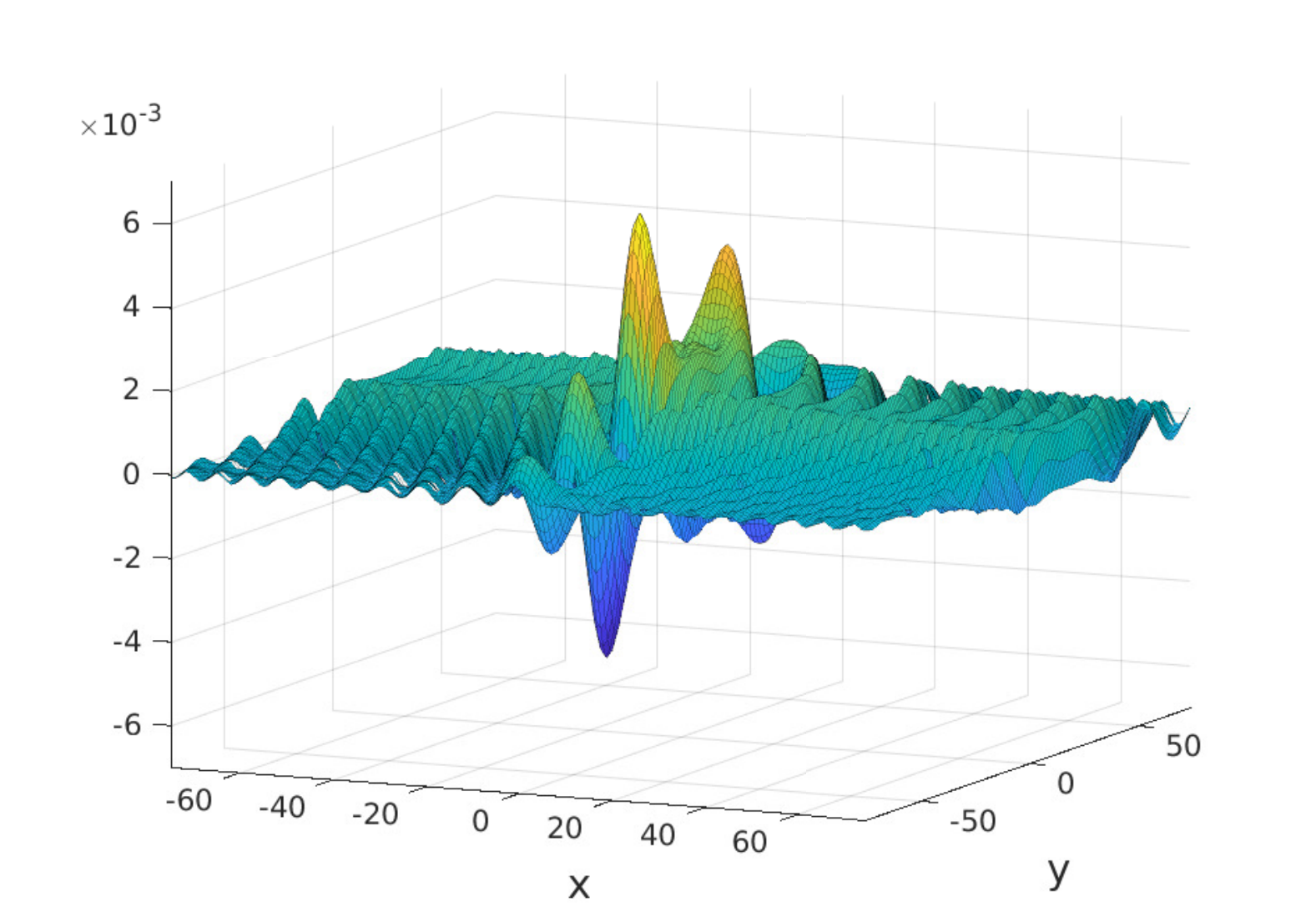}
		\caption{$\Re(H_{iQ}^n)$ for $n=2000$}
		\label{fig:TwoDEx1_H_2000}
	    \end{subfigure}
	    \begin{subfigure}{0.5\textwidth}
		\includegraphics[width=\textwidth]{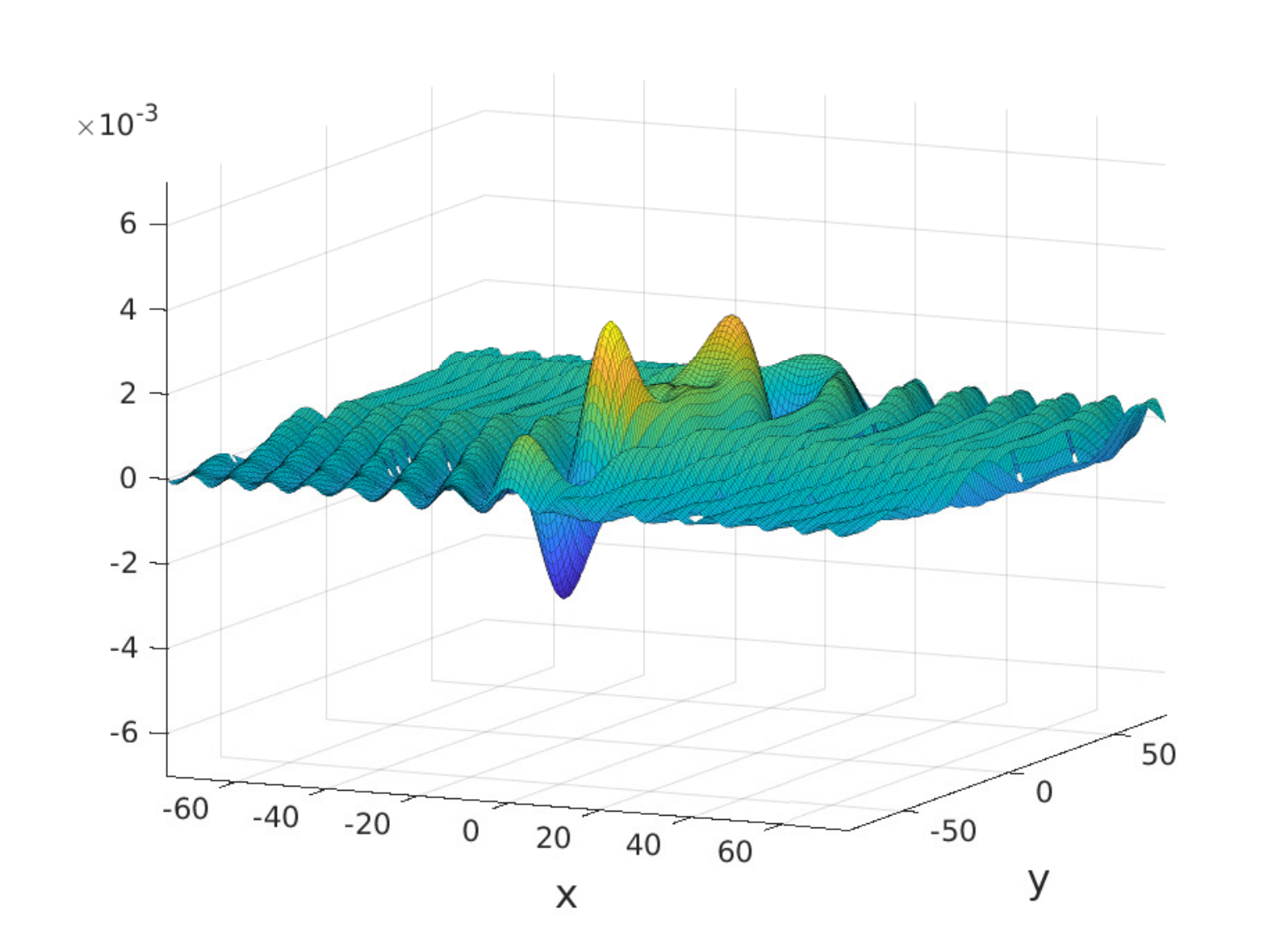}
		\caption{$\Re(H_{iQ}^n)$ for $n=4000$}
		\label{fig:TwoDEx1_H_4000}
	    \end{subfigure}
	    }
\caption{The graphs of $\Re(\phi^{(n)})$ and $\Re(H_{iQ}^n)$ for $n=2000,4000$. }
\label{fig:TwoDEx1}
\end{center}
\end{figure}

As we will show in Section \ref{sec:Attractor}, 
\begin{equation*}
    H_{iQ}^n(x)=n^{-3/4}H_{iQ}^1(n^{-E}x)=n^{-3/4}H^1_{iQ}(n^{-1/2}x_1,n^{-1/4}x_2)
\end{equation*}
for $x=(x_1,x_2)\in\mathbb{R}^2$. Consequently, the local limit theorem \eqref{eq:TwoDEx1_1} can be equivalently stated as
\begin{equation*}
    \phi^{(n)}(x)=n^{-3/4}H_{iQ}^1(n^{-1/2}x_1,n^{-1/4}x_2)+o(n^{-3/4})
\end{equation*}
uniformly for $x=(x_1,x_2)\in n^{E}(K)$. We remark that this is consistent with the following sup-norm type estimate established in \cite{BR21} (see (26) of Example 6 therein): There exists $C'>0$ for which
\begin{equation}\label{eq:TwoDEx1_2}
    \abs{\phi^{(n)}(x)}\leq\frac{C'}{n^{3/4}}
\end{equation}
for $n\in\mathbb{N}_+$ and $x\in K$. In Section \ref{sec:SupNormEst}, we will established the following matching lower estimate: There is some $C>0$ for which
\begin{equation}\label{eq:TwoDEx1_3}
\frac{C}{n^{3/4}}\leq \|\phi^{(n)}\|_\infty=:\sup_{x\in\mathbb{Z}^d}|\phi^{(n)}(x)|
\end{equation}
for all $n\in\mathbb{N}_+$.
\end{example}

\noindent Let us briefly discuss the condition $\mu_\phi<1$ appearing in Theorem \ref{thm:LLTIntro}. First, in one dimension, it is automatically satisfied because $\mu_\phi=1/m$ for some integer $m\geq 2$. Moving into higher dimensions, things become problematic. As the reader will see in Section \ref{sec:Attractor},  the renormalized integral defining $H_{iQ_{\xi_0}}$ need not converge when $\mu_\phi\geq 1$ (see the discussion following Theorem \ref{thm:Attractor}). Still, in some special cases, attractors corresponding to points of imaginary homogeneous type can be defined by other means and local limit theorems can be established when $\mu_\phi\geq 1$. This is discussed in Section \ref{sec:Discussion}. Curiously, an analogous condition appears in the study of higher-order partial differential equations. In the context of heat kernel estimates for higher-order semi-elliptic operators, this $\mu<1$ condition appears in Lemma 5.4 and Proposition 6.6 of \cite{RSC20}. As the article \cite{RSC20} explains, the threshold $\mu=1$ corresponds to $d=2m$ for $2m$th-order elliptic operators on $\mathbb{R}^d$. We refer the reader to the survey \cite{DaviesSurvey} which highlights the role that this dimension-order threshold plays in the theory of such operators. \\

\noindent For simplicity of presentation, we have stated Theorem \ref{thm:LLTIntro} with the assumption that $\Omega(\phi)$ is a singleton. In Section \ref{sec:LLT}, we state and prove a general theorem which allows for $\Omega(\phi)$ to consist of more than one point; this is Theorem \ref{thm:LLTMain}. Our theorem recaptures Theorem 1.6 of \cite{RSC17}, a result that assumes that every point of $\Omega(\phi)$ is of positive homogeneous type for $\widehat{\phi}$. What is new in Theorem \ref{thm:LLTMain} (and Theorem \ref{thm:LLTIntro}) is the consideration of points of imaginary homogeneous type for $\widehat{\phi}$ and this consideration, in its essence, is the major task treated in this article. While our results are not able to treat the full generality in which all points of $\Omega(\phi)$ are either of positive homogeneous or imaginary homogeneous type for $\widehat{\phi}$ (with no other restrictions), our results partially extend the local limit theorems of \cite{RSC15} and do so in the spirit of that article. \\

\noindent This article proceeds as follows. In Section \ref{sec:Attractor}, we first prove some basic facts concerning attractors of the form \eqref{eq:PosHomAttractor} corresponding to points of positive homogeneous type. We then discuss the renormalized integral of C. B\"{a}r \cite{Baer12} and establish some basic properties relevant to our context. With the renormalized integral in hand, we then introduce the attractors corresponding to points of imaginary homogeneous type and establish some of their basic properties; this is Theorem \ref{thm:Attractor}. In Section \ref{sec:LLT}, we first treat two basic lemmas pertaining to points of positive homogeneous type, Lemmas \ref{lem:LocalLimitPosHom} and \ref{lem:SupNormEstPositiveHom}. We then turn our focus to points of imaginary homogeneous type and, following three technical lemmas concerning oscillatory integrals, we establish a ``local limit lemma" for points of imaginary homogeneous type which parallels Lemma \ref{lem:LocalLimitPosHom} (and Lemma 4.3 of \cite{RSC17}); this is Lemma \ref{lem:LocalLimitImagHom}. We then state and prove our main result, Theorem \ref{thm:LLTMain}. Using Theorem \ref{thm:LLTMain} and some of the machinery developed within its proof, we prove Theorem \ref{thm:LLTIntro}. Section \ref{sec:SupNormEst} is a short section on sup-norm type estimates. Making use of the relevant analysis developed in Section 4 of \cite{RSC17} and Theorem \ref{thm:LLTMain}, we obtain Theorem \ref{thm:SupNormEst}, a result on sup-norm type estimates for convolution powers which provides a matching lower bound for the estimate of Theorem 3.2 of \cite{BR21} as well as (slightly) extending that result. In Section \ref{sec:Examples}, we present examples demonstrating the conclusion of Theorem \ref{thm:LLTMain} and provide the details for Example \ref{ex:TwoDEx1}.  Section \ref{sec:Discussion} is dedicated to discussing a number of open questions and describing further areas of research. Finally, the appendix contains some basic results on one-parameter contracting groups, positive homogeneous functions and subhomogeneous functions. \\

\noindent \textbf{Notation:} In this article, we denote by $\mathcal{M}_d$ the $\sigma$-algebra of Lebesgue measurable set of $\mathbb{R}^d$ and, for $A\in\mathcal{M}_d$, $\chi_A$ is the characteristic function of $A$. We denote by $m=m_d$ the Lebesgue measure on $\mathbb{R}^d$ and write $dm=m(d\xi)=d\xi$. For $X\in\mathcal{M}_d$ and $1\leq p\leq \infty$, $L^p(X)=L^p(X,m)$ will denote the usual Lebesgue space equipped with its norm $\|\cdot\|_{L^p(X)}$. When $k\in\mathbb{N}$ and $X\subseteq\mathbb{R}^d$ is non-empty and open (or, generally, when the following concept makes sense), we will denote by $C^k(X)$ the set of $k$-times continuously differentiable complex-valued functions on $X$. Of course, $C^0(X)$ and $C^\infty(X)$ are, respectively, the set of continuous and smooth functions on $X$. For $0\leq k\leq\infty$, the notation $C^k(X;\mathbb{R})$ will denote the subclass of function $C^k(X)$ which are real valued. The set of locally integrable functions on $\mathbb{R}^d$ will be denoted by $L^1_{\mbox{\tiny loc}}(\mathbb{R}^d)$; this is set of complex-valued measurable functions $f$ for which, given any compact set $K\subseteq\mathbb{R}^d$, $\xi\mapsto f(\xi)\chi_K(\xi)$ belongs to $L^p(\mathbb{R}^d)$. Finally, $\mathcal{S}(\mathbb{R}^d)$ will denote the Schwartz class of complex-valued functions on $\mathbb{R}^d$.

\section{The Attractors}\label{sec:Attractor}

\noindent In this section, we study the attractors \eqref{eq:PosHomAttractor} and \eqref{eq:ImagHomAttractorFormal} discussed in the introduction. Our first task is to establish some basic facts about the attractor \eqref{eq:PosHomAttractor} corresponding to a point $\xi_0$ of positive homogeneous type. To this end, we present the following proposition which ensures, in particular, that \eqref{eq:PosHomAttractor} is smooth and exhibits the scaling property \eqref{eq:PosHomAttractorScale}. The proposition is stated in the context of positive homogeneous functions (that is, it does not assume that $P=R+iQ$ is a polynomial), however, thanks to the theory developed in \cite{BR21}, working in this level of generality is natural. We remark that a similar statement can be found as Proposition 2.6 of \cite{RSC17}.

\begin{proposition}\label{prop:PosHomKernel}
Let $R:\mathbb{R}^d\to\mathbb{R}$ be a positive homogeneous function and let $Q:\mathbb{R}^d\to\mathbb{R}$ be continuous and such that $\Exp(R)\cap\Exp(Q)$ is non-empty. Set $P(\xi)=R(\xi)+iQ(\xi)$ for $\xi\in\mathbb{R}^d$, observe that $\Exp(P)=\Exp(R)\cap\Exp(Q)$, and set $\mu_P=\mu_R$. Then $e^{-tP(\xi)}\in L^1(\mathbb{R}^d)$ for each $t>0$ and we may therefore define
\begin{equation}\label{eq:PosHomKernel}
H_P^t(x)=\frac{1}{(2\pi)^d}\int_{\mathbb{R}^d}e^{-tP(\xi)}e^{-ix\cdot\xi}\,d\xi
\end{equation}
for $t>0$ and $x\in\mathbb{R}^d$. There holds the following:
\begin{enumerate}
\item\label{item:PosHomKernelSmooth}  As a function on $(0,\infty)\times \mathbb{R}^d$, $H_P=H_P^{(\cdot)}(\cdot)$ is smooth. If, additionally, $R$ and $Q$ are polynomials, then, for each $t>0$, $H_P^t\in \mathcal{S}(\mathbb{R}^d)$.
\item\label{item:PosHomKernelScaling} For any $E\in\Exp(P)$, 
\begin{equation*}
H_P^t(x)=t^{-\mu_P}H_P^1(t^{-E^*}x)
\end{equation*}
for $t>0$ and $x\in\mathbb{R}^d$. 
\item\label{item:PosHomKernelOnDiagional} We have
\begin{equation*}
\abs{H_P^t(x)}\leq t^{-\mu_P}H_R(0)=\frac{t^{-\mu_P}}{(2\pi)^d} m(B_R) \Gamma(\mu_P+1)
\end{equation*}
for all $x\in\mathbb{R}^d$ and $t>0$; here, $B_R=\{\xi\in\mathbb{R}^d:R(\xi)<1\}$ (and has finite measure by virtue of Proposition \ref{prop:CharofPosHom}), $\Gamma(\cdot)$ is Euler's Gamma function, and $H_R$ is defined in precisely the same way at $H_P$; they coincide, of course, when $Q\equiv 0$.
\end{enumerate}
\end{proposition}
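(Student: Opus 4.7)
The plan is to prove the three items in order, starting with an explicit computation that immediately gives both the integrability of $e^{-tP(\cdot)}$ and the on‑diagonal bound of item \ref{item:PosHomKernelOnDiagional}, and only then turning to scaling and smoothness. The key input throughout is the generalized polar/scaling structure afforded by Proposition \ref{prop:CharofPosHom} and the existence of a contracting $E\in\Exp(P)=\Exp(R)\cap\Exp(Q)$ (with $\tr E=\mu_P=\mu_R$, independent of the choice of $E$).

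First I would compute $\int_{\mathbb{R}^d} e^{-tR(\xi)}\,d\xi$ by layer‑cake: for each $u>0$,
\begin{equation*}
\{\xi:R(\xi)<u\}=\{\xi:R((u)^{-E}(u)^{E}\xi)<u\}=u^{E}B_R,
\end{equation*}
since $R(u^{-E}\eta)=u^{-1}R(\eta)$ and $E\in\Exp(R)$. Hence $m(\{R<u\})=u^{\mu_R}m(B_R)$ (finite by Proposition \ref{prop:CharofPosHom}), and substituting $s=e^{-u}$,
\begin{equation*}
\int_{\mathbb{R}^d}e^{-tR(\xi)}\,d\xi=\int_0^\infty m\!\left(\{R<u/t\}\right)e^{-u}\,du=t^{-\mu_R}m(B_R)\,\Gamma(\mu_R+1).
\end{equation*}
Since $|e^{-tP(\xi)}|=e^{-tR(\xi)}$, this shows $e^{-tP(\cdot)}\in L^1(\mathbb{R}^d)$, legitimizes the definition of $H_P^t$, and, by placing the absolute value inside the integral in \eqref{eq:PosHomKernel}, gives item \ref{item:PosHomKernelOnDiagional} with the explicit constant $m(B_R)\,\Gamma(\mu_P+1)/(2\pi)^d$.

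Next I would establish the scaling (item \ref{item:PosHomKernelScaling}) by the change of variables $\xi=t^{-E}\eta$ in \eqref{eq:PosHomKernel}. Since $E\in\Exp(P)$, $tP(t^{-E}\eta)=P(\eta)$; the Jacobian is $|\det t^{-E}|=t^{-\tr E}=t^{-\mu_P}$; and $x\cdot(t^{-E}\eta)=(t^{-E^*}x)\cdot\eta$. Collecting these,
\begin{equation*}
H_P^t(x)=t^{-\mu_P}\,\frac{1}{(2\pi)^d}\int_{\mathbb{R}^d}e^{-P(\eta)}e^{-i(t^{-E^*}x)\cdot\eta}\,d\eta=t^{-\mu_P}H_P^1(t^{-E^*}x).
\end{equation*}

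For smoothness (item \ref{item:PosHomKernelSmooth}) the strategy is differentiation under the integral. Differentiating $N$ times in $x$ brings down a factor $(-i\xi)^\beta$ with $|\beta|\le N$; differentiating in $t$ brings down factors of $P(\xi)^k$. Both are polynomially bounded in $\xi$, and by the scaling argument above, for any $N\in\mathbb{N}$,
\begin{equation*}
\int_{\mathbb{R}^d}|\xi|^N e^{-tR(\xi)}\,d\xi<\infty
\end{equation*}
(apply the layer‑cake computation to the distribution function of $|\xi|^N e^{-tR(\xi)}$, noting that $R$ dominates any polynomial at infinity by Proposition \ref{prop:CharofPosHom}\eqref{item:CharofPosHomLimToInf} combined with the scaling $R(s^E\eta)=sR(\eta)$ on the compact set $S_R$). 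Locally uniform integrable dominants on any closed box $[t_0,t_1]\times\{|x|\le M\}$ give smoothness on $(0,\infty)\times\mathbb{R}^d$. When moreover $R$ and $Q$ are polynomials, multiplying by $x^\alpha$ and integrating by parts (the boundary terms vanish because $e^{-tP(\xi)}$ decays faster than any polynomial) transfers $x^\alpha$ into a polynomial‑in‑$\xi$ factor times $e^{-tP(\xi)}$; the absolute value of the resulting integrand is again of the form $(\text{polynomial in }\xi)\cdot e^{-tR(\xi)}$, hence integrable. Thus $x^\alpha\partial_x^\beta H_P^t(x)$ is bounded for every $\alpha,\beta$, which is the Schwartz property.

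The main obstacle is really the integrability of $|\xi|^N e^{-tR(\xi)}$ underlying both the smoothness and the Schwartz conclusion; everything else is a clean two‑line computation. That estimate, however, reduces by the scaling $\xi\mapsto t^{-E}\eta$ to the single statement $\int |\eta|^N e^{-R(\eta)}\,d\eta<\infty$, which in turn follows from the generalized polar‑coordinate formula of \cite{BR21} (equivalently, from the layer‑cake argument applied to the compact level sets of $R$), so no new technology is required beyond what is already cited in the paper.
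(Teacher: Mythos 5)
Your proposal is correct, and it reaches the same conclusions by a slightly different route than the paper. The paper first proves the sup-estimate $\sup_\xi\abs{\xi^\alpha P(\xi)^k e^{-tP(\xi)}}<\infty$ by parametrizing $\mathbb{R}^d\setminus B_R$ as $\{r^E\eta: r\geq 1,\ \eta\in S_R\}$, deduces integrability, smoothness, and (for polynomial $P$) that $\xi\mapsto e^{-tP(\xi)}\in\mathcal{S}(\mathbb{R}^d)$, invoking that the Fourier transform is an automorphism of $\mathcal{S}(\mathbb{R}^d)$; for Item 3 it bounds $\abs{H_P^t(x)}\leq t^{-\mu}H_R^1(0)$ via the scaling and then evaluates $H_R^1(0)$ with the generalized polar-coordinate formula of \cite{BR21}. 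You instead get both the integrability and the explicit constant in one stroke from the layer-cake identity together with $m(\{R<u\})=m(u^E B_R)=u^{\mu_R}m(B_R)$, which is an entirely elementary replacement for the polar-coordinate computation (and your bound for Item 3 does not even need the scaling identity); your Schwartz argument unrolls the standard ``Fourier transform maps $\mathcal{S}$ to $\mathcal{S}$'' proof by integrating by parts directly, which is equivalent to the paper's route. Item 2 is identical in both proofs. What the paper's approach buys is that the single sup-estimate in $(r,\eta)$-coordinates uniformly handles all the weights ($\xi^\alpha$ and $P(\xi)^k$) needed for differentiation under the integral; what yours buys is a cleaner, self-contained evaluation of the constant $m(B_R)\Gamma(\mu_P+1)$.

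One point to tighten: you assert that the factors $P(\xi)^k$ produced by $t$-differentiation are ``polynomially bounded in $\xi$,'' but $Q$ is only assumed continuous and homogeneous with respect to $E$, so this requires a (short) argument — e.g., a spectral lower bound on $\abs{r^E\eta}$ for $r\geq 1$ — and, more to the point, your explicit reduction only covers the weights $\abs{\xi}^N$, not $\abs{P(\xi)}^k$. This is easily repaired with the same polar-coordinate (or layer-cake) device you already use: writing $\xi=r^E\eta$ with $\eta\in S_R$ gives $\abs{P(r^E\eta)}^k=r^k\abs{P(\eta)}^k\leq Cr^k$ by continuity of $P$ and compactness of $S_R$, so the dominating radial integrand is $Cr^{k+\mu-1}e^{-tr}$, which is integrable; this is exactly the mechanism behind the paper's sup-estimate. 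With that observation supplied, your argument is complete.
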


\begin{proof}
We first claim that, for each $k\in\mathbb{N}$ and multi-index $\alpha=(\alpha_1,\alpha_2,\dots,\alpha_d)\in\mathbb{N}^d$, 
\begin{equation}\label{eq:PosHomKernelProof1}
\sup_{\xi\in\mathbb{R}^d}\abs{\xi^\alpha P(\xi)^k e^{-tP(\xi)}}<\infty.
\end{equation}
for each $t>0$; here, $\xi^\alpha=\xi_1^{\alpha_1}\xi_2^{\alpha_2}\cdots\xi_d^{\alpha_d}$ for $\xi=(\xi_1,\xi_2,\dots,\xi_d)\in\mathbb{R}^d$. From this claim, the absolute integrability of $e^{-tP(\xi)}$ follows immediately. Also, standard arguments (e.g., those which show that the Fourier transform exchanges decay at infinity for smoothness) show that, by virtue of \eqref{eq:PosHomKernelProof1}, one may differentiate (in both $t$ and $x$) through the integral sign in \eqref{eq:PosHomKernel} as many times as one likes. In this way, \eqref{eq:PosHomKernelProof1} ensures that $H_P\in C^\infty((0,\infty)\times\mathbb{R}^d)$. To prove \eqref{eq:PosHomKernelProof1},  in view of the fact that $\xi\mapsto\xi^{\alpha}P(\xi)^k e^{-tP(\xi)}$ is continuous and $\Re(P)=R$, it suffices to prove that
\begin{equation*}
\sup_{\xi\in\mathbb{R}^d/B_R}\abs{\xi^{\alpha}P(\xi)^k e^{-tR(\xi)}}<\infty
\end{equation*}
where, as given in the statement of the proposition, $B_R=\{\xi\in\mathbb{R}^d:R(\xi)<1\}$. For $E\in\Exp(P)\subseteq\Exp(R)$, the fact that $\{r^E\}$ is contracting guarantees that $\mathbb{R}^d\setminus B_R=\{r^E\eta:r\geq 1, \eta\in S_R\}$ where $S_R$ is the unital level set of $R$. From this it follows that
\begin{equation*}
\sup_{\xi\in\mathbb{R}^d\setminus B_R}\abs{\xi^\alpha P(\xi)^k e^{-tR(\xi)}}=\sup_{r\geq 1,\eta\in S_R}\abs{(r^E\eta)^\alpha P(r^E\eta)^k e^{-tR(r^E\eta)}}=\sup_{r\geq 1,\eta\in S_R}\abs{(r^E\eta)^\alpha r^k P(\eta)^k e^{-tr}}.
\end{equation*}
Now, for $r\geq 1$, $\|r^E\|\leq r^{\|E\|}$ and therefore $\abs{(r^E\eta)^\alpha}\leq \abs{r^E\eta}^{|\alpha|}\leq \abs{\eta}^{|\alpha|}r^{|\alpha|\|E\|}$ where $|\alpha|=\alpha_1+\alpha_2+\cdots+\alpha_d$. Given that $P$ is continuous, $S_R$ is compact, and $t>0$, we conclude that
\begin{equation*}
\sup_{\xi\in\mathbb{R}^d\setminus B_R}\abs{\xi^\alpha P(\xi)^k e^{-tP(\xi)}}\leq \sup_{r\geq 1,\eta\in S_R}\abs{\eta}^{|\alpha|}\abs{P(\eta)}^k r^{\|E\||\alpha|+k}e^{-tr}<\infty,
\end{equation*}
as was asserted. 

In the case that $R$ and $Q$ are polynomials, for each pair of multi-indices $\alpha,\beta\in\mathbb{N}^d$, 
\begin{equation*}
\xi^\alpha D^\beta\left(e^{-tP(\xi)}\right)=U_{\alpha,\beta}(t,\xi)e^{-tP(\xi)}
\end{equation*}
where $U_{\alpha,\beta}(t,\xi)$ is a polynomial in $t$ and $\xi$; here $D^{\beta}=(i\partial_{\xi_1})^{\beta_1}(i\partial_{\xi_2})^{\beta_2}\cdots(i\partial_{\xi_d})^{\beta_d}$. By a similar argument to that given above, we find that
\begin{equation*}
\sup_{\xi\in \mathbb{R}^d\setminus B_R}\abs{\xi^\alpha D^{\beta}\left(e^{-tP(\xi)}\right)}=\sup_{r\geq 1,\eta\in S_R}\abs{U(t,r^E\eta)e^{-tr}}<\infty
\end{equation*}
for each pair of multi-indices $\alpha$ and $\beta$ and therefore $\xi\mapsto e^{-tP(\xi)}\in \mathcal{S}(\mathbb{R}^d)$. Because the Fourier transform is an automorphism of $\mathcal{S}(\mathbb{R}^d)$, it follows that $H_P^t\in\mathcal{S}(\mathbb{R}^d)$ for each $t>0$ and this concludes the proof of Item \ref{item:PosHomKernelSmooth}.

Throughout the remainder of the proof, we shall write $\mu=\mu_P$ and note that $\mu=\mu_R=\tr E$ for any $E\in \Exp(R)$. To see Item \ref{item:PosHomKernelScaling}, observe that, for any $E\in\Exp(P)=\Exp(R)\cap\Exp(Q)$, $t>0$, and $x\in\mathbb{R}^d$,
\begin{equation*}
H_P^t(x)=\frac{1}{(2\pi)^d}\int_{\mathbb{R}^d}e^{-tP(\xi)}e^{-ix\cdot\xi}\,d\xi=\frac{1}{(2\pi)^d}\int_{\mathbb{R}^d}e^{-P(t^E\xi)}e^{-ix\cdot\xi}\,d\xi.
\end{equation*}
Upon making the change of variables $\xi=t^{-E}\zeta$, for any $t>0$ and $x\in\mathbb{R}^d$, we have
\begin{eqnarray*}
H_P^t(x)&=&\frac{1}{(2\pi)^d}\int_{\mathbb{R}^d}e^{-P(\zeta)}e^{-ix\cdot (t^{-E}\zeta)}\det(t^{-E})\,d\zeta\\
&=&\frac{1}{(2\pi)^d}\int_{\mathbb{R}^d}e^{-P(\zeta)}e^{-i(t^{-E^*}x)\cdot\zeta}t^{-\mu}\,d\zeta\\
&=&t^{-\mu}H_P^1(t^{-E^*}x)
\end{eqnarray*}
where we have used the fact that $\det(t^{-E})=t^{-\tr E}=t^{-\mu}$ and $(t^{-E})^*=t^{-E^*}$. This proves Item \ref{item:PosHomKernelScaling}. 

To prove Item \ref{item:PosHomKernelOnDiagional}, we first appeal to the previous item to see that, for any $t>0$ and $x\in\mathbb{R}^d$,
\begin{equation*}
\abs{H_P^t(x)}=t^{-\mu}\abs{H_P^1(t^{-E^*}x)}\leq \frac{t^{-\mu}}{(2\pi)^d}\int_{\mathbb{R}^d}\abs{e^{-P(\xi)}e^{-i(t^{-E^*}x)\cdot\xi}}\,d\xi=\frac{t^{-\mu}}{(2\pi)^d}\int_{\mathbb{R}^d}e^{-R(\xi)}\,d\xi;
\end{equation*}
Thus,
\begin{equation*}
\abs{H_P^t(x)}\leq \frac{t^{-\mu}}{(2\pi)^d}\int_{\mathbb{R}^d}e^{-R(\xi)}\,d\xi=t^{-\mu}H_R^1(0)
\end{equation*}
for all $t>0$ and $x\in\mathbb{R}^d$. It remains to compute $H_R^1(0)$. To this end, we appeal to the polar-coordinate integration formula (Theorem 1.3) of \cite{BR21}. For the positive homogeneous function $R$, the theorem gives us a Radon measure $\sigma_R$ on $S_R$ for which, given any $f\in L^1(\mathbb{R}^d)$ and $E\in\Exp(R)$,
\begin{equation*}
\int_{\mathbb{R}^d}f(\xi)\,d\xi=\int_{S_R}\int_0^\infty f(r^E\eta)r^{\mu-1}\,dr\sigma_R(d\eta).
\end{equation*}
Applying this to $f(\xi)=e^{-R(\xi)}$, we have
\begin{eqnarray*}
H_R^1(0)&=&\frac{1}{(2\pi)^d}\int_{S_R}\int_0^\infty e^{-R(r^E\eta)}r^{\mu-1}\,dr\sigma_R(d\eta)\\
&=&\frac{1}{(2\pi)^d}\int_{S_R}\int_0^\infty e^{-r}r^{\mu-1}\,dr\,\sigma_R(d\eta)\\
&=&\frac{\sigma_R(S_R)}{(2\pi)^d}\int_0^\infty e^{-r}r^{\mu-1}\,dr\\
&=&\frac{\mu \cdot m(B_R)}{(2\pi)^d}\Gamma(\mu)\\
&=&\frac{\Gamma(\mu+1)}{(2\pi)^d}m(B_R)
\end{eqnarray*}
where we have noted that $\sigma_R(S_R)=\mu\cdot m(B_R)$ by virtue of Item 3 of Theorem 1.3 of \cite{BR21}, recognized the integral representation of the Gamma function, and made use of the fundamental identity, $\mu\Gamma(\mu)=\Gamma(\mu+1)$.
\end{proof}

\noindent We now turn our attention to the attractors corresponding to points $\xi_0$ of imaginary homogeneous type. As we discussed in the introduction, we must first discuss a theory for the convergence of oscillatory integrals given formally by \eqref{eq:ImagHomAttractorFormal}. The integral we introduce below can be seen as a special case the renormalized integral of \cite{Baer12}. While still having its limitations, this renormalized integral is broad enough to handle a large variety of possible $Q$s and sufficiently robust to make tractable our analysis. \\

\noindent Given $X\in\mathcal{M}_d$, a family of measurable sets $\mathcal{A}=\{\mathcal{O}_{\tau}\subseteq X:\tau>0\}\subseteq\mathcal{M}_d$ is said to be an \textbf{approximating family of $X$} if $\mathcal{A}$ is nested increasing and covers $X$, i.e., $\mathcal{O}_{\tau_1}\subseteq\mathcal{O}_{\tau_2}$ whenever $\tau_1\leq \tau_2$, and $X=\cup_{\tau>0}\mathcal{O}_{\tau}$. 

\begin{definition}\label{def:RenormIntegral}
Let $X\in\mathcal{M}_d$ and suppose that $\mathcal{A}=\{\mathcal{O}_{\tau}:\tau>0\}$ is an approximating family of $X$.
\begin{enumerate}
\item  Given a measurable function $f:X\to\mathbb{C}$, we say that the renormalized integral of $f$ over $X$ associated to $\mathcal{A}$ converges if the limit
\begin{equation}\label{eq:RenormIntegral}
\lim_{\tau\to\infty}\int_{\mathcal{O}_{\tau}}f\,dm
\end{equation}
exists. In this case, the renormalized integral of $f$ over $X$ associated to $\mathcal{A}$ is defined to be the value of the limit \eqref{eq:RenormIntegral} and denoted by
\begin{equation*}
\fint_{X} f\,d_\mathcal{A}\hspace{1cm}\mbox{or}\hspace{1cm}\fint_{X} f(\xi)\,d_\mathcal{A}\xi.
\end{equation*}
When the context is clear, we will omit the vocabulary ``over $X$ associated to $\mathcal{A}$". 
\item Let $Y$ be a non-empty set and, for a function $g=g_{(\cdot)}(\cdot):Y\times X\to\mathbb{C}$, and suppose that $\xi\mapsto g_y(\xi)$ is measurable for each $y\in Y$. If, the renormalized integral $\fint_{X}g_y\,d_{\mathcal{A}}=\fint_{X}g_y(\xi)\,d_{\mathcal{A}}\xi$ converges for each $y\in Y$ and,  for all $\epsilon>0$, there exists $\tau_0>0$ for which
\begin{equation*}
\abs{\fint g_y\,d_\mathcal{A}-\int_{\mathcal{O}_{\tau}} g_y\,dm}<\epsilon
\end{equation*}
for all $y\in Y$ and $\tau\geq \tau_0$, we say that the renormalized integral of $g_y$ converges uniformly on $Y$. We also say that the renormalized integral of $g_y$ converges uniformly for $y\in Y$.
\end{enumerate}
\end{definition}

\noindent As mentioned above, the above definition is a special case\footnote{Looking at Definition \ref{def:RenormIntegral} from the perspective of \cite{Baer12}, the relevant family of measure spaces is $\Omega=\{\mathcal{O}_{\tau},d\xi\vert_{\mathcal{O}_{\tau}}\}_{\tau>0}$ which is indexed by the directed set of positive real numbers with its usual ordering.} of the renormalized integral of \cite{Baer12}, a notion which itself recaptures several instances of Cauchy's principal value.  Some simple observations following Definition \ref{def:RenormIntegral} are presented in the following examples.
\begin{example} In one dimension, the improper integral
\begin{equation*}
\int_0^\infty f(\xi)\,d\xi=\lim_{s\to\infty}\int_0^sf(\xi)\,d\xi,
\end{equation*}
coincides with the renormalized integral of $f$ over $\mathbb{R}$ and associated to the family $\mathcal{A}=\{[0,s),s>0\}$. As noted in the introduction (c.f., the discussion surrounding \eqref{eq:OneDImagHomAttractor}), this integral is used to define the attractors in \cite{RSC15}. 
\end{example}
\begin{example}
Given $X\in\mathcal{M}_d$ and $f\in L^1(X)$, the Lebesgue integral of $f$ over $X$ coincides with the renormalized integral of $f$ over $X$ associated to any (and so every) approximating family $\mathcal{A}$ of $X$. In other words, given any $f\in L^1(X)$ and approximating family $\mathcal{A}$ of $X$, the renormalized integral of $f$ over $X$ associated to $\mathcal{A}$ converges and
\begin{equation*}
\fint_{X}f(\xi)\,d_{\mathcal{A}}\xi=\int_{X}f(\xi)\,d\xi.
\end{equation*}
In fact, the absolute Lebesgue integrability of a measurable function $f$ can be characterized by the convergence of the renormalized integral of $\abs{f(\xi)}$ over $X$ associated to any (and every) approximating family $\mathcal{A}$ of $X$. These assertions can be seen straightforwardly as consequences of the monotone and dominated convergence theorems.

As an application, let $P=R+iQ$ be as in the statement of Proposition \ref{prop:PosHomKernel} and consider the approximating family $\mathcal{A}=\{\mathcal{O}_\tau:\tau>0\}$ of $\mathbb{R}^d$ where, for each $\tau>0$, $\mathcal{O}_\tau=\{\xi\in\mathbb{R}^d:R(\xi)<\tau\}$. Then the attractor $H_P^t$ of Proposition \ref{prop:PosHomKernel} is equivalently given by
\begin{equation*}
H_P^t(x)=\frac{1}{(2\pi)^d}\fint_{\mathbb{R}^d}e^{-tP(\xi)}e^{-ix\cdot\xi}\,d_{\mathcal{A}}\xi=\lim_{\tau\to\infty}\frac{1}{(2\pi)^d}\int_{\mathcal{O}_\tau}e^{-tP(\xi)}e^{-ix\cdot\xi}\,d\xi
\end{equation*}
for $t>0$ and $x\in\mathbb{R}^d$. It is easy to see that, for each $t_0>0$, this renormalized integral converges uniformly for $(t,x)\in [t_0,\infty)\times\mathbb{R}^d$.
\end{example}

\noindent As \cite{Baer12} points out, the renormalized integral is linear, monotonic, and satisfies the triangle inequality. Unsurprisingly however, the renormalized integral does not satisfy basic limit theorems of measure theory, including Fatou's lemma and the monotone and dominated convergence theorems. The following proposition is a characterization of uniform convergence of renormalized integrals in terms of a Cauchy condition. Its proof is straightforward and omitted.

\begin{proposition}\label{prop:Cauchy}
Let $\mathcal{A}$ be an approximating family of $X\in\mathcal{M}_d$ and let $Y$ be a non-empty set.  Suppose that $g=g_{(\cdot)}(\cdot):Y\times X\to\mathbb{C}$ is such that $\xi\mapsto g_y(\xi)$ is Lebesgue measurable for each $y\in Y$. Then renormalized integral of $g_y$ converges uniformly on $Y$ if and only if, the following Cauchy condition is satisfied: For each $\epsilon>0$, there exists $\tau_0>0$ such that
\begin{equation*}
\left|\int_{\mathcal{O}_{\tau_2}\setminus\mathcal{O}_{\tau_1}}g_y\,dm\right|<\epsilon
\end{equation*}
for all $y\in Y$ and $\tau_0\leq\tau_1\leq\tau_2$.
\end{proposition}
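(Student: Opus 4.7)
The plan is to reformulate everything in terms of the partial integrals
$$I_\tau(y) := \int_{\mathcal{O}_\tau} g_y\,dm,$$
which are well-defined complex numbers once the integrals in the statement of the proposition are (the hypothesis that $\int_{\mathcal{O}_{\tau_2}\setminus\mathcal{O}_{\tau_1}} g_y\,dm$ makes sense implicitly carries this prerequisite). Because $\mathcal{A}$ is nested increasing, for $\tau_1 \leq \tau_2$ we have the key identity
$$I_{\tau_2}(y) - I_{\tau_1}(y) = \int_{\mathcal{O}_{\tau_2}\setminus\mathcal{O}_{\tau_1}} g_y\,dm,$$
so the Cauchy condition in the statement is precisely the usual uniform Cauchy condition on the family $\{I_\tau(y)\}_{\tau>0}$, viewed as a net of $\mathbb{C}$-valued functions of $y$ indexed by the parameter $\tau \to \infty$. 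The proposition is then the standard equivalence between uniform convergence and the uniform Cauchy criterion.

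For the necessity direction, assume the renormalized integral converges uniformly on $Y$ and write $I(y) := \fint_X g_y\,d_\mathcal{A}$. Given $\epsilon > 0$, by Definition \ref{def:RenormIntegral} pick $\tau_0 > 0$ such that $|I(y) - I_\tau(y)| < \epsilon/2$ for every $y \in Y$ and every $\tau \geq \tau_0$. For any $\tau_0 \leq \tau_1 \leq \tau_2$ and $y \in Y$, the triangle inequality gives
$$\left|\int_{\mathcal{O}_{\tau_2}\setminus\mathcal{O}_{\tau_1}} g_y\,dm\right| = |I_{\tau_2}(y) - I_{\tau_1}(y)| \leq |I_{\tau_2}(y) - I(y)| + |I(y) - I_{\tau_1}(y)| < \epsilon,$$
which is the desired condition.

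For the sufficiency direction, assume the Cauchy condition. First fix an arbitrary $y \in Y$: applied to this single point, the hypothesis is exactly the Cauchy criterion for existence of $\lim_{\tau\to\infty} I_\tau(y)$ in $\mathbb{C}$, and completeness of $\mathbb{C}$ therefore produces a value $I(y) \in \mathbb{C}$ equal, by Definition \ref{def:RenormIntegral}, to $\fint_X g_y\,d_\mathcal{A}$. To upgrade pointwise to uniform convergence, let $\epsilon > 0$, pick $\tau_0$ from the Cauchy hypothesis, fix $\tau_1 \geq \tau_0$ and $y \in Y$, and let $\tau_2 \to \infty$: since $I_{\tau_2}(y) \to I(y)$ by what was just shown, passing to the limit in the strict inequality $|I_{\tau_2}(y) - I_{\tau_1}(y)| < \epsilon$ yields $|I(y) - I_{\tau_1}(y)| \leq \epsilon$, and this bound is uniform in $y \in Y$ because $\tau_0$ did not depend on $y$. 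This is precisely the uniform convergence clause of Definition \ref{def:RenormIntegral} (with $\epsilon$ replaced by $\epsilon$, or, if one insists on the strict inequality, starting from $\epsilon/2$).

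There is no real obstacle here; the argument is the customary two-direction proof of ``uniform Cauchy iff uniform convergence,'' and the only mild subtlety is that in the sufficiency direction one must extract pointwise convergence first (using completeness of $\mathbb{C}$) before the Cauchy bound can be promoted to a uniform tail bound via a limit in $\tau_2$.
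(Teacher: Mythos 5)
Your argument is correct and matches the proof the paper itself has in mind (it is the standard two-direction ``uniform Cauchy iff uniform convergence'' argument: necessity by the triangle inequality with $\epsilon/2$, sufficiency by completeness of $\mathbb{C}$ for pointwise existence of the limit followed by letting $\tau_2\to\infty$ in the Cauchy bound to get the uniform tail estimate). Nothing further is needed.
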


\noindent Armed with the renormalized integral as a basic tool, we turn our focus to the attractors given formally by \eqref{eq:ImagHomAttractorFormal}. In what follows, we let $Q:\mathbb{R}^d\to\mathbb{R}$ be a continuous function for which $\abs{Q}$ is positive homogeneous with homogeneous order $\mu=\mu_Q=\mu_{|Q|}$ and exponent set $\Exp(Q)=\Exp(\abs{Q})$ (c.f., Remark \ref{rem:AbsPosHomExponentSets}). For each $\tau>0$, define $\mathcal{O}_{\tau}=\{\xi\in\mathbb{R}^d:\abs{Q(\xi)}<\tau\}$ and observe that $\mathcal{A}=\{\mathcal{O}_{\tau}:\tau >0\}$ is an approximating family of $\mathbb{R}^d$. The following theorem, stated in terms of the renormalized integral, introduces the functions which will appear as attractors in local limit theorems corresponding to points of imaginary homogeneous type.

\begin{theorem}\label{thm:Attractor}
Let $Q:\mathbb{R}^d\to\mathbb{R}$ be continuous and such that $\abs{Q}$ is positive homogeneous with homogeneous order $\mu=\mu_{\abs{Q}}>0$. Also, let $\mathcal{A}$ be as above. If $\mu<1$, then the following statements hold.
\begin{enumerate}
\item\label{item:ImagHomAttractorConverge} For any $t>0$ and $x\in\mathbb{R}^d$, the renormalized integral of $\xi\mapsto e^{-itQ(\xi)-ix\cdot\xi}$ over $\mathbb{R}^d$ associated to the family $\mathcal{A}$ converges and we set
\begin{equation}\label{eq:ImagHomAttractorRenorm}
H_{iQ}^t(x)=\frac{1}{(2\pi)^d}\fint_{\mathbb{R}^d} e^{-itQ(\xi)-ix\cdot \xi}\,d_{\mathcal{A}}\xi.
\end{equation}
Further, $H_{iQ}^t(x)$ is continuous on its domain, $(0,\infty)\times\mathbb{R}^d$.
\item\label{item:ImagHomAttractorConvergeUniformly} For each $t_0>0$ and compact set $K\subseteq\mathbb{R}^d$, the renormalized integral in \eqref{eq:ImagHomAttractorRenorm} converges uniformly for $t\geq t_0$ and $x\in K$. 
\item\label{item:ImagHomAttractorScale} For any $E\in\Exp(Q)$, 
\begin{equation}\label{eq:ImagHomAttractorScale}
H_{iQ}^t(x)=\frac{1}{t^\mu}H_{iQ}^1\left(t^{-E^*}x\right)
\end{equation}
for every $t>0$ and $x\in\mathbb{R}^d$.
\item\label{item:ImagHomAttractorAtOrigin} For all $t>0$,
\begin{equation*}
\Re\left(H_{iQ}^t(0)\right)=\frac{t^{-\mu}}{(2\pi)^d}m(B_{\abs{Q}})\Gamma(1+\mu)\cos\left(\frac{\mu\pi}{2}\right)>0
\end{equation*}
where $B_{\abs{Q}}=\{\xi\in\mathbb{R}^d:\abs{Q(\xi)}<1\}$ (and has finite measure by virtue of Proposition \ref{prop:CharofPosHom}) and $\Gamma(\cdot)$ is Euler's Gamma function.
\end{enumerate}
\end{theorem}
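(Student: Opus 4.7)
The plan is to use the generalized polar-coordinate integration formula of \cite{BR21} to reduce the $d$-dimensional oscillatory integral to a one-dimensional one on the compact surface $S := S_{|Q|}$, and then to handle convergence via an integration-by-parts argument whose integrability hinges precisely on the hypothesis $\mu < 1$. Throughout, I would fix $E \in \Exp(Q) = \Exp(|Q|)$; by the proposition immediately preceding the theorem, $\tr E = \mu$. Since $\{r^E\}$ is contracting, the eigenvalues of $E$ have positive real parts, and as these positive reals sum to $\mu < 1$, the largest among them, call it $\Lambda$, satisfies $\Lambda \leq \mu < 1$. In particular, $\|r^{E-I}\| \leq C_\epsilon r^{\Lambda-1+\epsilon}$ decays as $r\to\infty$ for sufficiently small $\epsilon > 0$.

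For Items \ref{item:ImagHomAttractorConverge} and \ref{item:ImagHomAttractorConvergeUniformly}, the polar-coordinate formula yields
\begin{equation*}
\int_{\mathcal{O}_\tau} e^{-itQ(\xi)-ix\cdot\xi}\,d\xi \;=\; \int_S \int_0^\tau e^{i\phi(r)}\,r^{\mu-1}\,dr\,\sigma_{|Q|}(d\eta),
\end{equation*}
where $\phi(r) := -trQ(\eta) - x\cdot r^E\eta$; here I used $Q(r^E\eta) = rQ(\eta)$ and $Q(\eta) = \pm 1$ on $S$. By Proposition \ref{prop:Cauchy}, it then suffices to produce a Cauchy estimate on the inner integral that is uniform in $(t,x,\eta) \in [t_0,\infty)\times K\times S$. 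Since $\phi'(r) = -tQ(\eta) - x\cdot r^{E-I}E\eta$ with $|Q(\eta)| = 1$, the decay of $\|r^{E-I}\|$ produces a threshold $R_0 = R_0(t_0,K)$, bounded uniformly, beyond which $|\phi'(r)| \geq t/2$. On $[0,R_0]$, the integrand is absolutely bounded by $r^{\mu-1}$ and contributes at most $R_0^\mu/\mu$; this piece is uniformly controlled. On $[R_0,\tau]$, one integration by parts based on $\tfrac{d}{dr}e^{i\phi(r)} = i\phi'(r)e^{i\phi(r)}$ yields a boundary term of size $O(\tau^{\mu-1}/t)\to 0$ (using $\mu<1$) and a remainder integrand bounded by $C\bigl(r^{\mu-2}/t + r^{\mu+\Lambda-3+\epsilon}/t^2\bigr)$, where the second term uses $|\phi''(r)| \leq Cr^{\Lambda-2+\epsilon}$. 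Both exponents are strictly less than $-1$, so the remainder is absolutely integrable on $[R_0,\infty)$; combined with compactness of $S$, this gives the required uniform Cauchy estimate. Continuity of $H_{iQ}^t(x)$ on $(0,\infty)\times\mathbb{R}^d$ is then immediate from continuity of each finite integral $\int_{\mathcal{O}_\tau}$ together with uniform convergence on compact sets.

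For Item \ref{item:ImagHomAttractorScale}, the substitution $\xi = t^{-E}\zeta$ yields $Q(\xi) = t^{-1}Q(\zeta)$, $x\cdot\xi = (t^{-E^*}x)\cdot\zeta$, $d\xi = t^{-\mu}\,d\zeta$, and $\mathcal{O}_\tau$ pulls back to $\mathcal{O}_{t\tau}$; letting $\tau\to\infty$ and using Item \ref{item:ImagHomAttractorConvergeUniformly} to pass the limit gives the scaling identity. For Item \ref{item:ImagHomAttractorAtOrigin}, I would set $x = 0$ and partition $S = S^+ \sqcup S^-$ with $S^\pm = \{\eta\in S:Q(\eta)=\pm 1\}$, reducing the claim to
\begin{equation*}
(2\pi)^d H_{iQ}^t(0) \;=\; \sigma_{|Q|}(S^+)\int_0^\infty e^{-itr}r^{\mu-1}\,dr \;+\; \sigma_{|Q|}(S^-)\int_0^\infty e^{itr}r^{\mu-1}\,dr.
\end{equation*}
Each improper integral converges for $0 < \mu < 1$ and, by rotating the contour onto the imaginary axis, equals $\Gamma(\mu)t^{-\mu}e^{\mp i\pi\mu/2}$. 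Taking real parts, combining $\sigma_{|Q|}(S^+) + \sigma_{|Q|}(S^-) = \sigma_{|Q|}(S) = \mu\,m(B_{|Q|})$ (Item 3 of Theorem 1.3 of \cite{BR21}), and invoking $\mu\Gamma(\mu) = \Gamma(1+\mu)$ yields the asserted identity; positivity is automatic because $\cos(\mu\pi/2) > 0$ for $\mu\in(0,1)$.

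The main obstacle is the integrability exponent in the integration-by-parts step. A naive bound $\|r^{E-I}\|\leq r^{\|E-I\|}$ would force an unnecessarily restrictive hypothesis on $\mu$; the argument succeeds under the sole assumption $\mu < 1$ only because of the sharp eigenvalue estimate $\Lambda \leq \tr E = \mu$, which allows both exponents $\mu-2$ and $\mu+\Lambda-3+\epsilon$ to lie strictly below $-1$ simultaneously.
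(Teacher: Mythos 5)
Your proposal is correct and establishes all four items. The skeleton coincides with the paper's: both reduce to the radial integrals $\int_S\int_0^\tau e^{i\phi(r)}r^{\mu-1}\,dr\,\sigma_{\abs{Q}}(d\eta)$ via the generalized polar-coordinate formula, both verify the uniform Cauchy criterion of Proposition \ref{prop:Cauchy}, and Items \ref{item:ImagHomAttractorScale} and \ref{item:ImagHomAttractorAtOrigin} are handled by the same change of variables $\xi=t^{-E}\zeta$ and the same Gamma-function evaluation; your decomposition $S=S^+\sqcup S^-$ simply subsumes, in one computation, the paper's observation that $Q$ is single-signed when $d>1$ together with its separate $d=1$ case. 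The genuine difference is the central oscillatory estimate. The paper substitutes $r=\theta^{1/\mu}$ so that the phase derivative is bounded below by $\tfrac{t}{2\mu}\theta^{1/\mu-1}$, verifies monotonicity of the phase derivative through a second-derivative bound obtained from Corollary \ref{cor:LargeTimeContractingGroup}, and then invokes the Van der Corput lemma (Proposition \ref{prop:VDC}) with amplitude $g\equiv 1$. You instead stay in the radial variable and integrate by parts once against $\tfrac{d}{dr}e^{i\phi(r)}$, bounding $\abs{\phi'(r)}\geq t/2$ beyond a uniform threshold $R_0(t_0,K)$ and $\abs{\phi''(r)}\leq C r^{\Lambda-2+\epsilon}$ via the spectral bound $\Lambda\leq \tr E=\mu<1$ — the same mechanism that underlies Lemma \ref{lem:LargeTimeContractingGroup}; since $\mu-2<-1$ and $\mu+\Lambda-3+\epsilon<-1$, the boundary and remainder terms vanish as $\tau_1\to\infty$, uniformly in $t\geq t_0$, $x\in K$, $\eta\in S$, which is exactly the required Cauchy estimate. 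What each route buys: Van der Corput spares one from differentiating the amplitude and is reused later with a genuinely nontrivial amplitude in Lemma \ref{lem:OscillatoryEstimate}, so the paper's setup does double duty; your direct non-stationary-phase argument avoids the substitution and the monotonicity check and makes the role of $\mu<1$ visible in the integrability exponents. One small remark: your sharp eigenvalue estimate $\Lambda\leq\mu$ is not strictly needed — the cruder conclusion of Lemma \ref{lem:LargeTimeContractingGroup}, namely $\|r^E\|\leq M'r^{\omega}$ for some $\omega<1$ whenever $\tr E<1$, already makes both exponents fall below $-1$ — though you are right that the naive bound $\|r^E\|\leq r^{\|E\|}$ would not suffice.
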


\noindent To see that the hypothesis $\mu<1$ cannot be weakened, in general, consider $Q:\mathbb{R}^2\to\mathbb{R}$ defined by $Q(\xi)=q\abs{\xi}^2=q(\xi_1^2+\xi_2^2)$ for $\xi=(\xi_1,\xi_2)\in\mathbb{R}^2$ where $q>0$. We see immediately that $\abs{Q(\xi)}=q\abs{\xi}^2$ is positive homogeneous with $\mu=1/2+1/2=1$. In this case, for each $\tau>0$, observe that $\mathcal{O}_\tau=\{\xi\in\mathbb{R}^2:\abs{\xi}<\sqrt{\tau/q}\}$ and so the polar-coordinate integration formula gives
\begin{equation*}
\int_{\mathcal{O}_\tau}e^{-itQ(\xi)}\,d\xi=\int_{0}^{2\pi}\int_0^{\sqrt{\tau/q}}e^{-itqr^2}r\,dr\,d\theta=\frac{\pi i}{q t}\left(e^{it\tau}-1\right)
\end{equation*}
for $t>0$. Consequently, the renormalized integral
\begin{equation*}
\fint_{\mathbb{R}^2}e^{-itQ(\xi)}e^{-ix\cdot\xi}\,d_{\mathcal{A}}\xi
\end{equation*}
does not converge at $x=0$ for any $t>0$. As the reader may notice, an analogous renormalized integral for $Q(\xi)=q|\xi|^2$ does converge provided the approximating family $\mathcal{A}$ is replaced by one in which the sets $\mathcal{O}_\tau$ are rectangular and, in that case, the attractor that results is a product of one-dimensional heat kernels evaluated at purely imaginary time. In Section \ref{sec:Discussion} (see also Section 5.4 of \cite{RSC17}), we discuss a theory of convolution powers for a very special class of functions on $\mathbb{Z}^d$ for which local limit theorems can be established where the attractors are products of the one-dimensional attractors of \cite{RSC15}. How such local limit theorems fit within a broader theory for convolution powers -- one which would also encompass the results of the present article -- remains an open question.\\


\noindent As we will see below, an important part of the proof of Theorem \ref{thm:Attractor} makes use of the generalized polar-coordinate integration formula developed in \cite{BR21}. Specifically, Theorem 1.3 of \cite{BR21} hands us a Radon measure $\sigma=\sigma_{\abs{Q}}$ on the unital level set $S=S_{\abs{Q}}=\{\xi\in\mathbb{R}^d:\abs{Q(\xi)}=1\}$ for which
\begin{equation*}
\int_{\mathbb{R}^d}f(\xi)\,d\xi=\int_S\int_0^\infty f(r^E\eta)r^{\mu-1}\,dr\,\sigma(d\eta)
\end{equation*}
for any $f\in L^1(\mathbb{R}^d)$ and $E\in\Exp(Q)=\Exp(\abs{Q})$. We note that, for  $E\in\Exp(Q)=\Exp(\abs{Q})$, $\abs{Q(r^E\eta)}=r\abs{Q(\eta)}=r$ for all $\eta\in S$ and $r>0$ and so it follows that, for each $\tau>0$,
\begin{equation*}\chi_{\mathcal{O}_\tau}(r^E\eta)=\chi_{[0,\tau)}(r)
\end{equation*} for all $\eta\in S$ and $r>0$. We note that, by virtue of the continuity of $Q$ and Proposition \ref{prop:CharofPosHom}, each $\mathcal{O}_\tau$ is relatively compact with compact closure $\overline{\mathcal{O}_\tau}=\{\xi\in\mathbb{R}^d:\abs{Q(\xi)}\leq \tau\}$. With these observations in mind, we find that, for any $f\in L^1_{\mbox{\tiny loc}}(\mathbb{R}^d)$, $E\in\Exp(Q)=\Exp(\abs{Q})$ and $\tau>0$,
\begin{equation}\label{eq:ApproxFormulaViaPolar}
\int_{\mathcal{O}_\tau}f(\xi)\,d\xi=\int_S\int_0^\tau f(r^E\eta) r^{\mu-1}\,dr\,\sigma(d\eta).
\end{equation}
When applied to $f(\xi)=e^{-itQ(\xi)-ix\cdot\xi}$, we obtain
\begin{equation*}
\int_{\mathcal{O}_{\tau}}e^{-itQ(\xi)-ix\cdot\xi}\,d\xi=\int_S\int_0^{\tau}e^{-itrQ(\eta)-ix\cdot(r^E\eta)}r^{\mu-1}\,dr\sigma(d\eta)
\end{equation*}
for $x\in\mathbb{R}^d$ and $\tau>0$. As we will see, the advantage of this representation is that it allows us to handle the oscillatory behavior of the integrand by studying its oscillations in the ``radial" direction $r$. In this way, we reduce the question of convergence to the analysis of an oscillatory integral in one dimension. For this reason, we will make use of the famous Van der Corput lemma which we state here for completeness (for a proof, see  \cite{stein_harmonic_1993}, \cite{RSC15}, or \cite{Bui21}).

\begin{proposition}[Van der Corput Lemma]\label{prop:VDC}
Given a compact interval $I=[a,b]$, let $f$ and $g$ be real-valued elements of $C^1(I)$, i.e., $f,g\in C^1(I;\mathbb{R})$, and denote by $f'$ and $g'$ their first derivatives, respectively. If $f'$ is monotonic on $I$ and, for $\lambda>0$, $\abs{f'(\theta)}\geq \lambda$ for all $\theta\in I$, then
\begin{equation*}
\abs{\int_a^b e^{-if(\theta)}g(\theta)\,d\theta}\leq 4\frac{\|g\|_{L^\infty(I)}+\|g'\|_{L^1(I)}}{\lambda}.
\end{equation*}
\end{proposition}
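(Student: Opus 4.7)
The plan is to prove the bound in two stages. First, I would establish the special case $g \equiv 1$, obtaining a bound of the form $|\int_a^b e^{-if(\theta)}\,d\theta| \leq 3/\lambda$; then I would reduce the general case to this special case via a Fubini-type argument based on the representation $g(\theta) = g(a) + \int_a^\theta g'(s)\,ds$.

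For the $g \equiv 1$ case, the key observation is that $1/f'$ is monotonic and of bounded variation. Since $f' \in C^0(I;\mathbb{R})$, $\abs{f'} \geq \lambda > 0$, and $I$ is connected, $f'$ has constant sign; hence $1/f'$ is continuous, has constant sign, satisfies $\|1/f'\|_{L^\infty(I)} \leq 1/\lambda$, and (being monotonic, as $f'$ is) has total variation $V(1/f') = \abs{1/f'(b) - 1/f'(a)} \leq 1/\lambda$. I would then write
\begin{equation*}
\int_a^b e^{-if(\theta)}\,d\theta = \int_a^b \frac{1}{-if'(\theta)}\,d\bigl(e^{-if(\theta)}\bigr)
\end{equation*}
and integrate by parts in the Riemann-Stieltjes (equivalently, Lebesgue-Stieltjes) sense. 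The boundary term is bounded in modulus by $2/\lambda$, and the remaining Stieltjes integral $\int_a^b e^{-if(\theta)}\,d(1/(-if'(\theta)))$ is bounded in modulus by the total variation $V(1/(-if')) = V(1/f') \leq 1/\lambda$, giving $|\int_a^b e^{-if(\theta)}\,d\theta| \leq 3/\lambda$.

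For the general case, applying the fundamental theorem of calculus to $g$ and then Fubini's theorem yields
\begin{equation*}
\int_a^b e^{-if(\theta)}g(\theta)\,d\theta = g(a)\int_a^b e^{-if(\theta)}\,d\theta + \int_a^b g'(s)\left[\int_s^b e^{-if(\theta)}\,d\theta\right]ds.
\end{equation*}
The bound from the first stage applies to every inner integral $\int_s^b e^{-if(\theta)}\,d\theta$ since $f'$ remains monotonic with $|f'|\geq \lambda$ on each subinterval $[s,b]$, yielding $|\int_s^b e^{-if(\theta)}\,d\theta| \leq 3/\lambda$ uniformly in $s \in [a,b]$. Hence
\begin{equation*}
\left|\int_a^b e^{-if(\theta)}g(\theta)\,d\theta\right| \leq \frac{3}{\lambda}\bigl(|g(a)| + \|g'\|_{L^1(I)}\bigr) \leq \frac{4}{\lambda}\bigl(\|g\|_{L^\infty(I)} + \|g'\|_{L^1(I)}\bigr).
\end{equation*}

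The main obstacle is justifying the integration by parts in the first stage: since $f$ is only assumed to be $C^1$, $f'$ need not be differentiable and we cannot classically compute $(1/f')'$. The monotonicity hypothesis on $f'$ is exactly what rescues the argument, ensuring $1/f'$ is continuous of bounded variation so that the Lebesgue-Stieltjes measure $d(1/f')$ is a well-defined finite signed Borel measure with total mass at most $1/\lambda$, against which $e^{-if} \in C^1(I)$ can be paired and the standard integration-by-parts formula applies. If one prefers to bypass Stieltjes machinery entirely, an alternative is to approximate $f$ by $C^2$ functions $f_\varepsilon$ with $f_\varepsilon'$ monotonic and $|f_\varepsilon'| \geq \lambda/2$, establish the inequality for each $f_\varepsilon$ via classical integration by parts, and pass to the limit using $f_\varepsilon \to f$ in $C^1(I)$.
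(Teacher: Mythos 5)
Your argument is correct and the constants check out: the first stage gives $3/\lambda$ (boundary term at most $2/\lambda$ plus a Stieltjes term bounded by the total variation $V(1/f')\le 1/\lambda$, using that $f'$ has constant sign so that $1/f'$ is monotone with $\|1/f'\|_{L^\infty(I)}\le 1/\lambda$), and the Fubini step then yields $\tfrac{3}{\lambda}\bigl(|g(a)|+\|g'\|_{L^1(I)}\bigr)\le \tfrac{4}{\lambda}\bigl(\|g\|_{L^\infty(I)}+\|g'\|_{L^1(I)}\bigr)$. Note that the paper does not prove Proposition \ref{prop:VDC} at all: it is recorded for completeness with a pointer to \cite{stein_harmonic_1993}, \cite{RSC15}, and \cite{Bui21}, so there is no in-paper proof to compare against; what you have written is essentially the classical non-stationary-phase argument of those references. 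The only real difference is cosmetic: the standard treatment of the amplitude integrates by parts against $F(\theta)=\int_a^\theta e^{-if(s)}\,ds$, using the uniform bound $|F|\le 3/\lambda$ from the first stage, whereas you expand $g(\theta)=g(a)+\int_a^\theta g'(s)\,ds$ and swap the order of integration; the two computations are interchangeable and give the same constant. Your use of the Lebesgue--Stieltjes measure $d(1/f')$ to handle the fact that $f$ is only $C^1$ is the right fix and is precisely where the monotonicity hypothesis enters. One small caveat on your optional mollification alternative: with only $|f_\varepsilon'|\ge\lambda/2$ the limiting first-stage bound degrades to $6/\lambda$, which no longer fits under the stated constant; you would instead want $|f_\varepsilon'|\ge(1-\delta)\lambda$ and let $\delta\to 0$ (in fact, mollifying the monotone, single-signed $f'$ preserves the lower bound $\lambda$ outright). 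This does not affect your main proof, which is complete as written.
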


\begin{proof}[Proof of Theorem \ref{thm:Attractor}]
We first prove Item \ref{item:ImagHomAttractorConvergeUniformly}. Let $\epsilon>0$, $t_0>0$ and $K\subseteq\mathbb{R}^d$ be a fixed compact set. By virtue of Proposition \ref{prop:Cauchy}, it suffices to show that there is a constant $\tau_0>0$ for which
\begin{equation*}
\abs{\int_{\mathcal{O}_{\tau_2}\setminus\mathcal{O}_{\tau_1}}\exp(-itQ(\xi)-ix\cdot\xi)\,d\xi}<\epsilon
\end{equation*}
for all $t\geq t_0$, $x\in K$ and $\tau_2\geq \tau_1\geq \tau_0$. Fix $E\in\Exp(Q)=\Exp(\abs{Q})$ and observe that, by virtue of \eqref{eq:ApproxFormulaViaPolar}, for $0\leq \tau_1\leq \tau_2$, $t>0$ and $x\in \mathbb{R}^d$,
\begin{eqnarray*}
\int_{\mathcal{O}_{\tau_2}\setminus\mathcal{O}_{\tau_1}}\exp(-itQ(\xi)-ix\cdot\xi)\,d\xi&=& \int_{S}\int_{\tau_1}^{\tau_2}\exp\left(-itQ(r^{E}\eta)-ix\cdot(r^{E}\eta)\right)r^{\mu-1}\,dr\,\sigma_P(d\eta)\\
&=& \int_S\int_{\tau_1}^{\tau_2}\exp\left(-itrQ(\eta)-ix\cdot(r^{E}\eta)\right)r^{\mu-1}\,dr\,\sigma_P(d\eta)\\
&=&\int_S I_{\tau_1,\tau_2,t,x}(\eta)\sigma_P(d\eta)
\end{eqnarray*}
where
\begin{equation*}
I_{\tau_1,\tau_2,t,x}(\eta)=\int_{\tau_1}^{\tau_2}\exp\left(-itrQ(\eta)-ix\cdot(r^{E}\eta)\right)r^{\mu-1}\,dr
\end{equation*}
for each $\eta\in S$. Upon making the change of variables $r=\theta^{1/\mu}$ and noting that $d\theta/\mu=r^{\mu-1}\,dr$, we have
\begin{equation*}
I_{\tau_1,\tau_2,t,x}(\eta)=\frac{1}{\mu}\int_{\tau_1^\mu}^{\tau_2^\mu}e^{if_{t,x,\eta}(\theta)}\,d\theta
\end{equation*}
where
\begin{equation*}
f_{t,x,\eta}(\theta)=-\left(tQ(\eta)\theta^{1/\mu}+x\cdot (\theta^F\eta)\right),
\end{equation*}
$F=E/\mu$. For simplicity, we write $f=f_{t,x,\eta}$ and observe that
\begin{equation}\label{eq:AttractorProof1}
\partial_{\theta}f(\theta)=-\left(\frac{t}{\mu}Q(\eta)\theta^{1/\mu-1}+x\cdot\left(\theta^{F-I}F\eta\right)\right)
\end{equation}
and
\begin{equation}\label{eq:AttractorProof2}
\theta^2\partial_\theta^2 f(\theta)=-\left(\frac{t}{\mu}\left(\frac{1}{\mu}-1\right)Q(\eta)\theta^{1/\mu}+x\cdot (\theta^F(F-I)F\eta\right)
\end{equation}
for all $t>0$, $x\in\mathbb{R}^d$, $\eta\in S$, and $\theta>0$. Using the estimates
\begin{equation*}
\abs{x\cdot (\theta^F(F-I)F\eta)}\leq \abs{x}\abs{(F-I)F\eta}\|\theta^F\|\hspace{.5cm}\mbox{and}\hspace{.5cm}\abs{x\cdot (\theta^{F-I}F\eta)}\leq \abs{x}\abs{F\eta}\|\theta^F\|\theta^{-1},
\end{equation*}
the compactness of $K$ and $S$, and our hypothesis that $\mu<1$, an appeal to Corollary \ref{cor:LargeTimeContractingGroup} (with $\alpha=\mu$) hands us $\tau_0>0$ for which
\begin{equation*}
\abs{x\cdot(\theta^F(F-I)F\eta)}\leq \frac{t_0}{2\mu}\left(\frac{1}{\mu}-1\right)\theta^{1/\mu}
\end{equation*}
and
\begin{equation*}
\abs{x\cdot(\theta^{F-I}F\eta)}\leq \frac{t_0}{2\mu}\theta^{1/\mu-1}
\end{equation*}
for all $x\in K$, $\eta\in S$ and $\theta\geq \tau_0^\mu$. Using these inequalities and the fact that $\abs{Q(\eta)}=1$ for all $\eta\in S$, from \eqref{eq:AttractorProof1} and \eqref{eq:AttractorProof2} it follows that, for all $t\geq t_0$, $x\in K$, $\eta\in S$ and $\theta\geq \tau_0^\mu$, 
\begin{equation*}
\abs{\theta^2\partial_\theta^2 f(\theta)}\geq \frac{t}{2\mu}\left(\frac{1}{\mu}-1\right)\theta^{1/\mu}>0
\end{equation*}
and
\begin{equation*}
\abs{\partial_\theta f(\theta)}\geq \frac{t}{2\mu}\theta^{1/\mu-1};
\end{equation*}
in particular, $\theta\mapsto \partial_\theta f(\theta)$ is monotonic on $[\tau_0^\mu,\infty)$. By further enlarging $\tau_0$ so that $\theta^{1/\mu-1}\geq 16\,\sigma(S)/(\epsilon t_0)$ for all $\theta\geq \tau_0^\mu$, if necessary, we have
\begin{equation*}
\abs{\partial_\theta f(\theta)}\geq \frac{t}{2\mu}\theta^{1/\mu-1}\geq 8\frac{\sigma(S)}{\mu\epsilon}\frac{t}{t_0}\geq 8\frac{\sigma(S)}{\mu\epsilon}
\end{equation*}
for all $t\geq t_0$, $x\in K$, $\eta\in S$ and $\theta\geq \tau_0^\mu$. By an appeal to Proposition \ref{prop:VDC} (with $f(\theta)$ as above and $g(\theta)\equiv 1$), we obtain
\begin{equation*}
\abs{I_{\tau_1,\tau_2,t,x}(\eta)}\leq\frac{1}{\mu}\frac{4(1+0)}{ 8\sigma(S)/\mu\epsilon}= \frac{1}{2}\frac{\epsilon}{\sigma(S)}
\end{equation*}
for all $t\geq t_0$, $x\in K$, $\eta\in S$ and $\tau_2\geq\tau_1\geq \tau_0$.
Consequently,
\begin{equation*}
\abs{\int_{\mathcal{O}_{\tau_2}\setminus\mathcal{O}_{\tau_1}}\exp(-itQ(\xi)-ix\cdot\xi)\,d\xi}\leq\int_S\abs{I_{\tau_1,\tau_2,t,x}(\eta)}\,\sigma(d\eta)\leq \frac{1}{2}\int_S \frac{\epsilon}{\sigma(S)}\,\sigma(d\eta)<\epsilon
\end{equation*}
for all $t\geq t_0$, $x\in K$ and $\tau_2\geq \tau_1\geq \tau_0$, as was asserted.

The convergence statement of Item \ref{item:ImagHomAttractorConverge} follows immediately from Item \ref{item:ImagHomAttractorConvergeUniformly}. Since the approximants
\begin{equation*}
(t,x)\mapsto \frac{1}{(2\pi)^d}\int_{\mathcal{O}_\tau}e^{-itQ(\xi)-ix\cdot\xi}\,d\xi
\end{equation*}
of $H_{iQ}^t(x)$ are all continuous and, by virtue of Item \ref{item:ImagHomAttractorConvergeUniformly}, converge uniformly on all compact subsets of $(0,\infty)\times \mathbb{R}^d$, it follows that $H_{iQ}^t(x)$ is continuous on its domain. This completes the proof of Item \ref{item:ImagHomAttractorConverge}.

Now, for an arbitrary $E\in\Exp(Q)=\Exp(|Q|)$, observe that, for each $t>0$, $x\in\mathbb{R}^d$ and $\tau>0$, 
\begin{eqnarray}\label{eq:ImagHomAttractorCOV}\nonumber
\int_{\mathcal{O}_\tau}e^{-itQ(\xi)-ix\cdot\xi}\,d\xi&=&\int_{\mathcal{O}_\tau}e^{-iQ(t^E\xi)-ix\cdot\xi}\,d\xi\\\nonumber
&=&\int_{t^E(\mathcal{O}_{\tau})}e^{iQ(\zeta)-ix\cdot t^{-E}\zeta}\det(t^{-E})\,d\zeta\\
&=&t^{-\mu}\int_{\mathcal{O}_{t\tau}}e^{iQ(\zeta)-i(t^{-E^*}x)\cdot\zeta}\,d\zeta
\end{eqnarray}
where we have made the change of variables $\zeta=t^E\xi$ and noted that $\det(t^{-E})=t^{-\tr E}=t^{-\mu}$ and
\begin{equation*}
t^E(\mathcal{O}_\tau)=\{t^E\xi: \abs{Q(\xi)}<\tau\}=\{t^E\xi:\abs{Q(t^E\xi)}=t\abs{Q(\xi)}< t\tau\}=\{\zeta:\abs{Q(\zeta)}< t\tau\}=\mathcal{O}_{t\tau}.
\end{equation*}
Consequently
\begin{eqnarray*}
H_{iQ}^t(x)&=&\frac{1}{(2\pi)^d}\lim_{\tau\to\infty}\int_{\mathcal{O}_\tau}e^{-itQ(\xi)-ix\cdot\xi}\,d\xi\\
&=&\frac{t^{-\mu}}{(2\pi)^d}\lim_{\tau\to\infty}\int_{\mathcal{O}_{t\tau}}e^{-iQ(\zeta)-i(t^{-E^*}x)\cdot \zeta}\,d\zeta\\
&=&t^{-\mu}H_{iQ}^1(t^{-E^*}x)
\end{eqnarray*}
for each $t>0$ and $x\in\mathbb{R}^d$; this is precisely the assertion made in Item \ref{item:ImagHomAttractorScale}.

To prove the final assertion, we first assume that $Q$ is a single-signed function and write $s=\sign(Q(\xi))$ for any non-zero $\xi$. Of course, this is always the case when $d>1$ because $\mathbb{R}^d\setminus \{0\}$ is connected and $Q(\xi)$ is continuous and non-vanishing for $\xi\neq 0$. By virtue of Item \ref{item:ImagHomAttractorScale} and \eqref{eq:ApproxFormulaViaPolar}, we have
\begin{eqnarray*}
H_{iQ}^t(0)&=&t^{-\mu}H_{iQ}^1(0)\\
&=&\frac{t^{-\mu}}{(2\pi)^d}\lim_{\tau\to\infty}\int_{\mathcal{O}_\tau}e^{-iQ(\xi)}\,d\xi\\
&=&\frac{t^{-\mu}}{(2\pi)^d}\lim_{\tau\to\infty}\int_S\int_0^\tau e^{-irQ(\eta)}r^{\mu-1}\,dr\,\sigma(d\eta)\\
&=&\frac{t^{-\mu}}{(2\pi)^d}\sigma(S)\lim_{\tau\to\infty}\int_0^\tau e^{-isr}r^{\mu-1}\,dr\\
&=&\frac{t^{-\mu}}{(2\pi)^d}\left(\frac{\sigma(S)}{\mu}\right)\lim_{\tau\to\infty}\int_0^{\tau^\mu}e^{-is\theta^{1/\mu}}\,d\theta
\end{eqnarray*}
for all $t>0$ where we have made the change of variables $r=\theta^{1/\mu}$. Given that $1/\mu>1$, a routine exercise in contour integration shows that
\begin{equation*}
\lim_{\tau\to\infty}\int_0^{\tau^\mu}e^{-is\theta^{1/\mu}}\,d\theta=\int_0^\infty e^{-is\theta^{1/\mu}}\,d\theta=e^{-is\pi/(2/\mu)}\Gamma\left(1+\frac{1}{1/\mu}\right)=e^{-is\mu\pi/2}\Gamma(1+\mu).
\end{equation*}
Upon noting that $m(B_{\abs{Q}})=\sigma(S)/\mu$ in view of Item 3 of Theorem 1.3 of \cite{BR21}, we conclude that
\begin{equation*}
H_{iQ}^t(0)=\frac{t^{-\mu}}{(2\pi)^d}m(B_{\abs{Q}})\Gamma(1+\mu) e^{-is\mu\pi/2}
\end{equation*}
from which it follows that
\begin{equation*}
\Re(H_{iQ}^t(0))=\frac{t^{-\mu}}{(2\pi)^d}m(B_{\abs{Q}})\Gamma(1+\mu)\cos\left(\frac{\mu\pi}{2}\right)
\end{equation*}
for all $t>0$. 

As pointed out in the preceding paragraph, $Q$ can only take on both positive and negative values when $d=1$. In this setting, it is straightforward to verify that
\begin{equation*}
Q(\xi)=\begin{cases}
s_+ q_+ |\xi|^{1/\mu} & \xi\geq 0\\
s_- q_- |\xi|^{1/\mu} & \xi<0
\end{cases}
\end{equation*}
for all $\xi\in\mathbb{R}$ where $q_+=\abs{Q(1)}$, $q_-=\abs{Q(-1)}$, $s_+=\sign(Q(1))$, and $s_-=\sign(Q(-1))$. Using this representation, a direct computation shows that
\begin{equation*}
H_{iQ}^t(0)=\frac{t^{-\mu}}{2\pi}\Gamma(1+\mu)\left(\frac{e^{-i(s_+)\mu\pi/2}}{q_+^{\mu}}+\frac{e^{-i(s_-)\mu\pi/2}}{q_-^{\mu}}\right)
\end{equation*}
for all $t>0$. Upon noting that
\begin{equation*}
m(B_{\abs{Q}})=m\left(\{\xi\in\mathbb{R}:\abs{Q(\xi)}<1\}\right)=\frac{1}{q_+^\mu}+\frac{1}{q_-^\mu},
\end{equation*}
we immediately obtain the desired identity in this case as well.
\end{proof}

\section{Local Limit Theorems}\label{sec:LLT}

\noindent In this section, we establish local limit theorems for a class of complex-valued functions on $\mathbb{Z}^d$. To this end, we shall fix $\phi\in\mathcal{S}_d\subseteq \ell^1(\mathbb{Z}^d)$ which is suitably normalized so that $\sup_{\xi}|\widehat{\phi}|=1$ and satisfies various hypotheses concerning the nature of the points in $\Omega(\phi)=\{\xi\in\mathbb{T}^d:|\widehat{\phi}(\xi)|=1\}$ described below. All of our results include the assumption that every point of $\Omega(\phi)$ is either of positive homogeneous type or imaginary homogeneous type for $\widehat{\phi}$.  Our main result, Theorem \ref{thm:LLTMain}, allows for $\Omega(\phi)$, under certain conditions, to contain a mixture of points positive homogeneous and imaginary homogeneous type for $\widehat{\phi}$. This theorem partially extends Theorem 1.3 of \cite{RSC15} to the $d$-dimensional setting and it extends Theorem 1.6 of \cite{RSC17} to include points of imaginary homogeneous type. As we will see, Theorem \ref{thm:LLTIntro} follows easily from Theorem \ref{thm:LLTMain}.\\

\noindent In view of \eqref{eq:FTConvolution}, our analysis will be done by studying $\widehat{\phi}(\xi)$ locally at each point $\xi_0\in\Omega(\phi)\subseteq\mathbb{T}^d$. For these local studies, it is helpful (though not essential) for the points $\xi_0\in \Omega(\phi)$ to live in the interior of $\mathbb{T}^d$, which is not always the case (e.g., simple random walk on $\mathbb{Z}^d$). As we previously discussed, our hypothesis that all points of $\Omega(\phi)$ are either of positive homogeneous or imaginary homogeneous type for $\widehat{\phi}$ ensures that $\Omega(\phi)$ is a finite set and so it follows that, for some $\xi_\phi\in\mathbb{R}^d$,
\begin{equation*}
\mathbb{T}^d_\phi:=\mathbb{T}^d+\xi_\phi
\end{equation*}
contains $\Omega(\phi)$ in its interior (as a subset of $\mathbb{R}^d)$. Using this representation of the torus, we have
\begin{equation}\label{eq:FTConvolutionPhi}
\phi^{(n)}(x)=\frac{1}{(2\pi)^d}\int_{\mathbb{T}_{\phi}^d}\widehat{\phi}(\xi)^n e^{-ix\cdot\xi}\,d\xi
\end{equation}
for all $x\in\mathbb{R}^d$ and $n\in\mathbb{N}_+$. The following lemma addresses the contribution from points $\xi_0\in\Omega(\phi)$ which are of positive homogeneous type for $\widehat{\phi}$. Though this lemma and its proof can be found, essentially, as Lemma 4.3 in \cite{RSC17}, the proof we give here is simplified and allows us to highlight the ingredients which carry over easily to the imaginary homogeneous setting and those which do not.

\begin{lemma}\label{lem:LocalLimitPosHom}
Let $\xi_0\in\Omega(\phi)$ be of positive homogeneous type for $\widehat{\phi}$ with associated $\mu=\mu_{\xi_0}$, $\alpha=\alpha_{\xi_0}$ and $P=R+iQ$ where $R=R_{\xi_0}$ and $Q=Q_{\xi_0}$. Then, for any $\epsilon>0$, there exists an open neighborhood of $\mathcal{U}_{\xi_0}\subseteq\mathbb{T}_\phi^d$ of $\xi_0$, which can be taken as small as desired, and a natural number $N$ for which
\begin{equation*}
\abs{\frac{1}{(2\pi)^d}\int_{\mathcal{U}_{\xi_0}}\widehat{\phi}(\xi)^ne^{-ix\cdot\xi}\,d\xi-\widehat{\phi}(\xi_0)^ne^{-ix\cdot\xi_0}H_P^n(x-n\alpha)}<\epsilon n^{-\mu}
\end{equation*}
for all $n\geq N$ and $x\in\mathbb{Z}^d$.
\end{lemma}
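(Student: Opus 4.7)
The plan is to localize near $\xi_0$, apply the expansion \eqref{eq:GammaExpansion}, and then use the anisotropic rescaling $\zeta = n^{-E}\eta$ to both extract the factor $n^{-\mu}$ and transfer the $o(R)$ behavior of $\widetilde{P}$ into a vanishing perturbation. Fix $E\in\Exp(P)=\Exp(R)\cap\Exp(Q)$ with $\tr E=\mu$, and choose a convex open $\mathcal{U}_{\xi_0}\subseteq\mathbb{T}_\phi^d$ containing $\xi_0$ on which $\Gamma_{\xi_0}(\zeta)=i\alpha\cdot\zeta-P(\zeta)-\widetilde{P}(\zeta)$ is valid after setting $\mathcal{U}'=\mathcal{U}_{\xi_0}-\xi_0$. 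Substituting $\xi=\xi_0+\zeta$ and factoring $\widehat{\phi}(\xi_0)^n e^{-ix\cdot\xi_0}$ (a number of modulus $1$) out of the integral reduces the claim to
\begin{equation*}
|I_n(y)-H_P^n(y)|<\epsilon\,n^{-\mu},\qquad y:=x-n\alpha,
\end{equation*}
uniformly in $y\in\mathbb{R}^d$, where $I_n(y)=(2\pi)^{-d}\int_{\mathcal{U}'}e^{-nP(\zeta)-n\widetilde{P}(\zeta)}e^{-iy\cdot\zeta}\,d\zeta$. I would split the difference as $I_n-H_P^n=II_n-III_n$, with
\begin{equation*}
II_n(y)=\frac{1}{(2\pi)^d}\int_{\mathcal{U}'}e^{-nP(\zeta)}\bigl(e^{-n\widetilde{P}(\zeta)}-1\bigr)e^{-iy\cdot\zeta}\,d\zeta,\quad III_n(y)=\frac{1}{(2\pi)^d}\int_{\mathbb{R}^d\setminus\mathcal{U}'}e^{-nP(\zeta)}e^{-iy\cdot\zeta}\,d\zeta,
\end{equation*}
and bound each by $\epsilon\,n^{-\mu}/2$ uniformly in $y$.

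The tail $III_n$ is the easy half. Since $R$ is continuous, positive homogeneous, and vanishes only at $0$, $\mathcal{U}'$ contains $\{R<r_0\}$ for some $r_0>0$. The change of variables $\zeta=n^{-E}\eta$, using $R(n^{-E}\eta)=n^{-1}R(\eta)$ and $|\det n^{-E}|=n^{-\mu}$, gives
\begin{equation*}
|III_n(y)|\leq\frac{n^{-\mu}}{(2\pi)^d}\int_{\{R(\eta)\geq nr_0\}}e^{-R(\eta)}\,d\eta,
\end{equation*}
which is $o(n^{-\mu})$ by dominated convergence since $e^{-R}\in L^1(\mathbb{R}^d)$ from Proposition \ref{prop:PosHomKernel}.

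The heart of the argument is the bound on $II_n$. By Proposition \ref{prop:Subhomequivtolittleoh}, $\widetilde{R}(\zeta)=o(R(\zeta))$ as $\zeta\to 0$, so after further shrinking $\mathcal{U}'$ I may assume $|\widetilde{R}(\zeta)|\leq R(\zeta)/2$ throughout $\mathcal{U}'$. The same rescaling $\zeta=n^{-E}\eta$ produces
\begin{equation*}
II_n(y)=\frac{n^{-\mu}}{(2\pi)^d}\int_{n^{E}\mathcal{U}'}e^{-P(\eta)}\bigl(e^{-n\widetilde{P}(n^{-E}\eta)}-1\bigr)e^{-i(n^{-E^*}y)\cdot\eta}\,d\eta,
\end{equation*}
which I split across a fixed compact set $K\subseteq\mathbb{R}^d$ and its complement. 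On $K$, the subhomogeneity of $\widetilde{P}$ with respect to $E$ (applied with $t=1/n$) gives $|n\widetilde{P}(n^{-E}\eta)|\to 0$ uniformly for $\eta\in K$, so the elementary inequality $|e^w-1|\leq |w|e^{|w|}$ bounds the integrand pointwise by $C\epsilon\,e^{-R(\eta)}$ there for $n$ large. Off $K$, the bound $|\widetilde{R}|\leq R/2$ on $\mathcal{U}'$ rescales to $n\widetilde{R}(n^{-E}\eta)\geq -R(\eta)/2$ for $\eta\in n^E\mathcal{U}'$, yielding the $y$- and $n$-independent majorant
\begin{equation*}
|e^{-P(\eta)}||e^{-n\widetilde{P}(n^{-E}\eta)}-1|\leq e^{-R(\eta)/2}+e^{-R(\eta)},
\end{equation*}
whose integral over $\mathbb{R}^d\setminus K$ is made $<\epsilon$ by choosing $K$ large. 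Combining the two regions and then choosing $n$ large delivers $|II_n(y)|\leq \epsilon\,n^{-\mu}/2$ uniformly in $y$, and the triangle inequality completes the proof.

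The decisive difficulty lies in Step $II_n$: one must convert the purely qualitative $o(R)$ behavior of $\widetilde{P}$ at $0$ into a quantitative bound that is uniform in $y\in\mathbb{R}^d$ and carries the correct $n^{-\mu}$ rate. This is why the anisotropic scaling $\zeta=n^{-E}\eta$ (rather than an isotropic one) is forced upon us, and why one needs both the small-$\eta$ regime (exploiting subhomogeneity) and the large-$\eta$ regime (exploiting the Gaussian-type decay of $e^{-R/2}$) to be treated separately before being recombined.
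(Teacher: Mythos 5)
Your proof is correct and follows essentially the same route as the paper: center at $\xi_0$, rescale anisotropically by $n^{-E}$ to produce the factor $n^{-\mu}$, use $|\widetilde{R}|\leq R/2$ on the shrunken neighborhood to get the integrable majorant, let subhomogeneity kill the perturbation $n\widetilde{P}(n^{-E}\cdot)$, and absorb the tail against $\int e^{-R}$. The only cosmetic difference is that the paper invokes the dominated convergence theorem (pointwise convergence plus the majorant $2e^{-R/2}$) where you give the equivalent quantitative compact-set/tail splitting.
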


\begin{proof}
Let $\epsilon>0$ be fixed and, for each $\tau>0$, define $\mathcal{O}_\tau=\{\xi\in\mathbb{R}^d:R(\xi)<\tau\}$. Upon writing $\widetilde{R}=\widetilde{R}_{\xi_0}$, $\widetilde{Q}=\widetilde{Q}_{\xi_0}$ and $\widetilde{P}=\widetilde{R}+i\widetilde{Q}$, we have
\begin{equation*}
\Gamma(\xi):=\Gamma_{\xi_0}(\xi)=i\alpha\cdot\xi-P(\xi)-\widetilde{P}(\xi)
\end{equation*}
on some convex neighborhood $\mathcal{U}$ of $0$.  With the help of Proposition \ref{prop:Subhomequivtolittleoh}, choose $\delta>0$ for which $\mathcal{O}_\delta\subseteq\mathcal{U}$, $\mathcal{U}_{\xi_0}:=\mathcal{O}_\delta+\xi_0$ is as small as desired and (minimally) ensures that $\mathcal{U}_{\xi_0}\subseteq\mathbb{T}^d_\phi$, and
\begin{equation}\label{eq:LocalLimitPosHom1}
\abs{\widetilde{R}(\xi)}<R(\xi)/2
\end{equation}
whenever $\xi\in\mathcal{O}_{\delta}$. With this neighborhood $\mathcal{U}_{\xi_0}$ of $\xi_0$, we define
\begin{equation*}
\mathcal{E}=\mathcal{E}_{n,x}=n^{\mu}\abs{\frac{1}{(2\pi)^d}\int_{\mathcal{U}_{\xi_0}}\widehat{\phi}(\xi)^ne^{-ix\cdot\xi}\,d\xi-\widehat{\phi}(\xi_0)^ne^{-ix\cdot\xi_0}H_P^n(x-n\alpha)}
\end{equation*}
for $n\in\mathbb{N}_+$ and $x\in\mathbb{Z}^d$. It is clear that, to prove the lemma, we must find $N\in\mathbb{N}_+$ for which $\mathcal{E}<\epsilon$ for all $x\in\mathbb{Z}^d$ and $n\geq N$. To this end, we first make the change of vaiables $\xi\mapsto \xi+\xi_0$ to see that
\begin{eqnarray}\label{eq:LocalLimitPosHom2}\nonumber
\int_{\mathcal{U}_{\xi_0}}\widehat{\phi}(\xi)^ne^{-ix\cdot\xi}\,d\xi&=& \int_{\mathcal{O}_{\delta}}\widehat{\phi}(\xi+\xi_0)^ne^{-ix\cdot(\xi+\xi_0)}\,d\xi\\\nonumber
&=&\int_{\mathcal{O}_\delta}\widehat{\phi}(\xi_0)^ne^{n\Gamma(\xi)}e^{-ix\cdot\xi_0}e^{-ix\cdot\xi}\,d\xi\\
&=&\widehat{\phi}(\xi_0)^ne^{-ix\cdot\xi_0}\int_{\mathcal{O}_\delta}e^{-n(P(\xi)+\widetilde{P}(\xi))}e^{-i(x-n\alpha)\cdot\xi}\,d\xi
\end{eqnarray}
for all $n\in\mathbb{N}_+$ and $x\in\mathbb{Z}^d$. Consequently,
\begin{equation*}
\mathcal{E}=n^{\mu}\abs{\frac{1}{(2\pi)^d}\int_{\mathcal{O}_\delta}e^{-nP(\xi)-n\widetilde{P}(\xi)}e^{-i(x-n\alpha)\cdot\xi}\,d\xi-H_P^n(x-n\alpha)}
\end{equation*}
for $n\in\mathbb{N}_+$ and $x\in\mathbb{Z}^d$. Now, let $E\in \Exp(R)\cap\Exp(Q)=\Exp(P)$ be as given by Definition \ref{def:Types} and observe that
\begin{equation*}
H^n_P(x-n\alpha)=n^{-\mu}H_P^1(y)
\end{equation*}
where 
\begin{equation*}
y=y_{n,x}=n^{-E^*}(x-n\alpha)
\end{equation*}
by virtue of Proposition \ref{prop:PosHomKernel}. With this $E\in\Exp(P)$, the change of variables $\xi\mapsto n^{-E}\xi$ yields
\begin{equation*}
\int_{\mathcal{O}_\delta}e^{-nP(\xi)-n\widetilde{P}(\xi)}e^{-i(x-n\alpha)\cdot\xi}\,d\xi=n^{-\mu}\int_{\mathcal{O}_{n\delta}}e^{-P(\xi)-n\widetilde{P}(n^{-E}\xi)}e^{-iy\cdot\xi}\,d\xi
\end{equation*}
for all $n\in\mathbb{N}_+$ and $x\in\mathbb{Z}^d$ where we have made use of the fact that $n^{E}(\mathcal{O}_\delta)=\mathcal{O}_{n\delta}$. Consequently,
\begin{equation*}
\mathcal{E}=\abs{\frac{1}{(2\pi)^d}\int_{\mathcal{O}_{n\delta}}e^{-P(\xi)-n\widetilde{P}(n^{-E}\xi)}e^{-iy\cdot\xi}\,d\xi-H_P^1(y)}
\end{equation*}
for all $n\in\mathbb{N}_+$ and $x\in\mathbb{Z}^d$ where $y=n^{-E^*}(x-n\alpha)$. In view of Proposition \ref{prop:PosHomKernel} (applied to the positive homogeneous function $R$), let $\tau_0>0$ be such that
\begin{equation*}
\frac{1}{(2\pi)^d}\int_{\mathbb{R}^d\setminus\mathcal{O}_{\tau_0}}e^{-R(\xi)}\,d\xi<\epsilon/2.
\end{equation*}
Thus, for any $\tau\geq \tau_0$,
\begin{eqnarray}\label{eq:LocalLimitPosHom3}\nonumber
\abs{\frac{1}{(2\pi)^d}\int_{\mathcal{O}_{\tau}}e^{-P(\xi)}e^{-iy\cdot\xi}\,d\xi-H_P^1(y)}&=&\abs{\frac{1}{(2\pi)^d}\int_{\mathbb{R}^d\setminus\mathcal{O}_{\tau}}e^{-P(\xi)}e^{-iy\cdot\xi}\,d\xi}\\\nonumber
&\leq&\frac{1}{(2\pi)^d}\int_{\mathbb{R}^d\setminus\mathcal{O}_{\tau_0}}e^{-R(\xi)}\,d\xi\\
&<&\epsilon/2
\end{eqnarray}
for all $y\in\mathbb{R}^d$. Observe that
\begin{eqnarray}\label{eq:LocalLimitPosHom4}\nonumber
\lefteqn{\hspace{-6cm}\abs{\frac{1}{(2\pi)^d}\int_{\mathcal{O}_{n\delta}}e^{-P(\xi)-n\widetilde{P}(n^{-E}\xi)}e^{-iy\cdot\xi}\,d\xi-\frac{1}{(2\pi)^d}\int_{\mathcal{O}_{n\delta}}e^{-P(\xi)}e^{-iy\cdot\xi}\,d\xi}}\\\nonumber
\hspace{6cm}&=&\frac{1}{(2\pi)^d}\abs{\int_{\mathcal{O}_{n\delta}}e^{-P(\xi)}\left(e^{-n\widetilde{P}(n^{-E}\xi)}-1\right)e^{-iy\cdot\xi}\,d\xi}\\
&\leq&\frac{1}{(2\pi)^d}\int_{\mathbb{R}^d}e^{-R(\xi)}\abs{e^{-n\widetilde{P}(n^{-E}\xi)}-1}\chi_{\mathcal{O}_{n\delta}}(\xi)\,d\xi
\end{eqnarray}
for all $y\in\mathbb{R}^d$ and $n\in\mathbb{N}_+$; here, for a measurable set $A$ of $\mathbb{R}^d$, $\chi_A$ denotes the characteristic function of $A$. Upon noting that $n^{-E}\xi\in\mathcal{O}_\delta$ whenever $\xi\in\mathcal{O}_{n\delta}$, we see that
\begin{equation*}
\abs{e^{-n\widetilde{P}(n^{-E}\xi)}-1}\leq e^{-\widetilde{R}(n^{-E}\xi)}+1\leq e^{nR(n^{-E}\xi)/2}+1=e^{R(\xi)/2}+1\leq 2e^{R(\xi)/2}
\end{equation*}
whenever $n^{-E}\xi\in\mathcal{O}_{n\delta}$ where we have used \eqref{eq:LocalLimitPosHom1}. Consequently,
\begin{equation*}
e^{-R(\xi)}\abs{e^{-n\widetilde{P}(n^{-E}\xi)}-1}\chi_{\mathcal{O}_{n\delta}}(\xi)\leq 2e^{-R(\xi)/2}
\end{equation*}
for all $\xi\in\mathbb{R}^d$ and $n\in\mathbb{N}_+$ and we note that $\xi\mapsto 2e^{-R(\xi)/2}\in L^1(\mathbb{R}^d)$ thanks to Proposition \ref{prop:PosHomKernel}. Also, by virtue of the subhomogeneity of $\widetilde{R}$ and $\widetilde{Q}$ with respect to $E$, we have
\begin{equation*}
\lim_{n\to\infty} n\widetilde{P}(n^{-E}\xi)=\lim_{n\to\infty} n\widetilde{R}(n^{-E}\xi)+i\lim_{n\to\infty}n\widetilde{Q}(n^{-E}\xi)=0
\end{equation*}
for each $\xi\in\mathcal{O}_{n\delta}$ and so, for every $\xi\in\mathbb{R}^d$,
\begin{equation*}
\lim_{n\to\infty}e^{-R(\xi)}\abs{e^{-n\widetilde{P}(n^{-E}\xi)}-1}\chi_{\mathcal{O}_{n\delta}}(\xi)=0.
\end{equation*}
We may therefore appeal to the dominated convergence theorem to produce a natural number $N\geq \tau_0/\delta$ so that, in view of \eqref{eq:LocalLimitPosHom4}, 
\begin{equation}\label{eq:LocalLimitPosHom5}
\abs{\frac{1}{(2\pi)^d}\int_{\mathcal{O}_{n\delta}}e^{-P(\xi)-n\widetilde{P}(n^{-E}\xi)}e^{-iy\cdot\xi}\,d\xi-\frac{1}{(2\pi)^d}\int_{\mathcal{O}_{n\delta}}e^{-P(\xi)}e^{-iy\cdot\xi}\,d\xi}<\epsilon/2
\end{equation}
for all $y\in\mathbb{R}^d$ and $n\geq N$. Upon noting that $n\delta\geq\tau_0$ whenever $n\geq N$, the estimates \eqref{eq:LocalLimitPosHom3}, \eqref{eq:LocalLimitPosHom4}, and \eqref{eq:LocalLimitPosHom5} together yield
\begin{eqnarray*}
\mathcal{E}&\leq& \abs{\frac{1}{(2\pi)^d}\int_{\mathcal{O}_{n\delta}}e^{-P(\xi)-n\widetilde{P}(n^{-E}\xi)}e^{-iy\cdot\xi}\,d\xi-\frac{1}{(2\pi)^d}\int_{\mathcal{O}_{n\delta}}e^{-P(\xi)}e^{-iy\cdot\xi}\,d\xi}\\
&&\hspace{6cm}+\abs{\frac{1}{(2\pi)^d}\int_{\mathcal{O}_{n\delta}}e^{-P(\xi)}e^{-iy\cdot\xi}\,d\xi-H_P^1(y)}<\epsilon
\end{eqnarray*}
for all $x\in\mathbb{Z}^d$ and $n\geq N$, as desired.
\end{proof}

\noindent From the preceding lemma and in view of Item \ref{item:PosHomKernelOnDiagional} of Proposition \ref{prop:PosHomKernel}, we immediately obtain the following useful lemma. The lemma appears as Lemma 3.3 of \cite{BR21} and Lemma 4.4 of \cite{RSC17}.

\begin{lemma}\label{lem:SupNormEstPositiveHom}
Let $\xi_0\in\Omega(\phi)$ be of positive homogeneous type for $\widehat{\phi}$ with associated homogeneous order $\mu_{\xi_0}>0$. Then there exists an open neighborhood $\mathcal{U}_{\xi_0}\subseteq\mathbb{T}_\phi^d$ of $\xi_0$, which can be taken as small as desired, and a constant $C_{\xi_0}$ for which
\begin{equation*}
\abs{\frac{1}{(2\pi)^d}\int_{\mathcal{U}_{\xi_0}}\widehat{\phi}(\xi)^ne^{-ix\cdot\xi}\,d\xi}\leq C_{\xi_0}n^{-\mu_{\xi_0}}
\end{equation*}
for all $x\in\mathbb{Z}^d$ and $n\in\mathbb{N}_+$.
\end{lemma}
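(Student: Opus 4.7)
The plan is to obtain Lemma \ref{lem:SupNormEstPositiveHom} as an essentially immediate corollary of the preceding two results: the local limit approximation of Lemma \ref{lem:LocalLimitPosHom} and the on-diagonal bound of Item \ref{item:PosHomKernelOnDiagional} of Proposition \ref{prop:PosHomKernel}. The idea is that Lemma \ref{lem:LocalLimitPosHom} says the localized integral is approximated (with error of size $n^{-\mu_{\xi_0}}$) by the attractor $\widehat{\phi}(\xi_0)^n e^{-ix\cdot\xi_0} H_P^n(x-n\alpha)$, and Proposition \ref{prop:PosHomKernel} tells us this attractor already has uniform sup-norm of order $n^{-\mu_{\xi_0}}$. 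Invoking the triangle inequality then yields a bound of the required form for all sufficiently large $n$, and a trivial argument handles the remaining small values of $n$.

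Concretely, I would proceed as follows. First, fix any $\epsilon > 0$ (say $\epsilon = 1$) and apply Lemma \ref{lem:LocalLimitPosHom} to obtain an open neighborhood $\mathcal{U}_{\xi_0} \subseteq \mathbb{T}^d_\phi$ of $\xi_0$ (as small as desired) together with a natural number $N$ such that
\[
\left| \frac{1}{(2\pi)^d}\int_{\mathcal{U}_{\xi_0}} \widehat{\phi}(\xi)^n e^{-ix\cdot\xi}\,d\xi - \widehat{\phi}(\xi_0)^n e^{-ix\cdot\xi_0} H_P^n(x-n\alpha) \right| < n^{-\mu_{\xi_0}}
\]
for all $n \geq N$ and $x \in \mathbb{Z}^d$, where $P = R_{\xi_0} + iQ_{\xi_0}$. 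Since $\xi_0 \in \Omega(\phi)$ forces $\abs{\widehat{\phi}(\xi_0)} = 1$, and since Item \ref{item:PosHomKernelOnDiagional} of Proposition \ref{prop:PosHomKernel} yields $|H_P^n(y)| \leq C_P\, n^{-\mu_{\xi_0}}$ for every $y \in \mathbb{R}^d$ with $C_P = (2\pi)^{-d} m(B_R) \Gamma(\mu_{\xi_0}+1)$, the triangle inequality gives
\[
\left| \frac{1}{(2\pi)^d}\int_{\mathcal{U}_{\xi_0}} \widehat{\phi}(\xi)^n e^{-ix\cdot\xi}\,d\xi \right| \leq (1 + C_P)\, n^{-\mu_{\xi_0}}
\]
uniformly for $x \in \mathbb{Z}^d$ and $n \geq N$.

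Finally, to cover the finitely many small values $n \in \{1, 2, \dots, N-1\}$, the trivial bound $\sup_\xi |\widehat{\phi}(\xi)| = 1$ produces
\[
\left| \frac{1}{(2\pi)^d}\int_{\mathcal{U}_{\xi_0}} \widehat{\phi}(\xi)^n e^{-ix\cdot\xi}\,d\xi \right| \leq \frac{m(\mathcal{U}_{\xi_0})}{(2\pi)^d} \leq \frac{m(\mathcal{U}_{\xi_0})}{(2\pi)^d}\, N^{\mu_{\xi_0}}\, n^{-\mu_{\xi_0}},
\]
using $n^{-\mu_{\xi_0}} \geq N^{-\mu_{\xi_0}}$ for $n \leq N$. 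Taking $C_{\xi_0}$ to be the maximum of $1 + C_P$ and $(2\pi)^{-d} m(\mathcal{U}_{\xi_0}) N^{\mu_{\xi_0}}$ completes the proof. There is no genuine obstacle here, as all the real work has already been done in Lemma \ref{lem:LocalLimitPosHom} and Proposition \ref{prop:PosHomKernel}; the only mild care needed is the bookkeeping step of absorbing the initial segment $n < N$ into the constant.
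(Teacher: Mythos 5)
Your proof is correct and follows exactly the route the paper takes: the paper derives this lemma as an immediate consequence of Lemma \ref{lem:LocalLimitPosHom} together with the on-diagonal bound in Item \ref{item:PosHomKernelOnDiagional} of Proposition \ref{prop:PosHomKernel}, which is precisely your argument. Your explicit handling of the finitely many values $n<N$ by the trivial bound and enlarging the constant is just the bookkeeping the paper leaves implicit.
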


\noindent We shall now focus on the case in which $\xi_0\in\Omega(\phi)$ is of imaginary homogeneous type for $\widehat{\phi}$. In view of Definition \ref{def:Types}, there is an open neighborhood $\mathcal{U}$ of $0$ on which
\begin{equation*}
\Gamma(\xi)=i\alpha\cdot\xi-i\left(Q(\xi)+\widetilde{Q}(\xi)\right)-\left(R(\xi)+\widetilde{R}(\xi)\right)
\end{equation*}
where $\abs{Q}$ and $R$ are both positive homogeneous and there exists $E\in \Exp(Q)=\Exp(\abs{Q})$ and $\kappa>1$ for which $R$ is homogeneous with respect to $E/\kappa$, $\widetilde{Q}$ is strongly subhomogeneous with respect to $E$ of order $2$ and $\widetilde{R}$ is strongly subhomogeneous with respect to $E/\kappa$ of order $1$. Before we are able to prove a result analogous to Lemma \ref{lem:LocalLimitPosHom}, we shall first treat three preliminary lemmas, all of which are aimed at handling the oscillatory integrals which arise in this imaginary-homogeneous setting. For the reader's convenience, we remark that Lemmas \ref{lem:PhaseEstimates} and \ref{lem:AmplitudeEstimates} are established using basic properties of homogeneous and subhomogeneous functions. Lemma \ref{lem:OscillatoryEstimate} is our central oscillatory integral estimate and its proof makes use of the generalized polar-coordinate integration formula of \cite{BR21} (Theorem 1.3 of \cite{BR21}) to decompose an oscillatory integral over $\mathbb{R}^d$ into a ``radial" integral and a surface integral. The radial integral, which captures the primary oscillatory behavior, is then estimated by an application of the Van der Corput lemma (Proposition \ref{prop:VDC}) using Lemma \ref{lem:PhaseEstimates} to handle the phase and Lemma \ref{lem:AmplitudeEstimates} to handle the amplitude. We then present Lemma \ref{lem:LocalLimitImagHom}, our analogue of Lemma \ref{lem:LocalLimitPosHom} for the imaginary-homogeneous setting; its proof makes use of Theorem \ref{thm:Attractor} and Lemma \ref{lem:OscillatoryEstimate} (and does not rely directly on Lemmas \ref{lem:PhaseEstimates} or \ref{lem:AmplitudeEstimates}).

\begin{lemma}\label{lem:PhaseEstimates}
Let $Q:\mathbb{R}^d\to\mathbb{R}$ be a continuous function for which $\abs{Q}$ is positive homogeneous with unital level set $S$ and homogeneous order $\mu$. Also, given an open neighborhood $\mathcal{U}$ of $0$ in $\mathbb{R}^d$, let $\widetilde{Q}$ be a real-valued function which is  twice continuously differentiable on $\mathcal{U}$, i.e., $Q\in C^2(\mathcal{U};\mathbb{R})$. Given $E\in\Exp(Q)=\Exp(\abs{Q})$, define $F=E/\mu$ and
\begin{equation*}
f_{n,y,\eta}(\theta)=Q(\theta^F\eta)+n\widetilde{Q}(n^{-E}\theta^F\eta)+y\cdot(\theta^F\eta)
\end{equation*}
for $n\in\mathbb{N}_+$, $y\in\mathbb{R}^d$, $\eta\in S$ and $\theta>0$ for which $n^{-E}\theta^F\eta\in\mathcal{U}$. If $\mu<1$ and $\widetilde{Q}$ is strongly subhomogeneous with respect to $E$ of order $2$, then, for each compact set $K\subseteq\mathbb{R}^d$, there exist $\delta>0$ and $\theta_0\geq 1$ such that, for any natural number $n$ such that $\theta_0\leq (n\delta)^\mu$,  $\partial_{\theta}f_{n,y,\eta}(\theta)$ is monotonic on $[\theta_0,(n\delta)^\mu]$ and
\begin{equation*}
\abs{\partial_{\theta}f_{n,y,\eta}(\theta)}\geq \frac{1}{2\mu}\theta^{1/\mu-1}
\end{equation*}
for all $\theta_0\leq\theta\leq(n\delta)^\mu$, $y\in K$ and $\eta\in S$.
\end{lemma}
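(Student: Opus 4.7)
The plan is to split the phase as $f_{n,y,\eta}(\theta) = f_0(\theta) + f_1(\theta) + f_2(\theta)$, where $f_0(\theta) = Q(\theta^F\eta)$, $f_1(\theta) = n\widetilde{Q}(n^{-E}\theta^F\eta)$, and $f_2(\theta) = y\cdot(\theta^F\eta)$, and to show that $f_0$ supplies the dominant lower bound while $f_1$ and $f_2$ are small perturbations on $[\theta_0,(n\delta)^\mu]$. For the dominant term, since $F = E/\mu$ and $E\in\Exp(Q)=\Exp(|Q|)$ by Remark \ref{rem:AbsPosHomExponentSets}, we have $\theta^F = (\theta^{1/\mu})^E$, so $f_0(\theta) = \theta^{1/\mu}Q(\eta)$; with $|Q(\eta)|=1$ for $\eta\in S$ this yields exactly $\partial_\theta f_0 = \mu^{-1}Q(\eta)\theta^{1/\mu-1}$ and $\partial_\theta^2 f_0 = \mu^{-1}(\mu^{-1}-1)Q(\eta)\theta^{1/\mu-2}$, the latter having a definite sign (that of $Q(\eta)$) and magnitude bounded below by a positive constant times $\theta^{1/\mu-2}$, using $\mu<1$.

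For the subhomogeneous correction $f_1$, I would set $u(\theta) = \theta^{1/\mu}/n$ and $h_\eta(u) = \widetilde{Q}(u^E\eta)$, so $f_1(\theta) = nh_\eta(u(\theta))$ and the chain rule gives $\partial_\theta f_1 = \mu^{-1}\theta^{1/\mu-1}h_\eta'(u)$ and $\partial_\theta^2 f_1 = (\mu\theta^2)^{-1}\bigl[(\mu^{-1}-1)\,u h_\eta'(u) + \mu^{-1}u^2 h_\eta''(u)\bigr]$. Applied with the compact set $S$, the strong subhomogeneity of $\widetilde{Q}$ of order $2$ with respect to $E$ hands us, for any prescribed $\epsilon>0$, a constant $\delta>0$ such that $|u^k h_\eta^{(k)}(u)|\le\epsilon u$ for $k=0,1,2$, all $\eta\in S$, and all $0<u<\delta$. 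Restricting to $\theta\le(n\delta)^\mu$ forces $u<\delta$, and I conclude $|\partial_\theta f_1|\le (\epsilon/\mu)\theta^{1/\mu-1}$ and $|\partial_\theta^2 f_1|\le C_\mu\,\epsilon\,\theta^{1/\mu-2}$, uniformly in $n\in\mathbb{N}_+$, $y$, and $\eta$.

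For the linear term $f_2$, I would invoke Corollary \ref{cor:LargeTimeContractingGroup} exactly as in the proof of Theorem \ref{thm:Attractor}: since $\{\theta^E\}$ is contracting with $\tr E=\mu<1$, the group $\{\theta^F\}$ satisfies $\|\theta^{F-I}\| = o(\theta^{1/\mu-1})$ and $\|\theta^{F-2I}\| = o(\theta^{1/\mu-2})$ as $\theta\to\infty$; with $K$ and $S$ compact this produces $\theta_0\ge 1$ such that $|\partial_\theta f_2(\theta)| = |y\cdot\theta^{F-I}F\eta|\le (\epsilon/\mu)\theta^{1/\mu-1}$ and $|\partial_\theta^2 f_2(\theta)|$ is dominated by $\tfrac12|\partial_\theta^2 f_0(\theta)|$ on $[\theta_0,\infty)$, uniformly for $y\in K$, $\eta\in S$. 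Choosing $\epsilon = 1/4$ (and, if needed, shrinking it further to secure the second-derivative comparison), and assembling the three bounds, one gets on $[\theta_0,(n\delta)^\mu]$ both $|\partial_\theta f_{n,y,\eta}(\theta)| \ge (1/\mu - 2\epsilon/\mu)\theta^{1/\mu-1} \ge (2\mu)^{-1}\theta^{1/\mu-1}$ and $\partial_\theta^2 f_{n,y,\eta}$ retaining the sign of $Q(\eta)/\mu \cdot (\mu^{-1}-1)$, so $\partial_\theta f_{n,y,\eta}$ is monotonic on the interval. The main obstacle I anticipate is the $f_1$ bookkeeping: correctly converting strong subhomogeneity of $\widetilde{Q}$ into a uniform-in-$n$ bound on $\partial_\theta^k f_1$ via the scaling $u = \theta^{1/\mu}/n$ is precisely where the hypothesis demands strong subhomogeneity up to order $2$ rather than mere subhomogeneity, and it is this absorption of the $n$-prefactor into the small parameter $u$ that makes the uniform estimate possible.
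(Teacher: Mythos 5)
Your proposal is correct and follows essentially the same argument as the paper's proof: the substitution $u=\theta^{1/\mu}/n$ converts the $n$-prefactor into the small variable so that strong subhomogeneity of order $2$ (applied with the compact set $S$) bounds the $\widetilde{Q}$-contributions uniformly in $n$, Corollary \ref{cor:LargeTimeContractingGroup} controls the linear term for $\theta\geq\theta_0$, and the sign-definite, non-vanishing second derivative yields monotonicity of $\partial_\theta f_{n,y,\eta}$. The only blemish is a dropped factor of $n$ in your displayed formula for $\partial_\theta^2 f_1$, which should read $n(\mu\theta^2)^{-1}\bigl[(\mu^{-1}-1)\,u h_\eta'(u)+\mu^{-1}u^2 h_\eta''(u)\bigr]$; since $nu=\theta^{1/\mu}$, your stated bound $\abs{\partial_\theta^2 f_1}\leq C_\mu\,\epsilon\,\theta^{1/\mu-2}$ is nevertheless exactly right.
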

\begin{proof}
Let $K$ be a compact subset of $\mathbb{R}^d$. Given that $\widetilde{Q}$ is strongly subhomogeneous with respect to $E$ of order $2$, let $\delta>0$ be such that
\begin{equation}\label{eq:PhaseEstimate1}
\abs{\partial_{r}\widetilde{Q}(r^{E}\eta)}\leq \frac{1}{4}
\end{equation}
and
\begin{equation}\label{eq:PhaseEstimate2}
\abs{r\partial^2_r \widetilde{Q}(r^{E}\eta)}\leq \frac{1-\mu}{4}= \frac{\mu}{4}\left(\frac{1}{\mu}-1\right)
\end{equation}
for all $0<r<\delta$ and $\eta\in S$. Using the estimates
\begin{equation*}
\abs{y\cdot(\theta^{F-I}F\eta)}\leq \abs{y}\abs{F\eta}\|\theta^F\|\theta^{-1}\hspace{.5cm}\mbox{and}\hspace{.5cm}\abs{y\cdot(\theta^F(F-I)F\eta)}\leq\abs{y}\abs{(F-I)F\eta}\|\theta^F\|,
\end{equation*}
the compactness of $K$ and $S$, and the hypothesis that $\mu<1$, an application of Corollary \ref{cor:LargeTimeContractingGroup} (with $\alpha=\mu$) hands us $\theta_0\geq 1$ for which
\begin{equation}\label{eq:PhaseEstimate3}
\abs{y\cdot(\theta^{F-I}F\eta)}\leq \frac{1}{4\mu}\theta^{1/\mu-1}
\end{equation}
and
\begin{equation}\label{eq:PhaseEstimate4}
\abs{y\cdot (\theta^F (F-I)F\eta)}\leq \frac{1}{4\mu}\left(\frac{1}{\mu}-1\right)\theta^{1/\mu}
\end{equation}
for all $\theta\geq \theta_0$, $y\in K$ and $\eta\in S$. With these estimates in hand, let us now write $f=f_{n,y,\eta}$ and observe that, when $r=r_n(\theta)=\theta^{1/\mu}/n$, $r^{E}=n^{-E}\theta^F$ and so
\begin{equation*}
f(\theta)=\theta^{1/\mu}Q(\eta)+n\widetilde{Q}(r^{E}\eta)+y\cdot(\theta^F\eta).
\end{equation*}
We have
\begin{eqnarray}\label{eq:PhaseEstimate5}\nonumber
\partial_\theta f(\theta)&=&\frac{1}{\mu}\theta^{1/\mu-1}Q(\eta)+n\partial_r\widetilde{Q}(r^{E}\eta)\frac{\partial r}{\partial \theta}+\partial_\theta\left(y\cdot(\theta^F\eta)\right)\\\nonumber 
&=&\frac{1}{\mu}\theta^{1/\mu-1}Q(\eta)+n\partial_r\widetilde{Q}(r^{E}\eta)\frac{1}{\mu}\frac{\theta^{1/\mu-1}}{n}+y\cdot(\theta^{F-I}F\eta)\\
&=&\frac{1}{\mu}\theta^{1/\mu-1}\left(Q(\eta)-\partial_r\widetilde{Q}(r^{E}\eta)\right)+y\cdot(\theta^{F-I}F\eta)
\end{eqnarray}
for all $n\in\mathbb{N}_+$, $y\in K$, $\eta\in S$, and $\theta>0$ for which $r^{E}\eta=n^{-E}\theta^F\eta\in \mathcal{U}$. By virtue of \eqref{eq:PhaseEstimate1} and the fact that $\abs{Q(\eta)}=1$ for all $\eta\in S$, we see that $r^{E}\eta\in \mathcal{U}$ and
\begin{equation}\label{eq:PhaseEstimate6}
\abs{Q(\eta)-\partial_r\widetilde{Q}(r^{E}\eta)}\geq \frac{3}{4}
\end{equation}
whenever $\eta\in S$ and $0<\theta<(n\delta)^\mu$. Combining the estimates \eqref{eq:PhaseEstimate3}, \eqref{eq:PhaseEstimate5} and \eqref{eq:PhaseEstimate6}, guarantees that
\begin{equation*}
\abs{\partial_\theta f(\theta)}\geq \frac{3}{4\mu}\theta^{1/\mu-1}-\frac{1}{4\mu}\theta^{1/\mu-1}=\frac{1}{2\mu}\theta^{1/\mu-1}
\end{equation*}
for all $n\in\mathbb{N}_+$, $y\in K$, $\eta\in S$ and $\theta_0\leq \theta\leq (n\delta)^\mu$.

It remains to show that $\partial_\theta f(\theta)$ is monotonic. Making use of \eqref{eq:PhaseEstimate2} and \eqref{eq:PhaseEstimate4}, analogous reasoning shows that
\begin{equation*}
\abs{\theta^2\partial_\theta^2 f(\theta)}\geq \frac{1}{4\mu}\left(\frac{1}{\mu}-1\right)\theta^{1/\mu}>0
\end{equation*}
for all $n\in\mathbb{N}_+$, $y\in K$, $\eta\in S$ and $\theta_0\leq\theta\leq (n\delta)^\mu$. In particular, $\partial_\theta^2f(\theta)$ is non-vanishing on $[\theta_0,(n\delta)^\mu]$ and so $\partial_\theta f(\theta)$ is monotonic.
\end{proof}

\begin{lemma}\label{lem:AmplitudeEstimates}
Given a compact set $S\subseteq\mathbb{R}^d$ which does not contain $0$ and a positive homogeneous function $R$, let $m$ and $M$ be positive constants for which
\begin{equation*}
m\leq R(\eta)\leq M
\end{equation*}
for all $\eta\in S$. Also, let $\mathcal{U}\subseteq\mathbb{R}^d$ be an open neighborhood of $0$ and let $\widetilde{R}$ be a real-valued function which is once continuously differentiable on $\mathcal{U}$. Given $E\in\End(\mathbb{R}^d)$ for which $\{r^E\}$ is a contracting group, define $F:=E/\mu$ where $\mu=\tr E$ and
\begin{equation*}
g_{n,\eta}(\theta)=\exp\left(-nR(n^{-E}\theta^F\eta)-n\widetilde{R}(n^{-E}\theta^F\eta)\right)
\end{equation*}
for $n\in\mathbb{N}_+$, $\eta\in S$ and $\theta>0$ for which $n^{-E}\theta^F\eta\in\mathcal{U}$.  If, for some $\kappa>0$, $R$ is homogeneous with respect to $E/\kappa$ and $\widetilde{R}$ is strongly subhomogeneous with respect to $E/\kappa$ of order $1$, then, for any $\beta>1$, there exists a $\delta>0$ for which
\begin{equation*}
\|g_{n,\eta}\|_{L^{\infty}[\theta_1,\theta_2]}+\|\partial_\theta g_{n,\eta}\|_{L^1[\theta_1,\theta_2]}\leq 1+\frac{\beta M}{m}
\end{equation*}
for all $n\in\mathbb{N}_+$, $\eta\in S$ and $0<\theta_1\leq\theta_2\leq (n\delta)^\mu.$
\end{lemma}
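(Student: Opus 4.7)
The plan is to exploit the fact that $F = E/\mu$ to collapse the argument $n^{-E}\theta^F\eta$ to $r^E\eta$ where $r := \theta^{1/\mu}/n$, and then to rely on the homogeneity identity $R(r^E\eta) = r^\kappa R(\eta)$ (obtained by setting $s = r^\kappa$ in the defining relation $R(s^{E/\kappa}\xi) = sR(\xi)$). Writing $H(\theta) := nR(r^E\eta) + n\widetilde{R}(r^E\eta)$ so that $g_{n,\eta}(\theta) = e^{-H(\theta)}$, the chain rule (together with the homogeneity just noted) gives
$$H'(\theta) = \frac{\theta^{1/\mu-1}}{\mu}\left[\kappa r^{\kappa-1}R(\eta) + \partial_r\widetilde{R}(r^E\eta)\right].$$

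Next I would apply the strong subhomogeneity of $\widetilde{R}$ with respect to $E/\kappa$ of order $1$: under the substitution $s = r^\kappa$, the inequalities $|\widetilde{R}(s^{E/\kappa}\xi)| \leq \epsilon s$ and $|\partial_s\widetilde{R}(s^{E/\kappa}\xi)| \leq \epsilon$ translate, via the chain rule $\partial_r\widetilde{R}(r^E\xi) = \kappa r^{\kappa-1}\,\partial_s\widetilde{R}(s^{E/\kappa}\xi)|_{s=r^\kappa}$, into the pair
$$|\widetilde{R}(r^E\eta)| \leq \epsilon r^\kappa, \qquad |\partial_r\widetilde{R}(r^E\eta)| \leq \epsilon \kappa r^{\kappa-1}$$
for $\eta \in S$ and $0 < r \leq \delta$, provided $\delta$ is small enough (such a $\delta$ exists by the contracting property of $\{r^E\}$, which also guarantees $r^E\eta \in \mathcal{U}$; see Proposition A.6 of \cite{BR21}). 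Given $\beta > 1$ as in the lemma, the algebraic key is to set $\epsilon := Mm(\beta-1)/(m + \beta M)$, which lies in $(0,m)$ and satisfies $(M+\epsilon)/(m-\epsilon) = \beta M/m$ exactly. The condition $\theta \leq (n\delta)^\mu$ is equivalent to $r \leq \delta$, so the above bounds apply and combine with $m \leq R(\eta) \leq M$ to give $H(\theta) \geq (m-\epsilon)nr^\kappa \geq 0$ (whence $\|g_{n,\eta}\|_{L^\infty[\theta_1,\theta_2]} \leq 1$) and the two-sided estimate
$$0 < \frac{\theta^{1/\mu-1}}{\mu}\kappa r^{\kappa-1}(m-\epsilon) \leq H'(\theta) \leq \frac{\theta^{1/\mu-1}}{\mu}\kappa r^{\kappa-1}(M+\epsilon).$$

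For the $L^1$ bound on $\partial_\theta g_{n,\eta} = -H' e^{-H}$, I would introduce the strictly increasing auxiliary function $u(\theta) := (m-\epsilon)nr^\kappa = (m-\epsilon)n^{1-\kappa}\theta^{\kappa/\mu}$, whose derivative coincides with the lower bound for $H'$. Since $H \geq u$ and $H' \leq [(M+\epsilon)/(m-\epsilon)]\, u'$, this yields $|\partial_\theta g_{n,\eta}(\theta)| \leq [(M+\epsilon)/(m-\epsilon)]\, u'(\theta) e^{-u(\theta)}$, and the substitution $v = u(\theta)$ produces
$$\int_{\theta_1}^{\theta_2}|\partial_\theta g_{n,\eta}|\,d\theta \leq \frac{M+\epsilon}{m-\epsilon}\int_{u(\theta_1)}^{u(\theta_2)} e^{-v}\,dv \leq \frac{M+\epsilon}{m-\epsilon} = \frac{\beta M}{m},$$
which, added to $\|g_{n,\eta}\|_{L^\infty[\theta_1,\theta_2]} \leq 1$, gives the asserted bound. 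There is no real obstacle beyond the calibration of $\epsilon$ against $\beta$: the hypothesis $\beta > 1$ is exactly what makes the choice $\epsilon < m$ admissible, ensuring the lower bound on $H'$ is strictly positive and that $v = u(\theta)$ is a legitimate change of variables on all of $[\theta_1,\theta_2]$.
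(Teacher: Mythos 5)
Your argument is correct and follows essentially the same route as the paper's proof: the substitution $r=\theta^{1/\mu}/n$, the chain-rule transfer of the strong subhomogeneity bounds via $s=r^\kappa$, the bound $\|g_{n,\eta}\|_{L^\infty}\leq 1$ from positivity of the exponent, and an $L^1$ estimate that reduces to $\int_0^\infty e^{-v}\,dv=1$ after a change of variables. Your calibration $\epsilon=Mm(\beta-1)/(m+\beta M)$ and the auxiliary function $u(\theta)$ are only cosmetic variants of the paper's choice $\beta=(1+\epsilon)/(1-\epsilon)$ and its change of variables back to $r$.
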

\begin{proof}
Define
\begin{equation*}
h_{n,\eta}(r)=\exp\left(-nR(r^{E}\eta)-n\widetilde{R}(r^{E}\eta)\right)
\end{equation*}
for $n\in\mathbb{N}_+$, $\eta\in S$ and $r>0$ for which $r^{E}\eta\in \mathcal{U}$. Observe that, for $r=r_n(\theta)=\theta^{1/\mu}/n$, we have $r^{E}=n^{-E}\theta^F$ and
\begin{equation*}
g_{n,\eta}(\theta)=h_{n,\eta}(r_n(\theta))
\end{equation*}
for all $n\in\mathbb{N}_+$, $\eta\in S$ and $\theta>0$ for which $r^{E}\eta=n^{-E}\theta^F\eta\in\mathcal{U}$. Let us fix $\beta>1$ and let $0<\epsilon<1$ be such that
\begin{equation*}
\beta=\frac{1+\epsilon}{1-\epsilon}.
\end{equation*}
In view of our supposition that $\widetilde{R}$ is strongly subhomogeneous with respect to $E/\kappa$, there exists $\delta_1>0$ for which
\begin{equation*}
\abs{\widetilde{R}(r^{E}\eta)}\leq \epsilon m r^\kappa
\end{equation*}
for all $0<r<\delta_1$ and $\eta\in S$. Further, by virtue of the strong subhomogeneity of $\widetilde{R}$ with respect to $E/\kappa$, we can choose $\delta_2>0$ for which
\begin{equation*}
\abs{\partial_r\widetilde{R}(r^{E}\eta)}=\abs{\frac{\partial\widetilde{R}((r^\kappa)^{E/\kappa}\eta)}{\partial (r^\kappa)}}\abs{\frac{\partial (r^{\kappa})}{\partial r}}<\epsilon M \kappa r^{\kappa-1}
\end{equation*}
for all $0<r<\delta_2$. For $\delta:=\min\{\delta_1,\delta_2\}$, the preceding estimates guarantee that
\begin{equation}\label{eq:AmplitudeEstimates1}
R(r^{E}\eta)+\widetilde{R}(r^{E}\eta)\geq r^\kappa R(\eta)-\epsilon m r^{\kappa}\geq m(1-\epsilon)r^{\kappa}
\end{equation}
and
\begin{equation}\label{eq:AmplitudeEstimates2}
\abs{\partial_r\left(R(r^{E}\eta)+\widetilde{R}(r^{E}\eta)\right)}\leq \kappa r^{\kappa-1}R(\eta)+\epsilon M \kappa r^{\kappa-1}\leq M(1+\epsilon)\kappa r^{\kappa-1}
\end{equation}
for all $0<r<\delta$ and $\eta\in S$. Thus, for all $n\in\mathbb{N}_+$, $\eta\in S$, and $0<\theta_1\leq\theta_2\leq (\delta n)^\mu$ (equivalently, $0<\rho_1\leq \rho_2\leq \delta$ where $\rho_j:=r_n(\theta_j)=\theta^{1/\mu}_j/n$ for $j=1,2$),
\begin{equation*}
\|g_{n,\eta}\|_{L^{\infty}[\theta_1,\theta_2]}=\sup_{\rho_1\leq r\leq \rho_2}\left(e^{-n(R(r^{E}\eta)+\widetilde{R}(r^{E}\eta))}\right)\leq \sup_{0<r\leq \delta}\left(e^{-n(1-\epsilon)mr^{\kappa}}\right)\leq 1
\end{equation*}
by virtue of \eqref{eq:AmplitudeEstimates1} and the fact that $\epsilon<1$. Appealing to both \eqref{eq:AmplitudeEstimates1} and \eqref{eq:AmplitudeEstimates2}, we have
\begin{eqnarray*}
\|\partial_\theta g_{n,\eta}\|_{L^1[\theta_1,\theta_2]}&=&\int_{\theta_1}^{\theta_2} \abs{\partial_\theta g_{n,\eta}(\theta)}\,d\theta\\
&=&\int_{\theta_1}^{\theta_2}\abs{\partial_{\theta}h_{n,\eta}(r_n(\theta))}\,d\theta\\
&=&\int_{\rho_1}^{\rho_2}\abs{\partial_r h_{n,\eta}(r)}\,dr\\
&=&\int_{\rho_1}^{\rho_2} n\abs{\partial_r\left( R(r^{E}\eta)+\widetilde{R}(r^{E}\eta)\right)}e^{-n\left(R(r^{E}\eta)+\widetilde{R}(r^{E}\eta)\right)}\,dr\\
&\leq& \int_{\rho_1}^{\rho_2} nM(1+\epsilon)\kappa r^{\kappa-1}e^{-n(1-\epsilon)m r^{\kappa}}\,dr\\
&\leq &\frac{(1+\epsilon)M}{(1-\epsilon)m}\int_0^\infty e^{-u}\,du =\frac{\beta M}{m}
\end{eqnarray*}
for all $n\in\mathbb{N}_+$, $\eta\in S$ and $0<\theta_1\leq \theta_2\leq (\delta n)^\mu$. With this, our desired estimate follows without trouble.
\end{proof}

%

\begin{lemma}\label{lem:OscillatoryEstimate}
Let $Q:\mathbb{R}^d\to\mathbb{R}$ be a continuous function for which $\abs{Q}$ is positive homogeneous with unital level set $S$ and homogeneous order $\mu$. Let $R:\mathbb{R}^d\to\mathbb{R}$ be a positive homogeneous function and, given an open neighborhood $\mathcal{U}$ of $0$ in $\mathbb{R}^d$, let $\widetilde{Q}\in C^2(\mathcal{U};\mathbb{R})$ and $\widetilde{R}\in C^1(\mathcal{U};\mathbb{R})$. Suppose that, for some $E\in\Exp(Q)=\Exp(\abs{Q})$, $\widetilde{Q}$ is strongly subhomogeneous with respect to $E$ of order $1$ and, for some $\kappa>0$, $R$ is homogeneous with respect to $E/\kappa$ and $\widetilde{R}$ is strongly subhomogeneous with respect to $E/\kappa$ of order $2$. Define
\begin{equation}\label{eq:DefofPsiPhase}
\Psi_{n,y}(\xi)=nQ(n^{-E}\xi)+n\widetilde{Q}(n^{-E}\xi)+y\cdot\xi=Q(\xi)+n\widetilde{Q}(n^{-E}\xi)+y\cdot\xi
\end{equation}
and
\begin{equation}\label{eq:DefofAAmplitude}
A_n(\xi)=\exp\left(-n\left(R(n^{-E}\xi)+\widetilde{R}(n^{-E}\xi)\right)\right)=\exp(-n^{1-\kappa}R(\xi)-n\widetilde{R}(n^{-E}\xi))
\end{equation}
for $n\in\mathbb{N}_+$, $y\in\mathbb{R}^d$ and $\xi\in\mathbb{R}^d$ for which $n^{-E}\xi\in\mathcal{U}$. If $\mu<1$, then, for each $\epsilon>0$ and compact set $K\subseteq\mathbb{R}^d$, there is a $\delta>0$ and $\tau_0\geq 1$ for which
\begin{equation*}
\abs{\int_{\mathcal{O}_{n\delta}\setminus\mathcal{O}_{\tau}}e^{-i\Psi_{n,y}(\xi)}A_n(\xi)\,d\xi}\leq \epsilon
\end{equation*}
for all $y\in K$, $n\in\mathbb{N}_+$, and $\tau>0$ for which $\tau_0\leq\tau\leq  n\delta$.
\end{lemma}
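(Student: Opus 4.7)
The plan is to reduce the $d$-dimensional oscillatory integral to a one-parameter family of one-dimensional oscillatory integrals (indexed by $\eta\in S$) via the generalized polar-coordinate integration formula of \cite{BR21}, and then bound each such one-dimensional integral uniformly by combining Van der Corput (Proposition \ref{prop:VDC}) with Lemmas \ref{lem:PhaseEstimates} and \ref{lem:AmplitudeEstimates}. Fixing a compact $K\subseteq\mathbb{R}^d$ and $\epsilon>0$, I would first choose a parameter $\beta>1$ (say $\beta=2$), then apply Lemma \ref{lem:PhaseEstimates} to $K$ to get $\delta_1>0$ and $\theta_0\geq 1$ governing the phase, and Lemma \ref{lem:AmplitudeEstimates} (with $m,M$ the extremes of $R$ on the compact set $S$, which are positive and finite by continuity and positive definiteness) to get $\delta_2>0$ governing the amplitude. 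Setting $\delta=\min(\delta_1,\delta_2)$ — possibly shrinking so that $\mathcal{O}_\delta\subseteq\mathcal{U}$, which is fine since $\overline{\mathcal{O}_\delta}$ is compact and $\{t^E\}$ is contracting — and $\tau_0=\max(1,\theta_0^{1/\mu})$, I will verify the asserted bound.

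Using $E\in\Exp(Q)=\Exp(|Q|)$ and the identity $|Q(r^E\eta)|=r$ for $\eta\in S$, so that $\xi\in\mathcal{O}_{n\delta}\setminus\mathcal{O}_\tau$ corresponds via $\xi=r^E\eta$ to $\tau\leq r<n\delta$, the polar-coordinate formula gives
\begin{equation*}
\int_{\mathcal{O}_{n\delta}\setminus\mathcal{O}_\tau}e^{-i\Psi_{n,y}(\xi)}A_n(\xi)\,d\xi = \int_S\int_\tau^{n\delta}e^{-i\Psi_{n,y}(r^E\eta)}A_n(r^E\eta)\,r^{\mu-1}\,dr\,\sigma(d\eta).
\end{equation*}
Substituting $r=\theta^{1/\mu}$, one has $r^E=\theta^F$ with $F=E/\mu$ and $r^{\mu-1}\,dr=d\theta/\mu$, and the inner integrand becomes $e^{-if_{n,y,\eta}(\theta)}g_{n,\eta}(\theta)$ for precisely the $f_{n,y,\eta}$ and $g_{n,\eta}$ of Lemmas \ref{lem:PhaseEstimates} and \ref{lem:AmplitudeEstimates}. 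Noting also that $n^{-E}\xi\in\mathcal{O}_\delta\subseteq\mathcal{U}$ for $\xi\in\mathcal{O}_{n\delta}$, the integrand is everywhere well-defined. For $\tau\geq\tau_0$ the interval $[\tau^\mu,(n\delta)^\mu]$ sits inside $[\theta_0,(n\delta)^\mu]$, so both preceding lemmas apply.

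By Lemma \ref{lem:PhaseEstimates}, $\partial_\theta f_{n,y,\eta}$ is monotonic on $[\tau^\mu,(n\delta)^\mu]$ and, since $1/\mu-1>0$, its absolute value is bounded below there by $(2\mu)^{-1}\tau^{1-\mu}$. By Lemma \ref{lem:AmplitudeEstimates}, $\|g_{n,\eta}\|_{L^\infty}+\|\partial_\theta g_{n,\eta}\|_{L^1}\leq 1+\beta M/m$ on the same interval. Applying Van der Corput then yields
\begin{equation*}
\left|\frac{1}{\mu}\int_{\tau^\mu}^{(n\delta)^\mu}e^{-if_{n,y,\eta}(\theta)}g_{n,\eta}(\theta)\,d\theta\right|\leq \frac{8(1+\beta M/m)}{\tau^{1-\mu}},
\end{equation*}
uniformly in $n\in\mathbb{N}_+$, $y\in K$, and $\eta\in S$. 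Integrating against the finite Radon measure $\sigma$ on the compact set $S$ produces the global estimate $C\tau^{-(1-\mu)}$ on the original integral, with $C=8\sigma(S)(1+\beta M/m)$. Since $\mu<1$, this vanishes as $\tau\to\infty$, so after enlarging $\tau_0$ one last time to force $C\tau_0^{-(1-\mu)}<\epsilon$, the required bound holds for all $\tau_0\leq\tau\leq n\delta$. I do not anticipate a serious obstacle: the real work is in Lemmas \ref{lem:PhaseEstimates} and \ref{lem:AmplitudeEstimates}, and the only care needed here is to coordinate the parameters $\delta,\theta_0,\tau_0$ so that the hypotheses of both lemmas are simultaneously in force on the relevant range of $\theta$.
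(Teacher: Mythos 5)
Your proposal is correct and follows essentially the same route as the paper's own proof: polar-coordinate decomposition via the formula of \cite{BR21}, the substitution $r=\theta^{1/\mu}$, Lemmas \ref{lem:PhaseEstimates} and \ref{lem:AmplitudeEstimates} for the phase and amplitude, Van der Corput, and a final choice of $\tau_0$ using $\mu<1$. The constants and parameter coordination you describe match the paper's argument (the paper takes $\beta=2$ as you do).
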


\begin{proof}
Let $\epsilon>0$ and $K\subseteq\mathbb{R}^d$ be a compact set. Though we will further restrict the size of $\delta$, for the moment, let $\delta>0$ be small enough that $n^{-E}\xi\in\mathcal{U}$ for all $n\in\mathbb{N}_+$ and $\xi\in\mathcal{O}_\delta$. Just as we did leading up to the proof of Theorem \ref{thm:Attractor}, we shall denote by $S$ the unital level set of $\abs{Q}$ and, by virtue of Theorem 1.3 of \cite{BR21}, let $\sigma$ be the Radon measure on $S$ for which \eqref{eq:ApproxFormulaViaPolar} holds. If $\tau\leq n\delta$,
\begin{eqnarray*}
\int_{\mathcal{O}_{n\delta}\setminus\mathcal{O}_{\tau}}e^{-i\Psi_{n,y}(\xi)}A_n(\xi)\,d\xi&=& \int_S\int_{\tau}^{n\delta} e^{-i\Psi_{n,y}(r^{E}\eta)}A_n(r^{E}\eta)r^{\mu-1}\,dr\sigma(d\eta)\\
&=&\int_S I_{\tau,n,y}(\eta)\,\sigma(d\eta)
\end{eqnarray*}
where
\begin{equation*}
I_{\tau,n,y}(\eta)=\int_\tau^{n\delta}e^{-i\Psi_{n,y}(r^{E}\eta)}A_n(r^{E}\eta)r^{\mu-1}\,dr.
\end{equation*}
As in the proof of Theorem \ref{thm:Attractor},  we make the change of variables $r\mapsto \theta^{1/\mu}$ to observe that
\begin{equation*}
I_{\tau,n,y}(\eta)=\int_{\tau^\mu}^{(n\delta)^\mu}e^{-i\Psi_{n,y}(\theta^F\eta)}A_n(\theta^F)\,d\theta\\
=\frac{1}{\mu}\int_{\tau^\mu}^{(n\delta)^\mu}e^{-if_{n,y,\eta}(\theta)}g_{n,\eta}(\theta)\,d\theta
\end{equation*}
where, for $F=E/\mu$,
\begin{equation*}
f_{n,y,\eta}(\theta)=\Psi_{n,y}(\theta^F\eta)=Q(\theta^F\eta)+n\widetilde{Q}(n^{-E}\theta^F\eta)+y\cdot(\theta^F\eta),
\end{equation*}
and
\begin{equation*}
g_{n,\eta}(\theta)=A_n(\theta^F\eta)=\exp\left(-n\left(R(n^{-E}\theta^F\eta)+\widetilde{R}(n^{-E}\theta^F\eta)\right)\right)
\end{equation*}
in the notation of the preceding two lemmas. Our immediate goal is to give a uniform estimate for $\abs{I_{\tau,n,y}(\eta)}$ and we shall do this using the Van der Corput lemma. To this end, we first make an appeal to Lemma \ref{lem:PhaseEstimates} to obtain $\delta_1>0$ and $\theta_0\geq 1$ be such that $\partial_\theta f_{n,y,\eta}(\theta)$ is monotonic on $[\theta_0,(n\delta)^\mu]$ and
\begin{equation*}
\abs{\partial_{\theta}f_{n,y,\eta}(\theta)}\geq \frac{1}{2\mu}\theta^{1/\mu-1}
\end{equation*}
for all $n\in\mathbb{N}_+$, $y\in K$, $\eta\in S$ and  $\theta>0$ such that $\theta_0\leq \theta\leq (n\delta_1)^\mu$. Also, by an appeal to Lemma \ref{lem:AmplitudeEstimates}, choose $\delta_2>0$ for which
\begin{equation*}
\|g_{n,\eta}\|_{L^\infty[\theta_1,\theta_2]}+\|\partial_\theta g_{n,\eta}\|_{L^1[\theta_1,\theta_2]}\leq 1+\frac{2M}{m}
\end{equation*}
for all $n\in\mathbb{N}_+$, $\eta\in S$ and $0<\theta_1\leq\theta_2\leq (n\delta_2)^\mu$ where $m=\inf_{\eta\in S}R(\eta)$ and $M=\sup_{\eta\in S}R(\eta)$, both of which are necessarily finite and positive because $R$ is positive homogeneous. Upon setting $\delta=\min\{\delta_1,\delta_2\}$ and selecting $\tau_0$ such that $\tau_0\geq \theta_0^{1/\mu}$ and
\begin{equation*}
\tau_0^{\mu-1}\leq  \frac{\epsilon}{16(1+2M/m)\sigma(S)}, 
\end{equation*}
we appeal to the Van der Corput lemma, Proposition \ref{prop:VDC}, to see that
\begin{eqnarray*}
\abs{I_{\tau,n,y}(\eta)}&=&\frac{1}{\mu}\abs{\int_{\tau^{\mu}}^{(n\delta)^\mu}e^{-if_{n,y,\eta}(\theta)}g_{n,\eta}(\theta)\,d\theta}\\
&\leq &\frac{4}{\mu}\frac{\|g_{n,y,\eta}\|_{L^\infty[\tau^\mu,(n\delta)^\mu]}+\|\partial_\theta g_{n,y,\eta}\|_{L^\infty[\tau^\mu,(n\delta)^\mu]}}{\inf_{\theta\in[\tau^\mu,(n\delta)^\mu]}\theta^{1/\mu-1}/2\mu}\\
&\leq&4\left(1+\frac{2M}{m}\right)\frac{2}{\tau^{1-\mu}}\\
&\leq &8\left(1+\frac{2M}{m}\right)\tau_0^{\mu-1}\\
&\leq& \frac{\epsilon}{2\sigma(S)}
\end{eqnarray*}
for all $n\in\mathbb{N}_+$, $y\in K$, $\eta\in S$, and $\tau>0$ for which $\tau_0\leq \tau\leq n\delta$. Thus, 
\begin{equation*}
\abs{\int_{\mathcal{O}_{n\delta}\setminus\mathcal{O}_\tau}e^{-i\Psi_{n,y}(\xi)}A_n(\xi)\,d\xi}\leq \int_S\abs{I_{\tau,n,y}(\eta)}\sigma(d\eta)\leq \int_S \frac{\epsilon}{2\sigma(S)}\sigma(d\eta)<\epsilon
\end{equation*}
for all $n\in\mathbb{N}_+$, $y\in K$,  and $\tau>0$ for which $\tau_0\leq \tau\leq n\delta$.
\end{proof}

\noindent With the preceding lemma in hand, we are now in a position to prove a limit statement analogous to Lemma \ref{lem:LocalLimitPosHom} in the case that $\xi_0\in\Omega(\phi)$ is a point of imaginary homogeneous type for $\widehat{\phi}$. 

\begin{lemma}\label{lem:LocalLimitImagHom}
Let $\xi_0\in\Omega(\phi)$ be of imaginary homogeneous type for $\widehat{\phi}$ with associated $\alpha=\alpha_{\xi_0}$, $Q=Q_{\xi_0}$, and $\mu=\mu_{\xi_0}$. If $\mu<1$, then, for any compact set $K\subseteq \mathbb{R}^d$ and $\epsilon>0$, there exists an open neighborhood $\mathcal{U}_{\xi_0}\subseteq\mathbb{T}_{\phi}$ of $\xi_0$, which can be taken as small as desired, and a natural number $N$ for which
\begin{equation*}
\abs{\frac{1}{(2\pi)^d}\int_{\mathcal{U}_{\xi_0}}\widehat{\phi}(\xi)^n e^{-ix\cdot\xi}\,d\xi-\widehat{\phi}(\xi_0)^n e^{-i x\cdot\xi_0} H_{iQ}^n(x-n\alpha)}<\epsilon n^{-\mu}
\end{equation*}
whenever $n\geq N$ and $x\in\mathbb{Z}^d$ is such that $(x-n\alpha)\in n^{E^*}(K)$ for $E\in\Exp(Q)=\Exp(\abs{Q})$.
\end{lemma}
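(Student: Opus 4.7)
The plan is to mimic the strategy of the proof of Lemma \ref{lem:LocalLimitPosHom}, replacing the dominated convergence argument by the oscillatory-integral machinery provided by Theorem \ref{thm:Attractor} and Lemma \ref{lem:OscillatoryEstimate}. Writing
$$\Gamma_{\xi_0}(\xi)=i\alpha\cdot\xi-i\bigl(Q(\xi)+\widetilde{Q}(\xi)\bigr)-\bigl(R(\xi)+\widetilde{R}(\xi)\bigr)$$
on a convex neighborhood of $0$, I fix $E\in\Exp(Q)=\Exp(\abs{Q})$ (using Remark \ref{rem:AbsPosHomExponentSets}) with $\tr E=\mu$, and take the approximating family $\mathcal{O}_\tau=\{\xi:\abs{Q(\xi)}<\tau\}$ from Theorem \ref{thm:Attractor}. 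For a suitably small $\delta>0$, I set $\mathcal{U}_{\xi_0}=\mathcal{O}_\delta+\xi_0\subseteq\mathbb{T}_\phi^d$. Carrying out the translation in \eqref{eq:LocalLimitPosHom2} and then rescaling $\xi\mapsto n^{-E}\xi$ (under which $n^E(\mathcal{O}_\delta)=\mathcal{O}_{n\delta}$, since $\abs{Q}$ is homogeneous of order $\mu$), I expect to obtain
$$\int_{\mathcal{U}_{\xi_0}}\widehat{\phi}(\xi)^n e^{-ix\cdot\xi}\,d\xi = \widehat{\phi}(\xi_0)^n e^{-ix\cdot\xi_0}\,n^{-\mu}\int_{\mathcal{O}_{n\delta}} e^{-i\Psi_{n,y}(\xi)} A_n(\xi)\,d\xi,$$
where $y=n^{-E^*}(x-n\alpha)$ and $\Psi_{n,y}, A_n$ are the phase and amplitude of Lemma \ref{lem:OscillatoryEstimate}. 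Combined with the scaling identity $H_{iQ}^n(x-n\alpha)=n^{-\mu}H_{iQ}^1(y)$ from Item \ref{item:ImagHomAttractorScale} of Theorem \ref{thm:Attractor}, the claim reduces, after multiplication by $n^\mu$, to showing that
$$\mathcal{E}:=\left|\frac{1}{(2\pi)^d}\int_{\mathcal{O}_{n\delta}} e^{-i\Psi_{n,y}(\xi)}A_n(\xi)\,d\xi - H_{iQ}^1(y)\right|<\epsilon.$$
The hypothesis $x-n\alpha\in n^{E^*}(K)$ ensures precisely that $y\in K$.

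Next, I introduce an intermediate cutoff $\tau$ with $\tau_0\le\tau\le n\delta$ and split $\mathcal{E}\le\abs{I_1}+\abs{I_2}+\abs{I_3}$, where
\begin{align*}
I_1 &=\frac{1}{(2\pi)^d}\int_{\mathcal{O}_\tau}\bigl(e^{-i\Psi_{n,y}(\xi)}A_n(\xi)-e^{-iQ(\xi)-iy\cdot\xi}\bigr)\,d\xi, \\
I_2 &=\frac{1}{(2\pi)^d}\int_{\mathcal{O}_{n\delta}\setminus\mathcal{O}_\tau} e^{-i\Psi_{n,y}(\xi)}A_n(\xi)\,d\xi, \\
I_3 &=\frac{1}{(2\pi)^d}\int_{\mathcal{O}_\tau} e^{-iQ(\xi)-iy\cdot\xi}\,d\xi - H_{iQ}^1(y).
\end{align*}
Each piece will be made smaller than $\epsilon/3$. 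For $I_3$, Item \ref{item:ImagHomAttractorConvergeUniformly} of Theorem \ref{thm:Attractor} supplies $\tau_1$ with $\abs{I_3}<\epsilon/3$ uniformly for $y\in K$ and $\tau\ge\tau_1$. For $I_2$, Lemma \ref{lem:OscillatoryEstimate}, applied with the same compact set $K$, hands us $\delta>0$ and $\tau_2\ge 1$ with $\abs{I_2}<\epsilon/3$ for $y\in K$ and $\tau_2\le\tau\le n\delta$. Fixing $\tau:=\max(\tau_1,\tau_2)$, the integrand of $I_1$ then lives on the fixed compact set $\overline{\mathcal{O}_\tau}$, and $\abs{I_1}$ reduces to a uniform-convergence estimate: the difference $\Psi_{n,y}(\xi)-(Q(\xi)+y\cdot\xi)=n\widetilde{Q}(n^{-E}\xi)$ tends to $0$ uniformly in $\xi\in\overline{\mathcal{O}_\tau}$ (and is independent of $y$) by the strong subhomogeneity of $\widetilde{Q}$ with respect to $E$, while $A_n(\xi)-1=e^{-n^{1-\kappa}R(\xi)-n\widetilde{R}(n^{-E}\xi)}-1$ does likewise because $n^{1-\kappa}\to 0$ (as $\kappa>1$) and because $\widetilde{R}$ is strongly subhomogeneous with respect to $E/\kappa$ (writing $n^{-E}=(n^\kappa)^{-E/\kappa}$ yields $n\abs{\widetilde{R}(n^{-E}\xi)}\le \epsilon'\,n^{1-\kappa}$ for large $n$). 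Choosing $N$ large enough then secures $\abs{I_1}<\epsilon/3$ for every $y\in K$ and $n\ge N$.

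The principal difficulty, which I would address first, is the outer piece $I_2$: in contrast with the positive-homogeneous case, the amplitude $A_n$ does not provide Gaussian-type decay (indeed $A_n\to 1$ pointwise), so the tail cannot be shrunk by the integrability of $e^{-R}$ — only the oscillations of $\Psi_{n,y}$ can do that, and harnessing them is exactly the content of Lemma \ref{lem:OscillatoryEstimate} through the Van der Corput estimate. The assumption $\mu<1$ is essential both here and in Theorem \ref{thm:Attractor}, which is what makes $H_{iQ}^1(y)$ well-defined in the first place. A small but crucial bookkeeping point is that the rescaled domain matches the approximating family used to define $H_{iQ}^1$: because $E\in\Exp(\abs{Q})$, one has $n^E(\mathcal{O}_\delta)=\mathcal{O}_{n\delta}$, so the three-term decomposition is genuine and assembles, almost symbol-for-symbol parallel to the proof of Lemma \ref{lem:LocalLimitPosHom}, into the claimed estimate.
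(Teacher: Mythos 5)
Your proposal follows the paper's proof almost step for step: the same translation and rescaling $\xi\mapsto n^{-E}\xi$ sending $\mathcal{O}_\delta$ to $\mathcal{O}_{n\delta}$, the same reduction via the scaling identity $H_{iQ}^n(x-n\alpha)=n^{-\mu}H_{iQ}^1(y)$, and the same three-term split with $I_2$ controlled by Lemma \ref{lem:OscillatoryEstimate}, $I_3$ by the uniform convergence in Item \ref{item:ImagHomAttractorConvergeUniformly} of Theorem \ref{thm:Attractor}, and $I_1$ by the uniform smallness of $n\widetilde{Q}(n^{-E}\xi)$, $n^{1-\kappa}R(\xi)$, and $n\widetilde{R}(n^{-E}\xi)$ on the fixed compact set $\overline{\mathcal{O}_{\tau}}$ (the paper packages these as $n\widetilde{P}(n^{-E}\xi)=o(1)$ using $\widetilde{P}=o(\abs{Q})$, but your direct estimates are equivalent). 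The order of choices ($\delta$, then $\tau$, then $N\geq\tau/\delta$) is also handled correctly modulo your ``choosing $N$ large enough''.

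There is, however, one concrete step that does not go through as written: your claim that ``the hypothesis $x-n\alpha\in n^{E^*}(K)$ ensures precisely that $y\in K$.'' The exponent used in your change of variables must be the particular $E$ furnished by Definition \ref{def:Types} (it is only for that $E$ that $\widetilde{Q}$ is strongly subhomogeneous of order $2$, $R$ is homogeneous with respect to $E/\kappa$, etc., as required by Lemmas \ref{lem:PhaseEstimates}--\ref{lem:OscillatoryEstimate}), whereas the lemma's hypothesis permits an arbitrary $E'\in\Exp(Q)=\Exp(\abs{Q})$, and $\Exp(Q)$ is generally not a singleton. If $E'\neq E$, then $y=n^{-E^*}(x-n\alpha)\in (n^{E'}n^{-E})^*(K)$, which need not lie in $K$; with your argument as stated you only prove the lemma for the one exponent $E$ of Definition \ref{def:Types}. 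The fix is the one the paper uses: since $n^{E'}n^{-E}\in\Sym(\abs{Q})$ and $\Sym(\abs{Q})$ is compact (Proposition \ref{prop:SymPCompact}), choose a fixed compact $K'\supseteq O^*(K)$ for all $O\in\Sym(\abs{Q})$ and run your entire argument with $K'$ in place of $K$, so that $y\in K'$ for every admissible $E'$. With that enlargement inserted, your proof coincides with the paper's.
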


\begin{proof}
Let $\epsilon>0$ and $K\subseteq\mathbb{R}^d$ be a compact set. Set $\mathcal{U}_{\xi_0}=\xi_0+\mathcal{O}_\delta$ where $\delta>0$ is yet to be specified but small enough to ensure that $\mathcal{O}_\delta\subseteq\mathcal{U}$ and $\mathcal{U}_{\xi_0}\subseteq\mathbb{T}_\phi^d$ is as small as desired. For $n\in\mathbb{N}_+$ and $x\in\mathbb{Z}^d$, define
\begin{equation*}
\mathcal{E}=n^{\mu}\abs{\frac{1}{(2\pi)^d}\int_{\mathcal{U}_{\xi_0}}\widehat{\phi}(\xi)^n e^{-ix\cdot\xi}\,d\xi-\widehat{\phi}(\xi_0)^n e^{-i x\cdot \xi_0} H_{iQ}^n(x-n\alpha)}.
\end{equation*}
Also, let $E\in\Exp(\abs{Q})=\Exp(Q)$ be that which appears in Definition \ref{def:Types} and, for $n\in\mathbb{N}_+$ and $x\in\mathbb{Z}^d$, set
\begin{equation*}
y=y_{n,x}=n^{-E^*}(x-n\alpha).
\end{equation*}
We observe that
\begin{eqnarray*}
\mathcal{E}&=&\abs{\frac{n^{\mu}}{(2\pi)^d}\int_{\mathcal{O}_\delta}\widehat{\phi}^n(\xi+\xi_0)e^{-ix\cdot (\xi+\xi_0)}\,d\xi-\widehat{\phi}^n(\xi_0)e^{-i x\cdot\xi_0}H_{iQ}^n(n^{E^*}y)} \\
&=&\abs{\frac{n^{\mu}}{(2\pi)^d}\int_{\mathcal{O}_\delta}\widehat{\phi}^n(\xi_0)e^{-ix\cdot\xi_0}e^{n\Gamma(\xi)}e^{-ix\cdot\xi}\,d\xi-\widehat{\phi}^n(\xi_0)e^{-ix\cdot\xi_0}H_{iQ}^n(n^{E^*}y)}\\
&=&\abs{\frac{n^{\mu}}{(2\pi)^d}\int_{\mathcal{O}_\delta}e^{n\Gamma(\xi)}e^{-ix\cdot\xi}\,d\xi-n^{\mu}H_{iQ}^n(n^{E^*}y)}\\
&=&\abs{\frac{n^\mu}{(2\pi)^d}\int_{\mathcal{O}_\delta}e^{n(\Gamma(\xi)-i\alpha\cdot\xi)}e^{-i(n^{E^*}y)\cdot\xi}\,d\xi-n^{\mu}H_{iQ}^n(n^{E^*}y)}\\
\end{eqnarray*}
for all $n\in\mathbb{N}_+$ and $x\in\mathbb{Z}^d$ where we have made an analogous computation to \eqref{eq:LocalLimitPosHom2} and written $\Gamma=\Gamma_{\xi_0}$. In view of \eqref{eq:GammaExpansion}, making the change of variables $\xi\mapsto n^{-E}\xi$ yields
\begin{eqnarray*}
n^{\mu}\int_{\mathcal{O}_\delta}e^{n(\Gamma(\xi)-i\alpha\cdot\xi)}e^{-i(n^{E^*}y)\cdot\xi}\,d\xi&=&n^{\mu}\int_{\mathcal{O}_\delta}\exp\left(-in\left(Q(\xi)+\widetilde{Q}(\xi)\right)-n\left(R(\xi)+\widetilde{R}(\xi)\right)\right)e^{-iy\cdot (n^{E}\xi)}\,d\xi\\
&=&\int_{\mathcal{O}_{n\delta}}e^{-i\Psi_{n,y}(\xi)}A_{n}(\xi)\,d\xi
\end{eqnarray*}
where $\Psi_{n,y}(\xi)$ and $A_n(\xi)$ are those defined in \eqref{eq:DefofPsiPhase} and \eqref{eq:DefofAAmplitude}, respectively, and we have noted that $n^{E}(\mathcal{O}_\delta)=\mathcal{O}_{n\delta}$. Also, by virtue of \eqref{eq:ImagHomAttractorScale}, we have $n^{\mu}H_{iQ}^n(n^{E^*}y)=H_{iQ}^1(y)$. Consequently,
\begin{equation*}
\mathcal{E}=\abs{\frac{1}{(2\pi)^d}\int_{\mathcal{O}_{n\delta}}e^{-i\Psi_{n,y}(\xi)}A_n(\xi)\,d\xi-H_{iQ}^1(y)}
\end{equation*}
for all $n\in\mathbb{N}_+$ and $x\in\mathbb{Z}^d$. 

Given that $\Sym(\abs{Q})$ is compact by virtue of Proposition \ref{prop:SymPCompact}, let $K'$ be a compact subset of $\mathbb{R}^d$ for which $O^*(K)\subseteq K'$ for all $O\in\Sym(\abs{Q})$. One can take, for example, $K'$ to be the closed ball of radius 
\begin{equation*}
M=\left(\sup_{x\in K}\abs{x}\right)\left(\sup_{O\in\Sym(\abs{Q})}\|O^*\|\right)=\left(\sup_{x\in K}\abs{x}\right)\left(\sup_{O\in \Sym(\abs{Q})}\|O\|\right)<\infty.
\end{equation*}
Suppose that, for some $E'\in\Exp(Q)=\Exp(\abs{Q})$ (which is possibly different from $E$), $n\in\mathbb{N}_+$ and $x\in\mathbb{Z}^d$, $(x-n\alpha)\in n^{E'^*}(K)$, then
\begin{equation*}
y=n^{-E^*}(x-n\alpha)\subseteq n^{-E^*}n^{E'^*}(K)=(n^{E'}n^{-E})^*(K)\subseteq K'
\end{equation*}
by virtue of Proposition \ref{prop:BasicGroupProp} and the simple fact that $n^{E'}n^{-E}\in\Sym(\abs{Q})$. With this observation in mind, to prove the lemma, it suffices to find a $\delta>0$ and an $N\in\mathbb{N}_+$ for which $\mathcal{E}<\epsilon$ for all $n\geq N$ and $y\in K'$.

By virtue of Lemma \ref{lem:OscillatoryEstimate}, let us now (and finally) fix $\delta>0$ and $\tau_0>$ for which
\begin{equation*}
\abs{\frac{1}{(2\pi)^d}\int_{\mathcal{O}_{n\delta}\setminus\mathcal{O}_{\tau}}e^{-i\Psi_{n,y}(\xi)}A_n(\xi)\,d\xi}<\epsilon/3
\end{equation*}
for all $y\in K'$, $n\in\mathbb{N}_+$, and $\tau>0$ for which $\tau_0\leq\tau\leq n\delta$. By an appeal to Theorem \ref{thm:Attractor}, let $\tau_1\geq \tau_0$ be such that
\begin{equation*}
\abs{\frac{1}{(2\pi)^d}\int_{\mathcal{O}_{\tau_1}}e^{-iQ(\xi)-iy\cdot\xi}\,d\xi-H_{iQ}^1(y)}<\epsilon/3
\end{equation*}
for all $y\in K'$. Let us now observe that
\begin{eqnarray*}
\int_{\mathcal{O}_{\tau_1}}\abs{e^{-i\Psi_{n,y}(\xi)}A_n(\xi)-e^{-iQ(\xi)-iy\cdot\xi}}\,d\xi&=&\int_{\mathcal{O}_{\tau_1}}\abs{e^{-n\left(i\widetilde{Q}(n^{-E}\xi)+R(n^{-E}\xi)+\widetilde{R}(n^{-E}\xi)\right)}-1}\,d\xi\\
&=&\int_{\mathcal{O}_{\tau_1}}\abs{e^{-n\widetilde{P}(n^{-E}\xi)}-1}\,d\xi
\end{eqnarray*}
where
\begin{equation*}
\widetilde{P}(\zeta):=i\widetilde{Q}(\zeta)+R(\zeta)+\widetilde{R}(\zeta)
\end{equation*}
for $\zeta\in\mathbb{R}^d$. By our supposition that $\widetilde{Q}$ is subhomogeneous with respect to $E$, $R$ is homogeneous with respect to $E/\kappa$ with $\kappa>1$ and $\widetilde{R}$ is subhomogeneous with respect to $E/\kappa$, from Proposition \ref{prop:Subhomequivtolittleoh} it follows that $\widetilde{P}(\zeta)=o(\abs{Q(\zeta)})$ as $\zeta\to 0$. Thus, given that $\{n^{E}\}$ is a contracting group and $\mathcal{O}_{\tau_1}$ is relatively compact,
\begin{equation*}
n\widetilde{P}(n^{-E}\xi)=\frac{\widetilde{P}(n^{-E}\xi)}{\abs{Q(n^{-E}\xi)}}\abs{Q(\xi)}\to 0\hspace{1cm}\mbox{as}\hspace{1cm}n\to\infty
\end{equation*}
uniformly for $\xi\in \mathcal{O}_{\tau_1}$. Thus, there is a natural number $N\geq \tau_1/\delta$ for which
\begin{equation*}
\frac{1}{(2\pi)^d}\int_{\mathcal{O}_{\tau_1}}\abs{e^{-i\Psi_{n,y}(\xi)}A_n(\xi)-e^{-iQ(\xi)-iy\cdot\xi}}\,d\xi<\frac{\epsilon}{3}
\end{equation*}
for all $n\geq N$ and $y\in K'$. With these estimates, we observe that, for $y\in K'$ and $n\geq N$, $\tau_1<N\delta\leq n\delta$ and
\begin{eqnarray*}
\mathcal{E}&= &\left|\frac{1}{(2\pi)^d}\int_{\mathcal{O}_{n\delta}\setminus \mathcal{O}_{\tau_1}}e^{-i\Psi_{n,y}(\xi)}A_n(\xi)\,d\xi\right.\\
&&\hspace{1.5cm}+\frac{1}{(2\pi)^d}\int_{\mathcal{O}_{\tau_1}}e^{-i\Psi_{n,y}(\xi)}A_n(\xi)\,d\xi-\frac{1}{(2\pi)^d}\int_{\mathcal{O}_{\tau_1}}e^{-iQ(\xi)-iy\cdot\xi}\,d\xi\\
&&\hspace{8.5cm}+\left.\frac{1}{(2\pi)^d}\int_{\mathcal{O}_{\tau_1}}e^{-iQ(\xi)-iy\cdot\xi}\,d\xi-H_{iQ}^1(y)\right|\\
&\leq&\abs{\frac{1}{(2\pi)^d}\int_{\mathcal{O}_{n\delta}\setminus \mathcal{O}_{\tau_1}}e^{-i\Psi_{n,y}(\xi)}A_n(\xi)\,d\xi}\\
&&\hspace{1.5cm}+\frac{1}{(2\pi)^d}\int_{\mathcal{O}_{\tau_1}}\abs{e^{-i\Psi_{n,y}(\xi)}A_n(\xi)-e^{-iQ(\xi)-iy\cdot\xi}}\,d\xi\\
&&\hspace{5cm}+\abs{\frac{1}{(2\pi)^d}\int_{\mathcal{O}_{\tau_1}}e^{-iQ(\xi)-iy\cdot\xi}\,d\xi-H_{iQ}^1(y)}\\
&<&\epsilon,
\end{eqnarray*}
as desired.
\end{proof}

\begin{lemma}\label{lem:SupNormEstImagHom}
Let $\xi_0\in\Omega(\phi)$ be of imaginary homogeneous type for $\widehat{\phi}$ with associated drift $\alpha_{\xi_0}\in\mathbb{R}^d$, homogeneous order $\mu_{\xi_0}>0$, and polynomial $Q_{\xi_0}$. If $\mu_{\xi_0}<1$, then, for any compact set $K\subseteq\mathbb{R}^d$, there exists an open neighborhood $\mathcal{U}_{\xi_0}\subseteq\mathbb{T}_\phi^d$ of $\xi_0$, which can be taken as small as desired, and a constant $C_{\xi_0}$ for which
\begin{equation*}
\abs{\frac{1}{(2\pi)^d}\int_{\mathcal{U}_{\xi_0}}\widehat{\phi}(\xi)^ne^{-ix\cdot\xi}\,d\xi}\leq C_{\xi_0}n^{-\mu_{\xi_0}}
\end{equation*}
whenever $x\in\mathbb{Z}^d$ and $n\in\mathbb{N}_+$ is such that $(x-n\alpha_{\xi_0})\in n^{E^*}(K)$ for $E\in\Exp(Q_{\xi_0})=\Exp(|Q_{\xi_0}|)$.
\end{lemma}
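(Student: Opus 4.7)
The plan is to mirror the derivation of Lemma \ref{lem:SupNormEstPositiveHom} from Lemma \ref{lem:LocalLimitPosHom}, but substituting the scaling property of the imaginary-homogeneous attractor $H_{iQ}$ (Item \ref{item:ImagHomAttractorScale} of Theorem \ref{thm:Attractor}) for the sup-norm estimate \ref{item:PosHomKernelOnDiagional} of Proposition \ref{prop:PosHomKernel}.

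First, I would fix a compact set $K \subseteq \mathbb{R}^d$ and, exactly as in the proof of Lemma \ref{lem:LocalLimitImagHom}, enlarge it to a compact set $K' \subseteq \mathbb{R}^d$ containing $O^*(K)$ for every $O \in \Sym(\abs{Q_{\xi_0}})$; the compactness of $\Sym(\abs{Q_{\xi_0}})$ provided by Proposition \ref{prop:SymPCompact} guarantees this is possible. By the argument given in the proof of Lemma \ref{lem:LocalLimitImagHom}, if $n \in \mathbb{N}_+$ and $x \in \mathbb{Z}^d$ satisfy $(x - n\alpha_{\xi_0}) \in n^{E^*}(K)$ for some $E \in \Exp(Q_{\xi_0}) = \Exp(\abs{Q_{\xi_0}})$, then for every fixed $E_0 \in \Exp(Q_{\xi_0})$ chosen once and for all, the vector $y := n^{-E_0^*}(x - n\alpha_{\xi_0})$ lies in $K'$.

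Next, I would apply Lemma \ref{lem:LocalLimitImagHom} with this fixed compact set $K$ (using the fixed $E_0$) and, say, $\epsilon = 1$, to produce an open neighborhood $\mathcal{U}_{\xi_0} \subseteq \mathbb{T}_\phi^d$ of $\xi_0$ and a natural number $N$ for which
\begin{equation*}
\abs{\frac{1}{(2\pi)^d}\int_{\mathcal{U}_{\xi_0}}\widehat{\phi}(\xi)^n e^{-ix\cdot\xi}\,d\xi - \widehat{\phi}(\xi_0)^n e^{-ix\cdot\xi_0} H_{iQ_{\xi_0}}^n(x - n\alpha_{\xi_0})} < n^{-\mu_{\xi_0}}
\end{equation*}
for all $n \geq N$ and admissible $x$. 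By Item \ref{item:ImagHomAttractorScale} of Theorem \ref{thm:Attractor}, $H_{iQ_{\xi_0}}^n(x - n\alpha_{\xi_0}) = n^{-\mu_{\xi_0}} H_{iQ_{\xi_0}}^1(y)$, and by Item \ref{item:ImagHomAttractorConverge} of the same theorem, $H_{iQ_{\xi_0}}^1$ is continuous on $\mathbb{R}^d$ and hence bounded on the compact set $K'$. Combining with $\abs{\widehat{\phi}(\xi_0)} = 1$ and the triangle inequality yields
\begin{equation*}
\abs{\frac{1}{(2\pi)^d}\int_{\mathcal{U}_{\xi_0}}\widehat{\phi}(\xi)^n e^{-ix\cdot\xi}\,d\xi} \leq \Bigl(1 + \sup_{y \in K'} \abs{H_{iQ_{\xi_0}}^1(y)}\Bigr) n^{-\mu_{\xi_0}}
\end{equation*}
for all $n \geq N$ and admissible $x$.

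Finally, to handle the finite range $1 \leq n < N$, I would use the trivial bound $\abs{\widehat{\phi}(\xi)^n} \leq 1$, which gives
\begin{equation*}
\abs{\frac{1}{(2\pi)^d}\int_{\mathcal{U}_{\xi_0}}\widehat{\phi}(\xi)^n e^{-ix\cdot\xi}\,d\xi} \leq \frac{m(\mathcal{U}_{\xi_0})}{(2\pi)^d} \leq \frac{m(\mathcal{U}_{\xi_0}) N^{\mu_{\xi_0}}}{(2\pi)^d}\, n^{-\mu_{\xi_0}}
\end{equation*}
for $1 \leq n < N$ and all $x$. Taking $C_{\xi_0}$ to be the maximum of the two constants above produces the desired uniform estimate. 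There is no substantive obstacle here; the entire argument is an immediate corollary of Lemma \ref{lem:LocalLimitImagHom} together with the scaling identity and continuity of $H_{iQ_{\xi_0}}^1$, with a routine adjustment for the small-$n$ regime.
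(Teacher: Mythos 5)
Your proposal is correct and follows essentially the same route as the paper: bound the attractor term by combining the scaling identity of Theorem \ref{thm:Attractor} with the continuity (hence boundedness on compacts) of $H_{iQ_{\xi_0}}^1$, invoke Lemma \ref{lem:LocalLimitImagHom} with the triangle inequality for large $n$, and enlarge the constant to cover the finitely many small $n$. The only cosmetic difference is that you route the scaling through a fixed $E_0$ and the enlarged set $K'$, whereas the paper applies the scaling identity with the same $E$ appearing in the hypothesis and so works with $K$ directly; both are fine.
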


\begin{proof}
By virtue of the continuity of $y\mapsto H_{iQ_{\xi_0}}^1(y)$ ensured by Theorem \ref{thm:Attractor}, we note that
\begin{equation*}
M:=\sup_{y\in K}\abs{H_{iQ_{\xi_0}}(y)}<\infty
\end{equation*}
and so, thanks to Item \ref{item:ImagHomAttractorScale} of Theorem \ref{thm:Attractor}, it follows that
\begin{equation*}
\abs{H_{iQ_{\xi_0}}^n(x-n\alpha_{\xi_0})}=n^{-\mu_{\xi_0}}\abs{H_{iQ_{\xi_0}}^1\left(n^{-E^*}(x-n\alpha_{\xi_0})\right)}\leq n^{-\mu_{\xi_0}}M
\end{equation*}
whenever $x\in n\alpha_{\xi_0}+n^{E^*}(K)$ for $E\in\Exp(Q_{\xi_0})$. Consequently, an application of Lemma \ref{lem:LocalLimitImagHom} and the triangle inequality yield $C_{\xi_0}>0$ for which the desired estimate holds for $n$ sufficiently large. By modifying $C_{\xi_0}$, if necessary, we obtain the desired estimate for all $n\in\mathbb{N}_+$.
\end{proof}

\begin{theorem}\label{thm:LLTMain}
Let $\phi\in\mathcal{S}_d$ be such that $\sup_{\xi}|\widehat{\phi}(\xi)|=1$ and suppose that $\Omega(\phi)$ consists only of points of positive homogeneous or imaginary homogeneous type for $\widehat{\phi}$ and let $\mu_\phi>0$ be the homogeneous order of $\phi$ defined by \eqref{eq:HomOrderPhi}. Denote by $\Omega_p(\phi)$ and $\Omega_i(\phi)$ those points of $\Omega(\phi)$ of positive homogeneous type and imaginary homogeneous type for $\widehat{\phi}$, respectively. If $\Omega_i(\phi)$ is non-empty, assume additionally that
\begin{enumerate}[(i)]
\item\label{hyp:LLTMainLessThanOne} $\mu_{\xi}<1$ for every $\xi\in\Omega_i(\phi)$, 
\item\label{hyp:LLTMainDomImag} there exists $\xi\in\Omega_i(\phi)$ for which $\mu_{\xi}=\mu_\phi$,
\end{enumerate}
and
\begin{enumerate}[(i),resume]
\item\label{hyp:LLTMainSameDrift} there exists $\alpha\in\mathbb{R}^d$ for which $\alpha_\xi=\alpha$ for all $\xi\in\Omega_i(\phi)$.
\end{enumerate}
Then, there holds the following.
\begin{enumerate}[\text{Case} 1, align = left] 
\item\label{item:LLTMain1} In the case that $\Omega(\phi)=\Omega_p(\phi)$, i.e., every point of $\Omega(\phi)$ is of positive homogeneous type for $\widehat{\phi}$, let $\{\xi_1,\xi_2,\dots,\xi_A\}$ be the set of points in $\Omega(\phi)$ for which $\mu_{\xi_k}=\mu_\phi$ and, for each $k=1,2,\dots,A$, set $\alpha_k=\alpha_{\xi_k}$ and $P_k=P_{\xi_k}=R_{\xi_k}+iQ_{\xi_k}$ in view of Definition \ref{def:Types} and let $H_{P_k}$ be the corresponding attractor defined in Proposition \ref{prop:PosHomKernel}. Then,
\begin{equation*}
\phi^{(n)}(x)=\sum_{k=1}^A\widehat{\phi}(\xi_k)^ne^{-ix\cdot\xi_k}H_{P_k}^n(x-n\alpha_k)+o(n^{-\mu_\phi})
\end{equation*}
uniformly for $x\in\mathbb{Z}^d$.
\item\label{item:LLTMain2} Alternatively, in the case that $\Omega_i(\phi)$ is nonempty, we have the following:
\begin{enumerate}
\item\label{item:LLTMain2a} If $\Omega(\phi)=\Omega_i(\phi)$ or $\mu_{\xi}>\mu_{\phi}$ for all $\xi\in\Omega_p(\phi)$, let $\{\xi_1,\xi_2,\dots,\xi_A\}$ be the set of points in $\Omega(\phi)$ for which $\mu_{\xi_k}=\mu_\phi$. Necessarily, $\{\xi_1,\xi_2,\dots,\xi_A\}\subseteq\Omega_i(\phi)$ and so, for each $k=1,2,\dots,A$, let $Q_k=Q_{\xi_k}$ in view of Definition \ref{def:Types} and let $H_{iQ_k}$ be the corresponding attractor defined in Theorem \ref{thm:Attractor}. Then, for each compact set $K\subseteq \mathbb{R}^d$, there exists a nested increasing sequence of compact sets $\{K_n\}$ all containing $K$ and whose union is $\mathbb{R}^d$ for which
\begin{equation*}
\phi^{(n)}(x)=\sum_{k=1}^A\widehat{\phi}(\xi_k)^ne^{-ix\cdot\xi_k}H_{iQ_k}^n(x-n\alpha)+o(n^{-\mu_\phi})
\end{equation*}
uniformly for $x\in \left(n\alpha+K_n\right)\bigcap\mathbb{Z}^d$.

\item\label{item:LLTMain2b} If, otherwise, $\Omega_i(\phi)$ and $\Omega_p(\phi)$ both contain points of order $\mu_\phi$, we shall denote by $\{\xi_1,\xi_2,\dots,\xi_{A_i}\}$ those points of $\Omega_i(\xi)$ for which $\mu_{\xi_k}=\mu_\phi$ for $k=1,2,\dots, A_i$. Also, denote by $\{\zeta_1,\zeta_2,\dots,\zeta_{A_p}\}$ those points of $\Omega_p(\xi)$ for which $\mu_{\zeta_j}=\mu_\phi$ for $j=1,2,\dots A_p$. For each $k=1,2\dots, A_i$, let $Q_k=Q_{\xi_k}$ be that given in Definition \ref{def:Types} and let $H_{iQ_k}$ be the associated attractor given in Theorem \ref{thm:Attractor}. For each $j=1,2\dots, A_p$, let $\alpha_{j}=\alpha_{\zeta_j}\in\mathbb{R}^d$ and $P_j=P_{\zeta_j}=R_{\zeta_j}+iQ_{\zeta_j}$ be those given by Definition \ref{def:Types} and let $H_{P_j}$ be the associated attractor given in Propsition \ref{prop:PosHomKernel}. Then, for each compact set $K\subseteq\mathbb{R}^d$, there exists a nested increasing sequence of compact sets $\{K_n\}$ all containing $K$ and whose union is $\mathbb{R}^d$ for which
\begin{equation*}
\phi^{(n)}(x)=\sum_{k=1}^{A_i}\widehat{\phi}(\xi_k)^ne^{-ix\cdot\xi_k}H_{iQ_k}^n(x-n\alpha)+\sum_{j=1}^{A_p}\widehat{\phi}(\zeta_j)^ne^{-ix\cdot\zeta_j}H^n_{P_j}(x-n\alpha_j)+ o(n^{-\mu_\phi})
\end{equation*}
uniformly for $x\in (n\alpha+K_n)\bigcap\mathbb{Z}^d$.
\end{enumerate}
\end{enumerate}
\end{theorem}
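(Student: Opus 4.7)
The plan is to apply the Fourier inversion identity \eqref{eq:FTConvolutionPhi} and partition $\mathbb{T}^d_\phi$ into mutually disjoint open neighborhoods $\mathcal{U}_\xi$ of each $\xi\in\Omega(\phi)$, chosen small enough for Lemmas \ref{lem:LocalLimitPosHom}, \ref{lem:SupNormEstPositiveHom}, \ref{lem:LocalLimitImagHom}, and \ref{lem:SupNormEstImagHom} to apply, together with the compact complement $C:=\mathbb{T}^d_\phi\setminus\bigcup_\xi\mathcal{U}_\xi$. Because $|\widehat{\phi}|$ attains $1$ only on the finite set $\Omega(\phi)$ and $C$ is compact and disjoint from $\Omega(\phi)$, we have $s:=\sup_{C}|\widehat{\phi}|<1$ and the contribution from $C$ is bounded above by $(2\pi)^{-d}m(C)s^n$, which is $o(n^{-\mu_\phi})$ uniformly in $x$.

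On each $\mathcal{U}_{\xi_k}$ centered at a dominant point (one with $\mu_{\xi_k}=\mu_\phi$), I would extract the claimed attractor term by invoking Lemma \ref{lem:LocalLimitPosHom} if $\xi_k\in\Omega_p(\phi)$ or Lemma \ref{lem:LocalLimitImagHom} if $\xi_k\in\Omega_i(\phi)$. On each neighborhood around a subdominant point ($\mu_\xi>\mu_\phi$), the corresponding Lemma \ref{lem:SupNormEstPositiveHom} or Lemma \ref{lem:SupNormEstImagHom} delivers a bound of order $n^{-\mu_\xi}$, which is absorbed into $o(n^{-\mu_\phi})$. Hypothesis (iii), the common-drift assumption for imaginary points, is precisely what ensures every imaginary contribution shares the same shift $n\alpha$, so that all restriction regions from Lemmas \ref{lem:LocalLimitImagHom} and \ref{lem:SupNormEstImagHom} align into a single translation $n\alpha+(\cdot)$. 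In the first case of the theorem (only positive homogeneous points), every contribution is uniform in $x\in\mathbb{Z}^d$ and the result follows immediately from summation plus the complement estimate.

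The remaining cases involve imaginary points, and the main obstacle is producing the nested increasing sequence $\{K_n\}$ from the fixed compact $K$, because Lemma \ref{lem:LocalLimitImagHom} only yields uniformity on shifts of $n^{E_k^*}(K')$ for a fixed compact $K'$ while distinct dominant imaginary points $\xi_k$ may carry distinct exponents $E_k\in\Exp(|Q_{\xi_k}|)$. I would handle this with a diagonal argument. Fix a nested compact exhaustion $K\subseteq K_1'\subseteq K_2'\subseteq\cdots$ of $\mathbb{R}^d$ (say closed balls enlarged to contain $K$). For each $m\in\mathbb{N}_+$ and each $\xi\in\Omega_i(\phi)$, Lemma \ref{lem:LocalLimitImagHom} (if $\mu_\xi=\mu_\phi$) or Lemma \ref{lem:SupNormEstImagHom} (if $\mu_\xi>\mu_\phi$) applied to the compact set $K_m'$ with tolerance $1/m$ yields a threshold $N_{m,\xi}$; setting $N_m:=\max_\xi N_{m,\xi}$ (arranged nondecreasing) and $m(n):=\max\{m:N_m\leq n\}$, we have $m(n)\to\infty$. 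Since each $\{n^{-E_k^*}\}$ is contracting by Proposition \ref{prop:CharofPosHom}, we have $\|n^{-E_k^*}\|\to 0$, and therefore
\[
r_n:=\min_k\frac{m(n)}{\|n^{-E_k^*}\|}\longrightarrow\infty\quad\text{as }n\to\infty.
\]
Setting $K_n:=K\cup\overline{B(0,r_n)}$, with $r_n$ monotonized if necessary, produces a nested increasing exhaustion of $\mathbb{R}^d$ containing $K$ such that $n^{-E_k^*}(x-n\alpha)\in K_{m(n)}'$ for every $k$ and every $x\in n\alpha+K_n$. Summing the dominant contributions from the local-limit lemmas (including the positive-homogeneous ones contributed by the $\zeta_j$ in the last case, which require no restriction on $x$) and bounding the subdominant contributions and the complement by $o(n^{-\mu_\phi})$ completes the proof.
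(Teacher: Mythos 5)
Your overall decomposition is the same as the paper's: split \eqref{eq:FTConvolutionPhi} over disjoint neighborhoods of the points of $\Omega(\phi)$ plus the compact complement, invoke Lemma \ref{lem:LocalLimitPosHom} or Lemma \ref{lem:LocalLimitImagHom} at the dominant points, Lemma \ref{lem:SupNormEstPositiveHom} or Lemma \ref{lem:SupNormEstImagHom} at the subdominant ones, and bound the complement by $s^n$ with $s<1$; your treatment of Case 1 and your use of hypothesis \ref{hyp:LLTMainSameDrift} to align all imaginary contributions at the common shift $n\alpha$ agree with the paper. Where you genuinely diverge is the construction of $\{K_n\}$. The paper avoids any diagonalization: for each dominant imaginary point it fixes the single compact set $J_k=\bigcup_{0<r\leq 1}r^{E_k^*}(\overline{B})$, where $\overline{B}$ is a closed ball containing $K$, observes that $n\mapsto n^{E_k^*}(J_k)$ is nested increasing with union $\mathbb{R}^d$, and sets $K_n=\bigcap_k n^{E_k^*}(J_k)$; then, for each $\epsilon>0$, a single application of Lemma \ref{lem:LocalLimitImagHom} with the fixed compact $J_k$ already gives the estimate on all of $n\alpha+K_n$, because the lemma's region of uniformity is precisely $n\alpha+n^{E_k^*}(J_k)$. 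This yields a $K_n$ that does not depend on $\epsilon$ and makes the uniform $o(n^{-\mu_\phi})$ statement immediate. Your scheme (exhaustion $K_m'$, tolerances $1/m$, the index $m(n)$, and $K_n=K\cup\overline{B(0,r_n)}$ with $r_n=\min_k m(n)/\|n^{-E_k^*}\|$) can be made to work and produces a valid, if typically slower-growing, exhaustion; the operator-norm estimate $|n^{-E_k^*}(x-n\alpha)|\leq\|n^{-E_k^*}\|\,|x-n\alpha|$ does what you want provided $K_m'$ contains the ball of radius $m$.

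There is, however, an internal inconsistency you must repair before the diagonal argument closes. You fix the neighborhoods $\mathcal{U}_\xi$ (hence the complement $C$ and $s=\sup_C|\widehat{\phi}|$) once at the outset, but Lemma \ref{lem:LocalLimitImagHom} produces a neighborhood that depends on the pair (compact set, tolerance): in its proof the radius $\delta$ is fixed through Lemma \ref{lem:OscillatoryEstimate}, which takes $K$ and $\epsilon$ as inputs, so when you reapply the lemma with $K_m'$ and tolerance $1/m$ the neighborhoods in general shrink as $m$ grows. Consequently the partition of $\mathbb{T}^d_\phi$, the complement bound $s_m^n$ (with $s_m<1$ but possibly $s_m\to 1$), and the subdominant constants from Lemma \ref{lem:SupNormEstImagHom} (whose constant involves $\sup_{y\in K_m'}\abs{H^1_{iQ_\xi}(y)}$ and so grows with $m$) all vary with $m$, and your thresholds $N_m$ must absorb every one of these level-$m$ requirements — not only the thresholds coming from the dominant imaginary-type lemma, and, in Case \ref{item:LLTMain2b}, also the $\epsilon=1/m$ thresholds from Lemma \ref{lem:LocalLimitPosHom} — before you define $m(n)$. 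With that bookkeeping added your argument is correct; the paper's dilation construction of $J_k$ and $K_n$ simply sidesteps the issue by applying each lemma to one fixed compact set per point.
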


\begin{proof} Throughout the proof, we will write $\mu=\mu_\phi$.

\begin{subproof}[\ref{item:LLTMain1}.]
Let $\epsilon>0$. In view of our supposition that every point of $\Omega(\phi)$ is of positive homogeneous type for $\widehat{\phi}$, we write
\begin{equation*}
\Omega(\phi)=\{\xi_1,\xi_2,\dots,\xi_A,\dots,\xi_{A'}\}
\end{equation*}
where $\mu_{\xi_k}=\mu$ for $k=1,2,\dots A$ and, if $A<k\leq A'$, $\mu_{\xi_k}>\mu_\phi$. For $k=1,2,\dots, A$, let $\alpha_k=\alpha_{\xi_k}\in\mathbb{R}^d$ and $P_k=P_{\xi_k}=R_{\xi_k}+iQ_{\xi_k}$ be those associated to $\xi_k$ in view of Definition \ref{def:Types}. By making simultaneous appeals to Lemma \ref{lem:LocalLimitPosHom} and Lemma \ref{lem:SupNormEstPositiveHom}, we may choose a disjoint collection of open sets $\mathcal{U}_{1},\mathcal{U}_2,\dots,\mathcal{U}_{A'}\subseteq \mathbb{T}_\phi^d$, each containing $\xi_k\in\Omega(\phi)$ for the appropriate $k=1,2\dots, A'$, and a natural number $N_1$ for which, if $k=1,2,\dots, A$,
\begin{equation}\label{eq:LLTMainCase1_1}
\abs{\frac{1}{(2\pi)^d}\int_{\mathcal{U}_k}\widehat{\phi}(\xi)^ne^{-ix\cdot\xi}\,d\xi-\widehat{\phi}(\xi_k)^ne^{-ix\cdot\xi_k}H_{P_k}^n(x-n\alpha_k)}<\frac{\epsilon n^{-\mu}}{2A'}
\end{equation}
for all $n\geq N_1$, $x\in\mathbb{Z}^d$ and, if $A<k\leq A'$,
\begin{equation*}
\abs{\frac{1}{(2\pi)^d}\int_{\mathcal{U}_k}\widehat{\phi}(\xi)^ne^{-ix\cdot\xi}\,d\xi}< C_k n^{-\mu_{\xi_k}}
\end{equation*}
for all $n\in\mathbb{N}_+$ and $x\in\mathbb{Z}^d$; here, $C_k>0$. We remark that, for $A<k\leq A'$, $n^{-\mu_{\xi_k}}=o(n^{-\mu})$ as $n\to\infty$ because $\mu<\mu_{\xi_k}$. Thus, there exists a natural number $N_2$ for which
\begin{equation}\label{eq:LLTMainCase1_2}
\abs{\frac{1}{(2\pi)^d}\int_{\mathcal{U}_k}\widehat{\phi}(\xi)^ne^{-ix\cdot\xi}\,d\xi}<\frac{\epsilon n^{-\mu}}{2A'}
\end{equation}
for $n\geq N_2$, $x\in\mathbb{Z}^d$ and $A<k\leq A'$. For the compact set
\begin{equation*}
G=\mathbb{T}_\phi^d\setminus\left(\bigcup_{k=1}^{A'} \mathcal{U}_k\right),
\end{equation*}
we observe that, for each $n\in\mathbb{N}_+$ and $x\in\mathbb{Z}^d$,
\begin{equation*}
\abs{\frac{1}{(2\pi)^d}\int_G \widehat{\phi}(\xi)^n e^{-ix\cdot\xi}\,d\xi}\leq \frac{1}{(2\pi)^d}\int_G |\widehat{\phi}(\xi)|^n\,d\xi\leq s^n
\end{equation*}
where
\begin{equation*}
s:=\sup_{\xi\in G}|\widehat{\phi}(\xi)|<1
\end{equation*}
because $\Omega(\phi)\subseteq \bigcup_{k=1}^{A'} \mathcal{U}_k$. Upon noting that $s^n=o(n^{-\mu})$ as $n\to\infty$, we can therefore select $N_3\in\mathbb{N}_+$ for which
\begin{equation}\label{eq:LLTMainCase1_3}
\abs{\frac{1}{(2\pi)^d}\int_G\widehat{\phi}(\xi)^ne^{-ix\cdot\xi}\,d\xi}\leq s^n<\frac{\epsilon n^{-\mu}}{2}
\end{equation}
for all $n\geq N_3$ and $x\in\mathbb{Z}^d$. By virtue of the identity \eqref{eq:FTConvolutionPhi}, observe that
\begin{eqnarray}\label{eq:LLTMainCase1_4}\nonumber
\lefteqn{\hspace{-0.5cm}\abs{ \phi^{(n)}(x)-\sum_{k=1}^A\widehat{\phi}(\xi_k)^ne^{-ix\cdot\xi_k}H_{P_k}^n(x-n\alpha_k)}}\\\nonumber
\hspace{0cm}&=&\abs{\frac{1}{(2\pi)^d}\int_{\mathbb{T}_\phi^d}\widehat{\phi}(\xi)^ne^{-ix\cdot\xi}\,d\xi-\sum_{k=1}^A\widehat{\phi}(\xi_k)^ne^{-ix\cdot\xi_k}H_{P_k}^n(x-n\alpha_k)}\\\nonumber
&=&\left|\sum_{k=1}^{A'}\frac{1}{(2\pi)^d}\int_{\mathcal{U}_k}\widehat{\phi}(\xi)^ne^{-ix\cdot\xi}\,d\xi+\frac{1}{(2\pi)^d}\int_{G}\widehat{\phi}(\xi)^ne^{-ix\cdot\xi}\,d\xi\right.\\\nonumber
&&\hspace{4cm}\left.-\sum_{k=1}^A\widehat{\phi}(\xi_k)^ne^{-ix\cdot\xi_k}H_{P_k}^n(x-n\alpha_k)\right|\\\nonumber
&=&\left|\sum_{k=1}^A\left(\frac{1}{(2\pi)^d}\int_{\mathcal{U}_k}\widehat{\phi}(\xi)^ne^{-ix\cdot\xi}\,d\xi-\widehat{\phi}(\xi_k)^ne^{-ix\cdot\xi_k}H_{P_k}^n(x-n\alpha_k)\right)\right.\\\nonumber
&&\hspace{0cm}\left.+\sum_{k=A+1}^{A'}\frac{1}{(2\pi)^d}\int_{\mathcal{U}_k}\widehat{\phi}(\xi)^ne^{-ix\cdot\xi}\,d\xi+\frac{1}{(2\pi)^d}\int_{G}\widehat{\phi}(\xi)^ne^{-ix\cdot\xi}\,d\xi\right|\\\nonumber
&\leq&\sum_{k=1}^A\abs{\frac{1}{(2\pi)^d}\int_{\mathcal{U}_k}\widehat{\phi}(\xi)^ne^{-ix\cdot\xi}\,d\xi-\widehat{\phi}(\xi_k)^ne^{-ix\cdot\xi_k}H_{P_k}^n(x-n\alpha_k)}\\
&&+\sum_{k=A+1}^{A'}\abs{\frac{1}{(2\pi)^d}\int_{\mathcal{U}_k}\widehat{\phi}(\xi)^n e^{-ix\cdot\xi}\,d\xi}+\abs{\frac{1}{(2\pi)^d}\int_{G}\widehat{\phi}(\xi)^ne^{-ix\cdot\xi}\,d\xi}
\end{eqnarray}
for all $n\in\mathbb{N}_+$ and $x\in\mathbb{Z}^d$. Making use of the estimates \eqref{eq:LLTMainCase1_1}, \eqref{eq:LLTMainCase1_2}, and \eqref{eq:LLTMainCase1_3}, the preceding inequality guarantees that
\begin{equation*}
\abs{ \phi^{(n)}(x)-\sum_{k=1}^A\widehat{\phi}(\xi_k)^ne^{-ix\cdot\xi_k}H_{P_k}^n(x-n\alpha_k)}< A'\left(\frac{\epsilon n^{-\mu}}{2A'}\right)+\frac{\epsilon n^{-\mu}}{2}=\epsilon n^{-\mu}
\end{equation*}
whenever $n\geq N=\max\{N_1,N_2,N_3\}$ and $x\in\mathbb{Z}^d$, as desired.
\end{subproof}
\begin{subproof}[\ref{item:LLTMain2a}.]
Let $\epsilon>0$ and $K\subseteq \mathbb{R}^d$ be compact. In view of our supposition, we can write
\begin{equation*}
\Omega(\phi)=\{\xi_1,\xi_2,\dots,\xi_A,\cdots,\xi_{A'}\}
\end{equation*}
where, for each $k=1,2,\dots, A$, $\xi_k$ is a point of imaginary homogeneous type for $\widehat{\phi}$ with homogeneous order $\mu_{\xi_k}=\mu<1$ and, for $A<k\leq A'$, $\xi_k$ is a point of positive homogeneous type or imaginary homogeneous type for $\widehat{\phi}$ with homogeneous order $\mu<\mu_{\xi_k}$; in the case that $\xi_k$ is of imaginary homogeneous type, we also know that $\mu<\mu_{\xi_k}<1$. 

We now construct the sequence of compact sets $\{K_n\}$. For each $k=1,2,\dots,A'$ for which $\xi_k$ is of imaginary homogeneous type for $\widehat{\phi}$ with associated $Q_k=Q_{\xi_k}$, let $E_k\in\Exp(Q_k)=\Exp(\abs{Q_k})$. Define $\psi_k:[0,\infty)\times \mathbb{R}^d\to\mathbb{R}^d$ by
\begin{equation*}
\psi_k(s,x)=\begin{cases}
s^{E_k^*}x & s>0\\
0 & s=0
\end{cases}
\end{equation*}
for $s\geq 0$ and $x\in\mathbb{R}^d$. Upon noting that $\{t^{E_k^*}\}$ is a contracting group (this follows from Proposition \ref{prop:CharofPosHom} and the fact that $(t^{E_k})^*=t^{E_k^*}$), it is straightforward to verify that $\psi_k$ is continuous. Let us fix a closed ball $\overline{B}=\overline{B_R(0)}$ containing $K$ and observe that
\begin{equation*}
J_k:=\psi_k([0,1]\times \overline{B})=\bigcup_{0<r\leq 1}r^{E_k^*}(\overline{B})
\end{equation*}
which is necessarily compact by virtue of the continuity of $\psi_k$. Thanks to the above identity,  we see that, for any $t\geq 1$, we have
\begin{equation*}
J_k\subseteq \bigcup_{0<s\leq t}s^{E_k^*}(\overline{B})=\bigcup_{0<r\leq 1} (t\cdot r)^{E_k^*}(\overline{B})=t^{E_k^*}\left(\bigcup_{0<r\leq 1}r^{E_k^*}(\overline{B})\right)=t^{E_k^*}(J_k).
\end{equation*}
Consequently, for any natural numbers $n\leq m$, we have
\begin{equation*}
n^{E_k^*}(J_k)\subseteq m^{E_k^*}(J_k).
\end{equation*}
With the above observations in mind, and in view of the fact that $\{t^{E_k^*}\}$ is contracting, it follows that
\begin{equation*}
J_k^n:=n^{E_k^*}(J_k)
\end{equation*}
is a nested increasing sequence of compact sets all containing $K$ and whose union is $\mathbb{R}^d$. Finally, for each $n\in\mathbb{N}_+$, we define
\begin{equation*}
K_n=\bigcap_{\substack{k=1,2,\dots,A' \\ \xi_k\in\Omega_i(\phi)}}J_k^n=\bigcap_{\substack{k=1,2,\dots,A' \\ \xi_k\in\Omega_i(\phi)}}n^{E_k^*}(J_k)
\end{equation*}
From our construction of $J_k^n$, it is straightforward to see that $\{K_n\}$ is a nested increasing sequence of compact sets all containing $K$ and whose union is $\mathbb{R}^d$.

We now make simultaneous appeals to Lemma \ref{lem:LocalLimitImagHom}, Lemma \ref{lem:SupNormEstPositiveHom}, and Lemma \ref{lem:SupNormEstImagHom} to produce a collection of disjoint open sets $\mathcal{U}_1,\mathcal{U}_2,\dots\mathcal{U}_{A'}\subseteq\mathbb{T}_\phi^d$, each containing $\xi_k\in\Omega(\phi)$ for the appropriate $k=1,2,\dots A'$, and a natural number $N_1$ for which the following estimates hold. For $k=1,2,\dots, A$,
\begin{equation}\label{eq:LLTMainCase1_5}
\abs{\frac{1}{(2\pi)^d}\int_{\mathcal{U}_k}\widehat{\phi}(\xi)^n e^{-ix\cdot\xi}\,d\xi-\widehat{\phi}(\xi_k)^{n}e^{-ix\cdot\xi_k}H_{iQ_k}^n(x-n\alpha)}<\frac{\epsilon n^{-\mu}}{2A'}
\end{equation}
whenever $n\geq N_1$, $x\in\mathbb{Z}^d$, and $(x-n\alpha)\in K_n\subseteq n^{E_k^*}(J_k)$. If $A<k\leq A'$ and $\xi_k\in\Omega_p(\phi)$, 
\begin{equation*}
\abs{\frac{1}{(2\pi)^d}\int_{\mathcal{U}_k}\widehat{\phi}(\xi)^ne^{-ix\cdot\xi}\,d\xi}<C_k n^{-\mu_{\xi_k}}
\end{equation*}
for all $n\in\mathbb{N}_+$ and $x\in\mathbb{Z}^d$. If $A<k\leq A'$ and $\xi_k\in\Omega_i(\phi)$, 
\begin{equation*}
\abs{\frac{1}{(2\pi)^d}\int_{\mathcal{U}_k}\widehat{\phi}(\xi)^ne^{-ix\cdot\xi}\,d\xi}<C_k n^{-\mu_{\xi_k}}
\end{equation*}
whenever $n\in\mathbb{N}_+$, $x\in\mathbb{Z}^d$ and $(x-n\alpha)\in K_n\subseteq n^{E_k^*}(J_k)$. Upon noting that $n^{-\mu_{\xi_k}}=o(n^{-\mu})$ as $n\to\infty$ whenever $A<k\leq A'$, we may further choose a natural number $N_2$ for which
\begin{equation}\label{eq:LLTMainCase1_6}
\abs{\frac{1}{(2\pi)^d}\int_{\mathcal{U}_k}\widehat{\phi}(\xi)^ne^{-ix\cdot\xi}\,d\xi}<\frac{\epsilon n^{-\mu}}{2A'}
\end{equation}
whenever $n\geq N_1$, $x\in\mathbb{Z}^d$ and $(x-n\alpha)\in K_n$. Finally, as in \ref{item:LLTMain1}, we select a natural number $N_3$ for which
\begin{equation}\label{eq:LLTMainCase1_7}
\abs{\frac{1}{(2\pi)^d}\int_{G}\widehat{\phi}(\xi)^ne^{-ix\cdot\xi}\,d\xi}<\frac{\epsilon n^{-\mu}}{2}
\end{equation}
for all $n\geq N_3$ and $x\in\mathbb{Z}^d$ where $G=\mathbb{T}_\phi^d\setminus\bigcup_{k=1}^{A'}\mathcal{U}_k$. By an analogous argument to that given in \ref{item:LLTMain1}, we combine the estimates \eqref{eq:FTConvolutionPhi}, \eqref{eq:LLTMainCase1_5}, \eqref{eq:LLTMainCase1_6}, and \eqref{eq:LLTMainCase1_7} to find that
\begin{equation*}
\abs{\phi^{(n)}(x)-\sum_{k=1}^A\widehat{\phi}(\xi_k)^ne^{-ix\cdot\xi_k}H_{iQ_k}^n(x-n\alpha)}<\epsilon n^{-\mu}
\end{equation*}
for all $n\geq N=\max\{N_1,N_2,N_3\}$ and $x\in\mathbb{Z}^d$ for which $(x-n\alpha)\in K_n$.
\end{subproof}
\begin{subproof}[\ref{item:LLTMain2b}.]
The proof of this final case is almost identical to the proof of \ref{item:LLTMain2a}. The only difference is that, in addition to appealing to Lemma \ref{lem:LocalLimitImagHom} to handle those points $\xi_k\in\Omega_i(\phi)$ for which $\mu_{\xi_k}=\mu$, one also appeals to Lemma \ref{lem:LocalLimitPosHom} to obtain local limits for those points $\eta_j\in\Omega_p(\phi)$ for which $\mu_{\eta_j}=\mu$. We leave this argument to the interested and committed reader.
\end{subproof}
\end{proof}

\begin{proof}[Proof of Theorem \ref{thm:LLTIntro}]
We assume that $\Omega(\phi)=\{\xi_0\}$ where $\xi_0$ is of positive homogeneous type for $\widehat{\phi}$ or $\xi_0$ is of imaginary homogeneous type for $\widehat{\phi}$ with $\mu_\phi=\mu_{\xi_0}<1$. In the positive-homogeneous case, the stated local limit theorem follows directly from \ref{item:LLTMain1} of Theorem \ref{thm:LLTMain} with $A=1$. In the imaginary-homogeneous case, the hypotheses \ref{hyp:LLTMainLessThanOne}-\ref{hyp:LLTMainSameDrift} of Theorem \ref{thm:LLTMain} are met automatically and we find ourselves in \ref{item:LLTMain2a} with $A=1$ because $\Omega(\phi)=\Omega_i(\phi)=\{\xi_0\}$. Let us select a compact set $K\subseteq\mathbb{R}^d$ and, using the argument given in the proof of Lemma \ref{lem:LocalLimitImagHom}, choose a compact set $K'\subseteq\mathbb{R}^d$ for which $O^*(K)\subseteq K'$ for all $O\in \Sym(\abs{Q_{\xi_0}})$. With $K'$ in hand, an appeal to Theorem \ref{thm:LLTMain} guarantees that
\begin{equation*}
\phi^{(n)}(x)=\widehat{\phi}(\xi_0)^ne^{-ix\cdot\xi_0}H_{iQ_{\xi_0}}^n(x-n\alpha_{\xi_0})+o(n^{-\mu_{\phi}})
\end{equation*}
uniformly for $x\in(n\alpha_{\xi_0}+K_n)\cap\mathbb{Z}^d$ where $K_n$ is a sequence of compact sets containing $K'$ and whose union is $\mathbb{R}^d$. Upon careful study of the construction of the sets $K_n$ in the proof of Theorem \ref{thm:LLTMain}, we see that, for a chosen $E_0\in \Exp(\abs{Q_{\xi_0}})$,
\begin{equation*}
n^{E_0^*}(K')\subseteq n^{E_0^*}(\overline{B})\subseteq n^{E_0^*}(J_0)=K_n
\end{equation*}
for each $n\in\mathbb{N}_+$ where $\overline{B}=\overline{B_R(0)}$ is a closed ball containing $K'$ and $J_0=\bigcup_{0<r\leq 1}r^{E_0^*}(\overline{B})$. Consequently, for any $E\in\Exp(Q_{\xi_0})=\Exp(\abs{Q_{\xi_0}})$ and $n\in\mathbb{N}_+$, we have
\begin{equation*}
n^{E^*}(K)=n^{E_0^*}(n^{E}n^{-E_0})^*(K)\subseteq n^{E_0^*}(K')\subseteq K_n
\end{equation*}
because $n^{E}n^{-E_0}\in\Sym(\abs{Q_{\xi_0}})$. Thus, for any $E\in\Exp(Q_{\xi_0})=\Exp(\abs{Q_{\xi_0}})$, we have
\begin{equation*}
\phi^{(n)}(x)=\widehat{\phi}(\xi_0)^ne^{-ix\cdot\xi_0}H_{iQ_{\xi_0}}^n(x-n\alpha_{\xi_0})+o(n^{-\mu_{\phi}})
\end{equation*}
uniformly for $x\in (n\alpha_{\xi_0}+n^{E^*}(K))\cap\mathbb{Z}^d$.
\end{proof}

\section{Sup-norm Type Estimates}\label{sec:SupNormEst}

\noindent In this short section, we will discuss sup-norm type estimates for convolution powers of functions $\phi\in\mathcal{S}_d$ satisfying the hypotheses of Theorem \ref{thm:LLTMain}. In the case that $\Omega(\phi)=\Omega_p(\phi)$, i.e., every point of $\Omega(\phi)$ is of positive homogeneous type for $\widehat{\phi}$, Theorem 1.4 of \cite{RSC17} guarantees positive constants $C,C'>0$ for which
\begin{equation}\label{eq:SupNormPos}
Cn^{-\mu_\phi}\leq \|\phi^{(n)}\|_\infty\leq C'n^{-\mu_\phi}
\end{equation}
for all $n\in\mathbb{N}_+$ where
\begin{equation*}
\|\phi^{(n)}\|_{\infty}:=\sup_{x\in\mathbb{Z}^d}\abs{\phi^{(n)}(x)}.
\end{equation*} 
The upper estimate in \eqref{eq:SupNormPos} can be established by piecing together lemmas of the type Lemma \ref{lem:SupNormEstPositiveHom} or by making an appeal to the local limit theorem of \ref{item:LLTMain1} of Theorem \ref{thm:LLTMain}, both of which hold uniformly for $x\in\mathbb{Z}^d$. Using the local limit theorem, the lower estimate in \eqref{eq:SupNormPos} is established using a generalization of E. Artin's result on the linear independence of characters to ensure that the sum
\begin{equation*}
\sum_{k=1}^A e^{-ix\cdot\xi_k}\widehat{\phi}(\xi_k)^n H_{P_{\xi_k}}^n(x-n\alpha_k)
\end{equation*}
cannot uniformly collapse. This result on the independence of characters appears as Lemma 4.5 in \cite{RSC17} and is proved by induction; an analytic proof that makes use of the Stone-Weierstrass theorem can be found in Section 5.5 of \cite{KhK21}. In the recent article \cite{BR21}, it is established that, for $\phi\in\mathcal{S}_d$ which satisfies the hypotheses of Theorem 1 and, in addition, has $\alpha_\xi=\alpha=0$ for all $\xi\in\Omega_i(\phi)$, that, to each compact set $K$ there is a constant $C>0$ for which
\begin{equation}\label{eq:SupNormImagHuanAndMe}
\abs{\phi^{(n)}(x)}\leq Cn^{-\mu_\phi}
\end{equation}
for all $n\in\mathbb{N}_+$ and $x\in K\cap\mathbb{Z}^d$; this is Theorem 3.6 of \cite{BR21}. Our final result in this section gives a matching lower bound for \eqref{eq:SupNormImagHuanAndMe} and, in addition, asserts that \eqref{eq:SupNormImagHuanAndMe} holds on sets of the form $K_n$ as appearing in Theorem \ref{thm:LLTMain}. 

\begin{theorem}\label{thm:SupNormEst}
Assume that $\phi\in\mathcal{S}_d$ satisfies the hypotheses of Theorem \ref{thm:LLTMain} and that $\Omega_i(\phi)$ is nonempty. Then, for each compact set $K\subseteq\mathbb{R}^d$, there exists a constant $C'>0$ and a sequence of compact sets $\{K_n\}$ all containing $K$ and whose union is $\mathbb{R}^d$ such that
\begin{equation*}
\abs{\phi^{(n)}(x)}\leq C' n^{-\mu_\phi}
\end{equation*}
for all $x\in (n\alpha+K_n)\cap\mathbb{Z}^d$ and $n\in\mathbb{N}_+$. Furthermore, there is a constant $C>0$ for which
\begin{equation*}
C n^{-\mu_\phi}\leq \|\phi^{(n)}\|_\infty
\end{equation*}
for all $n\in\mathbb{N}_+$.
\end{theorem}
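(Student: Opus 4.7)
The plan is to deduce both bounds from the local limit theorem, Theorem \ref{thm:LLTMain}, combined with bounds on the individual attractors and a character-independence argument in the spirit of Lemma 4.5 of \cite{RSC17}.

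For the upper bound, fix $K$ and let $\{K_n\}$ be the sequence produced by Case 2 of Theorem \ref{thm:LLTMain}. Inspecting the construction in that proof, one has $K_n \subseteq n^{E_k^*}(J_k)$ for each imaginary homogeneous point $\xi_k$ of order $\mu_\phi$, where $J_k$ is a fixed compact set and $E_k \in \Exp(Q_k)$. By the scaling identity \eqref{eq:ImagHomAttractorScale},
\begin{equation*}
|H_{iQ_k}^n(x-n\alpha)| = n^{-\mu_\phi}\left|H_{iQ_k}^1(n^{-E_k^*}(x-n\alpha))\right| \leq n^{-\mu_\phi} M_k
\end{equation*}
for $x \in n\alpha+K_n$, where $M_k := \sup_{y \in J_k}|H_{iQ_k}^1(y)| < \infty$ by the continuity in Theorem \ref{thm:Attractor}. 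The positive homogeneous attractors are bounded globally by a constant times $n^{-\mu_\phi}$ via Item \ref{item:PosHomKernelOnDiagional} of Proposition \ref{prop:PosHomKernel}. Summing the finitely many attractor contributions together with the $o(n^{-\mu_\phi})$ error from Theorem \ref{thm:LLTMain} and using $|\widehat{\phi}(\cdot)| \leq 1$ yields $|\phi^{(n)}(x)| \leq C' n^{-\mu_\phi}$ for $x \in (n\alpha+K_n)\cap \mathbb{Z}^d$ and $n \geq N$; the finitely many smaller $n$ are absorbed by enlarging $C'$.

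For the lower bound, I would seek $x_n \in \mathbb{Z}^d$ with $x_n - n\alpha$ lying in a fixed compact set $K'$ (contained in $K_n$ for all large $n$) at which the attractor sum from the local limit theorem has magnitude $\geq cn^{-\mu_\phi}$. For such $x_n$, the positive homogeneous attractor terms with $\alpha_j \neq \alpha$ are negligible: $|x_n - n\alpha_j| \sim n$, and by the Schwartz decay of $H_{P_j}^1$ (Proposition \ref{prop:PosHomKernel}) together with the scaling \eqref{eq:PosHomAttractorScale}, these contribute $o(n^{-\mu_\phi})$. The remaining terms evaluate, again up to $o(n^{-\mu_\phi})$, to
\begin{equation*}
n^{-\mu_\phi}\Bigl[\sum_k \widehat{\phi}(\xi_k)^n e^{-ix_n\cdot\xi_k}H_{iQ_k}^1(0) + \sum_{j:\alpha_j = \alpha}\widehat{\phi}(\zeta_j)^n e^{-ix_n\cdot\zeta_j}H_{P_j}^1(0)\Bigr],
\end{equation*}
where Item \ref{item:ImagHomAttractorAtOrigin} of Theorem \ref{thm:Attractor} ensures $\Re H_{iQ_k}^1(0) > 0$ so that the imaginary homogeneous coefficients are nonzero; the assumption that $\Omega_i(\phi)$ is nonempty guarantees that at least one such term appears. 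Since the points $\xi_k, \zeta_j \in \mathbb{T}^d$ are distinct, the associated characters $x \mapsto e^{-ix\cdot\xi_k}, e^{-ix\cdot\zeta_j}$ are linearly independent on $\mathbb{Z}^d$, and a quantitative version of this independence (as in Lemma 4.5 of \cite{RSC17}) allows us to choose $x_n \in n\alpha + K'$ so that the bracketed sum is bounded below by a positive constant uniformly in $n$.

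The main obstacle is the character-independence argument in the presence of the $n$-dependent unit phases $\widehat{\phi}(\zeta_l)^n$ together with the restriction that $x_n - n\alpha$ remain in a fixed bounded set. In \cite{RSC17}, Lemma 4.5 handles the analogous situation in the purely positive homogeneous case, where no such restriction is needed since the local limit theorem holds uniformly on all of $\mathbb{Z}^d$; the key point is that finitely many distinct characters on $\mathbb{Z}^d$ cannot be simultaneously small on any sufficiently rich subset. Adapting this to our setting reduces to checking that $K'$ may be taken large enough to contain enough lattice points to separate the finitely many characters, which is possible since the characters are distinct modulo $2\pi\mathbb{Z}^d$, and the same argument then furnishes the required $x_n$ and completes the lower bound.
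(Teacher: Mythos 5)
Your proposal is correct and follows essentially the same route as the paper: the upper bound is obtained exactly as you describe, from the sets $K_n$ of Theorem \ref{thm:LLTMain}, the scaling and continuity of $H_{iQ_k}$ from Theorem \ref{thm:Attractor}, Item \ref{item:PosHomKernelOnDiagional} of Proposition \ref{prop:PosHomKernel}, and the triangle inequality. For the lower bound the paper simply invokes the argument of Theorem 1.4 of \cite{RSC17} ``with minor modification,'' relying on the continuity of the attractors, the common drift on $\Omega_i(\phi)$, and $\abs{H_{iQ_1}^1(0)}>0$ from Item \ref{item:ImagHomAttractorAtOrigin} of Theorem \ref{thm:Attractor} — which is precisely the character-independence adaptation you sketch.
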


\begin{proof}
As in \ref{item:LLTMain2a}, we assume that $\Omega(\phi)=\Omega_i(\phi)$ or $\mu_\xi>\mu_\phi$ for all $\xi\in\Omega_p(\phi)$; the proof for the situation in  \ref{item:LLTMain2b} is similar and omitted. We first establish the upper bound. Given $K\subseteq\mathbb{R}^d$ compact, construct $\{K_n\}$ in precisely the same way as in the proof of Theorem \ref{thm:LLTMain}. This collection of sets $\{K_n\}$ has the stated properties and also has the property that, for $k=1,2,\dots,A$ and $n\in\mathbb{N}_+$,
\begin{equation*}
K_n\subseteq n^{E_k^*}(J_k)
\end{equation*}
where $J_k$ is a compact set and $E_k\in\Exp(Q_k)$. For each $k=1,2,\dots,A$, we set
\begin{equation*}
C_k=\sup_{y\in J_k}\abs{H_{iQ_k}^1(y)}
\end{equation*}
(which is finite by virtue of the continuity of $H_{iQ_k}^1$) and observe that
\begin{equation}\label{eq:AttractorSupEstimate}
\abs{H_{iQ_k}^n(x-n\alpha)}=n^{-\mu_\phi}\abs{H_{iQ_k}^1(n^{-E_k^*}(x-n\alpha))}\leq C_k n^{-\mu_\phi}
\end{equation}
whenever $x\in\mathbb{Z}^d$ is such that $x-n\alpha\in K_n\subseteq n^{E^*_k}(J_k)$ for each $n\in\mathbb{N}_+$; here we have used Theorem \ref{thm:Attractor}.  The desired estimate now follows directly from the local limit theorem of \ref{item:LLTMain2b}, \eqref{eq:AttractorSupEstimate}, and the triangle inequality. 

We now establish the lower estimate. Thanks to the continuity of attractors and our supposition that $\alpha=\alpha_{\xi_k}$ for all $k=1,2,\dots, A$, the argument for the lower bound in the proof of Theorem 1.4 of \cite{RSC17} can be made successfully with only minor modification. This guarantees that, for some $\epsilon>0$ and $N\in\mathbb{N}_+$,
\begin{equation*}
n^{\mu_\phi}\|\phi^{(n)}\|_{\infty}\geq \epsilon \abs{H_{iQ_1}(0)}
\end{equation*}
for all $n\geq N$. By virtue of Item \ref{item:ImagHomAttractorAtOrigin} of Theorem \ref{thm:Attractor}, $\abs{H_{iQ_1}(0)}>0$ and so we find that
\begin{equation*}
\|\phi^{(n)}\|_\infty\geq Cn^{-\mu_\phi}
\end{equation*}
for all $n\geq N$ where $C=\epsilon \abs{H_{iQ_1}(0)}>0$. By modifying $C>0$ to account for those $n\in\mathbb{N}_+$ for which $n< N$, if necessary, we obtain our desired result at once.
\end{proof}

\section{Examples}\label{sec:Examples}

In this section, we illustrate the results of Theorem \ref{thm:LLTIntro}, Theorem \ref{thm:LLTMain}, and Theorem \ref{thm:SupNormEst} by presenting concrete examples of finitely-supported functions $\phi:\mathbb{Z}^d\to\mathbb{C}$ for which $\sup_\phi|\widehat{\phi}(\xi)|=1$ and where the points of $\Omega(\phi)$ are all of positive homogeneous or imaginary homogeneous type for $\widehat{\phi}$. As the article \cite{RSC17} focuses on the situation in which $\Omega(\phi)$ consists strictly of points of positive homogeneous type for $\widehat{\phi}$, we remark that Section 7 therein contains several examples to which \ref{item:LLTMain1} of Theorem \ref{thm:LLTMain} (and Theorem 1.6 of \cite{RSC17}) applies and each illustrates a different feature of the relevant local limit (e.g., multiple attractors with distinct drifts, an attractor $H_P$ given by a polynomial $P=R+iQ$ for which $R$ is positive homogeneous yet $P$ is not of the form \eqref{eq:SemiEllipticPolynomial}). Consistent with focus of this article on the imaginary-homogeneous setting, below we consider examples to which \ref{item:LLTMain2} of Theorem \ref{thm:LLTMain} applies -- none of which can be treated by the results of \cite{RSC17}. To this end,  we first state a useful proposition which gives sufficient conditions for a point $\xi_0\in\Omega(\phi)$ to be of positive homogeneous or imaginary homogeneous type for $\hat{\phi}$ in terms of the Taylor expansion for $\Gamma_{\xi_0}$. This proposition can be found as Proposition 3.8 of \cite{BR21}; we have included a proof in Subsection \ref{subsec:HomAndSubHom} of the Appendix for completeness.

\begin{proposition}\label{prop:ExpandGamma}
Let $\phi\in\mathcal{S}_d$ with $\sup_{\xi}|\widehat{\phi}(\xi)|=1$. For $\xi_0\in \Omega(\phi)$, suppose that there exists $\mathbf{m}\in \mathbb{N}^d_+$ and $k \geq 1$ such that the Taylor expansion of $\Gamma_{\xi_0} : \mathcal{U}\to\mathbb{C}$ centered at $0$ is a series of the form
\begin{eqnarray}\label{eq:SemiEllipticImaginaryExpansion}\nonumber
    \Gamma_{\xi_0}(\xi) 
    &=& i\alpha \cdot \xi - i \left( \sum_{\abs{\beta : 2\mathbf{m}} \geq 1} A_\beta \xi^\beta\right) - \sum_{\abs{\beta : 2\mathbf{m}} \geq k} B_\beta \xi^\beta \\ \nonumber
    &=& i\alpha \cdot \xi - i \left( \sum_{\abs{\beta : 2\mathbf{m}} = 1} A_\beta \xi^\beta + \sum_{\abs{\beta : 2\mathbf{m}} > 1} A_\beta \xi^\beta\right) \nonumber\\
    &&\hspace{3cm} - \left( \sum_{\abs{\beta : 2\mathbf{m}} = k} B_\beta \xi^\beta + \sum_{\abs{\beta : 2\mathbf{m}} > k} B_\beta \xi^\beta \right) \nonumber\\
    &=&  i\alpha \cdot \xi - i\left( Q(\xi) + \widetilde{Q}(\xi)\right) - \left( R(\xi) + \widetilde{R}(\xi) \right),
\end{eqnarray}
where $\alpha \in \mathbb{R}^d$;   $Q$ and $R$ are real-valued polynomials for which $R$ is positive definite; and  $\widetilde{Q}$ and $\widetilde{R}$ are real multivariate power series which are absolutely and uniformly convergent on $\mathcal{U}$. If $k=1$, then $\xi_0$ is of positive homogeneous type for $\widehat{\phi}$ with associated polynomials $R_{\xi_0}=R$, $Q_{\xi_0}=Q$ and $P_{\xi_0}=R_{\xi_0}+iQ_{\xi_0}$. If $k>1$ and $|Q|$ is positive definite, then $\xi_0$ is of imaginary homogeneous type for $\hat{\phi}$ with associated polynomial $Q_{\xi_0}=Q$. In either case, $\xi_0$ has drift $\alpha_{\xi_0}=\alpha$ and homogeneous order
\begin{equation*}
    \mu_{\xi_0}=\abs{\mathbf{1}:2\mathbf{m}}=\sum_{j=1}^d\frac{1}{2m_j}.
\end{equation*}
\end{proposition}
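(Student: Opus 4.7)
The plan is to take $E \in \End(\mathbb{R}^d)$ to be the endomorphism with standard matrix representation $\diag(1/(2m_1), 1/(2m_2), \ldots, 1/(2m_d))$. All of the homogeneity and subhomogeneity claims will hinge on the elementary identity
\begin{equation*}
(t^E\xi)^\beta = t^{|\beta:2\mathbf{m}|}\xi^\beta, \qquad t>0,\ \xi\in\mathbb{R}^d,\ \beta\in\mathbb{N}^d,
\end{equation*}
which follows directly from the diagonal form of $E$; in particular $\{t^E\}$ is a contracting one-parameter group and $\tr E = |\mathbf{1}:2\mathbf{m}|$.

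Applying this identity term-by-term to the polynomial parts of the expansion gives $Q(t^E\xi) = tQ(\xi)$ (every monomial in $Q$ has $|\beta:2\mathbf{m}|=1$) and $R(t^E\xi) = t^k R(\xi)$ (every monomial in $R$ has $|\beta:2\mathbf{m}|=k$). In the case $k=1$, the polynomial $R$ is then semi-elliptic in the sense of Example \ref{ex:SemiElliptic} and positive definite by hypothesis, so that example together with Proposition \ref{prop:CharofPosHom} yields that $R$ is positive homogeneous with $E\in\Exp(R)$ and order $|\mathbf{1}:2\mathbf{m}|$; simultaneously $Q$ is homogeneous with respect to the same $E$. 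When $k>1$, the analogous argument applied to $|Q|$ in place of $R$ -- using the hypothesis that $|Q|$ is positive definite -- shows that $|Q|$ is positive homogeneous of order $|\mathbf{1}:2\mathbf{m}|$ with $E\in\Exp(|Q|)=\Exp(Q)$ (the last equality by Lemma \ref{lem:AbsPosHomExponentSets}). Rewriting $R(t^E\xi)=t^k R(\xi)$ as $R(t^{E/k}\xi)=tR(\xi)$ exhibits the homogeneity of $R$ with respect to $E/k$ demanded by Definition \ref{def:Types}.

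The core remaining task -- and the only nontrivial estimate -- is to verify that the tails $\widetilde{Q}$ and $\widetilde{R}$ satisfy the (strong) subhomogeneity conditions in Definition \ref{def:Types}. The key observation is that the set of possible values of $|\beta:2\mathbf{m}|$ is contained in $(2 m_1 m_2 \cdots m_d)^{-1}\mathbb{N}$ and hence is discrete, so there is a positive gap $\delta>0$ with $|\beta:2\mathbf{m}|\geq 1+\delta$ for every monomial appearing in $\widetilde{Q}$ and $|\beta:2\mathbf{m}|\geq k+\delta$ for every monomial appearing in $\widetilde{R}$. Substituting $t^E\xi$ into the absolutely and uniformly convergent series representing $\widetilde{Q}$ and differentiating termwise in $t$ (which is justified because the series defines an analytic function of $\zeta$ on $\mathcal{U}$) gives, for $0\leq j\leq 2$,
\begin{equation*}
t^j\partial_t^j \widetilde{Q}(t^E\xi) = \sum_{|\beta:2\mathbf{m}|>1} A_\beta\, p_j(|\beta:2\mathbf{m}|)\, t^{|\beta:2\mathbf{m}|}\, \xi^\beta, \qquad p_j(\lambda):=\lambda(\lambda-1)\cdots(\lambda-j+1).
\end{equation*}
The polynomial prefactors $p_j(|\beta:2\mathbf{m}|)$ do not alter the radius of convergence, so on any compact set $K\subseteq\mathbb{R}^d$ and for $t$ small enough that $t^E\xi$ stays in $\mathcal{U}$ uniformly for $\xi\in K$, factoring out $t^{1+\delta}$ yields a bound of the form $|t^j\partial_t^j \widetilde{Q}(t^E\xi)|\leq C_{K,j}\, t^{1+\delta}$, which is at most $\epsilon t$ for $t$ sufficiently small. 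The same argument applied to
\begin{equation*}
\widetilde{R}(t^{E/k}\xi)=\sum_{|\beta:2\mathbf{m}|>k} B_\beta\, t^{|\beta:2\mathbf{m}|/k}\, \xi^\beta,
\end{equation*}
now using $|\beta:2\mathbf{m}|/k \geq 1+\delta/k$, produces the strong subhomogeneity of $\widetilde{R}$ with respect to $E/k$ of order $1$. Feeding these facts into Definition \ref{def:Types} yields the two conclusions of the proposition with drift $\alpha_{\xi_0}=\alpha$ and homogeneous order $\mu_{\xi_0}=|\mathbf{1}:2\mathbf{m}|$. The main obstacle, as just indicated, is this bookkeeping: isolating the gap $\delta>0$ and controlling the termwise-differentiated series uniformly on compacta; once the discreteness of the $|\beta:2\mathbf{m}|$-values is exploited, everything else is routine.
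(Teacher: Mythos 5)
Your proposal follows essentially the same route as the paper: the paper's proof also takes $E=\diag((2m_1)^{-1},\dots,(2m_d)^{-1})$, gets the homogeneity of $Q$ (and of $R$ with respect to $E/k$) for free from $(t^E\xi)^\beta=t^{|\beta:2\mathbf{m}|}\xi^\beta$, and reduces everything to a lemma asserting that an analytic tail $\sum_{|\beta:2\mathbf{m}|>1}A_\beta\xi^\beta$ is strongly subhomogeneous of every order, proved by exploiting exactly the exponent gap you call $\delta$ (the paper's $\rho$). The only place the two arguments diverge is the handling of the derivative prefactors $p_j(|\beta:2\mathbf{m}|)$: the paper splits the series into a polynomial head and a deep tail with $|\beta:2\mathbf{m}|>2j+4$, absorbs the prefactor via the inequality $\abs{\mathcal{P}(q,j)}r^{q/4}\leq 1$ for $r\leq e^{-4j}$, and then sums the untouched series at the point $r^{E/4}\eta$, which lies in a fixed compact subset of $\mathcal{U}$; your shortcut instead invokes preservation of the convergence domain under polynomial weights. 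That shortcut is fine in substance, but note that as written the bound $\abs{t^j\partial_t^j\widetilde{Q}(t^E\xi)}\leq C_{K,j}t^{1+\delta}$ cannot be obtained by summing a weighted series in $\xi\in K$ (where $K\subseteq\mathbb{R}^d$ may lie far outside $\mathcal{U}$); you must route the estimate through the evaluation point inside $\mathcal{U}$, e.g.\ write $t^{|\beta:2\mathbf{m}|}\xi^\beta=(t/t_1)^{|\beta:2\mathbf{m}|}(t_1^E\xi)^\beta$ for a fixed small $t_1$ with $(\lambda t_1)^E(K)\subseteq\mathcal{U}$ for some $\lambda>1$, pull out $(t/t_1)^{1+\delta}$, and dominate $p_j(|\beta:2\mathbf{m}|)\abs{(t_1^E\xi)^\beta}$ by a constant times $\abs{((\lambda t_1)^E\xi)^\beta}$, whose absolute sum is bounded on the compact set $(\lambda t_1)^E(K)\subseteq\mathcal{U}$. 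With that (or the paper's absorption trick) supplied, your argument is complete and yields the same conclusions, including $\mu_{\xi_0}=\tr E=\abs{\mathbf{1}:2\mathbf{m}}$ and the identification $\Exp(Q)=\Exp(\abs{Q})$ via Lemma \ref{lem:AbsPosHomExponentSets} in the imaginary case.
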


\begin{example}[The details for Example \ref{ex:OneDEx1}]
Upon simplification, we find that
\begin{equation*}
\widehat{\phi}(\eta,\gamma)=\frac{1}{3} \left(-\frac{1}{16} i (\sin (\eta) \cos (\gamma)-\sin (\eta))-\sin ^4\left(\frac{\eta}{2}\right)-\frac{1}{2} i \sin ^2\left(\frac{\eta}{2}\right)-\sin
   ^8\left(\frac{\gamma}{2}\right)-\frac{1}{2} i \sin ^4\left(\frac{\gamma}{2}\right)+3\right)
   \end{equation*}
   for $\xi=(\eta,\gamma)\in\mathbb{R}^d$. With this, it is easy to verify that
   \begin{equation*}
       1=\sup_{\xi\in\mathbb{R}^2}|\widehat{\phi}(\xi)|=\widehat{\phi}(0,0)
   \end{equation*}
   and that $\xi_0=(0,0)$ is the unique point in $\mathbb{T}^2$ at which $|\widehat{\phi}(\xi)|$ is maximized, i.e., $\Omega(\phi)=\{\xi_0\}$. By a direct computation, we find
\begin{equation*}
    \Gamma_{\xi_0}(\xi)=-i\left(Q(\xi)+\widetilde{Q}(\xi)\right)-\left(R(\xi)+\widetilde{R}(\xi)\right)
\end{equation*}
where
\begin{equation*}
    Q(\xi)=\sum_{|\beta:(2,4)|=1}A_\beta \xi^\beta=\frac{\eta^2}{24}-\frac{\eta\gamma^2}{96} +\frac{ \gamma^4}{96}=\frac{1}{96}\left(4\eta^2-\eta\gamma^2+\gamma^4\right),
\end{equation*}
\begin{equation*}
    R(\xi)=\sum_{|\beta:(2,4)|=2}B_\beta \xi^\beta=\frac{23\eta^4}{1152}  + \frac{\eta^3\gamma^2}{2304}  - \frac{\eta^2\gamma^4}{2048} + \frac{\eta\gamma^6}{9216}+ \frac{23\gamma^8}{18432},
\end{equation*}
\begin{equation*}
    \widetilde{Q}(\xi)=\sum_{|\beta:(2,4)|\geq 3/2}A_\beta \xi^\beta=- \frac{\eta^4}{288}+\frac{\eta\gamma^4}{1152} +\frac{\eta^3\gamma^2}{576}  -\frac{\eta^3\gamma^4}{6912} 
    + \cdots,
\end{equation*}
and
\begin{equation*}
    \widetilde{R}(\xi)=\sum_{|\beta:(2,4)|\geq 5/2}B_\beta \xi^\beta
    =  - \frac{\eta^3\gamma^4}{27648} + \frac{\eta^4\gamma^4}{18432} +  \frac{\eta^4\gamma^4}{18432} + \cdots
\end{equation*}
for $\xi=(\eta,\gamma)\in\mathcal{U}$ where $\mathcal{U}\subseteq\mathbb{R}^2$ is a neighborhood of $0$. We remark that this is of the form \eqref{eq:SemiEllipticImaginaryExpansion} with $\alpha_{\xi_0}=(0,0)$, $2\mathbf{m}=(2,4)$ and $k=2$. It is readily verified that
\begin{equation*}
    \abs{Q(\xi)}=Q(\xi)=\frac{1}{96}(4\eta^2-\eta\gamma^2+\gamma^4)
\end{equation*}
and
\begin{equation*}
    R(\xi)=\frac{23\eta^4}{1152}  + \frac{\eta^3\gamma^2}{2304}  - \frac{\eta^2\gamma^4}{2048} + \frac{\eta\gamma^6}{9216}+ \frac{23\gamma^8}{18432}
\end{equation*}
are both positive definite and so Proposition \ref{prop:ExpandGamma} ensures that $\xi_0=(0,0)$ is of imaginary homogeneous type for $\widehat{\phi}$ with drift $\alpha_{\xi_0}=(0,0)$, associated homogeneous polynomial $Q_{\xi_0}=Q$, and homogeneous order
\begin{equation*}
    \mu_\phi=\mu_{\xi_0}=\frac{1}{2}+\frac{1}{4}=3/4<1.
\end{equation*}
Thus, by an appeal to Theorem \ref{thm:LLTIntro}, we obtain the stated local limit theorem \eqref{eq:TwoDEx1_1}. We see also that Theorems \ref{thm:LLTMain} and \ref{thm:SupNormEst} apply; the former agrees with \eqref{eq:TwoDEx1_1} and the latter yields \eqref{eq:TwoDEx1_2} and \eqref{eq:TwoDEx1_3}.
\end{example}

\begin{example}
This example illustrates a complex-valued function $\phi$ on $\mathbb{Z}^2$ whose Fourier transform is maximized in absolute value at two distinct points in $\mathbb{T}^2$ and falls within the scope of \ref{item:LLTMain2a} of Theorem \ref{thm:LLTMain}. Specifically, $\Omega(\phi)$ consists of two points, one of which is a point of imaginary homogeneous type for $\widehat{\phi}$ with homogeneous order $2/3=\mu_\phi$ and the other is a point of positive homogeneous type for $\widehat{\phi}$ of homogeneous order $1>\mu_\phi$. This $\phi$ appeared in Example 7 in \cite{BR21}. As we will see, our local limit theorem consists of a single attractor of the form $H_{iQ}$ and we note that, in this case, Theorem \ref{thm:LLTIntro} does not apply because $\Omega(\phi)$ is not a singleton.  

Consider $\phi: \mathbb{Z}^2 \to \mathbb{C}$ defined by $\phi=2^{-7}\phi_1-i2^{-11}\phi_2+2^{-21}\phi_3$ where
\begin{equation*}
    \phi_1(x,y)=\begin{cases}
    15 + 15i &(x,y) = (\pm 1,0)\\
    16 + 16i &(x,y) = (0, \pm 1)\\
     1 + 1i &(x,y) = (\pm 3,0)\\
    0 &\mbox{otherwise}
    \end{cases},
    \hspace{1cm}
    \phi_2(x,y) = 
    \begin{cases}
    682 &(x,y) = (0,0)\\
    152  &(x,y) = (\pm 2,0)\\
    -28  &(x,y) = (\pm 4,0)\\
    8 &(x,y) = (\pm 6, 0)\\
    -1 &(x,y) = (\pm 8, 0)\\
    60  &(x,y) = (0, \pm 2)\\
    -24 &(x,y) = (0,\pm 4)\\
    4 &(x,y) = (0,\pm 6)\\
    0 &\mbox{otherwise}
    \end{cases},
\end{equation*}
and
\begin{equation*}
    \phi_3(x,y) = 
    \begin{cases}
    1387004 &(x,y) = (0,0)\\
    -106722 &(x,y) = (\pm 2,0)\\
    3960 &(x,y) = (\pm 4,0)\\
    -1045 &(x,y) = (\pm 6, 0)\\
    138  &(x,y) = (\pm 8, 0)\\
    -9 &(x,y) = (\pm 10, 0)\\
    -131072 &(x,y) = (0, \pm 2)\\
    0 &\mbox{otherwise}
    \end{cases}
\end{equation*}
for $(x,y)\in\mathbb{Z}^2$. Though this example is slightly more complicated than the previous one, it is straightforward to verify that $\sup_{\xi}|\widehat{\phi}(\xi)|=1$  and $\Omega(\phi)=\{\xi_1,\zeta_1\}$ where $\xi_1=(0,0)$ and $\zeta_1=(\pi,\pi)$. For $\xi_1$, $\widehat{\phi}(\xi_1)=1$ and
\begin{equation*}
    \Gamma_{\xi_1}(\xi)=-i\left(Q_{\xi_1}(\xi)-\widetilde{Q_{\xi_1}}(\xi)\right)-\left(R_{\xi_1}(\xi)+\widetilde{R_{\xi_1}}(\xi)\right)
\end{equation*}
for $\xi=(\tau,\gamma)\in\mathcal{U}_{\xi_1}$ where $\mathcal{U}_{\xi_1}\subseteq\mathbb{R}^2$ is an open neighborhood of $(0,0)$ and
\begin{equation*}
    Q_{\xi_1}(\xi)=\sum_{|\beta:(6,2)|=1}A_\beta \xi^\beta=\frac{\tau^6}{128}+\frac{\gamma^2}{8},
\end{equation*}
\begin{equation*}
    R_{\xi_1}(\xi)=\sum_{|\beta:(6,2)|=2}B_\beta\xi^\beta=\frac{111\tau^{12}}{32768}-\frac{\tau^6 \gamma^2}{1024}+\frac{3\gamma^4}{128},
\end{equation*}
\begin{equation*}
    \widetilde{Q_{\xi_1}}(\xi)=\sum_{|\beta:(6,2)|\geq 4/3}A_\beta \xi^\beta=-\frac{65\tau^8}{512} -\frac{\gamma^4}{96} + \frac{\tau^6\gamma^4}{8192} 
    +\cdots,
\end{equation*}
and
\begin{equation*}
    \widetilde{R_{\xi_1}}(\xi)=\sum_{|\beta:(6,2)|\geq 7/3}B_\beta\xi^\beta=\frac{65\tau^8\gamma^2}{4096}  + \frac{\tau^6 \gamma^4}{12288} + \cdots
\end{equation*}
for $\xi=(\tau,\gamma)\in\mathcal{U}_{\xi_1}$. We observe that $Q_{\xi_1}=\abs{Q_{\xi_1}}$ and $R_{\xi_1}$ are positive definite and so Proposition \ref{prop:ExpandGamma} guarantees that $\xi_1=(0,0)$ is of imaginary homogeneous type for $\widehat{\phi}$ with $\mathbf{m}=(3,1)$, $k=2$, drift $\alpha_{\xi_1}=(0,0)$ and homogeneous order
\begin{equation*}
    \mu_{\xi_1}=|\mathbf{1}:2\mathbf{m}|=\frac{1}{6}+\frac{1}{2}=\frac{2}{3}.
\end{equation*}
For $\zeta_1= (\pi,\pi)$,
\begin{equation*}
    \Gamma_{\zeta_1}(\xi)=-i\left(Q_{\zeta_1}(\xi)+\widetilde{Q_{\zeta_1}}(\xi)\right)-\left(R_{\zeta_1}(\xi)+\widetilde{R_{\zeta_1}}(\xi)\right)
\end{equation*}
for $\xi=(\tau,\gamma)\in\mathcal{U}_{\zeta_1}$ where $\mathcal{U}_{\zeta_1}\subseteq\mathbb{R}^2$ is an open neighborhood of $(0,0)$ and 
\begin{equation*}
    Q_{\zeta_1}(\xi)=\sum_{|\beta:(2,2)|=1}A_\beta \xi^\beta=-\left(\frac{3\tau^2}{8}+\frac{\gamma^2}{4}\right),
\end{equation*}
\begin{equation*}
    R_{\zeta_1}(\xi)=\sum_{|\beta:(2,2)|=1}B_\beta\xi^\beta=\frac{\tau^2}{8}+\frac{3\gamma^2}{8},
\end{equation*}
\begin{equation*}
    \widetilde{Q_{\zeta_1}}(\xi)=\sum_{|\beta:(2,2)|\geq 2}A_\beta\xi^\beta=\frac{\tau^4}{64}-\frac{9\tau^2\gamma^2}{64}+\frac{\gamma^4}{48}+\cdots,
\end{equation*}
and
\begin{equation*}
    \widetilde{R_{\zeta_1}}(\xi)=\sum_{|\beta:(2,2)|\geq 2}B_\beta\xi^\beta=-\frac{\tau^4}{8}-\frac{3\tau^2\gamma^2}{64}-\frac{13\gamma^4}{384}+\cdots
\end{equation*}
for $\xi=(\tau,\gamma)\in\mathcal{U}_{\zeta_1}$. Thus, the expansion is of the form \eqref{eq:SemiEllipticImaginaryExpansion} with $\mathbf{m}=(1,1)$ and $k=1$. Since $R_{\zeta_1}$ is clearly positive definite, Proposition \ref{prop:ExpandGamma} guarantees that $\zeta_1=(\pi,\pi)$ is of positive homogeneous type for $\widehat{\phi}$ with drift $\alpha_{\zeta_1}=(0,0)$ and homogeneous order
\begin{equation*}
    \mu_{\zeta_1}=|\mathbf{1}:2\mathbf{m}|=\frac{1}{2}+\frac{1}{2}=1.
\end{equation*}
Thus $\mu_\phi=\min\{\mu_{\xi_1},\mu_{\zeta_1}\}=\mu_{\xi_1}=2/3<1$, $\alpha=\alpha_{\xi_1}=(0,0)$ and so we have met the hypotheses of Theorem \ref{thm:LLTMain}. For the situation at hand, \ref{item:LLTMain2a} is applicable so we are guaranteed that, to each compact set $K\subseteq\mathbb{R}^2$, there is a nested collection of compact set $\{K_n\}$ all containing $K$ and whose union is $\mathbb{R}^2$ for which
\begin{equation*}
\phi^{(n)}(x)=H_{iQ_{\xi_1}}^n(x)+o(n^{-2/3})
\end{equation*}
uniformly for $x\in K_n\cap \mathbb{Z}^2$ where
\begin{equation*}
H_{iQ_{\xi_1}}^n(x)=n^{-2/3}H_{iQ_{\xi_1}}^1(n^{-1/6}x_1,n^{-1/2}x_2)
\end{equation*}
for $x=(x_1,x_2)\in\mathbb{R}^2$ since $E^*=E\in\Exp(Q_{\xi_1})$ has standard matrix representation $\diag(1/6,1/2)$. This local limit theorem is illustrated in Figure \ref{fig:TwoDEx3} wherein $\Re(\phi^{(n)})$ and $\Re(H_{iQ_{\xi_1}}^n)$ are shown for $n=300$ and $n=600$ on the grid $K=[-50,50]^2$.

\begin{figure}[!htb]
\begin{center}
\resizebox{\textwidth}{!}{	
	    \begin{subfigure}{0.5\textwidth}
		\includegraphics[width=\textwidth]{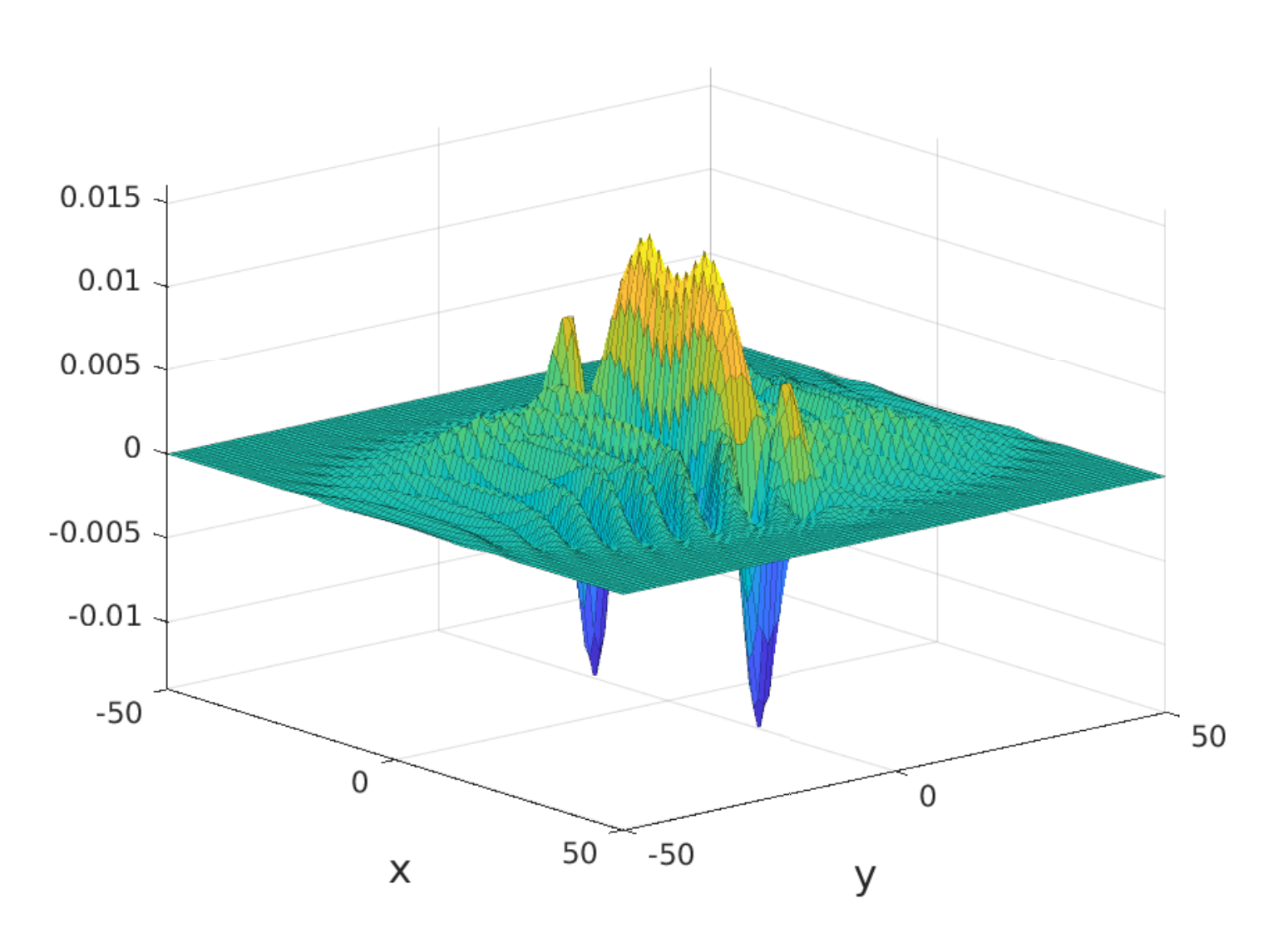}
		\caption{$\Re(\phi^{(n)})$ for $n=200$}
		\label{fig:TwoDEx3_Phi_200}
	    \end{subfigure}
	    \begin{subfigure}{0.5\textwidth}
		\includegraphics[width=\textwidth]{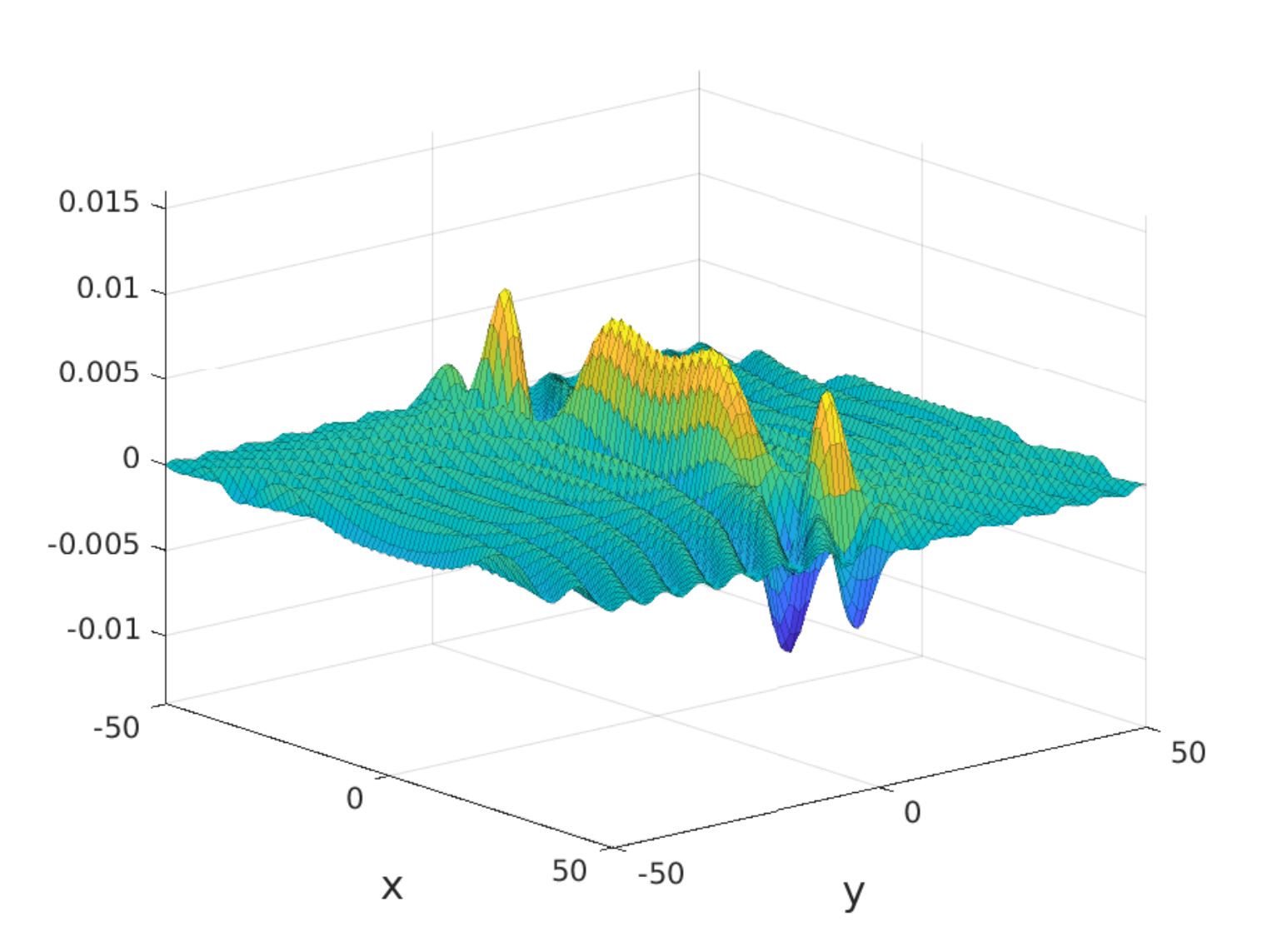}
		\caption{$\Re(\phi^{(n)})$ for $n=400$}
		\label{fig:TwoDEx3_Phi_400}
	    \end{subfigure}}
\resizebox{\textwidth}{!}{	
	    \begin{subfigure}{0.5\textwidth}
		\includegraphics[width=\textwidth]{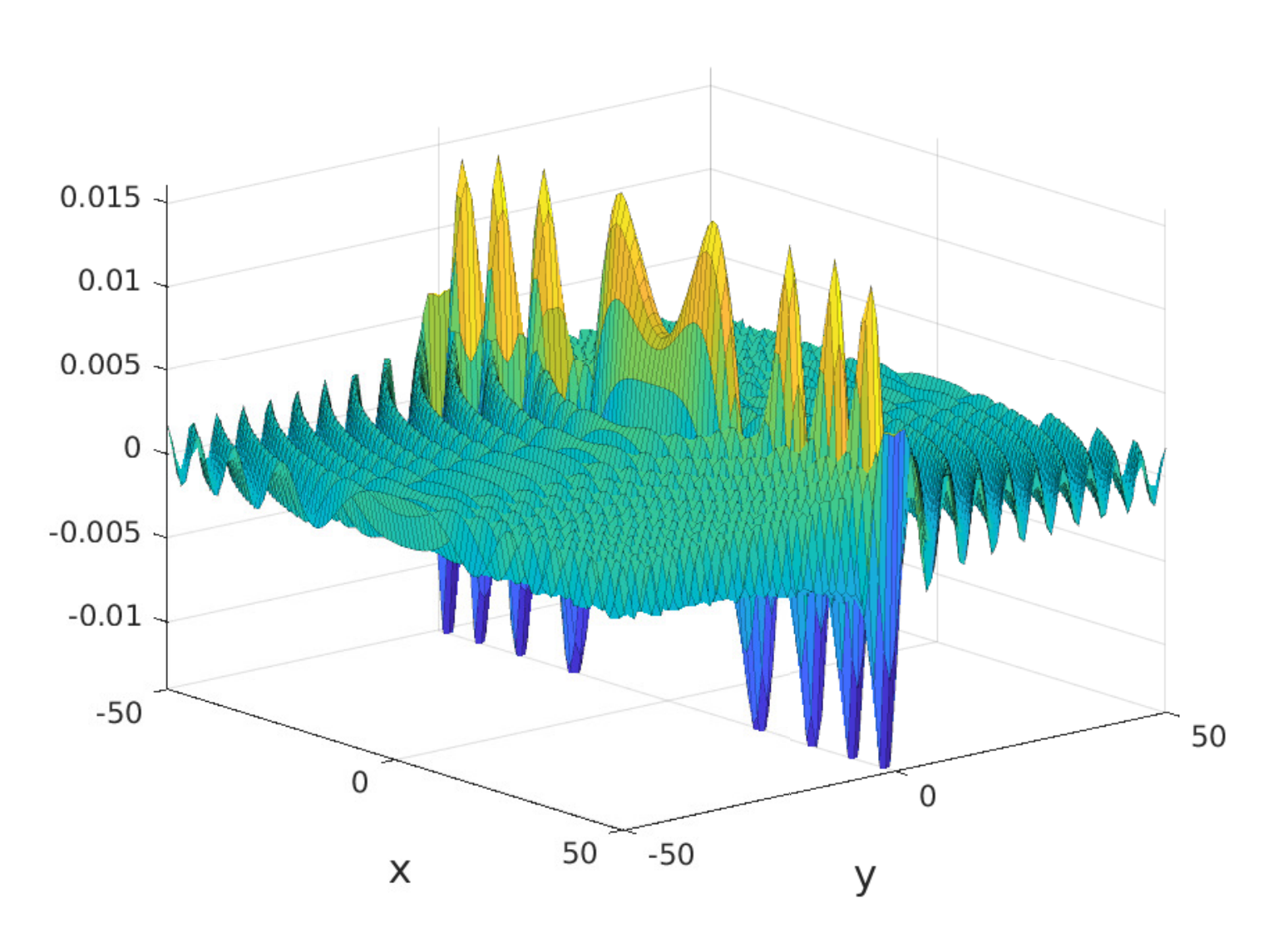}
		\caption{$\Re(H_{iQ_{\xi_1}}^n)$ for $n=200$}
		\label{fig:TwoDEx3_H_200}
	    \end{subfigure}
	    \begin{subfigure}{0.5\textwidth}
		\includegraphics[width=\textwidth]{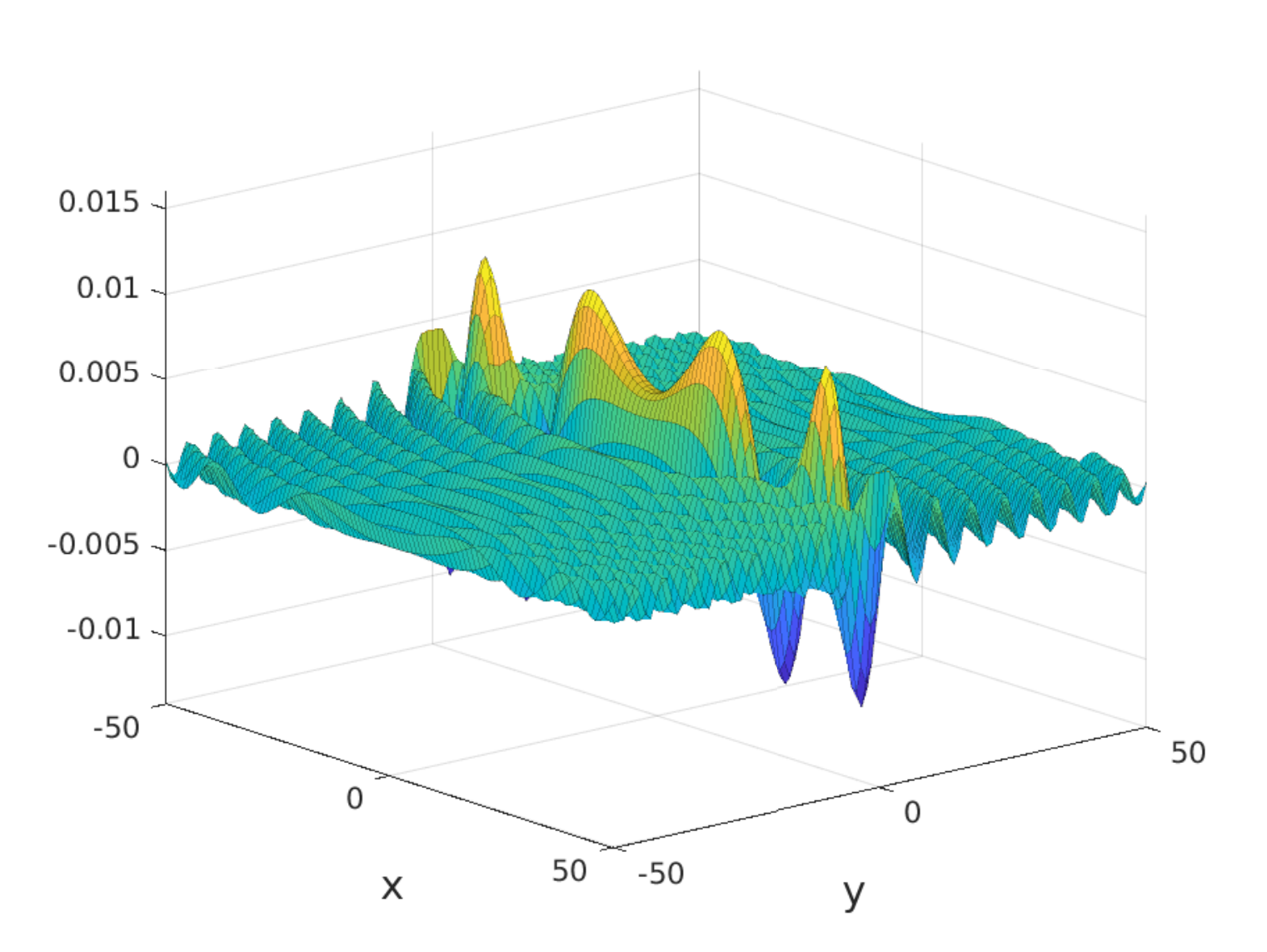}
		\caption{$\Re(H_{iQ_{\zeta_1}}^n)$ for $n=400$}
		\label{fig:TwoDEx3_H_400}
	    \end{subfigure}}
\caption{The graphs of $\Re(\phi^{(n)})$ and $\Re(H_{iQ_{\xi_1}}^n)$ for $n=200$ and $n=400$.}
\label{fig:TwoDEx3}
\end{center}
\end{figure}

Finally, Theorem \ref{thm:SupNormEst} gives positive constants $C,C'>0$ for which
\begin{equation*}
\abs{\phi^{(n)}(x)}\leq\frac{C'}{n^{2/3}}
\end{equation*}
whenever $x\in K_n\cap\mathbb{Z}^2$ and
\begin{equation*}
\frac{C}{n^{2/3}}\leq \|\phi^{(n)}\|_\infty
\end{equation*}
for all $n\in\mathbb{N}_+$.
\end{example}

\begin{example}
This example illustrates a complex-valued function on $\mathbb{Z}^2$ whose Fourier transform is maximized in absolute value at two points in $\mathbb{T}^2$ and falls within the scope of \ref{item:LLTMain2b} of Theorem \ref{thm:LLTMain}. Specifically, $\Omega(\phi)=\{\xi_1,\zeta_1\}$ where $\xi_1$ is of imaginary homogenous type for $\widehat{\phi}$, $\zeta_1$ is of positive homogeneous type for $\widehat{\phi}$ and $\mu_{\xi_1}=\mu_{\zeta_1}=\mu_\phi=1/2$. In this case, the local limit theorem consists of a sum of attractors, one of which is of the form $H_{iQ}$ and the other of the form $H_{P}$. 

Consider $\psi:\mathbb{Z}\to\mathbb{C}$ defined by
\begin{equation*}
    \psi(z)=
    \frac{1}{2^{12}}\begin{cases}
    1292(2+i) & z=0\\
    \left(552-540i\right) & z=\pm 1\\
    -\left(177-\frac{499}{2}i\right) & z=\pm 2\\
    -\left(28-10i\right) & z=\pm 3\\
    \left(42-59i\right) & z=\pm 4\\
    -\left(12- 18i\right) & z=\pm 5\\
    \left(1-\frac{3}{2}i\right) & z=\pm 6\\
    0 & \mbox{else}
    \end{cases}
\end{equation*}
for $z\in\mathbb{Z}$. With this, we define $\phi:\mathbb{Z}^2\to\mathbb{C}$ by
\begin{equation*}
    \phi(x,y)=\begin{cases}
    \psi(x) & \mbox{if }y=0\\
    \psi(y) & \mbox{if }x=0\\
    0 & \mbox{ otherwise}
    \end{cases}
\end{equation*}
for $(x,y)\in\mathbb{Z}^2$.
It is straightforward to verify that $\sup_{\xi}|\widehat{\phi}(\xi)|=1$ and $\Omega(\phi)=\{(0,0),(\pi,\pi)\}$. At $\xi_1=(0,0)$, we have $\widehat{\phi}(\xi_1)=1$ and 
\begin{equation*}
    \Gamma_{\xi_1}(\xi)=-i\left(Q_{\xi_1}(\xi)+\sum_{|\beta:(4,4)|>3/2
    }A_\beta \xi^\beta\right)-\left(R_{\xi_1}(\xi)+\sum_{|\beta:(4,4)|>5/2}B_\beta\xi^\beta\right)
\end{equation*}
for $\xi\in\mathcal{U}$ where
\begin{equation*}
    Q_{\xi_1}(\xi)=\frac{1}{2^5}\left(\eta^4+\gamma^4\right)=\sum_{|\beta:(4,4)|=1}A_\beta\xi^\beta
\end{equation*}
and
\begin{equation*}
    R_{\xi_1}(\xi)=\frac{1}{2^{11}}\left(11\eta^8-2\eta^4\gamma^4+11\gamma^8\right)=\sum_{|\beta:(4,4)|=2}B_\beta \xi^\beta
\end{equation*}
for $\xi=(\eta,\gamma)$ which is of the form \eqref{eq:SemiEllipticImaginaryExpansion} with $\mathbf{m}=(2,2)$ and $k=2>1$. It is clear that $Q_{\xi_1}=\abs{Q_{\xi_1}}$ and $R_{\xi_1}$ are positive definite and so Proposition \ref{prop:ExpandGamma} guarantees that $\xi_1$ is a point of imaginary homogeneous type for $\widehat{\phi}$ with drift $\alpha_{\xi_1}=(0,0)$, polynomial $Q_{\xi_1}$, and homogeneous order $\mu_{\xi_1}=1/4+1/4=1/2<1$. At $\zeta_1=(\pi,\pi)$, we have $\widehat{\phi}(\zeta_1)=i$ and
\begin{equation*}
    \Gamma_{\zeta_1}(\xi)=-i\left(Q_{\zeta_1}(\xi)+\sum_{|\beta:(4,4)|\geq 3/2}A_\beta\xi^\beta\right)-\left(R_{\zeta_1}(\xi)+\sum_{|\beta:(4,4)|\geq 3/2}B_\beta\xi^\beta\right)
\end{equation*}
where
\begin{equation*}
    Q_{\zeta_1}(\xi)=\frac{3}{8}(\eta^4+\gamma^4)=\sum_{|\beta:(4,4)|=1}A_\beta\xi^\beta
\end{equation*}
\begin{equation*}
    R_{\zeta_1}(\xi)=\frac{1}{2}(\eta^4+\gamma^4)=\sum_{|\beta:(4,4)=1}B_\beta\xi^\beta
\end{equation*}
for $\xi\in(\eta,\gamma)\in\mathcal{U}$ which is of the form \eqref{eq:SemiEllipticImaginaryExpansion} with $\mathbf{m}=(2,2)$ and $k=1$. By an appeal to Proposition \ref{prop:ExpandGamma}, we conclude that $\zeta_1$ is of positive homogeneous type for $\widehat{\phi}$ with drift $\alpha_{\zeta_1}=(0,0)$, polynomial
\begin{equation*}
P_{\zeta_1}(\xi)=iQ_{\zeta_1}(\xi)+R_{\zeta_1}(\xi)=\left(\frac{1}{2}+\frac{3}{8}i\right)\left(\eta^4+\gamma^4\right),
\end{equation*}
and homogeneous order $\mu_{\zeta_1}=1/4+1/4=1/2$. With this, we see that the hypotheses of Theorem \ref{thm:LLTMain} are met with $\mu_\phi=1/2$ and, given that $\xi_1\in\Omega_i(\phi)$ and $\zeta_1\in\Omega_p(\phi)$ both have the same homogeneous order of $1/2$, \ref{item:LLTMain2a} is applicable and we find that, for any compact set $K\subseteq\mathbb{R}^d$, there exists a sequence of compact sets $\{K_n\}$ all containing $K$ and whose union is $\mathbb{R}^2$ for which
\begin{eqnarray*}
    \phi^{(n)}(x,y)&=&\widehat{\phi}(\xi_1)^n e^{-ix\cdot\xi_1}H_{iQ_{\xi_1}}^n(x,y)+\widehat{\phi}(\zeta_1)^ne^{-ix\cdot\xi}H_{P_{\zeta_1}}^n(x,y)+o(n^{-1/2})\\
    &=&H_{iQ_{\xi_1}}^n(x,y)+i^n e^{-i\pi(x+y)}H_{P_{\zeta_1}}^n(x,y)+o(n^{-1/2})\\
    &=&H_{iQ_{\xi_1}}^n(x,y)+i^{n+2(x+y)}H_{P_{\zeta_1}}^n(x,y)+o(n^{-1/2})
\end{eqnarray*}
uniformly for $(x,y)\in K_n\cap \mathbb{Z}^2$. Upon setting
\begin{equation*}
    A_n(x,y)=H_{iQ_{\xi_1}}(x,y)+i^{n+2(x+y)}H_{P_{\zeta_1}}^n(x,y)
\end{equation*}
for $(x,y)\in\mathbb{R}^2$ so that
\begin{equation*}
    \phi^{(n)}(x,y)=A_n(x,y)+o(n^{-1/2})
\end{equation*}
uniformly for $(x,y)\in K_n\cap\mathbb{Z}^2$, this local limit theorem is illustrated in Figure \ref{fig:TwoDEx2_LLT} wherein $\Re(\phi^{(n)})$ and $\Re(A_n)$ are shown for $n=1000$ on the grid $K=[-50,50]^2\cap\mathbb{Z}^2$. To see the contribution from each attractor making up $A_n(x,y)$, Figure \ref{fig:TwoDEx2_AttractorContributions} illustrates $\Re(H_{iQ_{\xi_1}})^n$ alongside $\Re(H_{P_{\zeta_1}}^n)$ for $n=1000$.

\begin{figure}[!htb]
\begin{center}
\resizebox{\textwidth}{!}{	
	    \begin{subfigure}{0.5\textwidth}
		\includegraphics[width=\textwidth]{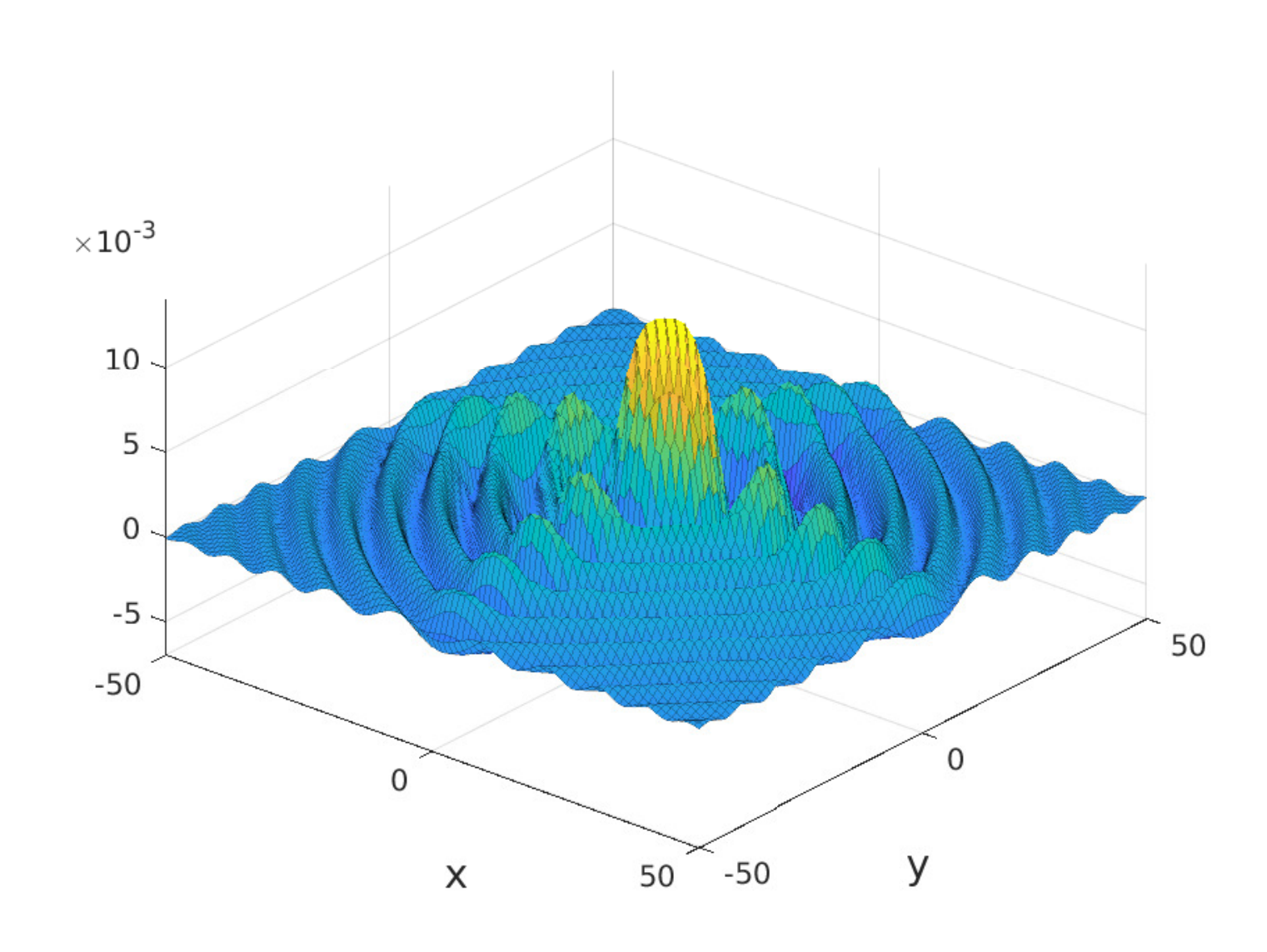}
		\caption{$\Re(\phi^{(n)})$ for $n=1000$}
		\label{fig:TwoDEx2_Phi_1000}
	    \end{subfigure}
	    \begin{subfigure}{0.5\textwidth}
		\includegraphics[width=\textwidth]{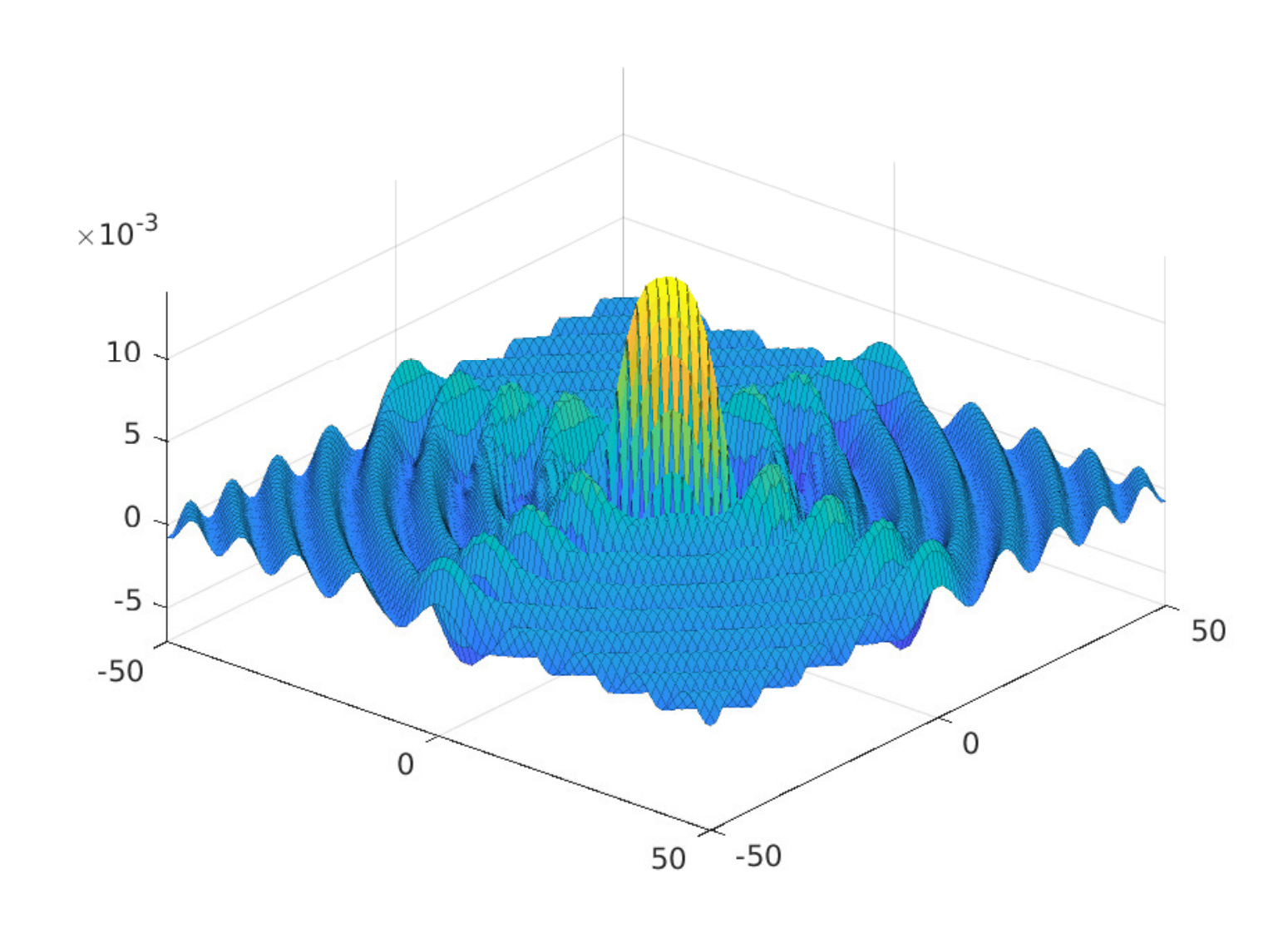}
		\caption{$\Re(A_{n})$ for $n=1000$}
		\label{fig:TwoDEx2_A_1000}
	    \end{subfigure}}
\caption{The graphs of $\Re(\phi^{(n)})$ and $\Re(A_n)$ for $n=1000$.}
\label{fig:TwoDEx2_LLT}
\end{center}
\end{figure}
\begin{figure}[!htb]
\begin{center}
\resizebox{\textwidth}{!}{	
	    \begin{subfigure}{0.5\textwidth}
		\includegraphics[width=\textwidth]{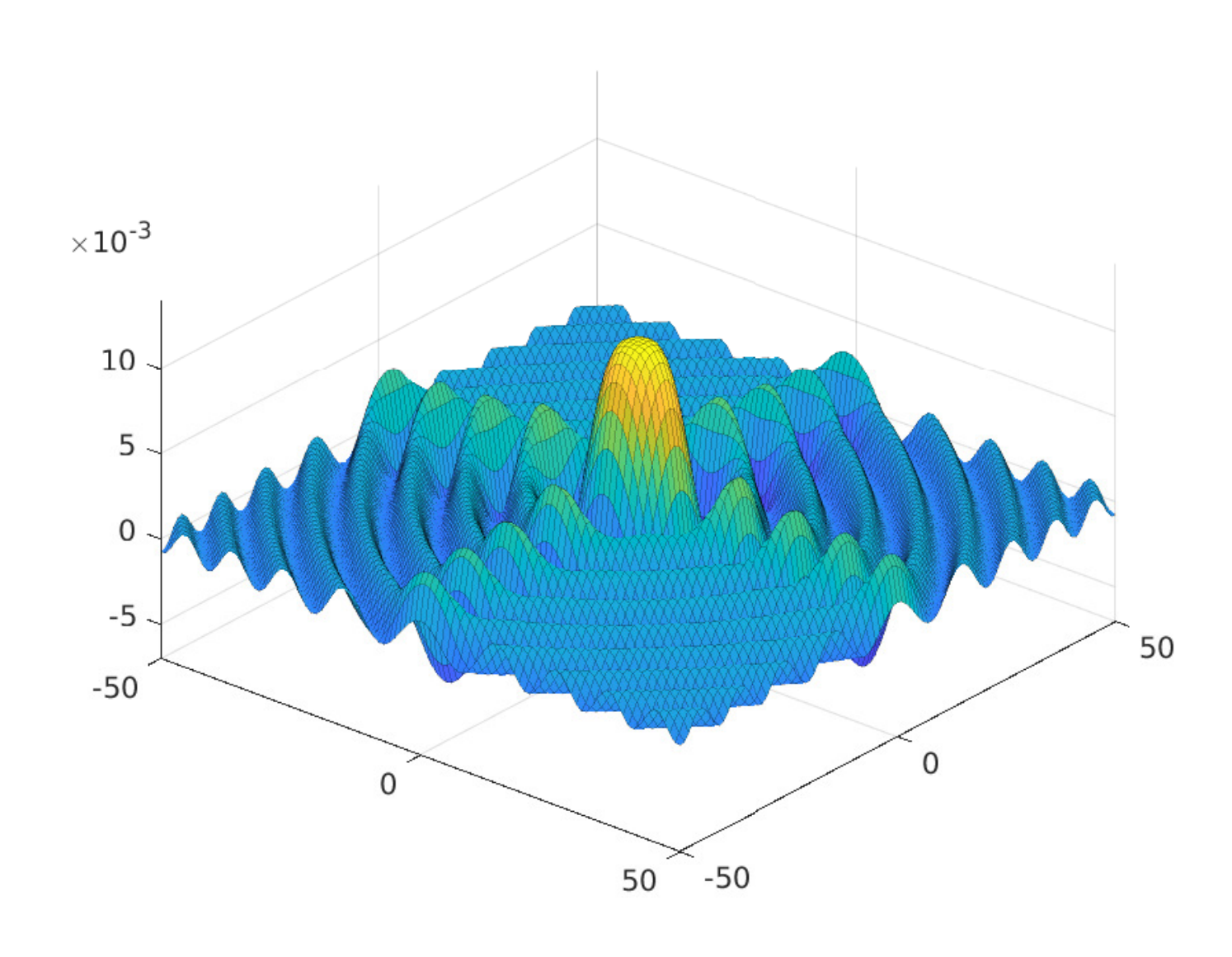}
		\caption{$\Re(H_{iQ_{\xi_1}}^n)$ for $n=1000$}
		\label{fig:TwoDEx2_Imag_Attractor}
	    \end{subfigure}
	    \begin{subfigure}{0.5\textwidth}
		\includegraphics[width=\textwidth]{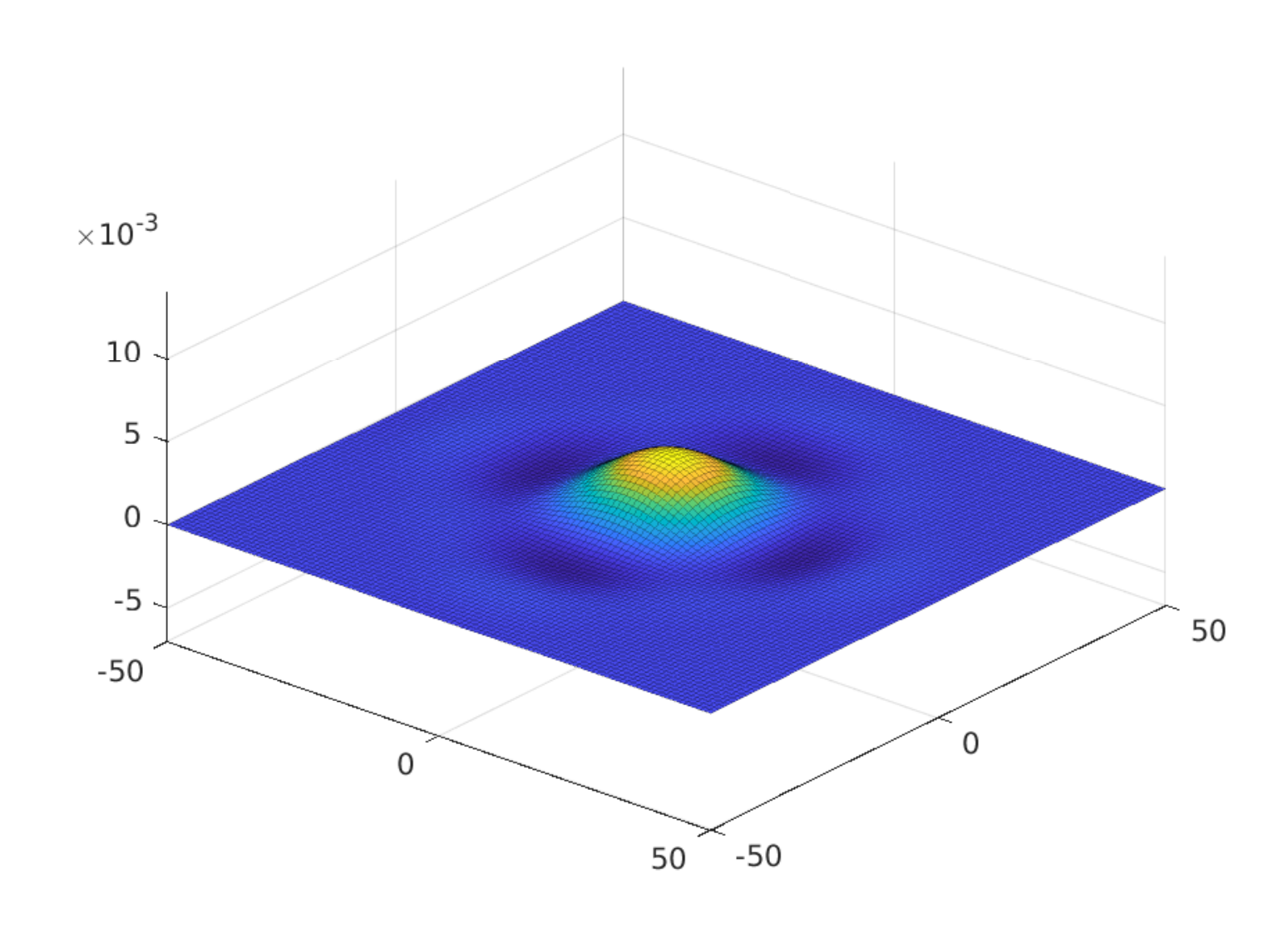}
		\caption{$\Re(i^{n+2(x+y)}H_{P_{\zeta_1}}^n)$ for $n=1000$}
		\label{fig:TwoDEx2_Pos_Attractor}
	    \end{subfigure}}
\caption{The graphs of $\Re(H_{iQ_{\xi_1}}^n)$ and $\Re(H_{P_{\zeta_1}}^n)$ for $n=1000$.}
\label{fig:TwoDEx2_AttractorContributions}
\end{center}
\end{figure}

Finally, Theorem \ref{thm:SupNormEst} gives positive constants $C,C'>0$ for which
\begin{equation*}
\abs{\phi^{(n)}(x)}\leq\frac{C'}{n^{1/2}}
\end{equation*}
whenever $x\in K_n\cap\mathbb{Z}^2$ and
\begin{equation*}
\frac{C}{n^{1/2}}\leq \|\phi^{(n)}\|_\infty
\end{equation*}
for all $n\in\mathbb{N}_+$.
\end{example}

\section{Discussion and Future Directions}\label{sec:Discussion}

This article has focused on describing the asymptotic behavior of convolution powers for a class of complex-valued functions on $\mathbb{Z}^d$ in the form of local limit theorems and identifying the attractors appearing therein. In particular, we have focused on those $\phi\in \mathcal{S}_d\subseteq \ell^1(\mathbb{Z}^d)$ whose Fourier transform $\hat{\phi}$ is maximized in absolute value at points of positive homogeneous type and imaginary homogeneous type. As the local limit theorems of \cite{RSC17} only treat the case in which $\widehat{\phi}$ is maximized at points of positive homogeneous type, the results of the present article broaden the class of functions on $\mathbb{Z}^d$ for which it is possible to prove local limit theorems. In light of the complete theory (for finitely supported functions) that is available in one dimension (established in \cite{RSC15}), there is much to be learned and many open questions remain concerning the convolution powers of complex-valued function on $\mathbb{Z}^d$ for $d>1$. In this section, we discuss a number of directions of further study. To preface this discussion, we first outline another class of functions on $\mathbb{Z}^d$ for which it is possible to prove local limit theorems. Though this class is very special/limited, it allows us to easily produce examples that go beyond the scope of the results in the present article and thus test conjectures concerning convolution powers. \\

\begin{example}\label{ex:Tensor}
For $j=1,2,\dots, d$, let $\phi_j:\mathbb{Z}\to\mathbb{C}$ have finite support consisting of more than one point and satisfy $\sup_{\xi_j}|\widehat{\phi_j}(\xi_j)|=1$. Let us assume, for simplicity that, $\Omega(\phi_j)=\{0\}$ and $\widehat{\phi_j}(0)=1$ for $j=1,2,\dots d$. By the results of \cite{RSC15}, for each $j$, there exists $\alpha_j\in\mathbb{R}$, $\beta_j\in\mathbb{C}$ with $\Re(\beta_j)\geq 0$ and an integer $m_j\geq 2$ such that
\begin{equation*}
\phi_j^{(n)}(x_j)=H_{m_j,\beta_j}^n(x_j-n\alpha_j)+o(n^{-1/m_j})
\end{equation*}
which holds uniformly for $x_j\in\mathbb{Z}$ or uniformly for $x_j\in (n\alpha_j+n^{1/m_j}K_j)\cap \mathbb{Z}$ for any compact set $K_j\subseteq\mathbb{R}$; this latter limitation applies precisely when  $m_j=2$ and $\beta_k$ is purely imaginary -- this is exactly when $H_{m_j,\beta_j}^n$ is a scaled version of the heat kernel evaluated at purely imaginary time. Consider now the function $\phi:\mathbb{Z}^d\to\mathbb{C}$ defined as the tensor product $\phi=\phi_1\otimes\phi_2\otimes\cdots\otimes\phi_d$, i.e., 
\begin{equation*}
\phi(x)=\phi_1(x_1)\phi_2(x_2)\cdots \phi_{d}(x_d)
\end{equation*}
for $x=(x_1,x_2,\dots,x_d)\in\mathbb{Z}^d$. It is easily seen that
\begin{equation*}
\phi^{(n)}(x)=\phi_1^{(n)}(x_1)\phi_2^{(n)}(x_2)\cdots\phi_{d}^{(n)}(x_d)
\end{equation*}
for each $x=(x_1,x_2,\dots,x_d)\in\mathbb{Z}^d$ and $n\in\mathbb{N}_+$. Using the one-dimensional local limit theorems above in conjunction with uniform estimates for $\phi_j^{(n)}$ and $H_{m_j,\beta_j}^n$ for each $j$ (established in \cite{RSC15}), we find that
\begin{equation*}
\phi^{(n)}(x)=H^n(x-n\alpha)+o(n^{-\mu})
\end{equation*}
where $\alpha=(\alpha_1,\alpha_2,\dots,\alpha_d)$, $\mu=1/m_1+1/m_2+\cdots+1/m_d$, and
\begin{equation*}
H^n(y)=H_{m_1,\beta_1}^n(y_1)H_{m_2,\beta_2}^n(y_2)\cdots H_{m_d,\beta_d}^n(y_d)
\end{equation*}
for $y=(y_1,y_2,\dots,y_d)\in\mathbb{R}^d$; whether or not the above local limit theorem holds uniformly on $\mathbb{Z}^d$ or just on growing and drifting semi-infinite rectangles depends on the nature the constants $m_j$ and $\beta_j$ as described above. Now, functions $\phi$ of this form do not, in general, satisfy the hypotheses of the theorems in this article. There is, however, some overlap: Suppose, additionally, that $m_j$ is even for each $j=1,2,\dots, d$. Observe that $\widehat{\phi}(\xi)=\widehat{\phi_1}(\xi_1)\widehat{\phi_2}(\xi_2)\cdots\widehat{\phi_d}(\xi_d)$ for $\xi=(\xi_1,\xi_2,\dots,\xi_d)\in\mathbb{R}^d$ and therefore $\sup_\xi |\widehat{\phi}(\xi)|=1$ and $\Omega(\phi)=\{0\}\subseteq \mathbb{T}^d$ since we have assumed that $\Omega(\phi_j)=\{0\}\in\mathbb{T}$ for each $j=1,2,\dots, d$. If $\Re(\beta)_j>0$ for all $j=1,2,\dots, d$, it is easy to verify that $0$ is a point of positive homogeneous type for $\widehat{\phi}$ with $\alpha=(\alpha_1,\alpha_2,\dots,\alpha_d)$,
\begin{equation*}
P_0(\xi)=\sum_{j=1}^d\beta_j\xi_j^{m_j}
\end{equation*}
for $\xi=(\xi_1,\xi_2,\dots,\xi_d)\in\mathbb{R}^d$, and homogeneous order $\mu=1/m_1+1/m_2+\cdots+1/m_d$. In this case, it is easy to verify that the hypotheses of Theorem \ref{thm:LLTIntro} are satisfied and the attractor $H_{P_0}$ coincides with $H$ given above. If instead, for each $j=1,2\dots, d$, $\beta_j=iq_j$ where $q_j>0$, then it is not difficult to see that $0$ is of imaginary type for $\widehat{\phi}$ with $\alpha=(\alpha_1,\alpha_2,\dots,\alpha_d)$,
\begin{equation*}
Q_0(\xi)=\sum_{j=1}^d q_j\xi_j^{m_j}
\end{equation*}
for $\xi=(\xi_1,\xi_2,\dots,x_d)$, and homogeneous order $\mu=1/m_1+1/m_2+\cdots+1/m_d$. If additionally $\mu<1$, then Theorem \ref{thm:LLTIntro} is valid and we find that, for any compact set $K\subseteq\mathbb{R}^d$,
\begin{equation*}
\phi^{(n)}(x)=H_{iQ_0}^n(x-n\alpha)+o(n^{-\mu})
\end{equation*}
for $x\in (n\alpha+n^{E}(K))\cap\mathbb{Z}^d$ where $E=E_{1/\mathbf{m}}$ has standard matrix representation $\diag(m_1^{-1},m_2^{-1},\dots,m_{d}^{-1})$. 
\end{example}

\noindent Armed with the above class of examples, let us now push forward with a general discussion. Suppose that $\phi\in\mathcal{S}_d$ has $\sup_\xi|\widehat{\phi}(\xi)|=1$. In the case that $\Omega(\phi)$ contains points of imaginary homogeneous type, the local limit theorems of the present article and those of \cite{RSC15} are limited to describing the behavior of convolution powers on growing (and drifting) compact sets, i.e., sets of the form $(n\alpha+K_n)\cap\mathbb{Z}^d$. This stands in contrast to the case in which $\Omega(\phi)$ consists only of points of positive homogeneous type wherein local limit theorems hold on all of $\mathbb{Z}^d$ (and also the one-dimensional case described by Theorem 1.2 of \cite{RSC15}). As discussed in Remark \ref{rmk:Support} and seen in Example \ref{ex:OneDEx2}, this compact set limitation is necessary in some cases. In this vein, there are two open questions that arise. First, when is this limitation of local limit theorems to sets of the form $(n\alpha+K_n)\cap \mathbb{Z}^d$ necessary? In one-dimension, Theorem 1.2 of \cite{RSC17} shows that local limit theorems hold uniformly on $\mathbb{Z}$ so as long as all points $\xi$ of Type 2 (imaginary homogeneous type) give rise to Taylor expansions for $\Gamma_\xi$ containing no quadratic term, i.e., $m>2$. Would such a statement be true in $\mathbb{Z}^d$? In terms of homogeneous orders, requiring that $\mu_\xi<1/2$ for all points $\xi\in \Omega(\phi)$ of imaginary homogeneous type would rule out the appearance of quadratic terms in the expansion for $\Gamma_\xi$ (for any $d$) and so it might be a sufficient condition for local limit theorems to hold uniformly on $\mathbb{Z}^d$. It is not, however, necessary and this can be seen by considering $\phi=\psi\otimes \psi$ where $\psi:\mathbb{Z}\to\mathbb{C}$ satisfies the hypotheses concerning the multiplicands in Example \ref{ex:Tensor} where $\Omega(\psi)=\{0\}$ and $0$ is a point of Type 2 (imaginary homogeneous type) for $\widehat{\psi}$ with $m=4$ and $\beta=iq$ for some $q>0$. By considering different possibilities for the multiplicands in Example \ref{ex:Tensor}, it becomes clear that no characterization (for a given local limit theorem to hold uniformly on $\mathbb{Z}^d$) can be made on the basis of homogeneous order alone.  The second question involves the behavior of $\phi^{(n)}$ away from the drifting compact sets of the form $(\alpha n+K_n)\cap\mathbb{Z}^d$. In the case that $\phi$ is finitely supported on $\mathbb{Z}^d$, for any $R>0$ for which $B_R=\{x\in\mathbb{R}^d:\abs{x}<R\}$ contains the support of $\phi$, $\supp(\phi)$, it is not difficult to show that $\supp(\phi^{(n)})\subseteq (nB_R)\cap \mathbb{Z}^d$ for each $n\in\mathbb{N}_+$. Thus, it remains an open question (even in one dimension) to describe the behavior of convolution of powers $\phi^{(n)}$ outside of the growing and drifting sets $(\alpha n+K_n)$ but within $nB_R$ for $n\in\mathbb{N}_+$. In one dimension, this region of interest is the union of intervals $[-nR,n\alpha-n^{1/m}r]\cup [n\alpha+n^{1/m}r, nR]$ for a fixed $r>0$. The author suspects that the one-dimensional problem is tractable in light of the methods of \cite{RSC15} and \cite{Co22}; however, a more robust analysis of the oscillatory integrals considered in this article would likely be needed to handle the case in which $d>1$. \\

\noindent Another direction of further research involves the condition $\mu<1$ appearing in Theorem \ref{thm:LLTIntro}, Theorem \ref{thm:Attractor}, and Theorem \ref{thm:LLTMain}. For a local limit theorem to be valid for $\phi$ where $\Omega(\phi)$ contains a point of imaginary homogeneous type for $\widehat{\phi}$, this condition is not necessary. To see this, let $\nu:\mathbb{Z}\to\mathbb{C}$ coincide with the function given in Example \ref{ex:OneDEx2} and consider $\phi=\nu\otimes\nu$ on $\mathbb{Z}^2$. In view of the results of Example \ref{ex:Tensor}, 
\begin{eqnarray*}
\phi^{(n)}(x_1,x_2)&=&H_{2,i/8}^n(x_1)H_{2,i/8}^n(x_2)+o(n^{-1})\\
&=&\frac{1}{ 4n i \pi/8}\exp\left(-\frac{x_1^2+x_2^2}{4ni/8}\right)+o(n^{-1})
\end{eqnarray*}
uniformly for $x=(x_1,x_2)\in n^E(K)\cap \mathbb{Z}^2$ where $E$ has standard matrix representation $\diag(1/2,1/2)$ and $K$ is a compact rectangle in $\mathbb{R}^2$. For this example, it is easy to verify that $\Omega(\phi)=\{0\}$ is a point of imaginary homogeneous type for $\widehat{\phi}$ with $Q_0(\xi)=|\xi|^2/8=(\xi_1^2+\xi_2^2)/8$ for $\xi=(\xi_1,\xi_2)\in\mathbb{R}^2$ and homogeneous order $\mu=1$. Looking back to the discussion immediately following Theorem \ref{thm:Attractor}, we see that the renormalized integral
\begin{equation*}
\fint_{\mathbb{R}^2}e^{-itQ_0(\xi)}e^{-ix\cdot\xi}\,d_\mathcal{A}\xi
\end{equation*}
does not converge for the approximating family $\mathcal{A}$ given in terms of the sets $\mathcal{O}_\tau=\{\xi:Q(\xi)<\tau\}=\{\xi:|\xi|^2<8\tau\}$. Hence, the above local limit theorem cannot be deduced directly using the results of the present article. Still, as we noted following Theorem \ref{thm:Attractor}, if the approximating family $\mathcal{A}$ were replaced by an approximating family of rectangles $\mathcal{A}'$, e.g., $\mathcal{A}'=\{[-\tau,\tau]^2:\tau>0\}$, then, thanks to the factorability of the integrand $e^{-itQ_0(\xi)}e^{-ix\cdot\xi}$ and Fubini's theorem, it is easy to see that the renormalized integral in
\begin{equation*}
H_{iQ_0}^t(x):=\frac{1}{(2\pi)^2}\fint_{\mathbb{R}^2}e^{-itQ_0(\xi)}e^{-ix\cdot\xi}\,d_{\mathcal{A}'}\xi
\end{equation*}
does converge and, further, it coincides with the attractor of the local limit theorem above, i.e., the $2$-dimensional heat kernel evaluated at purely imaginary time $it/8$. With this observation, it is likely that the renormalized integral and the machinery surrounding it can be used to identify attractors and prove local limit theorems in greater generality than is done in this article (and, in particular, relax the assumption that $\mu<1$). This would require a method of choosing approximating families to use in the renormalized integrals defining attractors, one that goes beyond the simple choice we have made in this article.\\

\noindent Concerning the attractors $H_{iQ}$ defined in Theorem \ref{thm:Attractor} and their properties, much remains to be explored. In one dimension, these attractors are known to be smooth and each satisfies a related differential equation called de Forest's Equation \cite{Gre66,DSC14}. For $d>1$, the regularity of the attractors $H_{iQ}$ is unknown. Given its structure, we suspect that $H_{iQ}$ satisfies, in some weak sense, the heat-type partial differential equation $\partial_t+iQ(D)=0$; however, we remark that this equation is not hypoelliptic. Perhaps most directly applicable to the study of convolution powers, establishing $L^\infty$-estimates for $H_{iQ}^t(\cdot)$ would go a long way toward understanding sup-norm estimates for $\phi^{(n)}$. Based on what is known in one dimension and in the positive homogeneous setting for $d>1$, it is reasonable to conjecture that $x\mapsto H_{iQ}^1(x)$ is bounded on $\mathbb{R}^d$ and from Theorem \ref{thm:Attractor} it would then follow that
\begin{equation*}
\abs{H_{iQ}^t(x)}\leq Ct^{-\mu}
\end{equation*}
for all $t>0$ and $x\in\mathbb{R}^d$ where $C=\|H_{iQ}^1\|_{L^\infty(\mathbb{R}^d)}$ and $\mu=\mu_Q$. Such an estimate and the methods used to establish it would likely be useful in extending the local limit theorems and sup-norm estimates of the present article.\\

\noindent To conclude this section, we briefly mention two recent works that extend the theory of local limit theorems of complex-valued functions on $\mathbb{Z}^d$ in distinct directions from that of the present article. The first is presented in the article \cite{Co22} of L. Coeuret and is stated in the context of one dimension. Consider a suitably normalized and finitely supported function $\phi$ on $\mathbb{Z}$ for which $\Omega(\phi)=\{\xi_1,\xi_2,\dots,\xi_l\}$ consists only of points of Type 1 (positive homogeneous type) for $\widehat{\phi}$.  As noted in the introduction, the results of \cite{DSC14}, \cite{RSC15}, and \cite{RSC17} show that the local limit theorem \eqref{eq:LLTType1OneAttractorMultiple} is valid. Under several additional assumptions concerning the drifts $\alpha_1,\alpha_2,\dots,\alpha_l$ and the values $\widehat{\phi}(\xi_k)$ for $k=1,2,\dots l$, Theorem 1 and Corollary 1 of \cite{Co22} show that the error in \eqref{eq:LLTType1OneAttractorMultiple} can be significantly improved by establishing the following local limit theorem: There exist positive constants $C$ and $M$ for which
\begin{equation}\label{eq:CoLLT}
\abs{\phi^{(n)}(x)-\sum_{k=1}^l\widehat{\phi}(\xi_k)^ne^{-ix\xi_k}H_{m_k,\beta_k}^n(x-n\alpha_k)}\leq \sum_{k=1}^l \frac{C}{n^{2/m_k}}\exp\left(-M\left|\frac{x-n\alpha_k}{n^{1/m_k}}\right|^{m_k/(m_k-1)}\right)
\end{equation} 
uniformly for $x\in\mathbb{Z}$ (see Theorem 1 and Corollary 1 of \cite{Co22}). Given the presence of the prefactors $n^{-2/m_k}$ in the upper bound, this result shows, in particular, that the uniform error of $o(n^{-1/m})$ in \eqref{eq:LLTType1OneAttractorMultiple} can be replaced by $O(n^{-2/m})$ when $\phi$ satisfies the hypotheses of Theorem 1 or Corollary 1 of \cite{Co22}.  Coeuret's proofs follow the method of ``spatial dynamics", first applied in this context in \cite{coulombel2022generalized}, and the method is quite different from the purely Fourier-analytic approach used in \cite{DSC14,RSC15,RSC17} (and the present article). It should be noted that the bounds in \eqref{eq:CoLLT} are paralleled by the following Gaussian-type off-diagonal estimates for the attractors: For each even integer $m\geq 2$ and $\beta\in\mathbb{C}$ with $\Re(\beta)>0$, there are positive constants $C$ and $M$ for which
\begin{equation*}
\abs{H_{m,\beta}^t(x)}\leq \frac{C}{t^{1/m}}\exp\left(-M\left|\frac{x}{t^{1/m}}\right|^{m/(m-1)}\right)
\end{equation*}
for $x\in\mathbb{R}$ and $t>0$. In fact, it is known that this estimate is sharp (see, e.g., \cite{BaD96}). In the context of $\mathbb{Z}^d$ for $d\geq 1$, the attractors $H_P$ corresponding to points of positive homogeneous type satisfy the following sharp off-diagonal estimate: There are positive constants $C$ and $M$ for which
\begin{equation*}
\abs{H_P^t(x)}\leq \frac{C}{t^{\mu}}\exp(-tMR^{\#}(x/t))
\end{equation*}
for $x\in\mathbb{R}^d$ and $t>0$; here $R^{\#}$ is the Legendre-Fenchel transform of the positive homogeneous polynomial $R=\Re P$ and $\mu=\mu_R$. This result appears as Proposition 2.6 of \cite{RSC17} (see also Theorem 8.2 of \cite{RSC20}). To see the connection between the two preceding estimates, one can easily verify that $R^{\#}(x)=|x|^{m/(m-1)}$ for $x\in\mathbb{R}$ when $P(\xi)=\beta \xi^m$ for $\xi\in\mathbb{R}$ when $m\geq 2$ is an even integer and $\Re(\beta)>0$. In view of Coeuret's one-dimensional result \eqref{eq:CoLLT}, this off-diagonal estimate, and Theorem 1.6 of \cite{RSC17} (or \ref{item:LLTMain1} of Theorem \ref{thm:LLTMain} of the present article), we ask: Given $\phi:\mathbb{Z}^d\to\mathbb{C}$ which is finitely supported, suitably normalized, and has $\Omega(\phi)=\{\xi_1,\xi_2,\dots,\xi_l\}$ consisting only of points of positive homogeneous type for $\widehat{\phi}$,  under what additional conditions on $\phi$ can we find positive constants $C$ and $M$ for which
\begin{equation*}
\abs{\phi^{(n)}(x)-\sum_{k=1}^l\widehat{\phi}(\xi_k)^ne^{-ix\cdot\xi_k}H_{P_k}^n(x-n\alpha_k)}\leq\sum_{k=1}^l\frac{C}{n^{2\mu_k}}\exp(-nMR_k^{\#}((x-n\alpha_k)/n))
\end{equation*}
uniformly for $x\in\mathbb{Z}^d$? Even a weaker result in which $2\mu_k$ is replaced by $c \mu_k$ for some $c>1$ would be interesting and useful. \\

\noindent The second new development appears in the recent article \cite{RSC22} of the present author and L. Saloff-Coste and is relevant only when $d>1$. Given a finitely supported function $\phi:\mathbb{Z}^d\to\mathbb{C}$ which is suitably normalized, suppose that $\Omega(\phi)$ consists of a single point $\xi_0\in\mathbb{T}^d$ for which
\begin{equation*}
\Gamma_{\xi_0}(\xi)=i\alpha\cdot\xi-R(\xi)+\widetilde{R}(\xi)
\end{equation*}
as $\xi\to 0$ where $\alpha\in\mathbb{R}^d$, $R(\xi)\geq 0$ is a positive-definite function and $\widetilde{R}(\xi)=o(R(\xi))$ as $\xi\to 0$. In contrast to \cite{RSC17} and the present article, the function $R$ is not homogeneous in any reasonable sense nor does it contain any lower-order part which is positive homogeneous. Given this lack of homogeneity, the class of examples (though special and limited) considered in \cite{RSC22} falls outside of the theory established in the present article (and all other known results for convolution powers on $\mathbb{Z}^d$). For such a $\phi$, Theorem 5.1 establishes a local limit theorem of the form
\begin{equation*}
\phi^{(n)}(x)=\widehat{\phi}(\xi_0)^ne^{-ix\cdot\xi_0}H_R^n(x-n\alpha)+o(n^{-\mu_R})
\end{equation*} 
uniformly for $x\in\mathbb{Z}^d$ where the constant $\mu_R>0$ arises not as the homogeneous order of $R$ (as no such order exists) but through an argument developed to deduce the on-diagonal asymptotic, 
\begin{equation*}
\sup_x\abs{H_R^t(x)}=H_R^t(0)\sim Ct^{-\mu_R}
\end{equation*}
as  $t\to \infty$ (see Theorem 3.1 of \cite{RSC22}). At present, little is known (away from $x=0$) of the attractor $H_R$.

\vspace{1cm}
\noindent\textbf{\large Acknowledgment:} I would like to extend my sincere thanks to Huan Bui who wrote the majority of the software used to illustrate the examples throughout, created several of the examples (including those of our previous joint work \cite{BR21}), and provided many invaluable discussions over the course of this work. I am indebted to Professors Leonard Gross, Leo Livshits, and Laurent Saloff-Coste who provided encouragement and helpful feedback (even at formative stages) for this work. I would also like to thank the anonymous referee for making a number of useful comments and asking several interesting questions that led to the inclusion of Section \ref{sec:Discussion}. As I wrote the majority of this article at Cornell University during my sabbatical leave from Colby College, I would like to thank both Colby College and Cornell University for supporting me and Cornell University for hosting me during the academic year $2021$-$2022$. Finally, I am endlessly grateful to Darby Beaulieu for her support and encouragement.

\appendix

\section{Appendix}

\subsection{Contracting Groups}\label{subsec:OneParameterGroups}

\noindent As discussed in the introduction, we shall denote by $\End(\mathbb{R}^d)$ the ring of (linear) endomorphisms of $\mathbb{R}^d$ which we take to be equipped with the operator norm $\|\cdot\|$ (inherited from the usual Euclidean norm $\abs{\cdot}$ on $\mathbb{R}^d$). For a given $A\in\End(\mathbb{R}^d)$, we shall denote by $\det(A)$, $\tr A$ and $A^*$, its determinant, trace, and adjoint/transpose, respectively. The associated general linear group will be denoted by $\Gl(\mathbb{R}^d)$ and its identity element by $I$. Given $E\in\End(\mathbb{R}^d)$, we define
\begin{equation*}
T_r=r^E=\exp(\log(r)E)=\sum_{k=0}^\infty \frac{(\log(r))^k}{k!}E^k
\end{equation*}
for $r>0$. The following amasses some basic facts about $T_r=r^E$; proofs can be found in the references \cite{Haz01,Hall03,engel_one-parameter_2000}.

\begin{proposition}\label{prop:BasicGroupProp}
For $E,F\in\End(\mathbb{R}^d)$ and $A\in\Gl(\mathbb{R}^d)$, the following properties hold: 
\begin{multicols}{2}
\begin{enumerate}
\item For every $r>0$, $r^E\in\Gl(\mathbb{R}^d)$.
\item For every $r>0$, $(r^E)^*=r^{E^*}$.
\item For every $r>0$, $\det(r^E)=r^{\tr E}$.\\
\item For every $r>0$, $A^{-1}r^EA=r^{A^{-1}EA}$.
\item\label{item:BasicGroupPropNormBound} For every $r\geq 1$, $\|r^E\|\leq r^{\|E\|}$.
\item If $EF=FE$, then $r^Er^F=r^{E+F}$\\ for every $r>0$.
\end{enumerate}
\end{multicols}
\noindent Finally, the map $(0,\infty)\ni r\mapsto r^E\in \Gl(\mathbb{R}^d)$ is a Lie group homomorphism. In particular, it is continuous and satisfies:
\begin{enumerate}
\item  $1^E=I$
\item For each $r>0$, $r^{-E}=(1/r)^E=(r^E)^{-1}$.
\item For each $r,s>0$, $r^Es^E=(ts)^E$.
\end{enumerate}
\end{proposition}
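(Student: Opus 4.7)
The plan is to reduce every item to a standard fact about the matrix exponential, exploiting the defining identity $r^E = \exp(\log(r) E)$. Since the series defining $\exp$ converges absolutely on all of $\End(\mathbb{R}^d)$, the termwise manipulations used below (termwise transpose, termwise conjugation by a fixed $A \in \Gl(\mathbb{R}^d)$, and termwise norm estimates) are automatically justified. The defining series also makes clear that $r \mapsto r^E$ is continuous, indeed analytic, in $r$.

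First I would prove item 6, namely $r^E r^F = r^{E+F}$ when $EF = FE$, because it is the only nontrivial algebraic identity and several of the other claims depend on it. The standard argument, rearranging the Cauchy product of the two exponential series and using $(\log(r)(E+F))^k = \sum_j \binom{k}{j} (\log r)^k E^j F^{k-j}$ (which requires $EF=FE$), carries over verbatim. As immediate corollaries, item 1 follows by applying item 6 with $F=-E$ to get $r^E r^{-E} = \exp(0) = I$, which simultaneously establishes invertibility and the identity $r^{-E} = (r^E)^{-1}$; and the homomorphism relation $r^E s^E = (rs)^E$ follows because $E$ commutes with itself, giving $r^E s^E = \exp((\log r + \log s) E) = \exp(\log(rs) E)$.

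Items 2, 3, and 4 I would handle by termwise manipulation of the defining series. The adjoint identity is immediate from $(E^k)^* = (E^*)^k$. The determinant identity follows from Jacobi's formula $\det(\exp A) = e^{\tr A}$ applied to $A = \log(r) E$, yielding $\det(r^E) = e^{\log(r)\tr E} = r^{\tr E}$. The conjugation identity follows from $A^{-1} E^k A = (A^{-1} E A)^k$ applied term by term. For item 5, I would restrict to $r \geq 1$ so that $\log(r) \geq 0$, then use the triangle inequality and submultiplicativity of the operator norm on each term of the series to obtain
\[
\|r^E\| \leq \sum_{k=0}^{\infty} \frac{(\log r)^k}{k!}\|E\|^k = e^{\log(r)\|E\|} = r^{\|E\|}.
\]

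Finally, the three homomorphism clauses are in hand: $1^E = \exp(0) = I$ is direct from the series, $r^{-E} = (r^E)^{-1}$ was obtained above, and $r^E s^E = (rs)^E$ was also obtained above. Smoothness of $r \mapsto r^E$ follows from analyticity of $\log$ on $(0,\infty)$ and of $\exp$ on $\End(\mathbb{R}^d)$, upgrading the continuous homomorphism to a Lie group homomorphism. The only obstacle, if one wishes to call it that, is invoking the correct classical facts about $\exp$ on $\End(\mathbb{R}^d)$; the proof itself is essentially a dictionary translation from the well-developed theory of the matrix exponential cited in the paper.
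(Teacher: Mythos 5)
Your proof is correct: every item reduces, as you say, to standard properties of the matrix exponential (binomial/Cauchy-product argument for commuting exponents, termwise transpose and conjugation, Jacobi's formula, and the norm estimate for $\log r\geq 0$), and this is precisely the argument the paper has in mind, since it gives no proof of its own and simply cites the references \cite{Haz01,Hall03,engel_one-parameter_2000} where these facts are established in exactly this way.
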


\noindent As guaranteed by the preceding proposition, for each $E\in\End(\mathbb{R})$, $\{r^E\}$ is a subgroup of $\Gl(\mathbb{R}^d)$ which we commonly refer to as a one-parameter group. As stated in the introduction, the one-parameter group $\{r^E\}$ is said to be \textbf{contracting} if
\begin{equation*}
\lim_{r\to 0}\|r^E\|=0.
\end{equation*}
The contracting property for $\{r^E\}$ is easily seen to be equivalent to Lyapunov stability of the (additive) one-parameter group $\mathbb{R}\ni t\mapsto e^{tE}$ (see \cite{engel_one-parameter_2000}).  We refer the reader to Appendix A of \cite{BR21} which contains many results on one-parameter contracting groups, many of which are used in this article.  In particular, Proposition A.2 of \cite{BR21} guarantees that $\{r^E\}$ is contracting if and only if, for each $\xi\in\mathbb{R}^d$,
\begin{equation*}
\lim_{r\to 0}|r^E\xi|=0.
\end{equation*}
For the remainder of this subsection, we focus on two results concerning the large-$r$ behavior of one-parameter contracting groups, neither of which can be found in \cite{BR21}.  By definition, contracting groups $\{r^E\}$ enjoy the property that their norms are well controlled as $r\to 0$. On the other hand, Item \ref{item:BasicGroupPropNormBound} of Proposition \ref{prop:BasicGroupProp} informs the large-$r$ behavior of $\|r^E\|$ based on the norm $\|E\|$. The following lemma gives us an estimate for $\|r^E\|$ using $E$'s trace.

\begin{lemma}\label{lem:LargeTimeContractingGroup}
Let $E\in\End(\mathbb{R}^d)$ be such that $\{r^E\}$ is a contracting group and suppose that $\tr E<1$. Then, for any $\epsilon>0$ there is $r_0\geq 1$ for which
\begin{equation*}
\|r^E\|\leq \epsilon r
\end{equation*}
for all $r\geq r_0$. In other words, $\|r^E\|=o(r)$ as $r\to\infty$.
\end{lemma}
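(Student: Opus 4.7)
My plan is to prove the lemma by extracting a spectral bound on $\|r^E\|$ that strictly beats $r^1$ for large $r$. The key observation is that the two hypotheses together force the spectrum of $E$ to lie in a vertical strip of the complex plane with real parts in the open interval $(0,1)$.

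First, I would record the spectral characterization of the contracting property: the one-parameter group $\{r^E\} = \{e^{(\log r)E}\}$ is contracting if and only if every (complex) eigenvalue $\lambda$ of $E$ satisfies $\Re(\lambda) > 0$. This is a standard Lyapunov-stability fact (see, e.g., \cite{engel_one-parameter_2000}), and it follows by writing $r^E = e^{(\log r)E}$ and noting that $\log r \to -\infty$ as $r \to 0^+$, so contraction is precisely the asymptotic stability of the flow generated by $-E$. Since $E$ is a real endomorphism, its non-real eigenvalues come in complex-conjugate pairs, so $\tr E = \sum_j \Re(\lambda_j)$, the sum taken with algebraic multiplicity. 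Combined with $\Re(\lambda_j) > 0$ for every $j$ and the hypothesis $\tr E < 1$, this forces
\[
\lambda_* := \max_j \Re(\lambda_j) < 1.
\]

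The second step is a standard Jordan-form estimate: for any $\lambda > \lambda_*$ there exists $C_\lambda > 0$ such that
\[
\|e^{tE}\| \leq C_\lambda e^{t\lambda} \quad \text{for all } t \geq 0,
\]
where the polynomial factors from nilpotent Jordan blocks are absorbed into the slightly enlarged exponential. Substituting $t = \log r$ for $r \geq 1$ yields $\|r^E\| \leq C_\lambda r^{\lambda}$. Choosing $\lambda \in (\lambda_*, 1)$, which is possible precisely because $\tr E < 1$, we conclude that $\|r^E\|/r \leq C_\lambda r^{\lambda-1} \to 0$ as $r \to \infty$. Given $\epsilon > 0$, one then picks $r_0 \geq 1$ with $C_\lambda r_0^{\lambda - 1} \leq \epsilon$ to obtain the desired estimate $\|r^E\| \leq \epsilon r$ for all $r \geq r_0$.

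No serious obstacle arises; the proof is essentially linear algebra. The only mildly delicate point is recording cleanly the equivalence between the contracting property and the location of the spectrum of $E$ in the open right half-plane, which is either implicit in the appendix of \cite{BR21} or can be stated as a short preliminary observation. The role of the hypothesis $\tr E < 1$ is transparent in this scheme: without it one could only conclude $\lambda_* < \infty$ and not $\lambda_* < 1$, and indeed the borderline example $E = I$ on $\mathbb{R}^d$ (for which $\tr E = d \geq 1$ and $\|r^E\| = r$) shows the conclusion cannot survive without the trace condition.
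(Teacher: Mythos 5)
Your proposal is correct and follows essentially the same route as the paper's proof: the Lyapunov characterization of the contracting property (all eigenvalues have positive real part), the observation that positivity plus $\tr E<1$ forces $\max_j\Re(\lambda_j)<1$, and a Jordan-type bound $\|r^E\|\leq C r^{\omega}$ for some $\omega<1$ (the paper makes the Jordan--Chevalley decomposition and the polynomial factor explicit before absorbing it, which is exactly your ``slightly enlarged exponential'' step).
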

\begin{proof}
We write $\mbox{Spec}(E)=\{\lambda_1,\lambda_2,\dots,\lambda_d\}$. 
Our assumption that $\{r^E\}$ is contracting guarantees that $\Re(\lambda_k)>0$ for all $k=1,2,\dots, d$ by virtue of the classical Lyapunov theorem (see, e.g., Theorem 2.10 of \cite{engel_one-parameter_2000}). Set $\rho=\max_{k=1,2,\dots,d}\Re(\lambda_k)$ and observe that
\begin{equation*}
0<\rho\leq \sum_{k=1}^d\Re(\lambda_k)=\Re\left(\sum_{k=1}^d\lambda_k\right)=\Re(\tr E)=\tr E<1.
\end{equation*}
Now, for $A\in\End(\mathbb{C}^d)$, we denote by $\|A\|'$ its operator norm. It is easy to see that the inclusion map $\iota:(\End(\mathbb{R}^d), \|\cdot\|)\to (\End(\mathbb{C}^d),\|\cdot\|')$ is a contraction, i.e, for all $A\in\End(\mathbb{R}^d)$, $\|A\|\leq \|A\|'$. Viewing $E$ as an element of $\End(\mathbb{C}^d)$ and making use of the Jordan-Chevelley decomposition, we write $E=D+N$ where $D\in \End(\mathbb{C}^d)$ is diagonalizable with $\Spec(D)=\Spec(E)$, $N\in\End(\mathbb{C}^d)$ is nilpotent and $DN=ND$. Because $D$ is diagonalizable, there is a constant $M\geq 1$ for which
\begin{equation*}
\|r^D\|'=\|\exp(\log(r)D)\|'\leq M \max_{\lambda\in \Spec(D)}\abs{e^{\log(r)\lambda}}\leq M\max_{\lambda\in\Spec(D)}e^{\log(r)\Re(\lambda)}=Mr^{\rho}
\end{equation*}
for $r\geq 1$ where we have used the fact that $\Spec(E)=\Spec(D)$. Thus, by virtue of the fact that $N$ and $D$ commute and $N$ is nilpotent, we have
\begin{equation*}
\|r^E\|\leq \|r^E\|'=\|r^Nr^D\|'\leq M\|r^N\|' r^\rho\leq P(\log(r)\|N\|')r^\rho
\end{equation*}
for $r\geq 1$ where $P$ is a polynomial. In view of the logarithm's slow growth and the fact that $\rho\leq \tr E<1$, the preceding inequality guarantees that, for any $\rho<\omega<1$, there is an $M'\geq 1$ for which
\begin{equation*}
\|r^E\|\leq M' r^\omega=\left(M'r^{\omega-1}\right) r
\end{equation*}
for all $r\geq 1$. With this, the desired estimate follows immediately.
\end{proof}

\noindent The following corollary follows immediately from the lemma above.

\begin{corollary}\label{cor:LargeTimeContractingGroup}
Let $E\in\End(\mathbb{R}^d)$ and, for $\alpha>0$, define $F=E/\alpha$. If $\{r^E\}$ is a contracting group, then $\{\theta^F\}_{\theta>0}$ is a contracting group. Further, if $\tr E<1$, then, for any $\epsilon>0$, there is a $\theta_0\geq 1$ for which
\begin{equation*}
\|\theta^F\|\leq \epsilon \theta^{1/\alpha}
\end{equation*}
for all $\theta\geq\theta_0$.
\end{corollary}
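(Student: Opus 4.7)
The plan is to reduce this corollary directly to Lemma \ref{lem:LargeTimeContractingGroup} by a change of parameterization. The central observation, which will do essentially all of the work, is the identity
\[
\theta^F \;=\; \exp\!\bigl((\log\theta)\, E/\alpha\bigr) \;=\; \exp\!\bigl((\log \theta^{1/\alpha})\, E\bigr) \;=\; (\theta^{1/\alpha})^E,
\]
valid for every $\theta>0$. In other words, evaluating the one-parameter group $\{\theta^F\}$ at the parameter $\theta$ is the same as evaluating $\{r^E\}$ at $r=\theta^{1/\alpha}$.

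For the first assertion, I will use that $\alpha>0$ makes the map $\theta\mapsto \theta^{1/\alpha}$ a homeomorphism of $(0,\infty)$ onto itself sending $0$ to $0$; hence $\lim_{\theta\to 0}\|\theta^F\|=\lim_{r\to 0}\|r^E\|=0$ by the assumed contracting property of $\{r^E\}$, so $\{\theta^F\}$ is itself contracting.

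For the quantitative estimate, I will apply Lemma \ref{lem:LargeTimeContractingGroup} to $E$, which is permissible because the hypothesis $\tr E<1$ is part of the corollary's assumptions. Given $\epsilon>0$, the lemma produces $r_0\geq 1$ with $\|r^E\|\leq \epsilon r$ for every $r\geq r_0$. I will then set $\theta_0 := r_0^\alpha$, which satisfies $\theta_0\geq 1$ since $r_0\geq 1$ and $\alpha>0$. For any $\theta\geq \theta_0$, the quantity $r:=\theta^{1/\alpha}$ satisfies $r\geq r_0$, and the boxed identity yields $\|\theta^F\|=\|r^E\|\leq \epsilon r=\epsilon\,\theta^{1/\alpha}$, as required.

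There is no real obstacle here; the corollary is a mechanical reparameterization of Lemma \ref{lem:LargeTimeContractingGroup}. The only subtlety worth noting is that the hypothesis is placed on $\tr E$ rather than $\tr F=\tr E/\alpha$, which is precisely what is needed at the point where the corollary gets invoked in the body of the paper (e.g.\ in the proof of Lemma \ref{lem:PhaseEstimates}, where $\alpha=\mu=\tr E<1$ so that $\tr F=1$, yet $\tr E<1$ still allows the lemma to apply).
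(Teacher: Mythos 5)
Your proposal is correct and matches the paper's intent: the paper simply states that the corollary ``follows immediately from the lemma above,'' and your reparameterization $\theta^F=(\theta^{1/\alpha})^E$ together with the choice $\theta_0=r_0^\alpha$ is precisely the argument being left implicit. No issues.
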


\subsection{Homogeneous and Subhomogeneous functions}\label{subsec:HomAndSubHom}

\noindent As mentioned in the introduction, the article \cite{BR21} develops the theory of positive homogeneous functions and related subhomogeneous functions. This development is more complete than that treated (or needed) in the present article and, for this reason, we have often appealed directly to the results of \cite{BR21}. In this short appendix, we present results concerning positive homogeneous and subhomogeneous functions which are of particular interest for our study of convolution powers.

\begin{lemma}\label{lem:AbsPosHomExponentSets}
Let $Q:\mathbb{R}^d\to\mathbb{R}$ be continuous and suppose that $\abs{Q}$ is positive homogeneous. Then $\Exp(Q)=\Exp(\abs{Q})$. 
\end{lemma}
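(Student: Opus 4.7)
The plan is to establish the two inclusions separately. The inclusion $\Exp(Q)\subseteq\Exp(\abs{Q})$ is immediate: if $E\in\Exp(Q)$, then for every $t>0$ and $\xi\in\mathbb{R}^d$,
\begin{equation*}
\abs{Q(t^E\xi)}=\abs{tQ(\xi)}=t\abs{Q(\xi)},
\end{equation*}
so $E\in\Exp(\abs{Q})$.

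The reverse inclusion $\Exp(\abs{Q})\subseteq\Exp(Q)$ is where the work lies, and the key idea is a connectedness argument. Suppose $E\in\Exp(\abs{Q})$. Since $\abs{Q}$ is positive homogeneous, it is positive definite by definition, hence $Q(\xi)=0$ if and only if $\xi=0$. Also, by Proposition \ref{prop:CharofPosHom}, $\{t^E\}$ is contracting, and since each $t^E\in\Gl(\mathbb{R}^d)$, the map $(t,\xi)\mapsto t^E\xi$ sends $(0,\infty)\times(\mathbb{R}^d\setminus\{0\})$ into $\mathbb{R}^d\setminus\{0\}$. Consequently, the function
\begin{equation*}
g(t,\xi):=\frac{Q(t^E\xi)}{t\,Q(\xi)}
\end{equation*}
is well-defined and continuous on $(0,\infty)\times(\mathbb{R}^d\setminus\{0\})$. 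The hypothesis $E\in\Exp(\abs{Q})$ gives $\abs{Q(t^E\xi)}=t\abs{Q(\xi)}$, and hence $\abs{g(t,\xi)}=1$, so $g$ takes values in the discrete two-point set $\{-1,+1\}$.

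The plan is then to observe that $g$ must be locally constant, and therefore constant on each connected component of its domain. Evaluating at $t=1$ yields $g(1,\xi)=1$ for every $\xi\neq 0$. When $d\geq 2$, $\mathbb{R}^d\setminus\{0\}$ is connected, so $(0,\infty)\times(\mathbb{R}^d\setminus\{0\})$ is connected and $g\equiv 1$, giving $Q(t^E\xi)=tQ(\xi)$ for all $t>0$ and $\xi\neq 0$; this identity extends trivially to $\xi=0$ since $Q(0)=0$. When $d=1$, the domain has two components $(0,\infty)\times(0,\infty)$ and $(0,\infty)\times(-\infty,0)$, but the slice at $t=1$ meets both components and forces $g\equiv 1$ on each. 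In either case, $E\in\Exp(Q)$.

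The main (and really only) obstacle is ensuring the continuity argument is airtight, which reduces to noting that $Q$ does not vanish away from the origin (a direct consequence of the positive definiteness of $\abs{Q}$) and that $t^E$ preserves the complement of the origin. Once these observations are in place, the connectedness argument closes the proof.
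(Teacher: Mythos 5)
Your proof is correct. The easy inclusion $\Exp(Q)\subseteq\Exp(\abs{Q})$ is exactly as you say, and your connectedness argument for the reverse inclusion is sound: since $\abs{Q}$ is positive definite, $Q$ is non-vanishing off the origin, so the ratio $g(t,\xi)=Q(t^E\xi)/\bigl(tQ(\xi)\bigr)$ is continuous with values in $\{-1,+1\}$, hence locally constant, and evaluating on the slice $t=1$ (which meets every connected component of the domain, in every dimension) forces $g\equiv 1$. The paper argues in the same spirit but packages it differently: for $d>1$ it uses connectedness of $\mathbb{R}^d\setminus\{0\}$ to conclude that $Q$ is single-signed, so $Q=\sign(Q(\xi_0))\abs{Q}$ globally and the exponent sets coincide at once; for $d=1$ it instead computes the explicit form $Q(\xi)=Q(\pm 1)\abs{\xi}^{\alpha}$ on the two half-lines and reads off $\Exp(Q)=\Exp(\abs{Q})=\{E_{1/\alpha}\}$ directly. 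Your ratio-function argument has the small advantage of treating $d=1$ and $d\geq 2$ uniformly, avoiding the one-dimensional classification, at the cost of being slightly less explicit about what $Q$ looks like; either way the mention of the contracting property of $\{t^E\}$ is not actually needed, since invertibility of $t^E$ alone guarantees that $\mathbb{R}^d\setminus\{0\}$ is preserved.
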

\begin{proof} 
If $d=1$, it is easy to see that $Q$ is necessarily of the form
\begin{equation*}
Q(\xi)=\begin{cases}
Q(1)\abs{\xi}^{\alpha} & \xi\geq 0\\
Q(-1)\abs{\xi}^{\alpha} & \xi<0
\end{cases}
\end{equation*}
for $\xi\in\mathbb{R}$ where $\alpha>0$. In this case, it is easy to see that $\Exp(Q)=\Exp(\abs{Q})=\{E_{1/\alpha}\}$ where $E_{1/\alpha}\in \End(\mathbb{R})$ is the transformation taking $\xi$ to $\xi/\alpha$. For $d>1$, given that $Q$ is non-vanishing on the connected set $\mathbb{R}^d\setminus\{0\}$, $Q$ must be single signed, i.e., given any fixed non-zero $\xi_0\in\mathbb{R}^d$, $Q(\xi)=\sign(Q(\xi_0))\abs{Q(\xi)}$ for all $\xi\in\mathbb{R}^d$ and from this it follows immediately that $Q$ and $\abs{Q}$ share the same exponent set.
\end{proof}

\begin{proof}[Proof of Proposition \ref{prop:Subhomequivtolittleoh}]
In what follows, $P$ is a positive homogeneous function and $\widetilde{P}$ is a continuous complex-valued function defined on an open set $\mathcal{U}$ of $\mathbb{R}^d$.
\begin{subproof}[\ref{item:Subhomequivtolittleoh1}$\implies$\ref{item:Subhomequivtolittleoh2}]
Let $E\in\Exp(P)$ and fix $\epsilon>0$ and a compact set $K\subseteq\mathbb{R}^d$. By our supposition, let $\mathcal{O}\subseteq\mathcal{U}$ be a neighborhood of $0$ for which
\begin{equation*}
\abs{\widetilde{P}(\zeta)}\leq \frac{\epsilon}{M+1} P(\zeta)
\end{equation*}
for all $\zeta\in\mathcal{O}$ where
\begin{equation*}
M=\sup_{\xi\in K}P(\xi).
\end{equation*} Since $\{t^E\}$ is contracting thanks to Proposition \ref{prop:CharofPosHom}, there exists $\tau>0$ for which $t^E\xi\in \mathcal{O}$ for all $0<t<\tau$ and $\xi\in K$  (Proposition A.6 of \cite{BR21}). Consequently, for any $\xi\in K$ and $0<t<\tau$,
\begin{equation*}
\abs{\widetilde{P}(t^E\xi)}\leq\frac{\epsilon}{M+1}P(t^E\xi)=\epsilon t\frac{P(\xi)}{M+1}\leq \epsilon t\frac{M}{M+1}<\epsilon t.
\end{equation*}
\end{subproof}
\begin{subproof}[\ref{item:Subhomequivtolittleoh2}$\implies$\ref{item:Subhomequivtolittleoh3}]
This is immediate.
\end{subproof}
\begin{subproof}[\ref{item:Subhomequivtolittleoh3}$\implies$\ref{item:Subhomequivtolittleoh1}]
Let $\epsilon>0$. Because the unital level set $S$ of $P$ is compact (Proposition \ref{prop:CharofPosHom}), our supposition guarantees $\tau>0$ for which
\begin{equation*}
\abs{\widetilde{P}(t^E\eta)}\leq \epsilon t
\end{equation*}
for all $0<t<\tau$ and $\eta\in S$. For the open set $\mathcal{O}_\tau=\{\zeta\in\mathbb{R}^d:P(\zeta)<\tau\}$, we claim that
\begin{equation*}
\mathcal{O}_\tau\setminus\{0\}=\{t^E\eta:0<t<\tau,\eta\in S\}.
\end{equation*}
To see this, first suppose that $\zeta\in \mathcal{O}_\tau\setminus\{0\}$ or, equivalently, $0<P(\zeta)<\tau$. Then, for $t=P(\zeta) \in (0,\tau)$, observe that $\eta:=t^{-E}\zeta\in S$ because 
$P(\eta)=P(t^{-E}\zeta)=P(\zeta)/t=1$. Consequently, $\zeta=t^{E}\eta$ for $\eta\in S$ and $0<t<\tau$ and so $\mathcal{O}_\tau\setminus\{0\}\subseteq\{t^E\eta:0<t<\tau,\eta\in S\}$. Of course, for $0<t<\tau$ and $\eta\in S$, $P(t^E\eta)=tP(\eta)=t\in (0,\tau)$ and so we have justified our claim. 

With this identification, we observe that
\begin{equation*}
\abs{\widetilde{P}(\zeta)}\leq \epsilon t=\epsilon P(\zeta).
\end{equation*}
for each $\zeta=t^E\eta\in \mathcal{O}\setminus \{0\}$. By the continuity of $\widetilde{P}$, it immediately follows that $\widetilde{P}(0)=0\leq \epsilon P(0)$. Thus, we have found an open neighborhood $\mathcal{O}=\mathcal{O}_\tau$ of $0$ for which
\begin{equation*}
\abs{\widetilde{P}(\zeta)}\leq \epsilon P(\zeta)
\end{equation*}
for all $\zeta\in\mathcal{O}$ which is precisely the statement that $\widetilde{P}(\xi)=o(P(\xi))$ as $\xi\to 0$.
\end{subproof}
\end{proof}

\noindent The remainder of the section is dedicated to the proof of Proposition \ref{prop:ExpandGamma}. The proof makes use of the following lemma.

\begin{lemma}
Given an open neighborhood $\mathcal{U}$ of $0$ in $\mathbb{R}^d$, suppose that $Q:\mathcal{U}\to\mathbb{C}$ is \\
analytic on $\mathcal{U}$ with absolutely and uniformly convergent series expansion
\begin{equation*}
    Q(\xi)=\sum_{|\beta:\mathbf{n}|>1}A_\beta\xi^\beta
\end{equation*}
for some $\mathbf{n}\in\mathbb{N}_+^d$. Consider $E\in\End(\mathbb{R}^d)$ with the standard representation $\diag(1/n_1,1/n_2,\dots,1/n_d)$. Then, for each $l\in\mathbb{N}_+$, $Q$ is strongly subhomogeneous with respect to $E$ of order $l$. 
\end{lemma}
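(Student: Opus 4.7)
My plan is to reduce the problem to analyzing a single-variable complex power series via the substitution $v := t^{1/N}$, where $N := \mathrm{lcm}(n_1,\ldots,n_d)$. Under this substitution, $v^{NE} = \diag(v^{N/n_1},\ldots,v^{N/n_d})$ has \emph{positive integer} exponents, so for each fixed $\xi \in K$ the map $v \mapsto F_\xi(v) := Q(v^{NE}\xi)$ is the composition of the analytic function $Q$ with a polynomial in $v$, and therefore extends to a complex-analytic function on a disk $\{|v|<\rho\}$. Substituting the hypothesized series for $Q$ gives
\[
F_\xi(v) = \sum_\beta A_\beta v^{N|\beta:\mathbf{n}|}\xi^\beta = \sum_{m\geq N+1} c_m(\xi) v^m,
\]
where the crucial observation is that $N|\beta:\mathbf{n}|$ is a positive integer and the constraint $|\beta:\mathbf{n}|>1$ forces $N|\beta:\mathbf{n}| \geq N+1$. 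A compactness argument (using that the series for $Q$ extends complex-analytically to a polydisk around $0$) shows that the radius $\rho$ and a bound $M := \sup_{|v|\leq\rho,\,\xi\in K}|F_\xi(v)|$ can both be chosen independently of $\xi \in K$. Cauchy's integral formula on $|v|=\rho/2$ then gives $|c_m(\xi)| \leq M(2/\rho)^m$ uniformly in $\xi \in K$ and $m \geq N+1$, which sidesteps the need to individually estimate the original coefficients $A_\beta$.

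The next step is to pass the $t$-derivatives through the substitution. The chain rule gives $t\partial_t = (1/N)v\partial_v$, which combined with the classical operator identity $t^k\partial_t^k = (t\partial_t)(t\partial_t - 1)\cdots(t\partial_t - k + 1) =: P_k(t\partial_t)$ shows that $t^k\partial_t^k$ acts on the monomial $v^m$ by multiplication by the scalar $P_k(m/N)$, where $P_k(s) := s(s-1)\cdots(s-k+1)$. Differentiating term-by-term (legitimate on $|v|<\rho/2$ by absolute convergence) yields
\[
t^k\partial_t^k Q(t^E\xi) = \sum_{m\geq N+1} c_m(\xi) P_k(m/N) v^m.
\]
The elementary polynomial bound $|P_k(m/N)| \leq C_l(1+m)^l$ valid for all $k \in \{0,\ldots,l\}$ (with $C_l$ independent of $k$ and $m$), combined with the Cauchy coefficient estimate, produces (for $v < \rho/4$, say) a bound of the form $|t^k\partial_t^k Q(t^E\xi)| \leq C\,v^{N+1} = C\,t^{1+1/N}$ uniform in $k\in\{0,\ldots,l\}$ and $\xi\in K$, for $t\in(0,\tau_0)$ with $\tau_0 := (\rho/4)^N$.

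Finally, writing $t^{1+1/N} = t\cdot t^{1/N}$ and shrinking $\tau\leq\tau_0$ so that $Ct^{1/N}<\epsilon$ for $t<\tau$ delivers the required bound $|t^k\partial_t^k Q(t^E\xi)| \leq \epsilon t$ for all $k\in\{0,\ldots,l\}$, $\xi\in K$, and $0<t<\tau$. The main obstacle I expect is achieving all estimates uniformly in $\xi\in K$; this amounts to verifying that $\{v^{NE}\xi : |v|\leq\rho,\,\xi\in K\}$ lies inside a fixed complex polydisk within the domain of convergence of $Q$, which is a routine compactness argument using that all exponents $N/n_i$ are positive. The remainder of the proof is algebraic manipulation centered on the operator identity $t^k\partial_t^k = P_k(t\partial_t)$ and the polynomial growth of $P_k(m/N)$ (easily absorbed by the geometric Cauchy bound); the key conceptual move is the substitution $v = t^{1/N}$, which converts the fractional-power series $\sum A_\beta t^{|\beta:\mathbf{n}|}\xi^\beta$ into an honest power series in $v$, making complex-analytic machinery available.
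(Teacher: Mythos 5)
Your argument is correct, but it follows a genuinely different route from the paper. The paper works entirely with the real-variable multivariate series: for each derivative order $j$ it splits $Q=Q_1+Q_2$ at the threshold $\abs{\beta:\mathbf{n}}\leq 2j+4$, bounds the finite polynomial part $Q_1$ trivially by $M_1 r^{1+\rho}$ (with $\rho$ the gap above degree $1$), and controls the tail $Q_2$ by trading a quarter of the homogeneity weight against the falling-factorial factor $\mathcal{P}(\abs{\beta:\mathbf{n}},j)$ — choosing $\delta\leq e^{-4j}$ so that $\abs{\mathcal{P}(q,j)}r^{q/4}\leq 1$ — and then invoking absolute convergence of the original series at the contracted point $r^{E/4}\eta$, yielding $r(M_1r^{\rho}+M_2 r)<\epsilon r$. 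You instead exploit the arithmetic structure: the substitution $v=t^{1/N}$ with $N=\operatorname{lcm}(n_1,\dots,n_d)$ turns $\sum A_\beta t^{\abs{\beta:\mathbf{n}}}\xi^\beta$ into an honest power series $\sum_{m\geq N+1}c_m(\xi)v^m$ in one complex variable (the integrality of $N\abs{\beta:\mathbf{n}}$ and the constraint $\abs{\beta:\mathbf{n}}>1$ giving $m\geq N+1$), so that Cauchy's coefficient estimates give uniform geometric bounds $\abs{c_m(\xi)}\leq M(2/\rho)^m$, and the operator identity $t^k\partial_t^k=(t\partial_t)(t\partial_t-1)\cdots(t\partial_t-k+1)$ reduces all derivative orders $k\leq l$ to scalar multipliers $P_k(m/N)$ of polynomial growth, easily absorbed. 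What your approach buys is a single uniform treatment of all orders $k\in\{0,\dots,l\}$ at once and the explicit rate $O(t^{1+1/N})$; what it costs is the (standard, but worth stating) step that absolute convergence on a real neighborhood of $0$ yields absolute convergence on a complex polydisk, plus the lcm trick, whereas the paper's proof never leaves the real domain and uses only the given convergence hypothesis term by term. Two housekeeping points you should make explicit in a full write-up: the requirement $Q\in C^l(\mathcal{U})$ in the definition of strong subhomogeneity is immediate from analyticity, and $\tau$ should also be taken small enough that $t^E(K)\subseteq\mathcal{U}$ (or the polydisk chosen with its real section inside $\mathcal{U}$) so that $Q(t^E\xi)$ is indeed given by the series; both are routine.
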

\begin{proof}
We will prove that, for each, $j\in\mathbb{N}$, $\epsilon>0$ and compact set $K\subseteq\mathbb{R}^d$, there is a $\delta>0$ for which
\begin{equation*}
    \abs{r^j\partial_r^jQ(r^E\eta)}\leq \epsilon r
\end{equation*}
for all $0<r<\delta$ and $\eta\in K$. To this end, we fix $j$, $\epsilon$, and $K$ as above and write $Q=Q_1+Q_2$ where
\begin{equation*}
Q_1(\xi)=\sum_{1+\rho\leq |\beta:\mathbf{n}|\leq 2j+4}A_\beta\xi^\beta
\hspace{0.5cm}\mbox{and}\hspace{0.5cm}
    Q_2(\xi)=\sum_{|\beta:\mathbf{n}|> 2j+4}A_{\beta}\xi^\beta
\end{equation*}
where $\rho:=\min\{|\beta:\mathbf{n}|:A_\beta\neq 0\}-1>0$; of course, $Q_1$ is identically zero provided $1+\rho>2j+4$. For each $q\geq 1$ and $l\in \mathbb{N}$, define
\begin{equation*}
    \mathcal{P}(q,l)=q(q-1)(q-2)\cdots (q-(l-1))
\end{equation*}
where we assume the convention that $\mathcal{P}(q,0)=1$. In this notation, we observe that 
\begin{equation*}
    \partial_r^j(r^E\xi)^\beta=\partial_r^j\left(r^{|\beta:\mathbf{n}|}\xi^\beta\right)=\mathcal{P}(|\beta:\mathbf{n}|,j)r^{|\beta:\mathbf{n}|-j}\xi^\beta
\end{equation*}
for $\xi\in\mathbb{R}^d$, $r>0$ and $\beta\in\mathbb{N}^d$.
Because $Q_1$ is a polynomial and $K$ is compact, we have
\begin{equation*}
    M_1:=\sup_{\eta\in K}\left(\sum_{1+\rho\leq |\beta:\mathbf{n}|\leq 2j+4}\abs{A_\beta \mathcal{P}(|\beta:\mathbf{n}|,j)\eta^\beta}\right)<\infty.
\end{equation*}
In view of our hypotheses, let $\mathcal{O}$ be an open  neighborhood of $0$ for which $\mathcal{O}\subseteq \overline{\mathcal{O}}\subseteq\mathcal{U}$ and
\begin{equation*}
    M_2:=\sup_{\xi\in \mathcal{O}}\left( \sum_{|\beta:\mathbf{n}|>2j+4}\abs{A_\beta\xi^\beta}\right)<\infty.
\end{equation*}
We now specify $\delta$. First, given that $\{r^E\}$ and $\{r^{E/4}\}$ are contracting and the set $K$ is compact, there exists $0<\delta_1$ for which $r^E\eta$ and $r^{E/4}\eta$ belong to $\mathcal{O}$ whenever $0<r<\delta_1$ and $\eta\in K$. Also, there exists $0<\delta_2\leq 1$ for which
\begin{equation}\label{eq:PermuationEst}
    \abs{\mathcal{P}(q,j)}r^{q/4}\leq 1
\end{equation}
for all $q>j$ and $0<r\leq \delta_2$; in fact, one may take $\delta_2=e^{-4j}$.  Finally, given that $\rho>0$, let $\delta_3>0$ be such that
\begin{equation*}
    M_1 r^\rho+M_2r<\epsilon
\end{equation*}
for all $0<r<\delta_3$. Set $\delta=\min\{\delta_1,\delta_2,\delta_3\}$ and observe that
for all $\eta\in K$ and $0<r<\delta$, we have
\begin{eqnarray*}
    \abs{r^j\partial_r^jQ_1(r^E\eta)}&=&r^j\abs{\sum_{1+\rho\leq|\beta:\mathbf{n}|\leq 2j+4}A_\beta \partial_r^j\left( r^E\eta\right)^{\beta}}\\
    &\leq&r^j\sum_{1+\rho\leq|\beta:\mathbf{n}|\leq 2j+4}\abs{A_\beta\mathcal{P}(|\beta:\mathbf{n}|,j)r^{|\beta:\mathbf{n}|-j}\eta^\beta}\\
    &\leq&r^{1+\rho}\sum_{1+\rho\leq |\beta:\mathbf{n}|\leq 2j+4}\abs{A_\beta \mathcal{P}(|\beta:\mathbf{n}|,j)\eta^\beta}\\
&\leq&r  M_1r^\rho
\end{eqnarray*}
where we have used the fact that $\delta\leq 1$. By virtue of \eqref{eq:PermuationEst}, observe that, for each $q=|\beta:\mathbf{n}|>2j+4$, $|\beta:\mathbf{n}|/2-j>2$ and so
\begin{eqnarray*}
    \abs{\partial_r^j\left( A_\beta(r^E\eta)^\beta\right)}&=&\abs{A_\beta}\abs{\mathcal{P}(|\beta:\mathbf{n}|,j)}r^{|\beta:\mathbf{n}|-j}\abs{\eta^\beta}\\
    &=&r^{|\beta:\mathbf{n}|/2-j}\abs{A_\beta}\abs{\mathcal{P}(|\beta:\mathbf{n}|,j)r^{|\beta:\mathbf{n}|/4}}\abs{(r^{E/4}\eta)^\beta}\\
    &\leq& r^2\abs{A_\beta(r^{E/4}\eta)^\beta}
\end{eqnarray*}
for all $0<r<\delta\leq 1$ and $\eta\in K$. It follows that
\begin{eqnarray*}
    \abs{\partial_r^jQ_2(r^E\eta)}&=&\abs{\sum_{|\beta:\mathbf{n}|> 2j+4}\partial_r^j\left( A_\beta(r^E\eta)^\beta\right)}\\
    &\leq & \sum_{|\beta:\mathbf{n}|>2j+4}\abs{\partial_r^j\left( A_\beta(r^E\eta)^\beta\right)}\\
    &\leq&\sum_{|\beta:\mathbf{n}|>2j+4}r^2\abs{A_\beta(r^{E/4}\eta)^\beta}\\
    &\leq &r^2M_2
\end{eqnarray*}
for all $0<r<\delta$ and $\eta\in K$. Therefore, for each $0<r<\delta$ and $\eta\in K$, we have
\begin{eqnarray*}
    \abs{r^j\partial_r^jQ(r^E\eta)}&\leq&\abs{r^j\partial_r^jQ_1(r^E\eta)}+\abs{r^j\partial_r^jQ_2(r^E\eta)}\\
    &\leq& rr^\rho M_1+r^{j+2}M_2\\
    &\leq& r(M_1r^\rho+M_2r)\\
    &<&r \epsilon,
\end{eqnarray*}
as desired.
\end{proof}

\begin{proof}[Proof of Proposition \ref{prop:ExpandGamma}.]
It is easy to see that $E\in \Exp(Q)\cap \Exp(\abs{Q})$ and $E/k\in\Exp(R)$ for $E\in\End(\mathbb{R}^d)$ with standard matrix representation 
\begin{equation*}
\diag((2m_1)^{-1}, (2m_2)^{-1},\dots, (2m_d)^{-1}).
\end{equation*}
If $k=1$, $R$ is positive homogeneous with $E\in\Exp(R)\cap\Exp(Q)$. By virtue of the preceding lemma (with $\mathbf{n}=2\mathbf{m}$), $\widetilde{Q}$ and $\widetilde{R}$ are strongly subhomogeneous with respect to $E$ of order $1$ and so necessarily subhomogeneous with respect to $E$. In this case, we may conclude that $\xi_0$ is of positive homogeneous type for $\widehat{\phi}$ with drift $\alpha$ and homogeneous order
\begin{equation*}
    \mu_{\xi_0}=\tr E=|\mathbf{1}:2\mathbf{m}|=\sum_{j=1}^d\frac{1}{2m_j}.
\end{equation*}
If $k>1$, our supposition guarantees that $\abs{Q}$ is positive homogeneous with respect to $E$ and $R$ is positive homogeneous with respect to $E/k$. By virtue of the preceding lemma, $\widetilde{Q}$ is strongly subhomogeneous with respect to $E$ of order $2$ and $\widetilde{R}$ is strongly subhomogeneous with respect to $E/k$ of order $1$. Consequently, $\xi_0$ is of imaginary homogeneous type for $\widehat{\phi}$ with drift $\alpha$ and homogeneous order $\mu_{\xi_0}=\tr E=|\mathbf{1}:2\mathbf{m}|$ as in the previous case.
\end{proof}

\vspace{1cm}
\noindent Evan Randles: Department of Mathematics, Colby College, Waterville, ME 04901, USA.\\

\noindent E-mail: \href{evan.randles@colby.edu}{evan.randles@colby.edu}

\end{document}